\numberwithin{equation}{section}
\newtheorem{theorem}{Theorem}[section]
\newtheorem{lemma}[theorem]{Lemma}
\newtheorem{proposition}[theorem]{Proposition}
\theoremstyle{definition}
\newtheorem{definition}[theorem]{Definition}
\newtheorem{remark}[theorem]{Remark}
\newtheorem*{remarksonT}{Remarks}
\theoremstyle{remark}
\newcommand{\R}{\mathbb{R}}
\newcommand{\C}{\mathbb{C}}
\newcommand{\N}{\mathcal{N}}
\newcommand{\B}{\mathcal{B}}
\let\Re=\undefined\DeclareMathOperator*{\Re}{Re}
\let\Im=\undefined\DeclareMathOperator*{\Im}{Im}
\newcommand{\eps}{\varepsilon}
\newcommand{\wh}{\widehat}
\newcommand{\whb}[1]{\widehat{\bar{#1}}}
\DeclareMathOperator{\supp}{supp}
\newcommand{\op}[1]{{\left\vert\kern-0.25ex\left\vert\kern-0.25ex\left\vert #1
    \right\vert\kern-0.25ex\right\vert\kern-0.25ex\right\vert}}
\newcommand{\opt}[1]{\|#1\|_{L_\xi^\infty \dot H_{\xi_2}^1}^{\frac12}\|#1\|_{L_\xi^\infty \dot H_{\xi_2}^2}^{\frac12}}
\newcommand{\opo}[1]{\|#1\|_{L_\xi^\infty \dot H_{\xi_1}^1}^{\frac12}\|#1\|_{L_\xi^\infty \dot H_{\xi_1}^2}^{\frac12}}
\newcommand{\jb}{\langle\nabla\rangle}
\newcommand{\jbb}{\langle\widetilde{\nabla}\rangle}
\newcommand{\qtq}[1]{\quad\text{#1}\quad}
\newcommand{\tunit}[1]{\tfrac{#1}{|#1|}}
\newcounter{smalllist}
\begin{document}

%%%%%%%%%%%%%%%%%%%%%%%%%
%%%%%%%%%%%%%%%%%%%%%%%%%
%%%%%%%%%%%%%%%%%%%%%%%%%
%%%%%%%%%%%%%%%%%%%%%%%%%

\title[Cubic-quintic NLS]{The initial-value problem for the cubic-quintic NLS with non-vanishing boundary conditions}

\author[R. Killip]{Rowan Killip}
\address{Department of Mathematics, UCLA, Los Angeles, USA}
\email{killip@math.ucla.edu}

\author[J. Murphy]{Jason Murphy}
\address{Department of Mathematics and Statistics, Missouri University of Science and Technology}
\email{jason.murphy@mst.edu}

\author[M. Visan]{Monica Visan}
\address{Department of Mathematics, UCLA, Los Angeles, USA}
\email{visan@math.ucla.edu}

\begin{abstract}
We consider the initial-value problem for the cubic-quintic NLS
\[
(i\partial_t+\Delta)\psi=\alpha_1 \psi-\alpha_{3}\vert \psi\vert^2 \psi+\alpha_5\vert \psi\vert^4 \psi
\]
in three spatial dimensions in the class of solutions with $|\psi(x)|\to c >0$ as $|x|\to\infty$.  Here $\alpha_1$, $\alpha_3$, $\alpha_5$ and $c$ are such that $\psi(x)\equiv c$ is an energetically stable equilibrium solution to this equation.  Normalizing the boundary condition to $\psi(x)\to 1$ as $|x|\to\infty$, we study the associated initial-value problem for $u=\psi-1$ and prove a scattering result for small initial data in a weighted Sobolev space.
\end{abstract}

\maketitle

%\tableofcontents
%%%%%%%%%%%%%%%%%%%%%%%%%%%
%%%%%%%%%%%%%%%%%%%%%%%%%%%
%%%%%%%%%%%%%%%%%%%%%%%%%%%
\section{Introduction}\label{S:intro}

We study the initial-value problem for the cubic-quintic nonlinear Schr\"odinger equation (NLS) with non-vanishing boundary conditions in three space dimensions:
\begin{equation}\label{E:cq}
\begin{cases}
(i\partial_t + \Delta) \psi = \alpha_1\psi - \alpha_3|\psi|^2\psi + \alpha_5 |\psi|^4\psi, & (t,x)\in\R\times\R^3, \\
\psi(0)=\psi_0.
\end{cases}
\end{equation}
We restrict attention to parameters $\alpha_1,\alpha_3,\alpha_5\in\R$ for which the polynomial
\[
p(x)=\alpha_1 - \alpha_3 x + \alpha_5 x^2 
\]
has a positive root $c^2$ with $p'(c^2)>0$.  This guarantees the energetic stability of the constant solution $\psi\equiv c$ and we are interested in the dynamics of perturbations to this equilibrium.  Correspondingly, we impose the boundary condition
\begin{equation}\label{E:cq-boundary}
\lim_{|x|\to\infty} |\psi(t,x)| = c
\end{equation}
on our solutions to \eqref{E:cq}.

Rescaling both space-time and the values of the solution, we may reduce \eqref{E:cq} and \eqref{E:cq-boundary} to
\begin{equation}\label{E:cq2}
\begin{cases}
(i\partial_t+\Delta)\psi = (|\psi|^2-1)(\beta(|\psi|^2-1)+1)\psi, \\
\psi(0) = \psi_0, \\
\lim_{|x|\to\infty} |\psi(t,x)| = 1,
\end{cases}
\end{equation}
for some $\beta\in\R$.  This is the Hamiltonian evolution associated to the (conserved) energy
\begin{equation}\label{E:psi energy}
\int_{\R^3} \tfrac12 |\nabla\psi|^2 + \tfrac14\bigl(|\psi|^2 -1\bigr)^2 + \tfrac\beta6 (|\psi|^2 -1\bigr)^3\,dx.
\end{equation}
In particular, we see that $\psi\equiv 1$ is always a local minimum of the energy.  When $\beta<0$, the energy is unbounded below which makes the system susceptible to wave collapse for large initial perturbations; however, we shall focus here on small perturbations.  When $\beta\leq 1$, the potential energy has a local maximum at $\psi\equiv 0$, while for $\beta>1$, this energy has a second local minimum at $\psi\equiv 0$.

The model \eqref{E:cq2} describes the behavior of a localized disturbance in an infinite expanse of quantum fluid that is otherwise quiescent.  The particular case $\beta=0$ is known as the Gross--Pitaevskii equation and has received much attention.  In particular, the asymptotic stability of the equilibrium solution to the Gross--Pitaevskii equation was proved in works of Gustafson, Nakanishi, and Tsai \cite{GNT:dd, GNT:2d, GNT:3d}.  We will extend this result to the more general model \eqref{E:cq2}, while also permitting  a wider class of initial data.

The inclusion of an additional parameter $\beta$ in \eqref{E:cq2} has allowed researchers to better fit the behavior of several real physical systems and \eqref{E:cq} has been used as a model in superfluidity \cite{Ginsburg1958, Ginsburg1976}, descriptions of bosons \cite{Barashenkov} and  of defectons \cite{Pushakarov1978}, the theory of ferromagnetic and molecular chains \cite{Pushakarov1984, Pushakarov1986}, and in nuclear hydrodynamics \cite{Kartavenko}.  For comparison, the initial-value problem \eqref{E:cq} with data decaying at infinity describes the dynamics of a finite body of fluid and it was studied in \cite{KOPV:35}.

The boundary condition \eqref{E:cq-boundary} may be further simplified to the following:
\begin{equation}\label{E:cq-boundary2}
\lim_{|x|\to\infty}\psi(t,x) = 1.
\end{equation}
Indeed, finite energy functions obeying \eqref{E:cq-boundary} have a limiting phase as $|x|\to\infty$, which we can normalize to zero; furthermore, the dynamics of \eqref{E:cq} preserve the value of this phase, so that the boundary condition is independent of time.  See \cite{Gerard} for these observations in the case of the Gross--Pitaevskii equation. The fact that the boundary condition is independent of time breaks the gauge invariance of \eqref{E:cq}; in particular, we cannot use a time-dependent phase factor to remove the linear term in this equation.  The linear term ultimately leads to weaker wave-like dispersion at low frequencies, which presents a key challenge in understanding the long-time behavior of solutions.

As we study perturbations of the constant solution $\psi\equiv 1$, it is natural to introduce the function $u=u_1+iu_2$ defined via $\psi = 1 + u$. Using \eqref{E:cq2}, we have the following equation for $u$:
\begin{equation}\label{E:cqu}
(i\partial_t+\Delta) u = 2 u_1 + N(u),
\end{equation}
where the nonlinearity is given by
\begin{align*}
N(u) & = (3+4\beta)u_1^2 + u_2^2 + 2iu_1 u_2 + |u|^2 u + 4\beta[|u|^2 u_1+uu_1^2] \\
& \quad + \beta[|u|^4 + 4|u|^2 uu_1]+\beta |u|^4 u. 
\end{align*}
We may also write $N(u) = \sum_{k=2}^5 N_k(u)$, where $N_k(u)$ represents the polynomial of degree $k$ in $u_1$ and $u_2$ appearing in $N(u)$.

To put \eqref{E:cqu} into the standard framework of dispersive equations, it is convenient to diagonalize the equation.  To do this, we employ operators $U$ and $H$ defined by
\[
U=\tfrac{|\nabla|}{\jb}\qtq{and} H = |\nabla|\jb,\qtq{where}\jb:=\sqrt{2-\Delta}\qtq{and}|\nabla|=(-\Delta)^{1/2}.
\]
The function $v=Vu = u_1+iUu_2$ then satisfies the following equation
\begin{equation}\label{E:cqv}
(i\partial_t - H)v = N_v(u) = U\Re N(u) + i\Im N(u).
\end{equation}

In our previous work \cite{KMV} we considered the final-state problem for \eqref{E:cqu} (with $\beta>1$) and constructed solutions scattering to prescribed asymptotic states.  Here we consider the initial-value problem for small localized data (and $\beta\in\R$).

\begin{theorem}\label{T:main} Let $v_0\in H_x^1(\R^3)$ with $xv_0\in L_x^2$ and $(x\times\nabla)v_0\in L_x^2$.  Suppose
\begin{equation}\label{E:X0}
\| v_0 \|_{X_0} := \|\jb v_0\|_{L_x^2} +  \|(x\times \nabla) v_0\|_{L_x^2} + \|xv_0\|_{L_x^2}
\end{equation}
is sufficiently small.  Then there exists a unique global solution $v$ to \eqref{E:cqv} with $v(0)=v_0$ that scatters in both time directions, that is, there exist unique $v_\pm$ so that
\begin{equation}\label{E:main}
\|e^{itH}v(t) - v_{\pm} \|_{X_0} \longrightarrow 0 \quad\text{as $t\to\pm\infty$}.
\end{equation}
Furthermore, the convergence in \eqref{E:main} holds at a rate of $|t|^{-\eps}$ for some $\eps>0$.
\end{theorem}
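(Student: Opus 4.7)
The plan is to run a bootstrap/contraction argument in a norm $X(T)$ that combines the weighted Sobolev norm of the profile $f(t) = e^{itH}v(t)$ with a space-time dispersive norm of $v$ itself. The Duhamel formula
\[
f(t) = v_0 - i\int_0^t e^{isH}\, N_v(u(s))\,ds
\]
reduces the theorem to showing that this integral is bounded in $X_0$ and Cauchy (at a polynomial rate) as $t\to\pm\infty$, which yields global existence, scattering, and the quantitative rate $|t|^{-\eps}$ in \eqref{E:main} all at once.

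The first step is to develop the linear theory for the propagator $e^{-itH}$ with dispersion relation $\omega(\xi) = |\xi|\sqrt{|\xi|^2+2}$. Stationary-phase analysis produces pointwise decay and Strichartz-type estimates; the phase is wave-like ($\omega\approx\sqrt{2}\,|\xi|$) at low frequency and Schr\"odinger-like ($\omega\approx|\xi|^2$) at high frequency, forcing a frequency-localized treatment. The rotation vector field $\Omega = x\times\nabla$ commutes with $H$ and so is freely propagated by the linear flow. The weight $x$ does not commute with $H$, but the identity $x\,e^{-itH} = e^{-itH}\bigl(x - t\,(\nabla_\xi\omega)(D)\bigr)$ lets us convert control of $\|xv\|_{L^2}$ into pointwise decay via a Klainerman--Sobolev inequality adapted to $H$, and this is precisely why $\|xv_0\|_{L^2}$ and $\|\Omega v_0\|_{L^2}$ appear in $X_0$.

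The principal analytic difficulty is the quadratic part $N_2$ of the nonlinearity. In three dimensions, quadratic NLS-type terms sit at the threshold for small-data scattering, and direct estimation of the Duhamel integral fails. To deal with this we apply a normal-form transformation: writing $v = w + B[w,w]$ where $B$ is a bilinear Fourier multiplier with schematic symbol
\[
m(\xi_1,\xi_2) \sim \frac{n_2(\xi_1,\xi_2)}{\omega(\xi_1)\pm\omega(\xi_2)\mp\omega(\xi_1+\xi_2)},
\]
we trade $N_2$ for cubic-and-higher nonlinearities in $w$, provided the resonance denominator is non-vanishing. For the Bogoliubov phase the only borderline region is the wave-wave sector at low frequency; here the factors of $U=|\nabla|/\jb$ built into $V$ (and into the real part of $N_v$) supply the vanishing of $n_2$ needed to make $m$ a bounded multiplier. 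This is the same mechanism that controlled the final-state problem in \cite{KMV}.

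The main obstacle will be propagating $\|xv(t)\|_{L^2}$ under the nonlinear flow, since commuting $x$ past $e^{itH}$ introduces a factor of $t$ and one must extract sufficient integrable time decay from the normal-formed equation to absorb it; this forces one to work at the level of the profile $f$, to integrate by parts in time on the quadratic Duhamel term, and to use the rotational and weighted vector fields to upgrade $L^2$ control to pointwise decay. With these ingredients in place, the cubic, quartic, and quintic pieces of $N_v$ (including those generated by the normal-form correction and by expanding $u = V^{-1}v$) are routine multilinear estimates based on the linear decay, and a standard small-data fixed point in $X(T)$ then delivers the unique global solution $v$ together with scattering and the rate $|t|^{-\eps}$.
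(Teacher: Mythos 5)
Your high-level scaffolding (bootstrap in a norm combining the weighted profile norm and a Strichartz norm, linear theory for the Bogoliubov dispersion, commuting $\Omega=x\times\nabla$ and the weight through the propagator) is consistent with the paper. But there is a real gap in how you handle the quadratic nonlinearity, and it is the decisive step.

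You propose a normal form $v = w + B[w,w]$ with symbol $n_2/\Phi$ that trades \emph{all} of $N_2$ for cubic-and-higher terms, and you claim the only obstruction is the low-frequency wave-wave sector, which the factor $U$ repairs. This cannot work. The symmetry constraints on a symmetric bilinear normal form preclude removing every quadratic term simultaneously: in the paper the normal form removes only the single term $2iu_1u_2$ (chosen precisely because it is the one quadratic term lacking a helpful $U$ at small output frequency), and the residual quadratic part $\N_2 = U[A_1(v_1,v_1)+A_2(v_2,v_2)]$ survives. Moreover, the residual phase $\Phi = H(\xi)+H(\xi_2)-H(\xi_1)$ for the $|v|^2$ interaction has a genuine resonance set at \emph{high} frequency, in the regime $1\lesssim |\xi_1|\sim|\xi_2|$ with $\theta_{02}$ bounded away from $0$ and $\pi$; no power of $U(\xi)$ saves the naive division by $\Phi$ there. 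On that region the paper integrates by parts in $\xi_2$ using the identity based on $\xi^{\perp_2}\cdot\nabla_{\xi_2}\Phi = \xi^{\perp_2}\cdot\nabla H(\xi_1)$ (``angular non-resonance''), which is exactly where the hypothesis $(x\times\nabla)v_0\in L^2_x$ is consumed; your proposal invokes $\Omega$ only to ``upgrade $L^2$ to pointwise decay,'' which is a different and insufficient use.

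Concretely, your sketch omits the space-time resonance machinery that occupies the heart of the paper: the decomposition of each quadratic phase ($\bar v^2$, $v^2$, $|v|^2$) into time non-resonant regions (integration by parts in $s$), space non-resonant regions (integration by parts in $\xi_2$), and angular non-resonant regions (the $\xi^{\perp_2}$ identity), together with the bilinear estimate for rough, non-Coifman--Meyer multipliers (Proposition~\ref{prop:bilinear}) needed to control the singular symbols $\nabla_\xi\Phi/\Phi$, $\nabla_\xi\Phi\,\nabla_{\xi_2}\Phi/|\nabla_{\xi_2}\Phi|^2$, etc., that arise. Without these, the ``phase derivative'' term $\int e^{is\Phi}[s\nabla_\xi\Phi]\wh f\wh f$ coming from hitting $e^{is\Phi}$ with $\nabla_\xi$ cannot be closed, and the $\|J(t)v\|_{L^2_x}$ bound in your bootstrap norm will not propagate. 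The cubic through sextic terms really are handled by Strichartz plus the decay estimates essentially as you say, but the quadratic terms are not ``routine'' even after the (partial) normal form.

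As a smaller point, the paper does not run a fixed-point/contraction argument from scratch: global well-posedness in the energy space is already known from \cite{KMV}, and the bootstrap only needs to establish the \emph{a priori} $Z$-norm bound and the Cauchy-in-$X_0$ property of $e^{itH}v(t)$. That is a minor structural difference; the missing space-time resonance analysis is the essential one.
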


We will outline the proof of this result in Section~\ref{SS:strategy}.  Before turning to that subject, we will first make a series of remarks on the nature of this theorem and its relation to prior work.  In particular, we will discuss the meaning of our theorem in the variables $u=V^{-1}u = \Re v + i U^{-1} \Im v$.  Correspondingly, we define
\[
u^{\text{lin}}_\pm(t) = V^{-1} e^{-itH} Vu_\pm \qtq{which solves} \left\{\begin{aligned} & (i\partial_t + \Delta)u^{\text{lin}}_\pm - 2 \Re u^{\text{lin}}_\pm = 0  \\
        &u^{\text{lin}}_\pm(0) = u_\pm = V^{-1} v_\pm.\end{aligned}\right.
\]

\begin{remarksonT}
\textbf{1.} The norm $X_0$ appearing here occurs as the specialization to initial data of the more general $X$-norm, defined in \eqref{energy norm}.

\textbf{2.} It was already known that our assumptions on $v_0$ imply global existence and uniqueness of solutions to \eqref{E:cqu}.  This follows from the well-posedness theory for \eqref{E:cqu} that we developed in our previous work \cite{KMV}.  Specifically, our arguments in \cite{KMV} yield large-data global well-posedness for \eqref{E:cqu} in the energy space
\[
\mathcal{E}=\{u\in \dot H_x^1: |u|^2 + 2\Re u \in L_x^2\}
\]
when $0\leq \beta\leq \frac32$ and small-data global well-posedness for the remaining values of $\beta$.
Note that $\mathcal E$ is not a vector space, but can naturally be interpreted as a complete metric space; see \cite{Gerard,KMV}.  By using \eqref{UL2} below, it is easy to see that under the hypotheses of Theorem~\ref{T:main}, the initial data $u_0=\Re v_0 + i U^{-1} \Im v_0 \in H_x^1\subset\mathcal{E}$ and so the previous analysis applies.  This reasoning also shows that \eqref{E:main} implies that
$$
\text{dist}(u(t),u^{\text{lin}}_\pm(t))\to 0 \quad\text{as $t\to\pm\infty$ in the natural metric on $\mathcal E$.}
$$

\textbf{3.} To properly recast the hypotheses on $v_0$ in terms of $u_0$, it is natural to separate the high and low frequencies using Littewood--Paley operators; see Section~\ref{S:notation} for the precise definitions.  Self evidently, we have
$$
\| P_{>1} v_0\|_{X_0} \sim \| P_{>1} u_0\|_{X_0} \sim \|\nabla  P_{>1} u_0\|_{L_x^2} +  \|(x\times \nabla) P_{>1} u_0\|_{L_x^2} + \|x P_{>1} u_0\|_{L_x^2},
$$
while setting $t=0$ in the proof of Lemma~\ref{lem:str} allows one to deduce
$$
\| P_{\leq 1} v_0 \|_{X_0} \sim \|\langle x\rangle P_{\leq 1}  \Re u_0\|_{L_x^2} + \|\langle x\rangle P_{\leq 1} \nabla \Im u_0\|_{L_x^2}.
$$
Analogously, the scattering statement can be recast as follows:
\begin{align*}
&\|u(t)-u^{\text{lin}}_\pm(t)\|_{H^1_x} +  \|(x\times \nabla) [u(t)-u^{\text{lin}}_\pm(t)]\|_{L_x^2}  \\
 &\quad{}+ \|J(t) \Re [u(t)-u^{\text{lin}}_\pm(t)] \|_{L_x^2} + \|J(t) U \Im  [u(t)-u^{\text{lin}}_\pm(t)] \|_{L^2_x} \to 0
\end{align*}
as $t\to\pm\infty$.  Here $J(t):=e^{-itH}xe^{itH}$.

As our hypotheses do not guarantee that $x \Im[V^{-1}e^{itH}Vu(t)]$ nor $x \Im u_{\pm}$ are square-integrable, one should not expect that one can remove the operator $U$ from the last term above.  On the other hand, due to the quantitative rate of convergence we obtain in Theorem~\ref{T:main}, one can show that
\begin{align*}
\lim_{t\to\pm\infty} \|x P_{>|t|^{-\delta}}\, \Im[V^{-1}e^{itH}Vu(t)-u_{\pm}]\|_{L_x^2}  = 0,
\end{align*}
for some $\delta>0$.

\textbf{4.} As the previous remark showed, our hypotheses on the real and imaginary parts of the physical field $u$ are different.  While this may seem peculiar in a Schr\"odinger-like setting, it is perfectly normal in studies on the wave equation, where one invariably adopts differing norms for the displacement and velocity components.  As the dispersion relation of the linear equation underlying \eqref{E:cq} is wave-like at low frequencies (it has a conical singularity at the origin), it is natural that our hypotheses on $\Im u$, which is analogous to displacement, and those on $\Re u$, which is analogous to velocity, differ by exactly one derivative at low frequencies.

\textbf{5.} When $\beta=0$, \eqref{E:cq2} becomes the Gross--Pitaevskii equation:
\begin{equation}\label{E:GP}
(i\partial_t+\Delta)\psi = (|\psi|^2-1)\psi, \qtq{with} \lim_{x\to\infty} \psi(t,x) = 1.
\end{equation}
The question of the long-time behavior of solutions to this equation was the central topic of a recent series of papers \cite{GNT:dd,GNT:2d,GNT:3d}.  The last of these shows scattering in the three-dimensional setting (as considered here) and much of what we do follows closely in the footsteps of that paper.  In actuality, the paper \cite{GNT:3d} has had a profound impact far beyond its original scope by codifying and popularizing (contemporaneously with \cite{GMS}) what has become known as the space-time resonance method.

More concretely, the paper \cite{GNT:3d} proves scattering for solutions to \eqref{E:GP} under the hypothesis that $\langle x\rangle v$ and $\langle x \rangle \nabla v$ belong to $L^2(\R^3)$.  These are stronger hypotheses than those employed in this paper in the sense that we have no weighted hypotheses on the derivatives beyond the angular regularity requirement $(x\times \nabla) v \in L^2$.  The nature of this improvement is most dramatic in the case of spherically symmetric data, for which $(x\times\nabla)v\equiv 0$. 

\textbf{6.} In the special case of radial data, however, the contribution of this paper is eclipsed by the more recent work \cite{GHN} on \eqref{E:GP}, which appeared while this paper was being finalized.  Nevertheless, for general data, the hypotheses of the two works are incomparable.  Specifically, \cite{GHN} has weaker hypotheses on the low-frequency portion of the solution, since they do not require weighted decay in $L^2$; on the other hand, they do place more stringent hypotheses on the high frequencies, requiring that $(x\times\nabla)P_{\geq 1} v$ belongs to $H^1(\R^3)$ and not merely $L^2(\R^3)$.

\end{remarksonT}

%%%%%%%%%%%%%%%%%
\subsection{Strategy of the proof}\label{SS:strategy}
The proof of Theorem~\ref{T:main} relies on a bootstrap argument.  For this, we introduce a few specific norms.  Letting $I$ denote any time interval, we first consider the Strichartz-type norm
\begin{equation}\label{def:S}
\|v\|_{S(I)}:= \| \jb v\|_{ L_t^\infty L_x^2\cap L_t^2 L_x^6}+ \|(x\times\nabla)v\|_{L_t^\infty L_x^2 \cap L_t^2 L_x^6},
\end{equation}
where the space-time norms are taken over $I\times\R^3$.

Next, we introduce the vector field $J(t)=e^{-itH}xe^{itH}$ and the `energy norm'
\begin{equation}\label{energy norm}
\|v(t)\|_X := \|\jb v(t)\|_{L_x^2} + \|J(t)v(t)\|_{L_x^2} + \|(x\times\nabla)v(t)\|_{L_x^2}.
\end{equation}
We refer to the $Jv$ component of this norm as the `weighted norm'.

For convenience, we also introduce the notation
\[
\|v\|_{Z(I)} := \|v\|_{S(I)} + \|v\|_{L_t^\infty(I;X)}.
\]

We will prove a bootstrap estimate of the form
\[
\|v\|_{Z([t_0,t_1])} \lesssim \|v(t_0)\|_{X} + \langle t_0\rangle^{-\eps}\sum_{k=2}^6 \|v\|_{Z([t_0,t_1])}^k,
\]
which, for small initial data, closes to give control in the $Z$-norm for all time.  Using global $Z$-norm bounds, we can then prove scattering by standard arguments (see Section~\ref{S:proof}).

To explain our choice of spaces, we compare \eqref{E:cqv} with related scale-invariant nonlinear Schr\"odinger equations of the form
\begin{equation}\label{nls}
(i\partial_t + \Delta) u = F(u) = |u|^p u, \quad p>0.
\end{equation}
The scaling symmetry of \eqref{nls} identifies a scaling-critical space of initial data, namely $\dot H^{s_c}(\R^3)$, where $s_c=\frac32-\frac2p$.  The regularity associated to a quintic nonlinearity is $s_c=1$, while the regularity associated to a quadratic nonlinearity is $s_c=-\frac12$.  As \eqref{E:cqv} has a quintic nonlinear term, it is natural to seek control in a Strichartz space at $H^1_x$ regularity, namely, the $S$-norm.

For Schr\"odinger equations with negative critical regularity, one seeks control of the solution in weighted spaces rather than Sobolev spaces of negative order, in which NLS is known to be illposed.  From the point of view of scaling, prescribing $|x|^s u_0\in L^2$ is like prescribing $u_0\in \dot H^{-s}$.  However, it is not expected that solutions will remain bounded in a weighted $L^2$-norm.  Indeed, this is not even case for the linear equation.  Instead, in the case of \eqref{nls}, one endeavors to prove $L^2$-bounds for the Galilean quantity $(x+2it\nabla)u$, which in turn implies decay for the solution via Klainerman--Sobolev-type inequalities.  As one can check that $x+2it\nabla = e^{it\Delta} x e^{-it\Delta}$, we see that the operator $J(t)$ defined above is the analogue of $x+2it\nabla$ for the problem \eqref{E:cqv}.

For the case of \eqref{nls}, the operator $x+2it\nabla$ essentially obeys a chain rule when applied to the nonlinearity, which greatly aids the analysis. This fact relies crucially on the gauge-invariance of the nonlinearity, i.e. the symmetry $F(e^{i\theta}u)=e^{i\theta}F(u)$.  In the case of \eqref{E:cqv}, the nonlinearity is not gauge-invariant.  Because of this, estimating $J(t)v(t)$ in $L^2$ becomes significantly more challenging.  In fact, establishing a suitable estimate for this term is the principal difficulty in this paper.

Broadly speaking, we follow an approach known as the method of space-time resonances (cf. \cite{GMS,GNT:3d}), which we now briefly describe in our setting.  Introducing the interaction variable $f(t)=e^{itH}v(t)$, we have
\[
\|J(t)v(t)\|_{L_x^2} \sim \| \nabla_{\xi} \wh{f}(t)\|_{L_\xi^2}.
\]
To estimate the latter term, we first use the Duhamel formula for $v$ (cf. \eqref{duhamel1} below) to write an integral formula for $\wh{f}$.  Because it is the quadratic terms that will ultimately be the most difficult to estimate, let us describe the technique for a single quadratic nonlinear term, say $v^2$.  For such a term, we would be led to estimate in $L_\xi^2$ the following term:
\[
\nabla_{\xi} \iint e^{isH(\xi)}\wh{v}(s,\xi-\eta)\wh{v}(s,\eta)\,d\eta\,ds = \nabla_{\xi} \iint e^{is\Phi}\wh{f}(s,\xi-\eta)\wh{f}(s,\eta)\,d\eta\,ds,
\]
where the phase $\Phi = H(\xi) - H(\xi-\eta) - H(\eta)$.  If the derivative lands on $\wh{f}(s,\xi-\eta)$, then after applying Plancherel, we recover a copy of $Jv$ and have a term that is amenable to Strichartz estimates.  We will see that such terms are not too difficult to estimate (cf. Section~\ref{S:weighted1}).

However, if the derivative lands on $e^{is\Phi}$, then we are faced with estimating the following `phase derivative' term:
\begin{equation}\label{ipd}
\iint e^{is\Phi} [s\nabla_{\xi}\Phi]\wh{f}(s,\xi-\eta)\wh{f}(s,\eta)\,d\eta\,ds.
\end{equation}
Such terms are significantly more difficult to estimate.  In particular, we need to exhibit additional decay to overcome the factor $s\nabla_\xi\Phi$.  The idea is to exploit oscillation in the phase, which as usual is achieved via integration by parts.

On `time non-resonant' regions (i.e. regions on which $\Phi \neq 0$), one can use the identity
\[
e^{is\Phi} = \tfrac{1}{i\Phi}\partial_s e^{is\Phi}
\]
to integrate by parts with respect to $s$, which either cancels the $s$ in \eqref{ipd} or leads to additional copies of the solution (which decays, due to the bootstrap assumption) via \eqref{E:cqv}.  On `space non-resonant' regions (i.e. regions on which $\nabla_\eta\Phi\neq 0$), one can use the identity
\[
e^{is\Phi} = \tfrac{1}{is|\nabla_\eta\Phi|^2}\nabla_\eta\Phi \cdot \nabla_\eta e^{is\Phi}
\]
to integrate by parts with respect to $\eta$, which cancels the $s$ in \eqref{ipd}; if the derivative then lands on a copy of $\wh{f}$, we recover another copy of $Jv$.

The strategy is then to decompose frequency space into regions of non-resonance and estimate the contribution of each region separately.  Note that each type of quadratic term (namely, $\bar v^2$, $v^2$, and $|v|^2$) will lead to a different phase in \eqref{ipd}, and hence requires its own decomposition.  For us, the most difficult case will come from the $|v|^2$ nonlinearity.  In fact, in this case we will need to rely on one other notion of non-resonance, which we have called `angular non-resonance'; this  refers to the non-vanishing of
\[
\bigl(\xi-\xi\cdot\tfrac{\eta}{|\eta|}\tfrac{\eta}{|\eta|}\bigr)\cdot\nabla_{\eta}\Phi
\]
and uses a different integration by parts identity (see Section~\ref{S:anr}).  It is only in treating the angular non-resonant region that we will need control over the angular momentum type quantity $\|(x\times\nabla) v\|_{L^2_x}$.

After integrating by parts on a non-resonant region, we can again write the resulting expressions back in terms of $v$ and $\bar{v}$.  An example of a resulting term for the $v^2$ nonlinearity (in the time non-resonant case, say) is
\[
\int e^{isH}b[v(s),v(s)]\,ds,
\]
where $b[\cdot,\cdot]$ is the bilinear multiplier with symbol $\tfrac{\nabla_\xi\Phi}{\Phi}\chi,$ with $\chi$ denoting a cutoff to the time non-resonant region.  In general, the terms we need to estimate will now involve a bilinear multiplier applied to $v$, $Jv$, $(x\times\nabla)v$, or $N_v(u)$; the particular multipliers that appear depend on whether the region is space, time, or angular non-resonant (see Sections~\ref{S:tnr}--\ref{S:anr}).  The multipliers that appear will not typically be amenable to standard bilinear estimates such as the Coifman--Meyer theorem.  Indeed, even many of the cutoff functions used to partition frequency space into appropriate non-resonant regions already suffer from this problem.  As in \cite{GNT:3d}, it is essential to curtail the associated losses by exploiting the fact that the final result is to be estimated in $L^2$.  In this paper, this key bilinear estimate appears as Proposition~\ref{prop:bilinear}, which extends an analogous result from \cite{GNT:3d}.

There is one additional twist to what we described above, namely, the fact that a direct implementation of the space-time resonance approach with the quadratic nonlinearity
\[
U[(3+4\beta)u_1^2 + u_2^2] + 2i u_1 u_2
\]
in \eqref{E:cqv} does not work.  Problems arise in quadratic frequency interactions with a small output frequency.  For the first two terms above, the factor of $U=\tfrac{|\nabla|}{\langle \nabla \rangle}$ provides cancellation at zero output frequency; however, the third term is missing this factor.  For this reason, we employ a normal form transformation to remove the quadratic term $2i u_1 u_2$ and carry out the space-time resonance method with the normal form of the equation.  See Section~\ref{S:NF} for further discussion.

Having completed the outline of the proof, it now seems pertinent to discuss more fully its relation to the works \cite{GHN,GNT:3d} on \eqref{E:GP}.  
First and foremost, our improvement on \cite{GNT:3d} is based on the discovery and subsequent exploitation of additional non-resonance phenomena in the case of high-high frequency interactions.  Most important of these is the angular non-resonance discussed in Section~\ref{SS:9.4}; however, the use of time non-resonance in Section~\ref{SS:9.3} is also novel and essential.  On top of this, we believe that our paper introduces a number of individually minor, but collectively significant, improvements to the analysis in \cite{GNT:3d}.  Two examples of this are the introduction and systematic use of the bound
\[
\| u(t) \|_{L^6_x} \lesssim \langle t\rangle ^{-\frac{7}{9}} \| v \|_Z
\]
(see Lemma~\ref{lem:decay}) and the extension of their bilinear estimate to include the end-point case (see Proposition~\ref{prop:bilinear}).  One end-result of these improvements is that we are able to present our result in a paper of comparable length while including a much more thorough exposition of the details.  In truth, the brevity of \cite{GNT:3d} rendered us unable to reconstruct their arguments in several places.

There is less of a connection between what we do here and the paper \cite{GHN}, since the key point of that paper is to exploit additional Strichartz estimates that become available for radial data or data with higher angular regularity.  The failure of such estimates in general manifests, for example, in the fact that while the traditional NLS is ill-posed in spaces of negative regularity, well-posedness can be restored by passing to radial data, or data with additional angular regularity.
 
\subsection{Organization of the paper}  In Section~\ref{S:notation}, we set up notation and collect some useful lemmas, including the bilinear estimate Proposition~\ref{prop:bilinear}.  In Section~\ref{S:decay}, we collect some consequences of the boundedness of the energy norm, including some decay estimates and control over the Strichartz norm.  In Section~\ref{S:NF}, we discuss a normal form transformation used to ameliorate the effect of the quadratic terms in the nonlinearity.  In Section~\ref{S:weighted1}, we begin to estimate the weighted norm, dealing with the cubic and higher terms, along with the quadratic terms without phase derivatives.  Sections~\ref{S:W2}--\ref{S:est} are dedicated to estimating the quadratic terms containing the phase derivatives (see Proposition~\ref{prop:bs-quad}).  These sections comprise the heart of the paper.  Finally, in Section~\ref{S:proof}, we collect all of the estimates and complete the proof of Theorem~\ref{T:main}.

%%%%%%%%%%%%%%%%%%%%%%%%%
%%%%%%%%%%%%%%%%%%%%%%%%%

\subsection*{Acknowledgements} We are grateful to an anonymous referee for their suggestion of the parameterization of the nonlinearity appearing in \eqref{E:cq2}; the original version of this paper employed a different parameterization, which resulted in us treating only what are now the cases $\beta >1$.

This work was partially supported by a grant from the Simons Foundation (\#342360 to Rowan Killip). R. K. was further supported by NSF grants DMS-1265868 and DMS-1600942.  J.~M. was supported by the NSF Postdoctoral Fellowship DMS-1400706 at the University of California, Berkeley.  M. V. was supported by NSF grant DMS-1500707. Part of the work on this project was supported by the NSF grant DMS-1440140, while the authors were in residence at the Mathematical Sciences Research Institute in Berkeley, California, during the Fall 2015 semester.   

%%%%%%%%%%%%%%%%%%%%%%%%%%%
%%%%%%%%%%%%%%%%%%%%%%%%%%%
%%%%%%%%%%%%%%%%%%%%%%%%%%%
\section{Notation and useful lemmas}\label{S:notation} For non-negative quantities $A$ and $B$, we write $A\lesssim B$ to denote $A\leq CB$ for some $C>0$.  We write $A\ll B$ to denote $A\leq cB$ for some small $c\in (0,1)$.  We write $A\sim B$ if $A\lesssim B$ and $B\lesssim A$.  We write $A\wedge B = \min\{A,B\}$ and $A\vee B=\max\{A,B\}$.  We write $a\pm$ to denote $a\pm\eps$ for some small $\eps>0$.

We write a complex-valued function $v$ as $v=v_1+iv_2$.  We write $\tilde v$ to indicate that either $v$ or $\bar v$ may appear.  When $X$ is a monomial, we write $\text{\O}(X)$ to denote a finite linear combination of the factors of $X$, where Mikhlin multipliers (for example, Littlewood--Paley projections or the operator $U$ defined below) and/or complex conjugation may be additionally applied in each factor.  We extend $\text{\O}$ to polynomials via $\text{\O}(X+Y)=\text{\O}(X)+\text{\O}(Y)$.

In what follows, $\nabla_\xi$ denotes derivatives with respect to $\xi$ with $\xi_2$ fixed and $\xi_1=\xi-\xi_2$.  On the other hand, $\nabla_{\xi_2}$ indicates derivatives with respect to $\xi_2$ with $\xi$ fixed and $\xi_1=\xi-\xi_2$.  Similarly, $\nabla_{\xi_1}$ indicates derivatives with respect to $\xi_1$ with $\xi$ fixed and $\xi_2=\xi-\xi_1$.

For a time interval $I$, we write $L_t^q L_x^r(I\times\R^3)$ for the Banach space of functions $u:I\times\R^3\to\C$ equipped with the norm
\[
\|u\|_{L_t^q L_x^r(I\times\R^3)}=\biggl(\int_I \|u(t)\|_{L_x^r(\R^3)}^q\,dt\biggr)^{1/q},
\]
with the usual adjustments when $q=\infty$. If $q=r$, we write $L_t^q L_x^q = L_{t,x}^q$.  We will often abbreviate
\[
\|u\|_{L_t^q L_x^r(I\times\R^3)}=\|u\|_{L_t^q L_x^r}\qtq{and} \|u\|_{L_x^r(\R^3)} = \|u\|_{L_x^r}.
\]
We write $r'\in[1,\infty]$ for the H\"older dual of $r\in[1,\infty]$, i.e. the solution to $\tfrac1r+\tfrac1{r'}=1$.

At times, we will make use of the Lorentz spaces $L_x^{q,\alpha}$ defined via the quasi-norm
\[
\| v\|_{L_x^{q,\alpha}}:= \bigl\| \lambda\,\bigl| \{x:|v(x)|>\lambda\}\bigr|^{\frac1q}\bigr\|_{L^\alpha((0,\infty),\frac{d\lambda}{\lambda})},
\]
where $1\leq q<\infty$ and $1\leq \alpha\leq\infty$.  We have $L_x^{q,q}=L_x^q$ and  $L_x^{q,\alpha}\hookrightarrow L_x^{q,\beta}$ for $\alpha<\beta$. We also have the following generalized H\"older and Hardy--Littlewood--Sobolev inequalities:

\begin{lemma}[H\"older and Hardy--Littlewood--Sobolev in Lorentz spaces, \cite{Hunt, One}]\label{lem:hls}\leavevmode\!\\The following estimates hold:
\begin{itemize}
\item[(i)]
\[
\|fg\|_{L_x^{q,\alpha}} \lesssim \|f\|_{L_x^{q_1,\alpha_1}}\|g\|_{L_x^{q_2,\alpha_2}},
\]
whenever $1\leq q,q_1,q_2<\infty$ and $1\leq\alpha,\alpha_1,\alpha_2\leq\infty$ satisfy $\tfrac{1}{q}=\tfrac{1}{q_1}+\tfrac{1}{q_2}$ and $\tfrac{1}{\alpha}=\tfrac{1}{\alpha_1}+\tfrac{1}{\alpha_2}$.
\item[(ii)]
\[
\|f\ast g\|_{L_x^{q,\alpha}} \lesssim \|f\|_{L_x^{q_1,\alpha_1}}\|g\|_{L_x^{q_2,\alpha_2}},
\]
whenever $1\leq q,q_1,q_2<\infty$ and $1\leq \alpha,\alpha_1,\alpha_2\leq\infty$ satisfy $\tfrac{1}{q}+1=\tfrac{1}{q_1}+\tfrac{1}{q_2}$ and $\tfrac{1}{\alpha}=\tfrac{1}{\alpha_1}+\tfrac{1}{\alpha_2}$.
\end{itemize}
\end{lemma}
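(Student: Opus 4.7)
Both inequalities are classical results from the theory of Lorentz spaces (due essentially to Hunt and O'Neil), and the natural route is through the decreasing rearrangement. Throughout, I will use the standard equivalent quasi-norm
\[
\| h \|_{L_x^{q,\alpha}} \sim \bigl\| t^{1/q} h^*(t) \bigr\|_{L^\alpha((0,\infty), dt/t)},
\]
where $h^*$ denotes the decreasing rearrangement of $|h|$ on $(0,\infty)$, together with the associated maximal function $h^{**}(t) = \frac1t \int_0^t h^*(s)\,ds$.

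For part (i) (Hölder), the plan is to reduce to a one-dimensional Hölder inequality via the pointwise rearrangement inequality $(fg)^*(t) \leq f^*(t/2)\,g^*(t/2)$, which follows from the sub-level set comparison $\{|fg|>\lambda\mu\}\subset\{|f|>\lambda\}\cup\{|g|>\mu\}$. Splitting $t^{1/q} = (t/2)^{1/q_1}(t/2)^{1/q_2} \cdot 2^{1/q}$ using $\frac{1}{q}=\frac{1}{q_1}+\frac{1}{q_2}$, one obtains
\[
t^{1/q}(fg)^*(t) \lesssim \bigl[(t/2)^{1/q_1} f^*(t/2)\bigr]\bigl[(t/2)^{1/q_2} g^*(t/2)\bigr],
\]
and then the one-dimensional Hölder inequality in $L^\alpha(dt/t)$ with exponents $\alpha_1, \alpha_2$ gives (i) after a change of variables $t\mapsto t/2$.

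For part (ii) (convolution), I would follow O'Neil. The starting point is O'Neil's pointwise bound for the rearrangement of a convolution:
\[
(f*g)^{**}(t) \leq t\, f^{**}(t)\, g^{**}(t) + \int_t^\infty f^*(s)\, g^*(s)\,ds,
\]
proved by splitting $f$ and $g$ into their level-set truncations at height $f^*(t)$ and $g^*(t)$ and estimating each of the four resulting convolutions in either $L^1$ or $L^\infty$. Multiplying by $t^{1/q}$ (using $\frac{1}{q}+1=\frac{1}{q_1}+\frac{1}{q_2}$, so that the powers of $t$ balance in both summands after writing $t = t^{1/q_1 + 1/q_2 - 1/q} \cdot t^{1/q - 1/q_1 - 1/q_2 +1}$) and taking the $L^\alpha(dt/t)$ norm reduces the estimate to two one-dimensional inequalities: a Hölder inequality for the first piece (as in part (i), now with $f^{**}$ in place of $f^*$) and a Hardy-type inequality of the form
\[
\Bigl\| t^{1/q}\int_t^\infty f^*(s) g^*(s)\,\tfrac{ds}{s}\Bigr\|_{L^\alpha(dt/t)} \lesssim \| t^{1/q_1} f^*\|_{L^{\alpha_1}(dt/t)} \|t^{1/q_2}g^*\|_{L^{\alpha_2}(dt/t)}
\]
for the second, which in turn is handled by a further one-dimensional Hölder estimate once the $ds/s$ integral is controlled via the Hardy inequality (valid because $\frac{1}{q_1}+\frac{1}{q_2}-1 = \frac{1}{q}-1+\frac{1}{q}\cdot 0>0$ ensures the weight is integrable). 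Finally, to replace $f^{**}$ by $f^*$ in the resulting bound (which requires $q_1, q_2 > 1$, and hence is where the restriction $q_j<\infty$ together with the convolution exponent condition plays its role), one invokes Hardy's inequality
\[
\|t^{1/q_j} f^{**}(t)\|_{L^{\alpha_j}(dt/t)} \lesssim \|t^{1/q_j} f^*(t)\|_{L^{\alpha_j}(dt/t)}, \qquad q_j > 1.
\]

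The main obstacle is the endpoint bookkeeping in part (ii): verifying that the Hardy inequalities and one-dimensional Hölder inequalities close with the correct indices in all admissible configurations of $(\alpha, \alpha_1, \alpha_2)$, including the cases where some $\alpha_j = \infty$. A cleaner alternative, which I would mention in the writeup, is to establish the weak-type endpoint $\|f*g\|_{L^{q,\infty}} \lesssim \|f\|_{L^{q_1,\infty}}\|g\|_{L^{q_2,\infty}}$ directly by the usual level-set splitting argument and then recover the full range of Lorentz exponents via the Marcinkiewicz real interpolation theorem; since both endpoints of interpolation are continuous bilinear maps and the Lorentz spaces arise as real interpolation spaces of $L^p$'s, this yields the desired bounds together with the correct indices $(\alpha, \alpha_1, \alpha_2)$ after the standard bookkeeping.
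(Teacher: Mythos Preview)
The paper does not prove this lemma; it is simply stated with a citation to Hunt and O'Neil, and the assertions are used as black boxes thereafter. Your sketch is exactly the classical argument from those references (rearrangement inequality plus one-dimensional H\"older for part (i), O'Neil's pointwise convolution bound plus Hardy's inequality for part (ii)), so there is nothing to compare against and your approach is the expected one.

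One small point: your parenthetical justification for the Hardy step in (ii) is garbled --- the relevant condition is simply $1/q>0$ (i.e.\ $q<\infty$) for the $\int_t^\infty$ Hardy inequality, and $q_j>1$ for the $f^{**}\mapsto f^*$ step; the latter is not implied by $q_j<\infty$ alone, so at the endpoint $q_j=1$ one must argue slightly differently (e.g.\ directly via Young's inequality, since $L^{1,1}=L^1$). This is a minor bookkeeping issue and does not affect the soundness of the overall strategy.
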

Lemma~\ref{lem:hls}(ii) implies the following Sobolev embedding on $\R^3$:
\begin{equation}\label{sobolev}
\| |\nabla|^{-1} v\|_{L_x^2} \sim \|\,\tfrac{1}{|x|^2}\ast v\|_{L_x^2} \lesssim \| \tfrac{1}{|x|^2}\|_{L_x^{\frac32,\infty}}\| v\|_{L_x^{\frac65,2}} \lesssim \|v\|_{L_x^{\frac65,2}}.
\end{equation}

We define the Fourier transform on $\R^3$ via
\[
\wh{f}(\xi)=(2\pi)^{-3/2}\int_{\R^3} e^{-ix\xi}f(x)\,dx,\qtq{so that} f(x)=(2\pi)^{-3/2}\int_{\R^3} e^{ix\xi}\wh{f}(\xi)\,d\xi.
\]

We define $|\nabla|^s$ via $\widehat{|\nabla|^s f}(\xi)=|\xi|^s \wh{f}(\xi)$.  We make use of the following Fourier multiplier operators:
\[
\jb := \sqrt{2-\Delta},\quad  U = |\nabla|\jb^{-1},\quad H = |\nabla|\jb.
\]
We also use the notation $\langle \xi\rangle := \sqrt{2+|\xi|^2}$.  

We employ the standard Littlewood--Paley theory.  Let $\varphi$ be a radial bump function supported in $\{|\xi|\leq \tfrac{11}{10}\}$ and equal to one on the unit ball.  Let $\psi(\xi) = \varphi(\xi)-\varphi(2\xi)$.  For $N\in 2^{\mathbb{Z}}$, we define the Littlewood--Paley projections
\[
\widehat{P_{\leq N}u}(\xi) = \varphi(\xi/N)\wh{u}(\xi),\quad \widehat{P_N u}(\xi)=\psi(\xi/N)\wh{u}(\xi), \quad P_{>N} = Id-P_{\leq N}.
\]
We also write $u_{\leq N} := P_{\leq N} u$, and similarly for the other operators.   These operators commute with all other Fourier multiplier operators.  They are self-adjoint and bounded on every $L_x^p$ and $H_x^s$ space for $1\leq p\leq \infty$ and $s\geq 0$.  They obey the following standard estimates.
\begin{lemma}[Bernstein] Let $1\leq r\leq q\leq\infty$ and $s\geq 0$.  Then
\begin{align*}
\| |\nabla|^s P_{\leq N}u\|_{L_x^r(\R^3)}& \lesssim N^s \|P_{\leq N}u\|_{L_x^r(\R^3)}, \\
\|P_{>N}u\|_{L_x^r(\R^3)}&\lesssim N^{-s}\||\nabla|^s P_{>N}u\|_{L_x^r(\R^3)}, \\
\| P_{\leq N} u\|_{L_x^q(\R^3)} & \lesssim N^{\frac3r-\frac3q}\|P_{\leq N}u\|_{L_x^r(\R^3)}.
\end{align*}
\end{lemma}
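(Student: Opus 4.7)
The plan is to deduce all three inequalities from Young's convolution inequality, combined with the Littlewood--Paley decomposition and the scaling behavior of Schwartz kernels. Writing $\check\varphi := \F^{-1}\varphi$ and $\check\psi := \F^{-1}\psi$, the multipliers $P_{\leq N}$ and $P_M$ (for a dyadic scale $M \in 2^{\mathbb{Z}}$) act as convolution with $N^3\check\varphi(Nx)$ and $M^3\check\psi(Mx)$, whose $L_x^p$ norms scale like $N^{3(1-1/p)}$ and $M^{3(1-1/p)}$ respectively, uniformly for all $p \in [1,\infty]$ since $\check\varphi, \check\psi$ are Schwartz.

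For the third inequality (Bernstein embedding) I would simply choose $p$ via $1 + \tfrac1q = \tfrac1p + \tfrac1r$, which is admissible precisely because $r \leq q$. Young's inequality then gives
\[
\|P_{\leq N} u\|_{L_x^q} = \|K_N * u\|_{L_x^q} \lesssim N^{3(1-1/p)}\|u\|_{L_x^r} = N^{3/r - 3/q}\|u\|_{L_x^r}.
\]
To restore $\|P_{\leq N}u\|_{L_x^r}$ on the right in place of $\|u\|_{L_x^r}$, I would introduce a fattened bump $\tilde\varphi$ equal to $1$ on $\supp\varphi$, note the identity $P_{\leq N} = \tilde P_{\leq N}P_{\leq N}$, and run the same Young estimate with the fattened kernel acting on $P_{\leq N}u$.

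The first two inequalities both reduce to the following dyadic building block: for each $M \in 2^{\mathbb{Z}}$ and $s \geq 0$, $\||\nabla|^{\pm s}P_M v\|_{L_x^r} \lesssim M^{\pm s}\|v\|_{L_x^r}$. Indeed, the symbol $|\xi|^{\pm s}\psi(\xi/M)$ factors as $M^{\pm s}m_\pm(\xi/M)$ with $m_\pm(\lambda) = |\lambda|^{\pm s}\psi(\lambda)$, and the decisive point is that $\psi$ is supported in an annulus bounded away from $\lambda=0$, so $m_\pm$ is smooth and compactly supported and hence $\check m_\pm$ is Schwartz; Young's inequality with $p=1$ then yields the bound directly.

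To finish, I would reduce the first (resp.\ second) inequality to the case $u = P_{\leq N}u$ (resp.\ $u = P_{>N}u$), both sides of each claim being invariant under the corresponding substitution. For the first, decomposing $u = \sum_{M \leq N,\,\text{dyadic}}P_M u$ (the larger-scale blocks vanish by disjointness of Fourier supports) and applying the building block gives
\[
\||\nabla|^s u\|_{L_x^r} \leq \sum_{M \leq N}\||\nabla|^s P_M u\|_{L_x^r} \lesssim \sum_{M \leq N} M^s \|P_M u\|_{L_x^r} \lesssim N^s\|u\|_{L_x^r},
\]
using uniform $L_x^r$-boundedness of $P_M$ and the geometric sum $\sum_{M \leq N}M^s \lesssim N^s$ for $s > 0$; the case $s=0$ is immediate. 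For the second, the analogous decomposition $u = \sum_{M \geq N}P_M u$ yields
\[
\|u\|_{L_x^r} \leq \sum_{M \geq N}\|P_M u\|_{L_x^r} \lesssim \sum_{M \geq N}M^{-s}\||\nabla|^s P_M u\|_{L_x^r} \lesssim N^{-s}\||\nabla|^s u\|_{L_x^r},
\]
now using $\sum_{M \geq N}M^{-s} \lesssim N^{-s}$ for $s > 0$. The only point requiring care is the dyadic bookkeeping and the use of fattened projections to restore the cutoffs on the right-hand sides; I do not anticipate a genuine analytic obstacle.
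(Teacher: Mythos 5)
Your proof is correct, and in fact the paper does not supply a proof of this lemma---it is stated as one of the ``standard estimates'' obeyed by Littlewood--Paley projections, with no argument given. Your route (Young's inequality for the kernel bound, the annular support of $\psi$ to render $|\lambda|^{\pm s}\psi(\lambda)$ smooth and compactly supported, and dyadic summation with $P_N=\tilde P_N P_N$ to restore the frequency cutoff on the right) is precisely the standard textbook argument, and all the pieces are in order, including the scaling $3(1-1/p)=3/r-3/q$ under the admissibility condition $1+1/q=1/p+1/r$ and the geometric sums $\sum_{M\lesssim N}M^s\lesssim N^s$, $\sum_{M\gtrsim N}M^{-s}\lesssim N^{-s}$ for $s>0$ with the $s=0$ cases handled trivially. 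No gaps.
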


The operators $P_N$ are not true projections, in the sense that $P_N\neq P_N^2$.  As a substitute, we introduce the `fattened' Littlewood--Paley multiplier $\tilde \psi(\xi) = \psi(2\xi)+\psi(\xi)+\psi(\xi/2)$ and define the operators $\tilde P_N$ analogously to the above. We then have $P_N = \tilde P_N P_N$.  %We may fatten $\tilde\psi$ further, leading to $\tilde{\tilde\psi}$, and so on.

%%%%%%%%%%%%%%%%%%%%%%%%%%%
\subsection{Linear estimates} In this section, we record some estimates for the linear propagator $e^{\pm itH}$.

\begin{proposition}[Dispersive estimates]\label{prop:dispersive} For $0<N\leq 1$ and $t\neq 0$,
\begin{equation}\label{E:FLDE}
\|e^{\pm it H}f_N\|_{L_x^\infty(\R^3)} \lesssim \min\{N^3, N^2|t|^{-1}, N^{\frac12}|t|^{-\frac32}\}\|f_N\|_{L_x^1(\R^3)}.
\end{equation}

For $2\leq r<\infty$ and $t\neq 0$,
\[
\|e^{\pm it H}f\|_{L_x^{r,2}(\R^3)} \lesssim |t|^{-(\frac32-\frac3r)}\|U^{\frac12-\frac1r}f\|_{L_x^{r',2}(\R^3)}.
\]
\end{proposition}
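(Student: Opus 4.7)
The plan is to establish the pointwise kernel bounds \eqref{E:FLDE} at each low dyadic frequency separately and then derive the Lorentz estimate by summation and real interpolation with $L^2$-unitarity. Writing $e^{\mp itH}P_N u = K_N^\mp(t,\cdot)*u$ with
\[
K_N^\mp(t,x) = c\int_{\R^3} e^{i(x\cdot\xi\mp t\omega(\xi))}\tilde\psi(\xi/N)\,d\xi, \qquad \omega(\xi)=|\xi|\sqrt{|\xi|^2+2},
\]
reduces \eqref{E:FLDE} to a pointwise $L^\infty_x$ bound on $K_N^\mp$. Exploiting spherical symmetry of the cutoff, the angular integral evaluates explicitly, leaving
\[
K_N^\mp(t,x) = \tfrac{4\pi}{|x|}\int_0^\infty e^{\mp it\omega(\rho)}\sin(|x|\rho)\,\rho\,\tilde\psi(\rho/N)\,d\rho,
\]
a one-dimensional oscillatory integral in $\rho\sim N$.

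Next I would establish the three bounds in \eqref{E:FLDE} in turn. The trivial amplitude estimate $|\sin(|x|\rho)/(|x|\rho)|\leq 1$ gives $|K_N^\mp(t,x)|\lesssim N^3$. A single integration by parts in $\rho$---exploiting the lower bound $\omega'(\rho)\sim 1$ on the support of $\tilde\psi(\cdot/N)$ for $N\lesssim 1$---yields $|t|^{-1}$ decay; combined with the $1/|x|$ prefactor, and with the trivial $N^3$ bound on the complementary region $|x|\lesssim 1/N$, this produces $|K_N^\mp|\lesssim N^2|t|^{-1}$. Finally, splitting $\sin(|x|\rho)=\tfrac{1}{2i}(e^{i|x|\rho}-e^{-i|x|\rho})$ and applying the Van der Corput lemma---with $\omega''(\rho)\sim \rho$ computed explicitly from $\omega(\rho)=\rho\sqrt{\rho^2+2}$, and a case analysis depending on whether the stationary point $\omega'(\rho_*)=|x|/|t|$ lies inside the support of $\tilde\psi(\cdot/N)$ (in which case $|x|\sim |t|$, so the $1/|x|$ prefactor becomes $1/|t|$) or not (use nonstationary IBP)---yields $|K_N^\mp|\lesssim N^{1/2}|t|^{-3/2}$.

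For the Lorentz estimate, I would combine the above low-frequency bound with the analogous Schr\"odinger-type high-frequency bound $\|e^{\pm itH}P_N f\|_{L^\infty}\lesssim |t|^{-3/2}\|P_N f\|_{L^1}$ for $N\geq 1$ (standard stationary phase, since $\omega''$ is nondegenerate for $|\xi|\gtrsim 1$) and real-interpolate each dyadic piece against the unitarity $\|e^{\pm itH}P_N f\|_{L^2}=\|P_N f\|_{L^2}$. Taking $\theta=2/r$ so that $(L^1,L^2)_{\theta,2}=L^{r',2}$ and $(L^\infty,L^2)_{\theta,2}=L^{r,2}$ produces
\[
\|e^{\pm itH}P_N f\|_{L^{r,2}} \lesssim |t|^{-(3/2)(1-2/r)}\,(N\wedge 1)^{1/2-1/r}\|P_N f\|_{L^{r',2}},
\]
and a Littlewood--Paley sum over dyadic $N$ (noting that $(N\wedge 1)^{1/2-1/r}$ precisely reproduces the Fourier weight $U^{1/2-1/r}$) yields the claimed Lorentz bound on the range $2\leq r<\infty$.

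The main obstacle is the Van der Corput step giving the $N^{1/2}|t|^{-3/2}$ bound. The issue is that $\omega''(\rho)\sim \rho \to 0$ as $\rho\to 0$, which degrades the radial stationary-phase gain by a factor of $N^{-1/2}$ relative to the Schr\"odinger case and is the source of the $N^{1/2}$ factor in the bound. One must carefully localize to a neighborhood of $\rho_*$ of width $\min(N,(|t|N)^{-1/2})$ where Van der Corput applies, use nonstationary IBP in the complementary regime, and patch the two contributions together using the $1/|x|$ prefactor while verifying in each regime that the output is compatible with the target rate $N^{1/2}|t|^{-3/2}$.
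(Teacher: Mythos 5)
Your outline implements exactly the ``standard stationary phase'' argument the paper alludes to: reduce the frequency-localized kernel to a radial one-dimensional oscillatory integral via polar coordinates, derive the three bounds in \eqref{E:FLDE} by the trivial estimate, one integration by parts, and van der Corput, and then obtain the Lorentz estimate by real-interpolating each dyadic piece against $L^2$-unitarity. Your observation that $(N\wedge1)^{1/2-1/r}$ reproduces $U(\xi)^{1/2-1/r}$ on frequency $\sim N$ is precisely the Hessian bookkeeping the paper records (the determinant of $D^2H$ is $\sim U(\xi)^{-1}$).

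Two steps need repair. For the $N^2|t|^{-1}$ bound, the fallback to the trivial $N^3$ estimate on $|x|\lesssim N^{-1}$ fails once $|t|\gtrsim N^{-1}$, since there $N^3\gtrsim N^2|t|^{-1}$. The fix is to integrate by parts in that region too, against the phase $-t\omega(\rho)$ alone, but \emph{without} extracting the $1/|x|$ prefactor: write the kernel with amplitude $a(\rho) = j_0(|x|\rho)\rho^2\tilde\psi(\rho/N)$, where $j_0(r)=\sin r/r$. On $|x|\lesssim N^{-1}$ one has $|a|\lesssim N^2$ and $|\partial_\rho a|\lesssim |x|N^2 + N\lesssim N$, and since $\omega'\sim 1$ there is no stationary point; one integration by parts then gives $\lesssim N^2|t|^{-1}$. (Your $|x|\gtrsim N^{-1}$ branch is sound, because the $|x|$ produced by differentiating $\sin(|x|\rho)$ is cancelled by the $1/|x|$ prefactor.) Second, the final ``Littlewood--Paley sum over dyadic $N$'' cannot be a naive $\ell^1$ sum, as the constants $(N\wedge1)^{1/2-1/r}$ have no decay for $N\gtrsim 1$. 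One should instead pass through the square-function characterization of $L^{r,2}$: use Minkowski and $r\geq 2$ to bound $\|(\sum_N|P_N g|^2)^{1/2}\|_{L^{r,2}}$ by $(\sum_N\|P_N g\|_{L^{r,2}}^2)^{1/2}$, apply the dyadic estimates, and then use the reverse square-function estimate in $L^{r',2}$ with $r'\leq 2$. This is precisely where the restriction $2\leq r<\infty$ earns its keep. With these two fixes, your proposal is a faithful rendering of the proof the paper has in mind.
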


The proof of this proposition is a standard application of stationary phase methods.  The key information is the eigenvalues of the Hessian of $H(\xi)$; these are
\[
\tfrac{2|\xi|(3+|\xi|^2)}{(2+|\xi|^2)^{3/2}}\sim \tfrac{|\xi|}{\langle \xi\rangle} \qtq{and} \tfrac{2(1+|\xi|^2)}{|\xi|(2+|\xi|^2)^{1/2}}\sim\tfrac{\langle \xi\rangle}{|\xi|}
\]
and the latter has multiplicity $2$.

From these dispersive estimates, one can deduce the following Strichartz estimates (see \cite{KeelTao}, for example).  In the following, we call $(q,r)$ an \emph{admissible pair} if $2\leq q\leq\infty$ and $\tfrac2q+\tfrac3r=\tfrac32$.  We call $(\alpha,\beta)$ a \emph{dual admissible pair} if $(\alpha',\beta')$ is an admissible pair.

\begin{proposition}[Strichartz estimates]\label{prop:strichartz} Let $I$ be a time interval and $t_0\in I$. Let $(q,r)$ be an admissible pair and let $(\alpha,\beta)$ be a dual admissible pair.  Then
\[
\biggl\| e^{-itH}\varphi+ \int_{t_0}^t e^{-i(t-s)H}F(s)\,ds\biggr\|_{L_t^q L_x^r(I\times\R^3)} \lesssim \|\varphi\|_{L_x^2}+ \|F\|_{L_t^{\alpha} L_x^{\beta}(I\times\R^3)}.
\]
\end{proposition}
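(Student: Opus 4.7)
The plan is to apply the standard $TT^*$/Keel-Tao framework, taking the dispersive estimates of Proposition~\ref{prop:dispersive} as the only nontrivial input beyond the unitarity of $e^{-itH}$ on $L_x^2$ (which is automatic since $H$ is a real-valued Fourier multiplier, and which immediately supplies the case $(q,r) = (\infty,2)$).

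First I would set $T\varphi(t) := e^{-itH}\varphi$ and observe that $U^{\frac12-\frac1r}$ has symbol $(|\xi|/\langle\xi\rangle)^{\frac12-\frac1r}$, which is bounded by one and satisfies the Mikhlin condition $|\partial_\xi^\alpha m(\xi)| \lesssim |\xi|^{-|\alpha|}$ (the $|\xi|^{\frac12-\frac1r}$ growth near the origin is harmless because $\frac12-\frac1r \geq 0$), and hence is bounded on every Lorentz space $L_x^{p,q}$ with $1<p<\infty$. The second estimate in Proposition~\ref{prop:dispersive} then reduces to
\[
\|e^{-i(t-s)H}F(s)\|_{L_x^{r,2}} \lesssim |t-s|^{-(\frac32-\frac3r)}\|F(s)\|_{L_x^{r',2}},\qquad 2\leq r<\infty,
\]
which, together with the $L^2$-isometry property, is exactly the dispersive hypothesis of the Keel-Tao theorem in Lorentz-space form. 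For non-endpoint pairs ($q>2$) I would close the argument with Minkowski's integral inequality, Hardy-Littlewood-Sobolev in $t$, and $TT^*$ duality, then invoke the Christ-Kiselev lemma to pass from the unrestricted $TT^*$ estimate to the retarded Duhamel form. For the endpoint $(q,r)=(2,6)$ the bilinear Keel-Tao argument applies: dyadically decompose $|t-s|$ in the pairing $\langle TT^*F, G\rangle$, estimate each piece via the dispersive bound on $L_x^{6,2}\times L_x^{6/5,2}$, and sum using an atomic decomposition of $F$ and $G$ along a time scale tied to the dyadic level. The embeddings $L_x^{r,2} \hookrightarrow L_x^r$ and $L_x^{r'} \hookrightarrow L_x^{r',2}$ (valid for $r\geq 2$) would then strip off the Lorentz refinements and yield the estimate in the stated form.

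The principal subtlety, which is also what forces the Lorentz formulation in Proposition~\ref{prop:dispersive} in the first place, is the conical singularity of $H(\xi)=|\xi|\langle\xi\rangle$ at the origin. This obstructs any direct $L_x^1 \to L_x^\infty$ dispersive bound at the Schr\"odinger rate $|t|^{-\frac32}$; the Lorentz-space bound is the minimal substitute that still feeds cleanly into the Keel-Tao machinery at the endpoint, and the entire argument proceeds otherwise exactly as in the standard Schr\"odinger case.
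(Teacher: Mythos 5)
Your proposal is correct and follows exactly the route the paper itself takes: the paper dispatches this proposition with the one-line remark that it follows from the dispersive estimates of Proposition~\ref{prop:dispersive} via \cite{KeelTao}, and your outline is a faithful expansion of that citation. In particular, you correctly identify the two nontrivial points — that $U^{\frac12-\frac1r}$ is a Mikhlin multiplier (hence removable on Lorentz spaces $L_x^{p,q}$ with $1<p<\infty$, since $\tfrac12-\tfrac1r\geq 0$ for $r\geq 2$) and that the Lorentz-space form of the dispersive estimate is precisely what the Keel--Tao endpoint machinery needs — and the embeddings $L_x^{r,2}\hookrightarrow L_x^r$, $L_x^{r'}\hookrightarrow L_x^{r',2}$ close the argument.
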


%%%%%%%%%%%%%%%%%%%%%%%%%%%
\subsection{Bilinear estimates} We will frequently encounter bilinear Fourier multiplier operators of the following type.

\begin{definition}[Bilinear operators]\label{D:bilinear} Given a function $B:\R^3\times\R^3\to\C$, we define
\[
B[f,g](x)= (2\pi)^{-3}\iint e^{ix\xi} B(\xi-\eta,\eta) \wh{f}(\xi-\eta)\wh{g}(\eta)\,d\eta \,d\xi.
\]
For functions $A,B:\R^3\times\R^3\to\C$ we write $AB[\cdot,\cdot]$ for the multiplier with symbol given by the pointwise product $AB$.
\end{definition}

\begin{remark} The symbol $B=1$ corresponds to the pointwise product.  The condition $\overline{B(\xi_1, \xi_2)}=B(-\xi_1, -\xi_2)$ characterizes those multipliers that map real-valued functions to real-valued functions.
\end{remark}

We will often work with bilinear multipliers on the Fourier side.  Note that
\[
\widehat{B[f,g]}(\xi) = (2\pi)^{-3/2}\int B(\xi-\eta,\eta)\wh{f}(\xi-\eta)\wh{g}(\eta)\,d\eta.
\]

We will consistently use the notation
\[
\xi_1=\xi-\eta,\quad \xi_2=\eta,\quad \xi=\xi_1+\xi_2
\]
and write
\[
\widehat{B[f,g]}(\xi)=(2\pi)^{-3/2}\int B(\xi_1,\xi_2)\wh{f}(\xi_1)\wh{g}(\xi_2)\,d\xi_2.
\]

We will need several estimates for bilinear operators.  We begin with some standard results concerning sufficiently smooth multipliers.

\begin{definition}[Coifman--Meyer--Mikhlin] We call a Fourier multiplier $m:\R^3\to\C$ a \emph{Mikhlin multiplier} if
\[
|\partial_\xi^\alpha m(\xi)|\lesssim_\alpha |\xi|^{-|\alpha|}
\]
for all multiindices up to sufficiently high order.

We call a bilinear multiplier $B:\R^3\times\R^3\to\C$ a \emph{Coifman--Meyer multiplier} if
\[
|\partial_{\xi_1}^\alpha \partial_{\xi_2}^\beta B(\xi_1,\xi_2)| \lesssim_{\alpha,\beta} \bigl(|\xi_1|+|\xi_2|\bigr)^{-(|\alpha|+|\beta|)}
\]
for all multiindices up to sufficiently high order.

We call a bilinear multiplier $B:\R^3\times\R^3\to\C$ a \emph{Coifman--Meyer--Mikhlin} multiplier if it may be written in the form
\begin{equation}\label{cmm}
B(\xi_1,\xi_2) = m(\xi)m_1(\xi_1) m_2(\xi_2)B_0(\xi_1,\xi_2) ,
\end{equation}
where $m,m_1,m_2$ are Mikhlin and $B_0$ is Coifman--Meyer.
\end{definition}

Combining the Coifman--Meyer theorem \cite{CoiMey, CoiMey2} together with the standard Mikhlin multiplier theorem, we obtain the following result:

\begin{proposition}[Coifman--Meyer--Mikhlin estimate \cite{CoiMey, CoiMey2}]\label{prop:CM} If $B:\R^3\times\R^3\to\C$ is Coifman--Meyer--Mikhlin, then
\[
\| B\|_{L_x^{r_1}\otimes L_x^{r_2}\to L_x^r} \lesssim 1
\]
for all $1<r<\infty$ and $1<r_1,r_2<\infty$ satisfying $\tfrac{1}{r}=\tfrac{1}{r_1}+\tfrac{1}{r_2}$.
\end{proposition}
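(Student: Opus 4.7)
The plan is to reduce the statement to the two classical results that are already named in the preceding text: the scalar Mikhlin multiplier theorem for the three Mikhlin factors, and the Coifman--Meyer theorem for the remaining bilinear symbol. Concretely, I would start from the factorization \eqref{cmm}, namely $B(\xi_1,\xi_2)=m(\xi)m_1(\xi_1)m_2(\xi_2)B_0(\xi_1,\xi_2)$, and observe that on the Fourier side this decouples: the operator $B[\cdot,\cdot]$ factors as
\[
B[f,g]\;=\;T_m\!\bigl(B_0[T_{m_1}f,\,T_{m_2}g]\bigr),
\]
where $T_m$, $T_{m_1}$, $T_{m_2}$ denote the (scalar) Fourier multipliers with symbols $m$, $m_1$, $m_2$. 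The identity follows immediately from the definition of the bilinear operator in Definition~\ref{D:bilinear}: inserting $\wh{T_{m_1}f}(\xi_1)=m_1(\xi_1)\wh f(\xi_1)$ and $\wh{T_{m_2}g}(\xi_2)=m_2(\xi_2)\wh g(\xi_2)$ produces exactly the symbol $B_0(\xi_1,\xi_2)m_1(\xi_1)m_2(\xi_2)$ in the inner operator, and multiplication by $m(\xi)$ on the output Fourier transform realizes the remaining factor.

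With the factorization in hand, the estimate is a direct concatenation. First, $T_{m_1}$ and $T_{m_2}$ are bounded on $L_x^{r_1}$ and $L_x^{r_2}$ respectively by the scalar Mikhlin multiplier theorem, since $1<r_1,r_2<\infty$. Second, the Coifman--Meyer theorem \cite{CoiMey,CoiMey2} applied to the symbol $B_0$ gives
\[
\|B_0[F,G]\|_{L_x^{r}}\lesssim \|F\|_{L_x^{r_1}}\|G\|_{L_x^{r_2}},
\]
whenever $1<r<\infty$ and $\tfrac1r=\tfrac1{r_1}+\tfrac1{r_2}$. Finally, $T_m$ is bounded on $L_x^{r}$, again by Mikhlin. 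Composing these three bounds yields
\[
\|B[f,g]\|_{L_x^r}\lesssim \|T_{m_1}f\|_{L_x^{r_1}}\|T_{m_2}g\|_{L_x^{r_2}}\lesssim \|f\|_{L_x^{r_1}}\|g\|_{L_x^{r_2}},
\]
which is the claimed estimate.

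There is essentially no obstacle here beyond correctly recording the Fourier-side factorization, and checking that the exponent range $1<r,r_1,r_2<\infty$ given in the hypothesis lies inside the ranges of both the Mikhlin and the Coifman--Meyer theorems. The only minor book-keeping point is to confirm that the derivative estimates on $B_0$ in the Coifman--Meyer class, and on $m,m_1,m_2$ in the Mikhlin class, are independently verified by the hypothesis that $B$ is Coifman--Meyer--Mikhlin in the sense of \eqref{cmm}; since those classes are defined so as to be closed under products of this specific form, no further symbol calculus is needed.
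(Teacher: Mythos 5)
Your proposal is correct and matches the route the paper intends: the paper states the result without proof, simply remarking that it follows by combining the Coifman--Meyer theorem with the Mikhlin multiplier theorem, and you have filled in exactly the routine details of that combination via the Fourier-side factorization $B[f,g]=T_m\bigl(B_0[T_{m_1}f,T_{m_2}g]\bigr)$. The only small imprecision is your closing remark about the symbol classes being ``closed under products''; what is actually being used is that the definition of a Coifman--Meyer--Mikhlin symbol in \eqref{cmm} hands you the factorization into a Coifman--Meyer part and Mikhlin parts directly, so no closure property is invoked.
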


The next bilinear estimates allow for more singular multipliers.  These estimates are a slight strengthening of similar estimates appearing in \cite{GNT:3d} (cf. \cite[Lemma~9.1, Corollary~9.3]{GNT:3d}).  Pointwise symbol estimates are replaced by conditions involving $L^2$-based Sobolev regularity.  We need both a time-independent bilinear estimate and a Strichartz-type bilinear estimate.  Given a symbol $b:\R^3\times\R^3\to\C$, we will consider the following `norm':
\begin{equation}\label{bnorm}
\op{b} := \|b\|_{L_\xi^\infty \dot H_{\xi_1}^1}^{\frac12} \|b\|_{L_\xi^\infty \dot H_{\xi_1}^2}^{\frac12} \wedge \|b\|_{L_\xi^\infty \dot H_{\xi_2}^1}^{\frac12} \|b\|_{L_\xi^\infty \dot H_{\xi_2}^2}^{\frac12},
\end{equation}
where for fixed $\xi$ we consider $b$ as a function of $\xi_1$ via $b=b(\xi_1,\xi-\xi_1)$ or as a function of $\xi_2$ via $b=b(\xi-\xi_2,\xi_2)$.

\begin{proposition}[Bilinear estimates]\label{prop:bilinear}  Let $a,b:\R^3\times\R^3\to\C$ be the symbols of bilinear operators $a,b$.
\begin{itemize}
\item[(i)] For any $1\leq r_1,r_2\leq\infty$,
\[
\| ab\|_{L_x^{r_1}\otimes L_x^{r_2}\to L_x^2} \lesssim \| a\|_{L_x^{r_1}\otimes L_x^{r_2} \to L_x^2} \op{b}.
\]
\item[(ii)]  For any dual admissible pair $(\alpha,\beta)$ and any $q_1,r_1,q_2,r_2\in[1,\infty]$ satisfying $\tfrac{1}{q_1}+\tfrac{1}{q_2}=\tfrac{1}{\alpha}$ and $\tfrac{1}{r_1}+\tfrac{1}{r_2}=\tfrac{1}{\beta}$,
\[
\biggl\| \int e^{itH}ab[f_1(t),f_2(t)]\,dt\biggr\|_{L_x^2} \lesssim \|a\|_{L_x^{r_1}\otimes L_x^{r_2} \to L_x^\beta} \op{b}\, \|f_1\|_{L_t^{q_1} L_x^{r_1}} \|f_2\|_{L_t^{q_2}L_x^{r_2}}.
\]
\end{itemize}
\end{proposition}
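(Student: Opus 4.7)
The approach is a Fourier synthesis of the symbol $b$ along fibers of fixed output frequency $\xi = \xi_1+\xi_2$. Focusing on the second summand in the definition of $\op{b}$, for each $\xi$ I set
\begin{equation*}
\widetilde{B}(\xi, y) := (2\pi)^{-3/2}\int b(\xi-\xi_2,\xi_2)\,e^{-iy\cdot\xi_2}\,d\xi_2,
\end{equation*}
so that $b(\xi_1,\xi_2) = (2\pi)^{-3/2}\int \widetilde{B}(\xi_1+\xi_2, y)\,e^{iy\cdot\xi_2}\,dy$. Substituting this into the defining formula for $\widehat{ab[f_1,f_2]}(\xi)$ and recognizing $e^{iy\xi_2}\hat f_2(\xi_2) = \widehat{T_{-y}f_2}(\xi_2)$, where $(T_h f)(x):=f(x-h)$, yields the key representation
\begin{equation*}
\widehat{ab[f_1,f_2]}(\xi) = (2\pi)^{-3/2}\int \widetilde{B}(\xi, y)\,\widehat{a[f_1,T_{-y}f_2]}(\xi)\,dy,
\end{equation*}
upon which both parts will rest.

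For part (i), I would bound $\|ab[f_1,f_2]\|_{L_x^2}$ by duality against $g\in L_x^2$. Pairing via Plancherel and applying Cauchy--Schwarz in $\xi$ with the grouping $\widetilde{B}(\xi,y)\hat g(\xi)$ versus $\widehat{a[f_1,T_{-y}f_2]}(\xi)$ gives
\begin{equation*}
|\langle ab[f_1,f_2],g\rangle| \lesssim \int_{\R^3} \|\widetilde{B}(\cdot,y)\hat g\|_{L^2_\xi}\,\|a[f_1,T_{-y}f_2]\|_{L^2_x}\,dy.
\end{equation*}
Since translation is an isometry on $L_x^{r_2}$, the second factor is bounded uniformly in $y$ by $\|a\|_{L_x^{r_1}\otimes L_x^{r_2}\to L_x^2}\|f_1\|_{L_x^{r_1}}\|f_2\|_{L_x^{r_2}}$. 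The remaining $y$-integral I handle by dyadic splitting $\int_{|y|\leq R}+\int_{|y|>R}$ combined with Cauchy--Schwarz in $y$, using $\||y|^{-1}\|_{L^2(|y|\leq R)}\lesssim R^{1/2}$ and $\||y|^{-2}\|_{L^2(|y|>R)}\lesssim R^{-1/2}$. Plancherel in $\xi_2$ (with $\xi$ held fixed) converts the $y$-moments of $\widetilde{B}$ into Sobolev norms of $b$: $\||y|^k\widetilde{B}(\xi,\cdot)\|_{L^2_y} = \|b(\xi-\cdot,\cdot)\|_{\dot H^k_{\xi_2}}$. Thus the two pieces are bounded by $R^{1/2}\|b\|_{L^\infty_\xi \dot H^1_{\xi_2}}\|g\|_{L^2_x}$ and $R^{-1/2}\|b\|_{L^\infty_\xi \dot H^2_{\xi_2}}\|g\|_{L^2_x}$, respectively, and optimizing $R$ produces the second summand of $\op{b}$ times $\|g\|_{L^2_x}$. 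Fourier synthesis with respect to $\xi_1$ handles the first summand symmetrically.

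For part (ii), I would reduce to part (i) via dual Strichartz. Let $G_y := \int e^{itH}a[f_1(t),T_{-y}f_2(t)]\,dt$. By the dual Strichartz estimate (using the admissibility of $(\alpha',\beta')$) and H\"older in time (with $\tfrac{1}{\alpha} = \tfrac{1}{q_1}+\tfrac{1}{q_2}$ and $\tfrac{1}{\beta} = \tfrac{1}{r_1}+\tfrac{1}{r_2}$),
\begin{equation*}
\|G_y\|_{L^2_x}\lesssim \|a[f_1, T_{-y}f_2]\|_{L^\alpha_t L^\beta_x}\lesssim \|a\|_{L_x^{r_1}\otimes L_x^{r_2}\to L_x^\beta}\|f_1\|_{L^{q_1}_t L^{r_1}_x}\|f_2\|_{L^{q_2}_t L^{r_2}_x},
\end{equation*}
uniformly in $y$. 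The same Fourier decomposition yields $\int e^{itH}ab[f_1(t),f_2(t)]\,dt = (2\pi)^{-3/2}\int m_y(D)G_y\,dy$, where $m_y(D)$ is the Fourier multiplier in $x$ with symbol $\widetilde{B}(\cdot,y)$. Running the duality argument of part (i) with $G_y$ in place of $a[f_1,T_{-y}f_2]$ delivers the claimed bound.

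The principal obstacle is the $\xi$-dependence of $\widetilde{B}(\xi,y)$: a naive Minkowski estimate would demand an $L^1_y L^\infty_\xi$ control on $\widetilde{B}$ strictly stronger than the $L^\infty_\xi L^1_y$ content encoded by $\op{b}$. The trick above absorbs the $\xi$-dependence into the inner $L^2_\xi$-pairing with $\hat g$, reducing the estimate to weighted $L^2_{y,\xi}$-bounds on $\widetilde{B}\hat g$ that are controlled by $\op{b}$ via Plancherel.
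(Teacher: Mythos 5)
Your proposal is correct and reproduces the paper's own argument in all essentials: the Fourier synthesis of the symbol $b$ along fibers of fixed output frequency $\xi$ (your $\widetilde{B}(\xi,y)$ is, up to conjugation and a choice of which input variable to transform, the paper's auxiliary function $w(x,\xi)$), the resulting translation representation, the duality pairing with $g$, the Cauchy--Schwarz step in $\xi$ followed by the dyadic $|y|\lessgtr R$ split with Cauchy--Schwarz in $y$, and the Plancherel identification of $y$-moments with $\dot H^s_{\xi_2}$-norms. The only cosmetic differences are that the paper works in $\xi_1$ and fixes the threshold $R$ at the outset rather than optimizing afterward, and in part (ii) it interleaves the Strichartz step inline rather than packaging it into a separate object $G_y$.
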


\begin{remark} No assumptions are needed for the operator $a$ beyond boundedness.   In Lemma~\ref{AOK}(ii) below we will see an example of a bilinear multiplier that is readily seen to be bounded (by relatively elementary means) but which is not of Coifman--Meyer--Mikhlin type.  A larger class of symbols obeying $\textit{\L}^\infty \dot B^{3/2}_{2,1}$ bounds is admitted in \cite{GNT:3d}; this can be immediately recovered from the above by dyadic splitting and the triangle inequality.  We prefer to formulate our result in terms of the norm \eqref{bnorm} since in practical instances, this seems easiest to bound in an effective manner.
\end{remark}

\begin{proof} By symmetry, it suffices to prove the result with the first factor on the right-hand side of \eqref{bnorm}.  Define $w:\R^3\times\R^3\to\C$ so that
\[
b(\xi_1,\xi-\xi_1) = \int \overline{w(x,\xi)} e^{ix\xi_1}\,dx.
\]
As
\[
\| b(\xi_1,\xi-\xi_1) \|_{\dot H_{\xi_1}^s} \sim \|\,|x|^s w(x,\xi)\|_{L_x^2}
\]
uniformly for $\xi\in\R^3$ (by Plancherel), we may choose $R>0$ so that
\begin{equation}\label{choice-of-R}
R\int_{|x|\leq R} |x|^2|w(x,\xi)|^2\,dx + R^{-1}\int_{|x|>R}  |x|^4|w(x,\xi)|^4\,dx \lesssim \op{b}^2,
\end{equation}
uniformly for $\xi\in\R^3$.

(i) Fix $f_j\in L_x^{r_j}$ and $h\in L_x^2$. By Plancherel,
\[
\langle h, ab[f_1,f_2]\rangle = (2\pi)^{-3/2}\iint \overline{w(x,\xi)\wh{h}(\xi)} a(\xi_1,\xi-\xi_1)e^{ix\xi_1}\wh{f_1}(\xi_1)\wh{f_2}(\xi-\xi_1)\,d\xi_1\,d\xi\,dx.
\]
Using Plancherel again and the fact that translation is an isometry on $L_x^{r_1}$, we deduce
\[
\bigl|\langle h, ab[f_1,f_2]\rangle\bigr| \lesssim \|a\|_{L_x^{r_1}\otimes L_x^{r_2}\to L_x^2} \|f_1\|_{L_x^{r_1}}\|f_2\|_{L_x^{r_2}} \int \|w(x,\xi)\wh{h}(\xi)\|_{L_\xi^2}\,dx.
\]
By Cauchy--Schwarz and \eqref{choice-of-R},
\begin{align*}
\biggl(\int \|w(x,\xi)\wh{h}(\xi)\|_{L_\xi^2}\,dx\biggr)^2 & \lesssim  \int_{|x|\leq R} |x|^{-2}\,dx\cdot\int_{|x|\leq R}\int|x|^2 |w(x,\xi)|^2|\wh{h}(\xi)|^2\,d\xi\,dx \\
&\quad + \int_{|x|>R}|x|^{-4}\,dx\cdot\int_{|x|>R}|x|^4 |w(x,\xi)|^4 |\wh{h}(\xi)|^2\,d\xi\,dx \\
& \lesssim \op{b}^2\|h\|_{L_x^2}^2.
\end{align*}
The estimate in (i) follows.

(ii) In this case, we write
\begin{align*}
(2\pi&)^{3/2}\bigl\langle h, \int e^{itH} ab[f_1(t),f_2(t)]\,dt\bigr\rangle \\
& = \iiiint \overline{w(x,\xi)\wh{h}(\xi)} e^{itH(\xi)} a(\xi_1,\xi-\xi_1) e^{ix\xi_1}f_1(t,\xi_1)f_2(t,\xi-\xi_1)\,d\xi_1\,dt\,d\xi\,dx.
\end{align*}
Thus, applying Strichartz (Proposition~\ref{prop:strichartz}) and estimating as above,
\begin{align*}
\bigl|\bigl\langle& h,\int e^{itH}ab[f_1(t),f_2(t)]\,dt\bigr\rangle\bigr| \\
& \lesssim \|a\|_{L_x^{r_1}\otimes L_x^{r_2}\to L_x^\beta} \bigl\| \|f_1(t)\|_{L_x^{r_1}} \|f_2(t)\|_{L_x^{r_2}} \bigr\|_{L_t^\alpha} \int \|w(x,\xi)\wh{h}(\xi)\|_{L_\xi^2}\,dx \\
&\lesssim \|a\|_{L_x^{r_1}\otimes L_x^{r_2}\to L_x^\beta}\op{b} \,\|f_1\|_{L_t^{q_1}L_x^{r_1}} \|f_2\|_{L_t^{q_2}L_x^{r_2}} \|h\|_{L_x^2}.
\end{align*}
The estimate in (ii) follows.
\end{proof}

%%%%%%%%%%%%%%%%%%%%%%%%%%%
%%%%%%%%%%%%%%%%%%%%%%%%%%%
%%%%%%%%%%%%%%%%%%%%%%%%%%%
\section{Decay via the energy norm}\label{S:decay}  In this section we prove that control over the energy norm
\[
\|v(t)\|_X = \|\jb v(t)\|_{L_x^2} + \|J(t)v(t)\|_{L_x^2}+\|(x\times\nabla)v(t)\|_{L_x^2},
\]
where $J(t)=e^{-itH}xe^{itH}$ implies decay for the solution $v$, as well as for $u=v_1+iU^{-1}v_2$.  In fact, one does not need to use the $(x\times\nabla)v$ term to prove decay.  The estimates we prove are in the spirit of Klainerman--Sobolev inequalities.  As a consequence of the decay estimates, we can then prove an \emph{a priori} estimate for the Strichartz-norm of $v$.  At the end of this section, we also record some bounds for norms with weights that will play a role in controlling the $L_x^2$-norm of $J(t)v(t)$.

We begin with an elementary lemma.
\begin{lemma}[Control of low frequencies]\label{lem:UL2}
\begin{equation}\label{UL2}
\| U^{-2} v(t)\|_{L_x^6} + \|U^{-1}v(t)\|_{L_x^2} \lesssim \|v(t)\|_X.
\end{equation}
\end{lemma}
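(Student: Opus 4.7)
The plan is to decompose $v = P_{\leq 1} v + P_{>1} v$ and handle the two pieces separately; the substantive content lies at low frequencies. For the high-frequency piece, the multiplier $U(\xi)=|\xi|/\langle \xi\rangle$ is bounded above and below on $\{|\xi|\gtrsim 1\}$, so $U^{-1}P_{>1}$ is bounded on $L^2$, and using the Sobolev embedding $\dot H_x^1 \hookrightarrow L_x^6$ for the $L^6$ term,
\[
\|U^{-1}P_{>1}v\|_{L_x^2} + \|U^{-2}P_{>1}v\|_{L_x^6} \lesssim \|\jb v\|_{L_x^2} \leq \|v\|_X.
\]

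For the low-frequency piece, $\langle \xi\rangle \sim 1$ on $\supp \varphi$, so Plancherel together with Sobolev $\dot H^1 \hookrightarrow L^6$ for the $L^6$ term reduces matters to proving
\[
\||\nabla|^{-1}P_{\leq 1}v\|_{L_x^2} \lesssim \|v\|_X.
\]
The key idea is to pass from $v$ to $f := e^{itH}v$. Since $\hat v(\xi) = e^{-itH(\xi)}\hat f(\xi)$, we have $|\hat v(\xi)| \equiv |\hat f(\xi)|$, and Plancherel yields
\[
\||\nabla|^{-1}P_{\leq 1}v\|_{L_x^2} = \||\nabla|^{-1}P_{\leq 1}f\|_{L_x^2}.
\]
Applying \eqref{sobolev} and then H\"older in Lorentz spaces (Lemma~\ref{lem:hls}(i)) with $\langle x\rangle^{-1}\in L_x^{3,\infty}$ gives
\[
\||\nabla|^{-1}P_{\leq 1}f\|_{L_x^2} \lesssim \|P_{\leq 1}f\|_{L_x^{6/5,2}} \lesssim \|\langle x\rangle P_{\leq 1}f\|_{L_x^2}.
\]
Finally, since the commutator $[x,P_{\leq 1}]$ is a Fourier multiplier with smooth, compactly supported symbol (and hence bounded on $L_x^2$),
\[
\|\langle x\rangle P_{\leq 1}f\|_{L_x^2} \lesssim \|f\|_{L_x^2} + \|xf\|_{L_x^2} = \|v\|_{L_x^2} + \|J(t)v\|_{L_x^2} \lesssim \|v\|_X.
\]

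The main obstacle to overcome is that the weighted term in $\|v\|_X$ controls $\|xf\|_{L_x^2} = \|J(t)v\|_{L_x^2}$ rather than $\|xv\|_{L_x^2}$; the formal identity $x = J(t) + t\,\nabla H(\nabla)$ shows that any direct estimate of $\|xv\|_{L_x^2}$ from $\|v\|_X$ would lose a linear factor of $|t|$ via the oscillation $e^{-itH(\xi)}$ carried by $\hat v$. The Fourier-side identity $|\hat v| = |\hat f|$ bypasses this issue precisely because the bound we ultimately need is an $L^2$-based Fourier norm.
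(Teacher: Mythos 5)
Correct, and essentially the same argument as the paper's: reduce to low frequencies, replace $v$ by $f=e^{itH}v$ via Plancherel, apply the Lorentz-space Sobolev embedding \eqref{sobolev}, and close with H\"older in Lorentz spaces against a negative power of $|x|$ lying in $L_x^{3,\infty}$ to produce $\|Jv\|_{L_x^2}$. The paper sidesteps your commutator step by dropping $P_{\leq 1}$ immediately (using $\||\nabla|^{-1}P_{\leq 1}g\|_{L_x^2}\leq\||\nabla|^{-1}g\|_{L_x^2}$) and pairing with $|x|^{-1}$ rather than $\langle x\rangle^{-1}$, so that only $\|xf\|_{L_x^2}=\|Jv\|_{L_x^2}$ is needed at the end.
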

\begin{proof} As the $X$-norm controls the $H_x^1$-norm, it suffices to consider the low frequencies.  For this, we use the Sobolev embedding \eqref{sobolev} and H\"older's inequality (cf. Lemma~\ref{lem:hls}) to estimate
\begin{align*}
\| U^{-2} v_{\leq 1} \|_{L_x^6} + \| U^{-1} v_{\leq 1}\|_{L_x^2}  \lesssim \||\nabla|^{-1} e^{itH}v\|_{L_x^2} &\lesssim \| e^{itH} v\|_{L_x^{\frac65,2}} \\ & \lesssim \|\,|x|^{-1}\|_{L_x^{3,\infty}} \|Jv\|_{L_x^2}.
\end{align*}
The result follows.\end{proof}

Next, we prove decay.

\begin{lemma}[Decay]\label{lem:decay} Fix $t\geq 0$.
\begin{itemize}
\item[(i)] $\|v(t)\|_{L_x^6}\lesssim\langle t\rangle^{-1} \|v(t)\|_X.$
\item[(ii)] For $0<N\leq 1$,
\begin{equation}\label{FL6}
\|U^{-1}v_N(t)\|_{L_x^{6,2}} \lesssim  \min\{N, N^{\frac13} t^{-\frac23}, N^{-\frac23}t^{-1}\}\|Jv(t)\|_{L_x^2}.
\end{equation}
Consequently,
\[
\|U^{-1}v(t)\|_{L_x^6} \lesssim \langle t\rangle ^{-\frac79}\|v(t)\|_{X}.
\]
\item[(iii)] For any $2\leq r\leq 6$,
\[
\|v(t)\|_{L_x^r}\lesssim \langle t\rangle^{-(\frac32-\frac3r)}\|v(t)\|_X \qtq{and} \|U^{-1} v(t) \|_{L_x^r} \lesssim \langle t\rangle^{-\frac{7}{9}(\frac32-\frac3r)}\|v(t)\|_X.
\]
\end{itemize}
\end{lemma}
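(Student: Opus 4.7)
The strategy centers on the interaction representation $f(t):=e^{itH}v(t)$, for which $\|xf(t)\|_{L_x^2}=\|Jv(t)\|_{L_x^2}$ and $U$ commutes with $e^{\pm itH}$. The workhorse is the Lorentz H\"older bound $\|g\|_{L_x^{6/5,2}}\lesssim \|\,|x|^{-1}\|_{L_x^{3,\infty}}\|xg\|_{L_x^2}$, which converts weighted $L_x^2$ information into the form needed to invoke Proposition~\ref{prop:dispersive}; several pieces already appear inside the proof of Lemma~\ref{lem:UL2}.

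For \emph{(i)}, if $\langle t\rangle\lesssim 1$ then Sobolev embedding gives $\|v\|_{L_x^6}\lesssim \|\nabla v\|_{L_x^2}\leq \|v\|_X$. For $t\geq 1$, the Lorentz dispersive estimate at $r=6$ yields $\|v\|_{L_x^6}\lesssim \|v\|_{L_x^{6,2}}=\|e^{-itH}f\|_{L_x^{6,2}}\lesssim t^{-1}\|U^{1/3}f\|_{L_x^{6/5,2}}$. Split $f=f_{>1}+\sum_{N\leq 1}f_N$. On the high frequencies, $U^{1/3}P_{>1}$ is a Mikhlin multiplier, so Lorentz H\"older together with $\|[x,P_{>1}]f\|_{L_x^2}\lesssim\|f\|_{L_x^2}$ (the commutator $[x,P_{\leq 1}]$ is convolution against a Schwartz function) controls $\|U^{1/3}f_{>1}\|_{L_x^{6/5,2}}$ by $\|v\|_X$. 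Since $U^{1/3}P_N\sim N^{1/3}P_N$ in the $L_x^{6/5,2}$ operator norm, the low-frequency part is bounded via the triangle inequality by $\sum_{N\leq 1}N^{1/3}\|f_N\|_{L_x^{6/5,2}}$, which is summable and collapses to $\|Jv\|_{L_x^2}$ via the key estimate described next.

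The estimate $\|f_N\|_{L_x^{6/5,2}}\lesssim \|Jv\|_{L_x^2}$, uniform for dyadic $N\leq 1$, is \emph{the main technical obstacle}. Lorentz H\"older reduces this to bounding $\|xf_N\|_{L_x^2}$; writing $xf_N=P_N(xf)+[x,P_N]f$, the first piece is clearly $\leq \|Jv\|_{L_x^2}$. Computing in Fourier shows $[x,P_N]f=iN^{-1}m_N(\nabla)\tilde P_N f$ with $m_N(\xi)=(\nabla\psi)(\xi/N)$ a uniformly bounded symbol, so $\|[x,P_N]f\|_{L_x^2}\lesssim N^{-1}\|\tilde P_Nf\|_{L_x^2}$. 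The apparent loss of $N^{-1}$ is absorbed by the identity $\|\tilde P_N f\|_{L_x^2}\sim N\|\tilde P_N U^{-1}f\|_{L_x^2}$, valid since $U\sim N$ on frequencies of scale $N\leq 1$, combined with the chain $\|\tilde P_N U^{-1}f\|_{L_x^2}\lesssim \||\nabla|^{-1}f\|_{L_x^2}\lesssim \|f\|_{L_x^{6/5,2}}\lesssim \|Jv\|_{L_x^2}$ that appears in the proof of Lemma~\ref{lem:UL2}. Without the $N$ factor coming from $U^{-1}\sim N^{-1}$, the commutator loss would be fatal.

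For \emph{(ii)}, each of the three bounds comes from interpolating a choice of $M\in\{N^3,\,N^2/|t|,\,N^{1/2}/|t|^{3/2}\}$ from Proposition~\ref{prop:dispersive} against the trivial $L_x^2\to L_x^2$ estimate (Riesz--Thorin, extended to Lorentz spaces), yielding $\|e^{-itH}P_N\|_{L_x^{6/5,2}\to L_x^{6,2}}\lesssim M^{2/3}$; applied to $U^{-1}f_N$ and paired with $\|U^{-1}f_N\|_{L_x^{6/5,2}}\lesssim N^{-1}\|Jv\|_{L_x^2}$, the three choices of $M$ give $N$, $N^{1/3}|t|^{-2/3}$, and $N^{-2/3}|t|^{-1}$, respectively. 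For the consequent $L_x^6$ bound, the high-frequency piece $\|U^{-1}v_{>1}\|_{L_x^6}\sim \|v_{>1}\|_{L_x^6}$ is handled by (i); the low-frequency dyadic sum $\sum_{N\leq 1}\min\{N,N^{1/3}|t|^{-2/3},N^{-2/3}|t|^{-1}\}$ splits into three regimes ($N\leq|t|^{-1}$, $|t|^{-1}\leq N\leq |t|^{-1/3}$, $|t|^{-1/3}\leq N\leq 1$), each summing to $\lesssim |t|^{-7/9}$. Finally, \emph{(iii)} follows by interpolating $L_x^r$ between the $L_x^2$ bounds (from $\|v\|_X$ and Lemma~\ref{lem:UL2}) and the $L_x^6$ bounds proved in (i) and (ii), producing $\langle t\rangle^{-(3/2-3/r)}$ for $v$ and $\langle t\rangle^{-(7/9)(3/2-3/r)}$ for $U^{-1}v$.
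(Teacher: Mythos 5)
Your proposal is correct in all parts, and for (ii) and (iii) it follows essentially the paper's route (the paper's own ``easily obtain'' for \eqref{FL6} is filled in by exactly the interpolation of \eqref{E:FLDE} against the trivial $L^2\to L^2$ bound that you use; and the three-regime dyadic sum is a fine variant of the paper's two-regime sum). The issue is part (i), where you build a considerably more elaborate argument than is needed and, in doing so, defeat your own purpose. You introduce a Littlewood--Paley decomposition of $f$, declare the bound $\|f_N\|_{L^{6/5,2}}\lesssim\|Jv\|_{L^2}$ to be ``the main technical obstacle,'' and then close it via the commutator identity for $[x,P_N]$ --- whose final link is the estimate $\|f\|_{L^{6/5,2}}\lesssim\|Jv\|_{L^2}$. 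But once you have that estimate, the bound $\|f_N\|_{L^{6/5,2}}\lesssim\|Jv\|_{L^2}$ is immediate from the uniform (in $N$) boundedness of $P_N$ on $L^{6/5,2}$, which is a standard consequence of Young's inequality (the kernel of $P_N$ has uniformly bounded $L^1$ mass) together with real interpolation; no commutator computation, no $N^{-1}$ loss, and no appeal to $U^{-1}\sim N^{-1}$ is required.

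In fact, none of the Littlewood--Paley machinery is needed for (i) at all: the paper's proof is two lines. Apply the Lorentz dispersive estimate of Proposition~\ref{prop:dispersive} at $r=6$ to get $\|v(t)\|_{L_x^6}\lesssim\|v(t)\|_{L^{6,2}}\lesssim t^{-1}\|U^{1/3}f(t)\|_{L^{6/5,2}}$, drop $U^{1/3}$ because it is a Mikhlin multiplier (hence bounded on $L^{6/5,2}$), and finish with $\|f\|_{L^{6/5,2}}\lesssim\| |x|^{-1}\|_{L^{3,\infty}}\|xf\|_{L^2}=\|Jv\|_{L^2}$ by Lorentz H\"older (Lemma~\ref{lem:hls}). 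Your error in perspective is placing the Lorentz--H\"older step \emph{inside} each dyadic block (which then forces you to consider $\|xf_N\|_{L^2}$ and the resulting commutator) rather than applying it once to the full $f$ and projecting afterward. This is worth flagging because your remark that ``Without the $N$ factor coming from $U^{-1}\sim N^{-1}$, the commutator loss would be fatal'' suggests you believe the estimate is more delicate than it actually is; there is no loss to begin with.
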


\begin{proof}  First note that by Sobolev embedding,
\[
\|v(t)\|_{L_x^6} \lesssim \|U^{-1}v(t)\|_{L_x^6} \lesssim \|\jb v(t)\|_{L_x^2}.
\]
Thus, to prove (i) and (ii), it suffices to show that control over $Jv$ implies decay for times $t\geq1$.

To this end, we note that the dispersive estimate (Proposition~\ref{prop:dispersive}) and H\"older's inequality imply
\[
\|v(t)\|_{L_x^6} \lesssim t^{-1}\|e^{itH}v(t)\|_{L_x^{\frac65,2}} \lesssim t^{-1}\|Jv(t)\|_{L_x^2},
\]
giving (i).

Using instead the frequency-localized dispersive estimate \eqref{E:FLDE}, we easily obtain \eqref{FL6}. Thus, for $t\geq 1$,
\[
\|U^{-1}v_{\leq 1}(t)\|_{L_x^6}\lesssim \biggl(\sum_{N\leq t^{-\frac13}} N^{\frac13}t^{-\frac23}+\sum_{t^{-\frac13}<N\leq 1} N^{-\frac23}t^{-1}\biggr)\|Jv(t)\|_{L_x^2} \lesssim t^{-\frac79}\|Jv(t)\|_{L_x^2},
\]
which together with (i) settles (ii).

The estimates in  (iii) follow by interpolating between (i) and (ii).\end{proof}

%%%
We will next prove control over the Strichartz norm of $v$.  As the nonlinearity is described in terms of $u=v_1+iU^{-1}v_2$, it is useful to record the following lemma.

\begin{lemma}[Control of $u$]\label{lem:str} Let $v=u_1+iUu_2$ and let $I$ be a time interval.  Then with all space-time norms over $I\times\R^3$,
\[
\|\jb u\|_{L_t^\infty L_x^2\cap L_t^2 L_x^6} + \|(x\times\nabla)u\|_{L_t^\infty L_x^2} \lesssim \|v\|_{Z(I)}.
\]
\end{lemma}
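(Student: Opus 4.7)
Writing $v = u_1 + iUu_2$ gives $u_1 = \Re v$ and $u_2 = U^{-1}\Im v$. The contribution from $u_1$ to every norm on the left-hand side is immediately dominated by the corresponding norm of $v$. Likewise, at high frequencies $P_{>1}U^{-1}$ is a bounded smooth Fourier multiplier, so $\|\jb P_{>1}u_2\|_{L_t^\infty L_x^2\cap L_t^2 L_x^6}$ and $\|(x\times\nabla)P_{>1}u_2\|_{L_t^\infty L_x^2}$ are controlled directly by $\|\jb v\|_{L_t^\infty L_x^2\cap L_t^2 L_x^6}$ and $\|(x\times\nabla)v\|_{L_t^\infty L_x^2}$, both of which sit inside $\|v\|_Z$. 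All of the difficulty is therefore concentrated at low frequencies, where $\jb U^{-1}$ behaves like $|\nabla|^{-1}$.

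For the Sobolev pieces, at low frequency one has $\|\jb P_{\leq 1}u_2\|_{L_x^2} \lesssim \|U^{-1}P_{\leq 1}v\|_{L_x^2} \lesssim \|v\|_X$ by Lemma~\ref{lem:UL2}; this handles the $L_t^\infty L_x^2$ bound. For the $L_t^2 L_x^6$ piece, Lemma~\ref{lem:decay}(ii) supplies the pointwise-in-time decay $\|U^{-1}v(t)\|_{L_x^6} \lesssim \langle t\rangle^{-7/9}\|v(t)\|_X$, and since $\int_\R \langle t\rangle^{-14/9}\,dt < \infty$, integrating in time yields $\|\jb P_{\leq 1}u_2\|_{L_t^2 L_x^6} \lesssim \|v\|_{L_t^\infty X} \leq \|v\|_Z$.

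The heart of the matter, and the main obstacle, is the low-frequency estimate for $(x\times\nabla)u_2$, where the spatial weight $x$ must be reconciled with the singular factor $|\nabla|^{-1}$. The key is to trade $x$ for the operator $J(t)$, exploiting the radial form of $H(\xi) = |\xi|\langle\xi\rangle$. Since $\nabla_\xi H$ is parallel to $\xi$, one has $\xi\times\nabla_\xi H \equiv 0$; starting from $\widehat{Jf}(\xi) = i(\nabla_\xi + t\nabla_\xi H)\hat f$, a direct Fourier computation yields the operator identity
\[
(x\times\nabla) f = -(\nabla\times J) f, \qquad \text{i.e.,}\qquad \bigl[(x\times\nabla)f\bigr]_k = -\epsilon_{kij}\partial_i J_j f,
\]
for any scalar $f$. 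Since $U^{-1}$ is a radial multiplier it commutes with $(x\times\nabla)$ and with $\Im(\cdot)$, so $(x\times\nabla)u_2 = \Im\bigl[U^{-1}(x\times\nabla)v\bigr]$, and it suffices to bound the $L_x^2$ norm of $P_{\leq 1}U^{-1}\partial_i J_j v$ for each pair $i,j$. Now $P_{\leq 1}U^{-1}\partial_i$ has Fourier symbol $i\xi_i\varphi(\xi)\langle\xi\rangle/|\xi|$, which is bounded and compactly supported; by Plancherel, this multiplier acts boundedly on $L_x^2$. Combined with $\|Jv(t)\|_{L_x^2} \leq \|v(t)\|_X$ and taking the supremum in time, we obtain $\|(x\times\nabla)P_{\leq 1}u_2\|_{L_t^\infty L_x^2} \lesssim \|v\|_{L_t^\infty X} \leq \|v\|_Z$. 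The entire argument for this last component rests on the identity above, whose validity is fundamentally tied to the radial symbol $H$.
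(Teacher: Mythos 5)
Your proof is correct and follows essentially the same route as the paper: high frequencies are handled by boundedness of $P_{>1}U^{-1}$, the low-frequency Sobolev piece by Lemma~\ref{lem:UL2} and Lemma~\ref{lem:decay}(ii), and the low-frequency angular piece by converting $x\times\nabla$ into $-\nabla\times J$ (the paper phrases this as $x\times\nabla=-\nabla\times x$ combined with unitarity of $e^{itH}$, which is the same observation) together with the $L^2$-boundedness of the truncated multiplier $P_{\leq 1}U^{-1}\partial_i$. The only cosmetic slip is the Fourier formula for $J$: it should read $\widehat{J(t)f}(\xi)=i\nabla_\xi\hat f(\xi)-t[\nabla_\xi H(\xi)]\hat f(\xi)$, but since the only feature you use is that $\nabla_\xi H$ is parallel to $\xi$, this does not affect the argument.
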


\begin{proof} As $P_{>1}U^{-1}$ is bounded on $L_x^2$ and $L_x^6$ and $x\times\nabla$ commutes with any Fourier multiplier operator whose symbol is radial, it is clear that
\[
\|\jb u_{>1}\|_{L_t^\infty L_x^2\cap L_t^2 L_x^6}+\|(x\times\nabla)u_{>1}\|_{L_t^\infty L_x^2} \lesssim \|v\|_{Z(I)}.
\]

For the low frequencies, we use Bernstein and \eqref{UL2} to see that
\begin{equation}\label{low-u-L2}
\|\jb u_{\leq 1}\|_{L_t^\infty L_x^2(I\times\R^3)} \lesssim \|v\|_{L_t^\infty (I;X)}.
\end{equation}
Next, we use Lemma~\ref{lem:decay} to estimate
\[
\|\jb u_{\leq 1}\|_{L_t^2 L_x^6} \lesssim \|U^{-1} v\|_{L_t^2 L_x^6} \lesssim \|\langle t\rangle^{-\frac79}\|_{L_t^2(I)} \|v\|_{L_t^\infty(I;X)}.
\]
Finally, using $x\times\nabla = -\nabla\times x$, unitarity of $e^{itH}$, and boundedness of Riesz potentials,
\begin{align*}
\|(x\times\nabla)u_{\leq 1}(t)\|_{L_x^2}\lesssim\sum_{j\neq k}\|P_{\leq 1}U^{-1}\partial_j x_k e^{itH}v(t)\|_{L_x^2} \lesssim \| J(t)v(t)\|_{L_x^2} \lesssim \|v(t)\|_X
\end{align*}
The result follows. \end{proof}

%%%%%%%%%%%%%%%%%%%%%%%%

We can now prove control of the Strichartz norm (cf. \eqref{def:S}).

\begin{proposition}\label{prop:str} Let $0\leq t_0=\inf I$ and suppose $v:I\times\R^3\to\C$ solves \eqref{E:cqv}. Write $v=u_1+iUu_2$.  There exists $\eps>0$ such that
\begin{align}
\|e^{-i(t-t_0)H}v(t_0)\|_{S(I)} & \lesssim \| v(t_0)\|_{X}, \label{E:str1} \\
\biggl\| \int_{t_0}^t e^{-i(t-s)H} N_v(u(s)) \,ds \biggr\|_{S(I)} & \lesssim \langle t_0\rangle^{-\eps} \sum_{k=2}^5 \|v\|_{Z(I)}^k.\label{E:str2}
\end{align}
\end{proposition}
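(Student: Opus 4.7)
The plan is to apply the Strichartz estimates of Proposition~\ref{prop:strichartz} componentwise in the $S$-norm, exploiting that both $\jb$ and $(x\times\nabla)$ commute with $e^{-itH}$: the former because it is a Fourier multiplier, the latter because $H(\xi)=|\xi|\langle\xi\rangle$ is radial, so commutes with the generator of rotations. For the homogeneous estimate \eqref{E:str1}, this commutation reduces the claim to the standard Strichartz estimate applied to $\jb v(t_0)$ and to $(x\times\nabla)v(t_0)$, whose $L_x^2$ norms sum to $\|v(t_0)\|_X$.

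For \eqref{E:str2}, I would apply inhomogeneous Strichartz with a dual admissible pair $(\alpha,\beta)$ (to be chosen), commute $\jb$ and $(x\times\nabla)$ inside the time integral, and use that $U=|\nabla|\jb^{-1}$ is a Mikhlin multiplier bounded on $L_x^p$ for $1<p<\infty$ and commutes with $(x\times\nabla)$. This reduces the task to showing
\[
\|\jb N(u)\|_{L_t^\alpha L_x^\beta([t_0,\infty)\times\R^3)} + \|(x\times\nabla) N(u)\|_{L_t^\alpha L_x^\beta} \lesssim \langle t_0\rangle^{-\eps}\sum_{k=2}^5\|v\|_{Z}^k,
\]
where $N(u)=\sum_{k=2}^5 N_k(u)$ is the polynomial in $u_1,u_2$. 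The derivative $\jb$ is distributed across each $N_k(u)$ by fractional Leibniz (Kato--Ponce), and $(x\times\nabla)$ by the plain product rule, so each summand becomes a product of $k-1$ copies of $u$ with a single factor of either $\jb u$ or $(x\times\nabla)u$.

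These products are controlled by H\"older in space and then in time, using as inputs the bounds $\|\jb u\|_{L_t^\infty L_x^2\cap L_t^2 L_x^6}\lesssim\|v\|_Z$ and $\|(x\times\nabla)u\|_{L_t^\infty L_x^2}\lesssim\|v\|_Z$ from Lemma~\ref{lem:str}, together with the pointwise decay $\|u(t)\|_{L_x^r}\lesssim\langle t\rangle^{-\frac{7}{9}(\frac{3}{2}-\frac{3}{r})}\|v\|_X$ for $2\leq r\leq 6$ from Lemma~\ref{lem:decay}. For the quadratic terms the natural dual pair is $(\alpha,\beta)=(4/3,3/2)$: placing the $\jb u$ or $(x\times\nabla)u$ factor in $L_x^2$ and the remaining copy of $u$ in $L_x^6$ yields an integrand $\sim\langle t\rangle^{-7/9}$ whose $L_t^{4/3}([t_0,\infty))$-norm produces a gain $\sim\langle t_0\rangle^{-1/36}$. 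For $k=3,4,5$ the pair $(\alpha,\beta)=(2,6/5)$ works comfortably; the integrand then decays faster than $\langle t\rangle^{-1}$ with plenty of room to spare, giving much larger gains.

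The main obstacle is the quadratic case, for two reasons. First, the bound $\|u^2\|_{L_x^2}\lesssim\langle t\rangle^{-7/6}\|v\|_Z^2$ is only marginally integrable in time, so the dual pair must be chosen so that the $\langle t\rangle$-power strictly beats $\langle t\rangle^{-1}$ after the $L_t^\alpha$ norm is taken. Second, for the $(x\times\nabla)$-branch one cannot easily place $(x\times\nabla)u$ in $L_x^r$ for $r>2$ in an $L_t^2$ Strichartz-type space, because $(x\times\nabla)u$ contains the low-frequency contribution $U^{-1}(x\times\nabla)v_2$ for which such uniform control is not available; keeping that factor in $L_x^2$ and absorbing the remaining integrability through the $L_x^6$ decay of the other factor circumvents this. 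The final $\eps>0$ is thus determined entirely by the quadratic estimate.
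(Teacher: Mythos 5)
Your overall strategy mirrors the paper's: commute $\jb$ and $x\times\nabla$ with $e^{-itH}$, apply Strichartz with a dual admissible pair, distribute the derivatives across $N(u)$ by (fractional) Leibniz, and close via Lemma~\ref{lem:str} and the decay of Lemma~\ref{lem:decay}, with the quadratic term governing the final $\eps$ through the pair $(4/3,3/2)$. The homogeneous estimate, the quadratic inhomogeneous estimate, and the cubic inhomogeneous estimate all go through exactly as you describe.

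The gap is in the quartic and quintic terms on the $x\times\nabla$ branch. You propose to keep $(x\times\nabla)u$ in $L_x^2$ and absorb the remaining $k-1$ factors of $u$ via the decay $\|u\|_{L_x^r}$ for $2\leq r\leq 6$. For $k=4$, however, H\"older with $(x\times\nabla)u\in L_x^2$ and $u\in L_x^r$ with $r\leq 6$ forces the product into $L_x^\beta$ with $\tfrac1\beta = \tfrac3r + \tfrac12 \geq 1$, i.e.\ $\beta\leq 1$; this lies strictly outside the dual admissible range $\beta\in[6/5,2]$, so the Strichartz estimate cannot be applied. (For $\beta=6/5$ you would need $u\in L_t^6L_x^9$, which is not reached by Lemma~\ref{lem:decay}.) The situation is worse for $k=5$. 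The paper resolves this by a low/high frequency split of $(x\times\nabla)u$: the high-frequency part $(x\times\nabla)u_{>1}$ equals $U^{-1}P_{>1}(x\times\nabla)v$ with $U^{-1}P_{>1}$ bounded, so it is controlled in $L_t^2L_x^6$ as part of the $S$-norm; the low-frequency part $(x\times\nabla)u_{\leq 1}$ is controlled in $L_t^\infty L_x^r$ for every $r\geq 2$ by Bernstein's inequality. Each piece then combines with decay at $L_x^{9/2}$ (inside the allowed range) to close the estimate. An alternative fix, which you have all the ingredients for but do not invoke, is the Morrey--Sobolev bound $\|u\|_{L_x^\infty}\lesssim\|\jb u\|_{L_x^6}$, placing one copy of $u$ in $L_t^2L_x^\infty$; either way, some extra input beyond "$(x\times\nabla)u$ in $L_x^2$ plus $L_x^{\leq 6}$ decay" is required, and as written your argument does not close for $k\in\{4,5\}$.
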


\begin{proof} Recalling that $e^{itH}$ commutes with $x\times\nabla$, the estimate \eqref{E:str1} follows immediately from Proposition~\ref{prop:strichartz}.

We turn to \eqref{E:str2}.  An application of Proposition~\ref{prop:strichartz} shows that to prove \eqref{E:str2}, we need to estimate
\[
\|D\text{\O}(u^2)\|_{L_t^{\frac43}L_x^{\frac32}} + \sum_{k=3}^5\|D\text{\O}(u^k)\|_{L_t^2 L_x^{\frac65}},\quad D\in\{\jb,x\times\nabla\}.
\]
To do this, we will rely on the decay for $u$ provided by Lemma~\ref{lem:decay}, as well as Lemma~\ref{lem:str}.  We also note that $x\times\nabla$ obeys Leibniz and chain rules.

For the quadratic and cubic terms, we may estimate
\begin{align*}
\|D\text{\O}(u^2)\|_{L_t^{\frac43}L_x^{\frac32}} & \lesssim \|u\|_{L_t^{\frac43} L_x^6} \|D u\|_{L_t^\infty L_x^2} \lesssim \langle t_0\rangle^{-\frac{1}{36}}\|v\|_{Z(I)}^2, \\
\|D\text{\O}(u^3)\|_{L_t^2 L_x^{\frac65}} & \lesssim \|u\|_{L_t^4 L_x^6}^2 \|D u\|_{L_t^\infty L_x^2} \lesssim \langle t_0\rangle^{-\frac{19}{18}}\|v\|_{Z(I)}^3
\end{align*}
for $D\in\{\jb,x\times\nabla\}$.

We turn to the quartic and quintic terms.  Writing $u=u_{\leq 1}+u_{>1}$ and distributing derivatives, we see that to estimate the terms involving $x\times\nabla$, it suffices to consider the contribution of the following terms:
\[
\text{\O}(u^k (x\times\nabla)u_{\leq 1}) + \text{\O}(u^k(x\times\nabla)u_{>1}),\quad k\in\{3,4\}.
\]

Recalling boundedness of $U^{-1}P_{>1}$ (and the fact that this operator commutes with $x\times\nabla$), we can first bound
\begin{align*}
\|\jb\text{\O}&(u^4)\|_{L_t^2 L_x^{\frac65}}+\|u^3(x\times\nabla)u_{>1}\|_{L_t^2 L_x^{\frac65}} \\
& \lesssim \|u\|_{L_t^\infty L_x^{\frac92}}^3 \bigl(\|\jb u\|_{L_t^2 L_x^6}+\|(x\times\nabla)v\|_{L_t^2 L_x^6}\bigr) \lesssim \langle t_0\rangle^{-\frac{35}{18}}\|v\|_{Z(I)}^4, \\
\|\jb\text{\O}&(u^5)\|_{L_t^2 L_x^{\frac65}} + \|u^4(x\times\nabla)u_{>1}\|_{L_t^2 L_x^{\frac65}} \\
&\lesssim \|u\|_{L_t^\infty L_x^6}^4 \bigl(\|\jb u\|_{L_t^2 L_x^6}+ \|(x\times\nabla)v\|_{L_t^2 L_x^6}\bigr)\lesssim \langle t_0\rangle^{-\frac{28}{9}}\|v\|_{Z(I)}^5.
\end{align*}

Finally, we note that by Bernstein and Lemma~\ref{lem:str} (and the fact that $x\times\nabla$ commutes with frequency projections), we have
\[
\|(x\times\nabla)u_{\leq 1}\|_{L_t^\infty L_x^r}\lesssim \|v\|_{Z(I)} \qtq{for any}r\geq 2.
\]
Thus, estimating with the same spaces as above, we have
\begin{align*}
\|u^3(x\times\nabla)u_{\leq1}\|_{L_t^2 L_x^{\frac65}}& \lesssim \|u\|_{L_t^\infty L_x^{\frac92}}^2 \|(x\times\nabla)u_{\leq 1}\|_{L_t^\infty L_x^{\frac92}} \|u\|_{L_t^2 L_x^6} \lesssim \langle t_0\rangle^{-\frac{35}{27}}\|v\|_{Z(I)}^4, \\
\|u^4(x\times\nabla)u_{\leq 1}\|_{L_t^2 L_x^{\frac65}}&\lesssim\|u\|_{L_t^\infty L_x^6}^3 \|(x\times\nabla)u_{\leq 1}\|_{L_t^\infty L_x^6}\|u\|_{L_t^2 L_x^6}\lesssim \langle t_0\rangle^{-\frac73}\|v\|_{Z(I)}^5.
\end{align*}
The result follows. \end{proof}

\begin{remark} Note that the proof above gives
\begin{equation}\label{dual-est}
\| \jb\text{\O}(u^k) \|_{L_t^2 L_x^{\frac65}} \lesssim \langle t_0\rangle^{-\frac{19}{18}}\|v\|_{Z(I)}^k \qtq{for} k\in\{3,4,5\},
\end{equation}
which we will use in Section~\ref{S:est}.
\end{remark}

%%%%%%%%%%%%%%%%%%%%%%%%%%%

The bulk of the remainder of the paper is devoted to controlling the weighted norm.  Taking the Fourier transform, we compute
\begin{equation}\label{expandJ}
J(t) = e^{-itH}xe^{itH} = x+\tfrac{2it(1-\Delta)}{\jb}\tfrac{\nabla}{|\nabla|} =: x+it\jbb.
\end{equation}
To estimate the contribution of the cubic and higher order terms in the nonlinearity to the weighted norm, we will write $J(t)$ as in \eqref{expandJ} and estimate each resulting piece separately.  This requires bounds on $xu$, for which we rely on the following lemma.

\begin{lemma}[Bounds for weights]\label{lem:weighted} Let $v=u_1+iUu_2$ and $t\geq 0$. Then
\begin{align}
\|xu_{>1}(t)\|_{L_x^2} &\lesssim \langle t\rangle \|v(t)\|_X, \label{E:xu2} \\
\|xu_{\leq 1}(t)\|_{L_x^6} & \lesssim \langle t\rangle^{\frac29}\|v(t)\|_X. \label{E:xu6}
\end{align}
\end{lemma}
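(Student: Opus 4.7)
The starting point is the operator identity $x = J(t) - it\jbb$ from \eqref{expandJ}. Since $\|J(t) v\|_{L_x^2} \leq \|v\|_X$ by definition and $\jbb$ is a Fourier multiplier whose symbol has magnitude comparable to $\langle\xi\rangle$, this yields the crude bound $\|xv\|_{L_x^2} \lesssim \langle t\rangle\|v\|_X$. The subtlety is to convert such bounds on $v$ into bounds on $u = v_1 + iU^{-1}v_2$: the $U^{-1}$ factor on the imaginary component creates low-frequency singularities that interact nontrivially with multiplication by $x$. I would treat the real and imaginary parts of $u$ separately and use Lemma~\ref{lem:UL2} and Lemma~\ref{lem:decay} to harvest low-frequency decay where needed.

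For \eqref{E:xu2}, write $xu_{>1} = xP_{>1}v_1 + ixP_{>1}U^{-1}v_2$ and commute $x$ past $P_{>1}$ and $P_{>1}U^{-1}$. The commutator symbols $i\partial_\xi\tilde\varphi$ and $i\partial_\xi[\tilde\varphi(\xi)\langle\xi\rangle/|\xi|]$ (with $\tilde\varphi = 1-\varphi$) are smooth and bounded on $\R^3$, so the associated operators are $L_x^2$-bounded. What remains is the estimate $\|xv\|_{L_x^2}\leq\|Jv\|_{L_x^2}+t\|\jbb v\|_{L_x^2}\lesssim \|v\|_X+t\|\jb v\|_{L_x^2}\lesssim\langle t\rangle\|v\|_X$.

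For \eqref{E:xu6}, the real part $xu_1^{\leq 1} = xP_{\leq 1}v_1$ is easier. Commuting $x$ past $P_{\leq 1}$ produces an $L_x^p$-bounded error (since $\partial_\xi\varphi$ is smooth and compactly supported) and then $xv = Jv - it\jbb v$ can be applied, noting that $P_{\leq 1}$ and $\jbb$ commute as Fourier multipliers. Bernstein gives $\|P_{\leq 1}Jv\|_{L_x^6} \lesssim \|Jv\|_{L_x^2} \lesssim \|v\|_X$, while $\jbb$ has bounded symbol at low frequencies, so $\|P_{\leq 1}\jbb v\|_{L_x^6} \lesssim \|v_{\leq 1}\|_{L_x^6} \lesssim \langle t\rangle^{-1}\|v\|_X$ by Lemma~\ref{lem:decay}(i). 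The prefactor $t$ is absorbed by $\langle t\rangle^{-1}$, yielding $\|xu_1^{\leq 1}\|_{L_x^6} \lesssim \|v\|_X$ uniformly in time.

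The imaginary part $xu_2^{\leq 1} = xP_{\leq 1}U^{-1}v_2$ is the main obstacle and dictates the $\langle t\rangle^{2/9}$ exponent. Writing $xP_{\leq 1}U^{-1} = P_{\leq 1}U^{-1}x + [x, P_{\leq 1}U^{-1}]$, the commutator's leading low-frequency symbol $-2i\varphi(\xi)\xi/(|\xi|^3\langle\xi\rangle)$ has magnitude $\sim 1/|\xi|^2$, comparable to $U^{-2}P_{\leq 1}$, so its contribution in $L_x^6$ is bounded by $\|U^{-2}v\|_{L_x^6} \lesssim \|v\|_X$ via Lemma~\ref{lem:UL2}. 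For the main term, expanding $xv = Jv - it\jbb v$ gives a $J$-piece handled by the Sobolev embedding $|\nabla|^{-1}P_{\leq 1}:L_x^2\to L_x^6$, which yields a contribution of size $\|Jv\|_{L_x^2}\lesssim\|v\|_X$. The crux is the $\jbb$-piece: the composition $U^{-1}\jbb$ has symbol magnitude $\sim 1/|\xi|$ at low frequencies, comparable to $|\nabla|^{-1}$, so $\|P_{\leq 1}U^{-1}\jbb v\|_{L_x^6}$ is dominated by $\|U^{-1}v_{\leq 1}\|_{L_x^6}\lesssim \langle t\rangle^{-7/9}\|v\|_X$ by Lemma~\ref{lem:decay}(ii). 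The $t$ prefactor then produces $\langle t\rangle^{2/9}\|v\|_X$, as desired. The delicate point throughout is correctly tracking the low-frequency algebra of compound symbols so that the improved $L_x^6$-decay of $U^{-1}v$---and not merely of $v$---is what controls the singular piece; this is the only mechanism that produces the exponent $2/9 = 1 - 7/9$.
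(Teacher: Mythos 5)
Your proof is correct and follows essentially the same route as the paper's: you invoke the identity $x = J(t) - it\jbb$, reduce to $P(xu)$ via $L^2$/$L^6$-bounded commutators, and handle the low-frequency imaginary part by splitting off $[x,U^{-1}]$ (controlled via $U^{-2}v$ and Lemma~\ref{lem:UL2}), estimating the $J$-piece by Sobolev/Bernstein, and using the $\langle t\rangle^{-7/9}$ decay of $U^{-1}v$ from Lemma~\ref{lem:decay}(ii) to absorb the factor of $t$ and produce the exponent $2/9$. The minor stylistic difference — commuting $x$ through $P_{\leq 1}U^{-1}$ in one step rather than first through $P_{\leq 1}$ and then computing $[x,U^{-1}]$ separately — does not change the substance.
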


\begin{proof} As the commutator $[x,P]$ is bounded on $L_x^2$ and $L_x^6$ for $P\in\{P_{\leq 1},P_{>1}\}$ and $\|u\|_{L_x^2\cap L_x^6}\lesssim \|v\|_X$, it suffices to prove the estimates \eqref{E:xu2} and \eqref{E:xu6} for $P(xu)$.

To this end, we write
\begin{align*}
& xu_1 = xv_1 = \Re(Jv) +t\jbb v_2, \\
& xu_2 = xU^{-1}v_2 = [x,U^{-1}]v_2 + U^{-1}\bigl[ \Im(Jv) -t\jbb v_1\bigr].
\end{align*}
As the commutator $[x,U^{-1}]=\tfrac{-2}{\Delta\jb}\tfrac{\nabla}{|\nabla|}$, we obtain
\begin{align*}
&\|P_{>1}(xu_1)\|_{L_x^2} \lesssim \|Jv\|_{L_x^2} + t\|\jb v\|_{L_x^2} \lesssim \langle t\rangle\|v\|_X, \\
&\|P_{>1}(xu_2)\|_{L_x^2} \lesssim \|v\|_{L_x^2} + \|Jv\|_{L_x^2} +t\|\jb v\|_{L_x^2} \lesssim \langle t\rangle\|v\|_X.
\end{align*}
Next, using Lemmas~\ref{lem:UL2} and \ref{lem:decay} together with Sobolev embedding, we estimate
\begin{align*}
& \|P_{\leq 1}(xu_1)\|_{L_x^6} \lesssim \|Jv\|_{L_x^2} + t\|v\|_{L_x^6} \lesssim \|v\|_X, \\
& \|P_{\leq 1}(xu_2)\|_{L_x^6} \lesssim \|\Delta^{-1} v\|_{L_x^6} + \||\nabla|^{-1} Jv\|_{L_x^6} + t\|U^{-1}v\|_{L_x^6} \lesssim \langle t\rangle^{\frac29}\|v\|_X.
\end{align*}
The result follows.
\end{proof}

%%%%%%%%%%%%%%%%%%%%%%%%%%%
%%%%%%%%%%%%%%%%%%%%%%%%%%%
%%%%%%%%%%%%%%%%%%%%%%%%%%%
\section{Normal form transformation}\label{S:NF}

In this section, we discuss normal form transformations for \eqref{E:cqv}.  The use of normal forms in PDE originated in the work of Shatah \cite{Sha} and has since become a widely-used technique in the setting of nonlinear dispersive equations.  Briefly, the idea is to look for a change of variables with the effect of removing the most problematic terms from the nonlinearity.

In our setting it is the quadratic terms, namely,
\[
U[(3+4\beta)u_1^2 +  u_2^2] + 2i u_1 u_2,
\]
that are the most difficult to estimate. Recall also that $u_1 = v_1$ and $u_2 = U^{-1} v_2$.  In our previous work concerning the final-state problem \cite{KMV}, we used a normal form transformation to eliminate the $u_2^2$ term from the nonlinearity.  In \cite{KMV}, we worked with energy-space solutions (i.e. no weighted assumption) and the poor spatial decay of $u_2$  was the biggest difficulty in handling the quadratic terms; in particular, we had only $u_2\in L_x^6$, compared to $u_1\in L_x^3\cap L_x^6$.  Due to the nature of our arguments (which entailed testing against a dense subclass of test functions, namely, those supported away from the origin in frequency space), the interaction output frequency did not play an important role.  In particular, the presence of $U$ in the real part of the nonlinearity offered no real advantage.

In this work, to treat the quadratic terms will require a careful decomposition of frequency space into `non-resonant' regions, which requires an accounting of all of the various quadratic frequency interactions; see Sections~\ref{S:W2}--\ref{dec3}.  In this analysis, the operator $U$ will be crucial in our treatment of interactions with small output frequency.  Accordingly, the absence of $U$ in the imaginary part of the nonlinearity is problematic.  Note also that the weighted assumption gives better control over $u_2=U^{-1}v_2$ (cf. Section~\ref{S:decay}), which already makes the $u_2^2$ term less intimidating in this setting.  Thus, in this work, we will employ a normal form transformation to remove the term $2i u_1u_2$.  The authors of \cite{GNT:3d} also elected to remove this term in order to treat the initial-value problem for Gross--Pitaevskii, for the same reasons.   As we will see, the normal form transformation that removes the $u_1 u_2$ term also cancels the two copies of $U^{-1}$ appearing in the $v_2^2$ term, which actually puts the $v_1^2$ and $v_2^2$ terms on essentially equal footing.  It is worth noting that the symmetry conditions required of the normal form transformation make it impossible to remove \emph{all} of the quadratic terms.

The standard approach to normal form transformations is to introduce a new variable of the form
\[
w = v + \B[v,v],
\]
where $\B$ is a bilinear operator that is chosen so that
\[
(i\partial_t - H)\B[v,v]
\]
cancels the problematic term(s) in the nonlinearity.  Note that this will add some new terms to the nonlinearity, as well.  One then proves estimates for $w$, which (hopefully) solves a better equation than $v$, and then inverts the normal form to deduce information about $v$.

We take the following closely-related approach.  First, we rewrite \eqref{E:cqv} in integral form:
\begin{equation}\label{duhamel1}
e^{itH}v(t) = e^{it_0H}v(t_0) - i \int_{t_0}^t e^{isH} N_v(u) \,ds.
\end{equation}
Given a bilinear operator $\B$ as above, we can write
\begin{align*}
\int_{t_0}^t e^{isH} N_v(u) \,ds & = \int_{t_0}^t e^{isH}\bigl(N_v(u) + (i\partial_s-H)\B[v,v]\bigr)\,ds - \int_{t_0}^t i\partial_s \bigr(e^{isH}\B[v,v]\bigr)\,ds \\
& =\int_{t_0}^t e^{isH}\bigl(N_v(u) + (i\partial_s-H)\B[v,v]\bigr)\,ds - \bigl(i e^{isH}\B[v,v]\bigr)\bigr|_{s=t_0}^t.
\end{align*}
As before, we wish to choose $\B[v,v]$ to cancel the problematic quadratic terms.  In place of inverting the transformation, we will instead need to estimate the quadratic boundary term above.

We turn now to the details.  We take
\[
\B[v,v]=B_1[v_1,v_1]+B_2[v_2,v_2],
\]
where $B_1,B_2$ are symmetric, bilinear Fourier multiplier operators to be determined below.  Using \eqref{E:cqv}, we have
\begin{align*}
(i\partial_t-H)\B[v,v] & = 2i\bigl(B_1[v_1,Hv_2]-B_2[Hv_1,v_2]\bigr) \\
& \quad - H\bigl(B_1[v_1,v_1]+B_2[v_2,v_2]\bigr) \\
& \quad + 2i\bigl(B_1[v_1,\Im N(u)]-B_2[v_2,U\Re N(u)]\bigr).
\end{align*}

In order to cancel the term $2i u_1 u_2$, we need to choose $B_1$ and $B_2$ so that
\[
v_1 U^{-1}v_2 + B_1[v_1,Hv_2]-B_2[Hv_1,v_2] = 0.
\]
Imposing symmetry on $B_1$ and $B_2$ then leads to the unique choice
\[
B_1[f,g] = B[f,g] \qtq{and} B_2[f,g] =-B[U^{-1}f,U^{-1}g],
\]
where the symbol $B(\xi_1,\xi_2)$ of $B$ is given by
\begin{equation}\label{dnf}
B(\xi_1,\xi_2):= -(2+ |\xi_1|^2+|\xi_2|^2)^{-1}.
\end{equation}

With this choice, we write
\begin{equation}\label{nf-duh}
\begin{aligned}
\int_{t_0}^t e^{isH} N_v(u)\,ds & = \sum_{k=2}^6 \int_{t_0}^t e^{isH} \N_k \,ds  \\
&\quad -i \bigl[e^{isH}(B[v_1,v_1]-B[U^{-1}v_2,U^{-1}v_2])\bigr]_{s=t_0}^t,
\end{aligned}
\end{equation}
where the $\N_k$ are defined as follows:
\begin{equation}\label{Nk}
\begin{aligned}
\N_2 & =  (3+4\beta)U(v_1^2) - H B(v_1,v_1) +  U[U^{-1}v_2]^2 + H B(U^{-1}v_2,U^{-1}v_2), \\
\N_k & = U\Re N_k(u) + i\Im N_k(u) \\
&\quad + 2i[B(v_1,\Im N_{k-1}(u)) + B(U^{-1}v_2, \Re N_{k-1}(u))]\quad\text{for}\quad  k\in\{3,4,5\},\\
\N_6 &= 2i[B(v_1, \Im N_5(u)) + B(U^{-1}v_2, \Re N_5(u))].
\end{aligned}
\end{equation}

The structure of $\N_k$ for $k\in\{3,4,5,6\}$ will not be too important.  The quadratic terms, however, have some nice structure that will be important in the analysis below:
\[
\N_2 = U [ A_1(v_1,v_1) + A_2(v_2,v_2)],
\]
where
\begin{equation}\label{Aj}
\begin{aligned}
&A_1(\xi_1,\xi_2) = (4+4\beta)+\frac{ 2 \xi_1\cdot \xi_2}{2+|\xi_1|^2+|\xi_2|^2}, \\
& A_2(\xi_1,\xi_2) = -\frac{2 \langle \xi_1\rangle\langle\xi_2\rangle}{2+|\xi_1|^2+|\xi_2|^2}\frac{\xi_1}{|\xi_1|}\cdot\frac{\xi_2}{|\xi_2|}.
\end{aligned}
\end{equation}

One can check that
\begin{equation}\label{Ajbds}
|\partial_{\xi_j}^\alpha A_1| \lesssim \bigl(\tfrac{1}{|\xi_1|\vee|\xi_2|}\bigr)^{|\alpha|} \qtq{and} |\partial_{\xi_j}^\alpha A_2| \lesssim \bigl(\tfrac{1}{|\xi_1|\wedge|\xi_2|}\bigr)^{|\alpha|}.
\end{equation}
In particular, $A_1$ is Coifman--Meyer, but $A_2$ is not.  In fact, $A_2$ is not even Coifman--Meyer--Mikhlin; nonetheless, it is bounded, as the following lemma shows.

\begin{lemma}\label{AOK}\text{ }
\begin{itemize}
\item[(i)] The operator $A_1$ is Coifman--Meyer.
\item[(ii)] For any Coifman--Meyer--Mikhlin multiplier $b$, the operator with symbol
\[
b(\xi_1,\xi_2)\frac{\langle \xi_1\rangle\langle\xi_2\rangle}{2+|\xi_1|^2+|\xi_2|^2}
\]
maps $L^{r_1}\otimes L^{r_2}\to L^r$ boundedly whenever $1<r,r_1,r_2<\infty$ and $\tfrac{1}{r_1}+\tfrac{1}{r_2}=\frac{1}{r}.$ In particular, $A_2b:L^{r_1}\otimes L^{r_2}\to L^r$ boundedly.
\end{itemize}
\end{lemma}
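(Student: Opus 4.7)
Both parts amount to verifying symbol-class conditions and then invoking the Coifman--Meyer--Mikhlin theorem (Proposition~\ref{prop:CM}).

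For part (i), the constant $4+4\beta$ trivially belongs to the Coifman--Meyer class, so it suffices to treat $\phi(\xi):=\tfrac{2\xi_1\cdot\xi_2}{2+|\xi_1|^2+|\xi_2|^2}$, viewing $\xi=(\xi_1,\xi_2)\in\R^6$.  The numerator is a quadratic form, whence $|\partial^\gamma(2\xi_1\cdot\xi_2)|\lesssim(1+|\xi|)^{\max(2-|\gamma|,\,0)}$, while an elementary induction (writing $\partial^\gamma(2+|\xi|^2)^{-1}$ as a finite sum of terms $P_k(\xi)(2+|\xi|^2)^{-k}$ with $\deg P_k\le 2k-2-|\gamma|$) gives $|\partial^\gamma(2+|\xi|^2)^{-1}|\lesssim(1+|\xi|)^{-2-|\gamma|}$.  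The Leibniz rule then yields the stronger bound $|\partial^\gamma\phi|\lesssim(1+|\xi|)^{-|\gamma|}\le(|\xi_1|+|\xi_2|)^{-|\gamma|}$, which is the Coifman--Meyer condition.

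For part (ii), the principal step is to show that the auxiliary symbol
\[
m(\xi_1,\xi_2):=\frac{\langle\xi_1\rangle\langle\xi_2\rangle}{2+|\xi_1|^2+|\xi_2|^2}
\]
is itself Coifman--Meyer.  Since $\langle\xi_j\rangle=(2+|\xi_j|^2)^{1/2}\ge\sqrt{2}$ is smooth on all of $\R^3$, a direct computation gives $|\partial_{\xi_j}^{\gamma}\langle\xi_j\rangle|\lesssim\langle\xi_j\rangle^{1-|\gamma|}\lesssim(1+|\xi|)^{1-|\gamma|}$ for $|\gamma|\ge 1$, together with $\langle\xi_j\rangle\lesssim 1+|\xi|$ for $|\gamma|=0$.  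Combining these estimates with the bound on $(2+|\xi|^2)^{-1}$ from part (i) via Leibniz produces $|\partial^\gamma m|\lesssim(1+|\xi|)^{-|\gamma|}$, confirming the Coifman--Meyer condition.  Now, writing the hypothesis on $b$ in the form \eqref{cmm} as $b(\xi_1,\xi_2)=m'(\xi)\,m_1(\xi_1)\,m_2(\xi_2)\,B_0(\xi_1,\xi_2)$, one has
\[
b(\xi_1,\xi_2)\,m(\xi_1,\xi_2)=m'(\xi)\,m_1(\xi_1)\,m_2(\xi_2)\,\bigl(B_0\cdot m\bigr)(\xi_1,\xi_2),
\]
and the product $B_0\cdot m$ of two Coifman--Meyer symbols is Coifman--Meyer (one more Leibniz estimate).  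Hence $b\cdot m$ is Coifman--Meyer--Mikhlin, and Proposition~\ref{prop:CM} delivers the stated $L^{r_1}\otimes L^{r_2}\to L^r$ bound.  The final claim for $A_2b$ then follows by factoring $A_2=-2\,m(\xi_1,\xi_2)\,\tfrac{\xi_1}{|\xi_1|}\cdot\tfrac{\xi_2}{|\xi_2|}$ and absorbing the Riesz symbols $\xi_j/|\xi_j|$, which are Mikhlin in the single variables $\xi_1,\xi_2$, into the $m_1,m_2$ factors of $b$.

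There is no genuine obstacle here; the proof is essentially a bookkeeping exercise.  The only point requiring care is to ensure that the derivative estimates on $\langle\xi_j\rangle$ and on $(2+|\xi|^2)^{-1}$ hold uniformly down to $\xi=0$ (which is why the pleasant lower bound $\langle\xi_j\rangle\ge\sqrt{2}$ is essential), and to observe that the resulting bound $(1+|\xi|)^{-|\gamma|}$ automatically implies the Coifman--Meyer condition $(|\xi_1|+|\xi_2|)^{-|\gamma|}$.
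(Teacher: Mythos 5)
Part (i) of your proof is fine.  Part (ii) contains a fatal error.  You assert that
\[
m(\xi_1,\xi_2):=\frac{\langle\xi_1\rangle\langle\xi_2\rangle}{2+|\xi_1|^2+|\xi_2|^2}
\]
is Coifman--Meyer.  It is not, and the paper explicitly flags this ("In fact, $A_2$ is not even Coifman--Meyer--Mikhlin").  The slip occurs in the chain
\[
|\partial_{\xi_j}^\gamma\langle\xi_j\rangle|\lesssim\langle\xi_j\rangle^{1-|\gamma|}\lesssim(1+|\xi|)^{1-|\gamma|}.
\]
The first inequality is correct, but the second reverses direction when $|\gamma|\ge2$: since $\langle\xi_j\rangle\le 1+|\xi|$ and the exponent $1-|\gamma|$ is negative, one actually has $\langle\xi_j\rangle^{1-|\gamma|}\ge(1+|\xi|)^{1-|\gamma|}$, and the gap is real in the regime $|\xi_1|\ll 1\ll|\xi_2|$.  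Concretely, with $|\xi_1|\lesssim1$ and $|\xi_2|\sim N\gg1$ one computes
\[
\bigl|\partial_{\xi_1}^2 m\bigr|\sim\frac{|\partial_{\xi_1}^2\langle\xi_1\rangle|\,\langle\xi_2\rangle}{2+|\xi_1|^2+|\xi_2|^2}\sim\frac{1}{\langle\xi_1\rangle\langle\xi_2\rangle}\sim\frac1N,
\]
whereas the Coifman--Meyer condition demands $\lesssim(|\xi_1|+|\xi_2|)^{-2}\sim N^{-2}$.  The same mechanism shows $A_2$ itself fails the Coifman--Meyer--Mikhlin criterion, so the boundedness claimed in Proposition~\ref{prop:CM} cannot be invoked directly.

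The underlying issue is that the symbol is smooth only in the joint variable: differentiating the factor $\langle\xi_1\rangle$ twice in $\xi_1$ costs $\langle\xi_1\rangle^{-2}$, not $(|\xi_1|+|\xi_2|)^{-2}$, and there is no way to buy back the difference pointwise.  The paper circumvents this by analytic (Stein) interpolation: one introduces $m_z$ with $\langle\xi_1\rangle^{2z}\langle\xi_2\rangle^{2(1-z)}$ in place of $\langle\xi_1\rangle\langle\xi_2\rangle$, checks that on the two boundary lines $z\in i\R$ and $z\in1+i\R$ the factors recombine into genuine Mikhlin pieces in $\xi_1$, $\xi_2$ times a quotient like $\tfrac{2+|\xi_2|^2}{2+|\xi_1|^2+|\xi_2|^2}$ (which, unlike $m$, \emph{is} Coifman--Meyer since the isolated $\langle\xi_j\rangle$ factor is gone), and then applies the three lines theorem.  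You would need to replace your direct verification with an argument of that type.
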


\begin{proof} Item (i) follows from \eqref{Ajbds}. We focus on (ii) and argue by interpolation: Writing
\[
m_z(\xi_1,\xi_2) = b(\xi_1,\xi_2)\frac{\langle \xi_1\rangle^{2z}\langle \xi_2\rangle^{2(1-z)}}{2+|\xi_1|^2+|\xi_2|^2}
\]
for $z\in [0,1]+i\R$, it suffices to show that the operator corresponding to $m_{1/2}$ maps $L^{r_1}\otimes L^{r_2}\to L^r$ boundedly.  To this end, we note that $m_{iy}$ and $m_{1+iy}$ are Coifman--Meyer--Mikhlin with norms bounded by $\langle y\rangle^{N}$ for some $N\in\mathbb{N}$.  Indeed,
\[
m_{iy}(\xi_1,\xi_2) = (2+|\xi_1|^2)^{iy} (2+|\xi_2|^2)^{-iy} \tfrac{2 + |\xi_2|^2}{2+|\xi_1|^2+|\xi_2|^2}b(\xi_1,\xi_2),
\]
while for $m_{1+iy}$ one exchanges $\xi_1$ and $\xi_2$ in the first three factors.  Boundedness of $m_{1/2}$ can then be deduced by the three lines theorem (see, for example, \cite[Lemma~4.2]{SteinWeiss}). \end{proof}

%%%%%%%%%%%%%%%%%%%%%%%%%%%
%%%%%%%%%%%%%%%%%%%%%%%%%%%
%%%%%%%%%%%%%%%%%%%%%%%%%%%
\section{The weighted norm, part I}\label{S:weighted1}

In this section, we begin estimating the $L_x^2$-norm of $Jv$, where $v$ is a solution to \eqref{E:cqv}.  To do this, we write the Duhamel formula for $v(t)$ using the normal form \eqref{nf-duh}.

We first consider the contribution of everything except the quadratic terms appearing in the nonlinearity.

\begin{proposition}\label{prop:weighted1} Let $t_0\geq 0$ and let $I=[t_0,1]$ if $t_0<1$ and $I=[t_0,2t_0]$ otherwise. Let $v:I\times\R^3\to\C$ be a solution to \eqref{E:cqv}.  There exists $\eps>0$ such that for all $t\in I$,
\begin{align}
\| J(t) e^{-i(t-t_0)H}v(t_0)\|_{L_x^2} & \lesssim \|J(t_0)v(t_0)\|_{L_x^2}, \label{weighted1-1}\\
\| J(t) e^{-i(t-s)H} B[U^{-1}\tilde v(s),U^{-1}\tilde v(s)] \|_{L_x^2} & \lesssim \langle s\rangle^{-\eps}\|v\|_{Z(I)}^2, \quad s\in\{t_0,t\},\label{weighted1-2} \\
\biggl\| \int_{t_0}^t J(t) e^{-i(t-s)H} \N_k(u(s))\,ds\biggr\|_{L_x^2}& \lesssim\langle t_0\rangle^{-\eps} \|v\|_{Z(I)}^k, \quad k\in\{3,4,5,6\}. \label{weighted1-3}
\end{align}
Here $B$ is as in \eqref{dnf} and the $\N_k$ are as in \eqref{Nk}.
\end{proposition}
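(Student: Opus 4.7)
The homogeneous estimate \eqref{weighted1-1} is immediate from the commutation identity $J(t)e^{-i(t-t_0)H} = e^{-i(t-t_0)H}J(t_0)$ (verified directly from $J(t) = e^{-itH}xe^{itH}$) combined with the $L_x^2$-unitarity of $e^{-i(t-t_0)H}$, yielding the bound with constant one. The same identity $J(t)e^{-i(t-s)H}F(s) = e^{-i(t-s)H}J(s)F(s)$ together with unitarity reduces \eqref{weighted1-2} to the fixed-time estimate
\[
\|J(s)B[U^{-1}\tilde v(s),U^{-1}\tilde v(s)]\|_{L_x^2} \lesssim \langle s\rangle^{-\eps}\|v\|_{Z(I)}^2,
\]
and reduces \eqref{weighted1-3}, via dual Strichartz (Proposition~\ref{prop:strichartz}, taking $L_t^\infty L_x^2$ on the left and any admissible pair $(\alpha,\beta)$ on the right), to
\[
\|J(s)\N_k(u(s))\|_{L_s^{\alpha'}L_x^{\beta'}(I)} \lesssim \langle t_0\rangle^{-\eps}\|v\|_{Z(I)}^k.
\]

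In both reduced estimates the plan is to split $J(s) = x + is\jbb$ via \eqref{expandJ} and bound each piece separately, noting that on the dyadic interval $I$ one has $\langle s\rangle \sim \langle t_0\rangle$. For the $x$-piece of the quadratic boundary term, a Leibniz identity on the Fourier side writes $x B[f,g] = B[xf,g] + B^\sharp[f,g]$, where $B^\sharp$ has symbol $\nabla_\xi B$ and remains Coifman--Meyer--Mikhlin type; taking $f=g=U^{-1}\tilde v$, Proposition~\ref{prop:CM} reduces matters to a product $\|x U^{-1}\tilde v\|_{L_x^p} \|U^{-1}\tilde v\|_{L_x^q}$ controlled by Lemma~\ref{lem:weighted} on the first factor and Lemma~\ref{lem:decay}(iii) on the second, which yields the required $\langle s\rangle^{-\eps}$. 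For the $is\jbb$-piece I combine the Sobolev embedding \eqref{sobolev} at low output frequency with Lorentz-H\"older (Lemma~\ref{lem:hls}) to estimate $\|\jbb B[U^{-1}\tilde v,U^{-1}\tilde v]\|_{L_x^2}$ by a product of $L_x^r$-norms of $U^{-1}\tilde v$; Lemma~\ref{lem:decay}(iii) then provides two decay factors whose combined rate is strictly better than $\langle s\rangle^{-1}$ and thus beats the explicit factor of $s$.

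For the $k$-linear terms in \eqref{weighted1-3}, the $is\jbb$-piece, after distributing $\jbb$ through the polynomial $\N_k$ by Leibniz and chain rules, reduces to bounds of the form $\|\jb\,\text{\O}(u^k)\|_{L_s^{\alpha'}L_x^{\beta'}(I)}$ already contained in the proof of Proposition~\ref{prop:str} (in particular \eqref{dual-est}), whose decay is strictly better than $\langle t_0\rangle^{-1}$ for $k\geq 3$, so the extra factor of $s$ is harmless. The $x$-piece distributes via Leibniz as $x\N_k(u) = \text{\O}(u^{k-1}\cdot xu)$ up to commutators with Fourier multipliers, and one estimates it by placing $xu$ in $L_x^2$ or $L_x^6$ through \eqref{E:xu2}--\eqref{E:xu6} of Lemma~\ref{lem:weighted} while placing the remaining $u^{k-1}$ through Lemma~\ref{lem:decay}(iii) and Lemma~\ref{lem:str}; the resulting time integral is absolutely bounded with the required decay for each $k\in\{3,4,5,6\}$. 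The principal obstacle throughout is the quadratic boundary estimate \eqref{weighted1-2}: the two factors of $U^{-1}$ make the bilinear symbol singular at the origin, and beating the explicit $s$ from $is\jbb$ requires strictly more than $\langle s\rangle^{-1}$ of time-decay from the two copies of $v$; this is exactly where the sharpened rate $\|U^{-1}v\|_{L_x^6}\lesssim\langle t\rangle^{-7/9}\|v\|_X$ of Lemma~\ref{lem:decay}(ii) proves essential.
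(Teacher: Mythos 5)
Your treatment of \eqref{weighted1-1} (commutation plus unitarity) and \eqref{weighted1-3} (dual Strichartz, the split $J(s)=x+is\jbb$, Leibniz, then Lemma~\ref{lem:weighted} and Lemma~\ref{lem:decay}) matches the paper's proof, and for \eqref{weighted1-3} the same split is exactly what the paper does. The issue is with \eqref{weighted1-2}, where the plan as written does not close.

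For the quadratic boundary term you propose to split $J(s)=x+is\jbb$ and, for the $x$-piece, to apply the Fourier-side Leibniz rule and then control the term $B[x(U^{-1}\tilde v),U^{-1}\tilde v]$ by putting $\|x U^{-1}\tilde v\|_{L_x^p}$ under Lemma~\ref{lem:weighted}. But Lemma~\ref{lem:weighted} gives $\|xu_{>1}(s)\|_{L_x^2}\lesssim \langle s\rangle\|v(s)\|_X$, i.e.\ the high-frequency $L^2$ bound on $xu$ (let alone $x U^{-1}\tilde v$, which the lemma does not directly address) \emph{grows} like $\langle s\rangle$. The only decay available from the companion factor is of order $\langle s\rangle^{-7/9}$ in $L^6$ (or $\langle s\rangle^{-7/18}$ in $L^3$) for $U^{-1}v$, which is strictly less than $\langle s\rangle^{-1}$. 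So the product of your two factors yields growth, not decay, at high frequencies; there is no $s$-power left to kill the $\langle s\rangle$, since unlike the cubic-and-higher terms there is only one extra copy of $v$ here. This is exactly why the split $J=x+is\jbb$ is adequate for $\N_k$ with $k\geq 3$ (where two or more factors each contribute $\langle s\rangle^{-7/9}$) but fails for the quadratic boundary term.

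What the paper does instead for \eqref{weighted1-2} is go to the Fourier side with the phase $e^{is\Phi}$ extracted, $\Phi=H(\xi)\pm H(\xi_1)\pm H(\xi_2)$, and apply $\nabla_\xi$ to
\[
\int e^{is\Phi} B(\xi_1,\xi_2)U^{-1}(\xi_1)U^{-1}(\xi_2)\wh{\tilde f}(\xi_1)\wh{\tilde f}(\xi_2)\,d\xi_2,
\]
where $f=e^{isH}v$. Then the term where the derivative lands on $\wh{\tilde f}(\xi_1)$ produces $Jv$ (time-uniformly bounded in $L_x^2$), not $xv$ (growing), and the term where it hits the phase carries $s\nabla_\xi\Phi = s[\nabla H(\xi)\mp\nabla H(\xi_1)]$, which is a single Coifman--Meyer--Mikhlin multiplier against two copies of $U^{-1}v$. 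In your decomposition the $s\nabla H(\xi)$ piece is the $is\jbb$ contribution, but the $s\nabla H(\xi_1)$ piece is hidden inside $x\tilde v$ and, without explicitly peeling it off and recombining, you are left with the growing quantity $x\tilde v$ instead of the bounded $J\tilde v$. That recombination is the content of the paper's proof that your plan omits, and it is essential for \eqref{weighted1-2}.
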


\begin{proof} To begin, recall that $J(t)=e^{-itH}xe^{itH}$, so that
\begin{equation}\label{Jcommute}
J(t)e^{-i(t-s)H} = e^{-itH} x e^{isH} = e^{-i(t-s)H}J(s).
\end{equation}
This, together with the fact that $e^{it H}$ is unitary on $L_x^2$, immediately implies \eqref{weighted1-1}.

We turn to \eqref{weighted1-2} and fix $s\in\{t_0,t\}$.  By Plancherel, it is equivalent to estimate
\[
\biggl\|\nabla_\xi \int e^{is\Phi} B(\xi_1,\xi_2)U^{-1}(\xi_1)U^{-1}(\xi_2) \wh{\tilde f}(\xi_1)\wh{\tilde f}(\xi_2)]\,d\xi_2 \biggr\|_{L_\xi^2},
\]
where $f(t)=e^{itH}v(t)$ and $\Phi = H(\xi)\pm H(\xi_1)\pm H(\xi_2)$.

Distributing the derivative, we are led to estimate the following terms:
\begin{equation}\label{three terms}
 s \| b_1[U^{-1}\tilde v, U^{-1}\tilde v] \|_{L_x^2}, \quad \|b_2[U^{-2}\tilde v, U^{-1} \tilde v]\|_{L_x^2}, \quad \|B[U^{-1}\widetilde{Jv}, U^{-1}\tilde v]\|_{L_x^2},
\end{equation}
where the multipliers $b_1$ and $b_2$ have symbols
\[
 b_1(\xi_1,\xi_2) = [\nabla_\xi \Phi]B(\xi_1,\xi_2) \qtq{and} b_2(\xi_1,\xi_2) = U^2(\xi_1)\nabla_\xi[U^{-1}(\xi_1)B(\xi_1,\xi_2)].
\]

Note that while the operator with symbol $b_1(\xi_1, \xi_2)$ is not Coifman--Meyer, it is Coifman--Meyer--Mikhlin.  Indeed, this is apparent from
\begin{align*}
b_1(\xi_1,\xi_2) = -2 \tfrac{\xi}{|\xi|}\tfrac{1}{\langle \xi\rangle}\tfrac{1+|\xi|^2}{2 +|\xi_1|^2+|\xi_2|^2} \mp 2 \tfrac{\xi_1}{|\xi_1|}\tfrac{1}{\langle \xi_1\rangle}\tfrac{1+|\xi_1|^2}{2 +|\xi_1|^2+|\xi_2|^2}.
\end{align*}
Thus, using Proposition~\ref{prop:CM} and Lemma~\ref{lem:decay}, we estimate
\begin{align*}
s\|b_1[U^{-1}v,U^{-1}v]\|_{L_x^2} & \lesssim s\|U^{-1}v\|_{L_x^6} \|U^{-1}v\|_{L_x^3} \lesssim \langle s\rangle^{-\frac16} \|v\|_{Z(I)}^2.
\end{align*}

Similarly,
\[
b_2(\xi_1, \xi_2)= 2\tfrac{1}{\langle \xi_1\rangle}\tfrac{\xi_1}{|\xi_1|}\tfrac{1}{(2+|\xi_1|^2+|\xi_2|^2)^2} - 2 \tfrac{1}{\langle \xi_1\rangle^3}\tfrac{\xi_1}{|\xi_1|}\tfrac{1}{2+|\xi_1|^2+|\xi_2|^2}
\]
is Coifman--Meyer--Mikhlin.  Thus, we can use Lemma~\ref{lem:UL2} and Lemma~\ref{lem:decay} to estimate
\[
\|b_2[U^{-2}v, U^{-1}v]\|_{L_x^2} \lesssim \|U^{-2}v\|_{L_x^6} \|U^{-1}v\|_{L_x^3} \lesssim \langle s\rangle^{-\frac{7}{18}} \|v\|_{Z(I)}^2.
\]

For the third term in \eqref{three terms}, we split
\[
U^{-1} Jv = U^{-1}P_{\leq 1}(Jv) + U^{-1} P_{>1}(Jv).
\]
We estimate the contribution of the first summand by
\[
\| B[U^{-1} P_{\leq 1}(Jv), U^{-1}v]\|_{L_x^2} \lesssim \| U^{-1}P_{\leq 1}(Jv)\|_{L_x^6} \|U^{-1} v\|_{L_x^3} \lesssim \langle s\rangle^{-\frac7{18}} \|v\|_{Z(I)}^2.
\]
For the second summand, we note that $\xi B(\xi_1,\xi_2)$ is still a Coifman--Meyer multiplier. Thus
\begin{align*}
\| B[U^{-1}P_{>1}(Jv), U^{-1}v]\|_{L_x^2}& \lesssim \|\nabla B[U^{-1}P_{>1}(Jv), U^{-1}v]\|_{L_x^{\frac65}} \\
& \lesssim \|Jv\|_{L_x^2} \|U^{-1}v\|_{L_x^3} \lesssim \langle s\rangle^{-\frac7{18}}\|v\|_{Z(I)}^2.
\end{align*}

Finally, we consider \eqref{weighted1-3}. For this, we again use \eqref{Jcommute}.  The nonlinearity consists of two types of terms, namely, the original terms coming from $N_v(u)$ and the terms coming from the normal form.

For the original terms, we recall from \eqref{expandJ} that we may write
\begin{equation}\label{weighted-splitJ}
J(s)=x+is\jbb
\end{equation}
and we note that the commutator is given by $[J,U] = \tfrac{2}{\jb^3}\tfrac{\nabla}{|\nabla|}.$  Applying Proposition~\ref{prop:strichartz}, we get
\begin{equation}\label{weighted-higher1}
\biggl\|\int_{t_0}^t e^{isH} J(s)\text{\O}[U(u^k) + u^k](s) \biggr\|_{L_x^2} \lesssim \|xu^k\|_{N(I)} + \|u^k\|_{N(I)} + \|t\langle\tilde \nabla\rangle(u^k)\|_{N(I)},
\end{equation}
where $k\in \{3,4,5\}$ and $N(I)$ denotes any dual Strichartz norm.

For the terms coming from the normal form, it suffices to estimate
\[
\biggl\| \int_{t_0}^t e^{isH} J(s) B[u, u^k](s)\,ds\biggr\|_{L_x^2} \qtq{with} k\in\{2,3,4,5\}.
\]
Once again, we use \eqref{weighted-splitJ} and write
\begin{equation}\label{weighted-higher2}
JB[u,u^k] = B[xu,u^k] + \tilde B[u,u^k] + it\jbb B[u,u^k],
\end{equation}
where $\tilde B$ has the symbol $\tilde B(\xi_1,\xi_2)=\nabla_\xi B(\xi_1,\xi_2)$.  Note that $\tilde B$ and $\jbb B$ are Coifman--Meyer--Mikhlin operators.

Applying Strichartz, it remains to estimate the terms on the right-hand sides of \eqref{weighted-higher1} and \eqref{weighted-higher2} in any dual Strichartz space.  Note that by \eqref{dual-est}, the contribution of $\|u^k\|_{N(I)}$ with $k\in\{3,4,5\}$ is acceptable.

We first consider the cubic contributions in the remaining terms.  By Lemma~\ref{lem:weighted} and Lemma~\ref{lem:decay}, we may estimate as follows:
\begin{align*}
\|xu^3\|_{L_t^2 L_x^{\frac65}} {+} \| B[xu,u^2]\|_{L_t^2 L_x^{\frac65}}
& \lesssim \bigl\| \|xu_{\leq 1}\|_{L_x^6} \|u\|_{L_x^2} \|u\|_{L_x^6} \bigr\|_{L_t^2} + \bigl\| \|xu_{>1}\|_{L_x^2} \|u\|_{L_x^6}^2 \bigr\|_{L_t^2}  \\
& \lesssim \|\langle t\rangle^{-\frac59}\|_{L_t^2} \|v\|_{Z(I)}^3 \lesssim  \langle t_0\rangle^{-\frac{1}{18}}\|v\|_{Z(I)}^3.
\end{align*}
Next, by Proposition~\ref{prop:CM} and Lemma~\ref{lem:str},
\begin{align*}
\|t\jbb(u^3)\|_{L_t^1 L_x^2}& + \|\tilde B[u,u^2]\|_{L_t^1L_x^2} + \|t\jbb B[u,u^2]\|_{L_t^1 L_x^2} \\
& \lesssim \bigl\|\langle t\rangle \|u\|_{L_x^6}^2\bigr\|_{L_t^2} \|\jb u\|_{L_t^2 L_x^6}  \lesssim \|\langle t\rangle^{-\frac59}\|_{L_t^2} \|v\|_{Z(I)}^3 \lesssim \langle t_0\rangle^{-\frac{1}{18}}\|v\|_{Z(I)}^3.
\end{align*}

Let us consider now the quartic contributions.  Arguing as above,
\begin{align*}
\|xu^4\|_{L_t^2 L_x^{\frac65}}&+\| B[xu,u^3]\|_{L_t^2 L_x^{\frac65}}\\
& \lesssim \bigl[\bigl\| \|xu_{\leq 1}\|_{L_x^6}\|u\|_{L_x^2}\|u\|_{L_x^6}^{\frac12} \bigr\|_{L_t^\infty} + \bigl\| \|xu_{>1}\|_{L_x^2}\|u\|_{L_x^6}^{\frac32}\bigr\|_{L_t^\infty}\bigr]\|u\|_{L_t^3 L_x^{18}}^{\frac32} \\
& \lesssim \langle t_0\rangle^{-\frac16}\|v\|_{Z(I)}^{\frac52}\| |\nabla|^{\frac23} u\|_{L_t^3 L_x^{\frac{18}{5}}}^{\frac32} \lesssim \langle t_0\rangle^{-\frac16}\|v\|_{Z(I)}^4
\end{align*}
and
\begin{align*}
\|t\jbb(u^4)&\|_{L_t^2 L_x^{\frac65}}   + \|\tilde B[u,u^3]\|_{L_t^2 L_x^{\frac65}} + \|t\jbb B[u,u^3]\|_{L_t^2 L_x^{\frac65}}  \\
& \lesssim \|\langle t\rangle\|u\|_{L_x^6}^{\frac97}\|_{L_t^\infty}^{\phantom{\frac11}} \|u\|_{L_t^\infty L_x^{\frac{72}{19}}}^{\frac{12}{7}}\|\jb u\|_{L_t^2 L_x^6} \lesssim \langle t_0\rangle^{-\frac{17}{18}} \|v\|_{Z(I)}^4.
\end{align*}

We now turn to the quintic contributions.  Arguing as above,
\begin{align*}
\|xu^5\|_{L_t^2 L_x^{\frac65}}&+ \|B[xu, u^4]|_{L_t^2 L_x^{\frac65}} \\
& \lesssim \bigl(\| xu_{\leq 1}\|_{L_x^6} \|u\|_{L_x^2} \|u\|_{L_x^6}^{\frac12}\bigr\|_{L_t^\infty} + \bigl\| \| xu_{>1}\|_{L_x^2} \|u\|_{L_x^6}^{\frac32}\bigr\|_{L_t^\infty} \bigr)\|u\|_{L_t^5 L_x^{30} }^{\frac52} \\
& \lesssim \langle t_0\rangle^{-\frac16}\|v\|_{Z(I)}^{\frac52} \|\nabla u\|_{L_t^5 L_x^{\frac{30}{11}}}^{\frac52} \lesssim
\langle t_0\rangle^{-\frac16}\|v\|_{Z(I)}^5.
\end{align*}
Next,
\begin{align*}
\|t\jbb(u^5)&\|_{L_t^2 L_x^{\frac65}}   + \|\tilde B[u,u^4]\|_{L_t^2 L_x^{\frac65}} + \|t\jbb B[u,u^4]\|_{L_t^2 L_x^{\frac65}}  \\
& \lesssim \|\langle t\rangle\|u\|_{L_x^6}^{\frac97}\|_{L_t^\infty} \|u\|_{L_t^\infty L_x^{6}}^{\frac{19}{7}}\|\jb u\|_{L_t^2 L_x^6} \lesssim \langle t_0\rangle^{-\frac{19}{9}} \|v\|_{Z(I)}^5.
\end{align*}

Finally, we consider the sextic contribution, which comes only from the normal form.  We rely on Sobolev embedding and the fact that $\xi B(\xi_1,\xi_2)$ is Coifman--Meyer.  We get
\begin{align*}
\|B[xu,u^5] \|_{L_t^1 L_x^2}& \lesssim \|\nabla B[xu,u^5]\|_{L_t^1 L_x^{\frac65}}  \\
& \lesssim \bigl( \bigl\| \|xu_{\leq 1}\|_{L_x^6} \|u\|_{L_x^2} \|u\|_{L_x^6}^{\frac12}\bigr\|_{L_t^\infty} + \bigl\| \|xu_{>1}\|_{L_x^2} \|u\|_{L_x^6}^{\frac32}\bigr\|_{L_t^\infty}\bigr)\|u\|_{L_t^{\frac72} L_x^{42}}^{\frac72} \\
& \lesssim \langle t_0\rangle^{-\frac16}\|v\|_{Z(I)}^{\frac52} \||\nabla|^{\frac67} u\|_{L_t^{\frac72} L_x^{\frac{42}{13}}}^{\frac72} \lesssim \langle t_0\rangle^{-\frac16} \|v\|_{Z(I)}^6.
\end{align*}
Using that $\xi \tilde B(\xi_1,\xi_2)$ and $\xi\widetilde{\langle \xi\rangle} B$ are Coifman--Meyer--Mikhlin, we estimate
\begin{align*}
\|\tilde B[u,u^5]\|_{L_t^1 L_x^2} + \|t\jbb B[u,u^5]\|_{L_t^1 L_x^2} & \lesssim \|\nabla \tilde B[u,u^5]\|_{L_t^1 L_x^{\frac65}} + \|t\nabla \jbb B[u,u^5]\|_{L_t^1 L_x^{\frac65}}  \\
& \lesssim \| \langle t\rangle \|u\|_{L_x^6}^{\frac32}\|_{L_t^\infty} \|u\|_{L_t^\infty L_x^2} \|u\|_{L_t^{\frac72}L_x^{42}}^{\frac72} \\
& \lesssim \langle t_0\rangle^{-\frac16}\|v\|_{Z(I)}^6.
\end{align*}
This completes the proof of Proposition~\ref{prop:weighted1}.
\end{proof}

%%%%%%%%%%%%%%%%%%%%

We turn to estimating the contribution of the quadratic terms arising from the nonlinearity in \eqref{nf-duh}, that is,
\[
\biggl\| x\int_{t_0}^t e^{isH} \N_2\,ds\biggr\|_{L_x^2},
\]
where we recall
\[
\N_2 = U\bigl(A_1[v_1,v_1] +  A_2[v_2,v_2]\bigr),\quad  v_1 = \tfrac12(v+\bar v), \qtq{and} v_2 = \tfrac1{2i}(v-\bar v).
\]

Thus, by Plancherel we are led to estimate terms of the form
\[
\biggl\| \nabla_\xi\int_{t_0}^t e^{is\Phi}U(\xi) A(\xi_1,\xi_2) \wh{\tilde f}(\xi_1)\wh{\tilde f}(\xi_2)\,d\xi_2\,ds\biggr\|_{L_\xi^2},
\]
where $f(t) = e^{itH}v(t)$, $\Phi=H(\xi)\pm H(\xi_1)\pm H(\xi_2)$, and $A\in\{A_1,A_2\}$ (cf. \eqref{Aj}).  In the remainder of this section, we consider the easier terms in which the derivative misses the phase.

\begin{proposition}\label{prop:weighted2} Let $t_0>0$ and let $I=[t_0,1]$ if $t_0<1$ and $I=[t_0,2t_0]$ otherwise.  Let $v:I\times\R^3\to\C$ be a solution to \eqref{E:cqv} and $f(t)=e^{itH}v(t)$. For all $t\in I$,
\[
\biggl\| \int_{t_0}^t \int_{\R^3}e^{is\Phi} \nabla_\xi[U(\xi)A(\xi_1,\xi_2)\wh{\tilde f}(\xi_1)\wh{\tilde f}(\xi_2)]\,d\xi_2\,ds\biggr\|_{L_\xi^2} \lesssim \langle t_0\rangle^{-\frac14} \|v\|_{Z(I)}^2,
\]
where $A\in\{A_1,A_2\}$ as in \eqref{Aj}.
\end{proposition}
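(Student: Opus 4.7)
Since the difficult case (the derivative hitting $e^{is\Phi}$) is explicitly excluded, the first step is to distribute $\nabla_\xi$ among the three remaining $\xi$-dependent factors in the integrand: $U(\xi)$, $A(\xi_1,\xi_2)$, and $\wh{\tilde f}(\xi_1)$. Recall that $\nabla_\xi$ holds $\xi_2$ fixed and $\xi_1=\xi-\xi_2$, so the derivative cannot act on $\wh{\tilde f}(\xi_2)$; moreover, $\nabla_{\xi_1}\wh{\tilde f}(\xi_1)$ is the Fourier transform of $-ix\tilde f$, which, after extracting $e^{\pm isH(\xi_1)}$, reduces to $\widetilde{Jv}$. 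Inverting the $\xi$-Fourier transform, the three resulting contributions take the form
\[
\int_{t_0}^t e^{isH}m_j[g_1(s),g_2(s)]\,ds,\qquad j=1,2,3,
\]
where (T1) $m_1=(\nabla U)(\xi)A(\xi_1,\xi_2)$ with $(g_1,g_2)=(\tilde v,\tilde v)$; (T2) $m_2=U(\xi)\nabla_{\xi_1}A(\xi_1,\xi_2)$ with $(g_1,g_2)=(\tilde v,\tilde v)$; and (T3) $m_3=U(\xi)A(\xi_1,\xi_2)$ with $(g_1,g_2)=(\widetilde{Jv},\tilde v)$.

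The second step is to show that each bilinear symbol $m_j$ is bounded $L_x^{r_1}\otimes L_x^{r_2}\to L_x^r$ on the exponents I use below. When the $A$ factor in $m_j$ is $A_1$, the symbol is Coifman--Meyer--Mikhlin: $A_1$ is Coifman--Meyer by \eqref{Ajbds}, while $(\nabla U)(\xi)=\tfrac{2\xi}{|\xi|\langle\xi\rangle^3}$, $U(\xi)=\tfrac{|\xi|}{\langle\xi\rangle}$, and $\nabla_{\xi_1}A_1$ are Mikhlin or Coifman--Meyer in their respective variables, so Proposition~\ref{prop:CM} applies. When the $A$ factor is $A_2$, the symbol carries the non-Coifman--Meyer--Mikhlin factor $\tfrac{\langle\xi_1\rangle\langle\xi_2\rangle}{2+|\xi_1|^2+|\xi_2|^2}$, which is exactly the setting of Lemma~\ref{AOK}(ii). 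In T2, the extra singularity $\tfrac{1}{|\xi_1|\wedge|\xi_2|}$ from $\nabla_{\xi_1}A_2$ (cf.\ \eqref{Ajbds}) is compensated by the output factor $U(\xi)$.

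Finally, I would combine these bilinear bounds with the decay estimates of Section~\ref{S:decay}. For T1 and T2, Minkowski in $s$ and the choice $r_1=r_2=4$ give
\[
\biggl\|\int_{t_0}^t e^{isH}m_j[\tilde v,\tilde v]\,ds\biggr\|_{L_x^2}\lesssim\|v\|_{L_t^2 L_x^4(I)}^2\lesssim\langle t_0\rangle^{-1/2}\|v\|_Z^2,
\]
using $\|v(t)\|_{L_x^4}\lesssim\langle t\rangle^{-3/4}\|v\|_Z$ from Lemma~\ref{lem:decay}(iii) and $|I|\lesssim\langle t_0\rangle$. For T3, since $Jv$ enjoys no a priori time decay, I would dualize against $h\in L_x^2$ and use Strichartz with the admissible pair $(\alpha',\beta')=(4,3)$, obtaining
\[
\biggl\|\int_{t_0}^t e^{isH}m_3[\widetilde{Jv},\tilde v]\,ds\biggr\|_{L_x^2}\lesssim\|Jv\|_{L_t^\infty L_x^2}\|v\|_{L_t^{4/3}L_x^6(I)}\lesssim\langle t_0\rangle^{-1/4}\|v\|_Z^2,
\]
where the last step follows from Lemma~\ref{lem:decay}(i) and $|I|\lesssim\langle t_0\rangle$. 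The main obstacle is the $A_2$ contribution in T3, whose symbol $U(\xi)A_2(\xi_1,\xi_2)$ is genuinely singular; the required boundedness $L_x^2\otimes L_x^6\to L_x^{3/2}$ is provided by Lemma~\ref{AOK}(ii), whose hypotheses are satisfied since $\tfrac{1}{2}+\tfrac{1}{6}=\tfrac{2}{3}$ and all three exponents lie strictly between $1$ and $\infty$. Summing the three contributions yields the claim.
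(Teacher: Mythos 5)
Your decomposition into T1, T2, T3 matches the paper's splitting, and your treatment of T1 (all of it) and T3 (dualizing with the $(\alpha',\beta')=(4,3)$ Strichartz pair, $\|Jv\|_{L_t^\infty L_x^2}\|v\|_{L_t^{4/3}L_x^6}\lesssim\langle t_0\rangle^{-1/4}\|v\|_Z^2$, and Lemma~\ref{AOK}(ii) for the $U(\xi)A_2$ symbol) is essentially the paper's argument. The issue is T2.

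The claim that \emph{``the extra singularity $\frac{1}{|\xi_1|\wedge|\xi_2|}$ from $\nabla_{\xi_1}A_2$ is compensated by the output factor $U(\xi)$''} is false. Take $\xi_1=\eps e_1$ and $\xi_2=e_2$ with $\eps\to 0$: then $|\xi|=\sqrt{1+\eps^2}\sim 1$ so $U(\xi)\sim 1$, while the term of $\nabla_{\xi_1}A_2$ in which the derivative hits $\tfrac{\xi_1}{|\xi_1|}$ is $\sim\tfrac{1}{\eps}$ (because $\tfrac{\xi_2}{|\xi_2|}$ is orthogonal to $\xi_1$, so the perpendicular projection is full-strength). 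The singularity lives at $\xi_1\to 0$ with $\xi_2\sim 1$, which does not force $|\xi|$ to be small. Equivalently, at the level of operators, this singular piece of $m_2$ is (a CMM/AOK multiplier) composed with $|\nabla|^{-1}$ on the \emph{first} input, and on $\R^3$ the operator $|\nabla|^{-1}$ does not map $L^4$ into any Lebesgue space ($\tfrac14-\tfrac13<0$). So the bound $L_x^4\otimes L_x^4\to L_x^2$ you invoke for T2 simply fails, and the Minkowski-in-$s$ computation with $\|v\|_{L_t^2L_x^4}^2$ does not close.

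The paper handles exactly this term (the derivative hitting $\tfrac{\xi_1}{|\xi_1|}$ in $A_2$) by pulling the $\tfrac{1}{|\xi_1|}$ onto the first input as $U^{-1}(\xi_1)$ times a Mikhlin factor; the resulting symbol is of AOK type and one estimates, in the dual Strichartz space $L_t^{4/3}L_x^{3/2}$,
\[
\|\tilde A[v,v]\|_{L_t^{4/3}L_x^{3/2}}\lesssim \|U^{-1}v\|_{L_t^\infty L_x^2}\,\|v\|_{L_t^{4/3}L_x^6}\lesssim\langle t_0\rangle^{-1/4}\|v\|_{Z(I)}^2,
\]
where the crucial ingredient is that $\|U^{-1}v\|_{L_x^2}\lesssim\|v\|_X$ (Lemma~\ref{lem:UL2}), i.e.\ the weighted norm controls the extra negative power of $|\nabla|$ at low frequency. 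The remaining, nonsingular pieces of $U(\xi)\nabla_{\xi_1}A_j$ are then estimated against $\|v\|_{L_t^\infty L_x^2}\|v\|_{L_t^{4/3}L_x^6}$, as the paper does. So the correct fix is to absorb the low-frequency loss into the input via $U^{-1}$ and Lemma~\ref{lem:UL2}, not to appeal to the output $U(\xi)$; with that change the rest of your argument goes through.
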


\begin{proof}  If the derivative lands on $\wh{\tilde f}(\xi_1)$, then we apply Proposition~\ref{prop:strichartz}, Lemma~\ref{AOK}, and Lemma~\ref{lem:decay} to estimate
\[
\| U A[Jv, v]\|_{L_t^{\frac43} L_x^{\frac32}} \lesssim \| v\|_{L_t^{\frac43} L_x^6} \|Jv\|_{L_t^\infty L_x^2}
\lesssim \langle t_0\rangle^{-\frac14} \|v\|_{Z(I)}^2.
\]

We next consider the operators
\[
\nabla_\xi[U(\xi)A_j(\xi_1,\xi_2)],\quad j\in\{1,2\}.
\]
Distributing the derivative, we first consider the term in which $\nabla_\xi$ hits the factor $\frac{\xi_1}{|\xi_1|}$ in the multiplier $A_2$.  One can verify that this term is of the form
\[
\tilde A(\xi_1,\xi_2) = \tfrac{1}{|\xi_1|} \tfrac{\langle \xi_1\rangle\langle\xi_2\rangle}{2+|\xi_1|^2+|\xi_2|^2} b(\xi_1,\xi_2),
\]
where $b$ is Coifman--Meyer--Mikhlin.  Thus, by Lemmas~\ref{AOK}, \ref{UL2}, and \ref{lem:decay},
\[
\| \tilde A[v,v]\|_{L_t^{\frac43} L_x^{\frac32}} \lesssim \|v\|_{L_t^{\frac43} L_x^6} \|U^{-1}v\|_{L_t^\infty L_x^2} \lesssim \langle t_0\rangle^{-\frac14} \|v\|_{Z(I)}^2.
\]
All remaining terms are either Coifman--Meyer--Mikhlin symbols or products of Coifman--Meyer--Mikhlin symbols with the multiplier
\[
\tfrac{\langle \xi_1\rangle\langle\xi_2\rangle}{2+|\xi_1|^2+|\xi_2|^2}.
\]
Thus, by Proposition~\ref{prop:CM}, Lemma~\ref{AOK}, and Lemma~\ref{lem:decay}, the contribution of these terms is bounded by
\[
\|v\|_{L_t^{\frac43} L_x^6} \|v\|_{L_t^\infty L_x^2} \lesssim \langle t_0\rangle^{-\frac14} \|v\|_{Z(I)}^2.
\]
This completes the proof of the proposition.\end{proof}

%%%%%%%%%%%%%%%%%%%%%%%%%%%
%%%%%%%%%%%%%%%%%%%%%%%%%%%
%%%%%%%%%%%%%%%%%%%%%%%%%%%
\section{The weighted norm, part II}\label{S:W2}

In this section we begin to estimate the contribution of the quadratic nonlinearity when the derivative hits the phase.  The goal of the next several sections is to prove the following:

\begin{proposition}\label{prop:bs-quad} Let $t_0>0$ and let $I=[t_0,1]$ if $t_0<1$ and $I=[t_0,2t_0]$ otherwise.  There exists $\eps>0$ such that for all $t\in I$,
\begin{equation}\label{bs-quad}
\biggl\| \int_{t_0}^t \int e^{is\Phi}[s\nabla_\xi\Phi] U(\xi)A(\xi_1,\xi_2)\wh{\tilde{f}}(\xi_1)\wh{\tilde{f}}(\xi_2)\,d\xi_2\,ds \biggr\|_{L_\xi^2} \lesssim  \langle t_0\rangle^{-\eps}\sum_{k=2}^6 \|v\|_{Z(I)}^k,
\end{equation}
where $A\in\{A_1,A_2\}$ as in \eqref{Aj} and $\Phi=H(\xi)\pm H(\xi_1)\pm H(\xi_2)$.
\end{proposition}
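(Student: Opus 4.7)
The plan is to analyze \eqref{bs-quad} separately for each of the three choices of signs in $\Phi$ (corresponding to the $\bar v^2$, $v^2$, and $|v|^2$ contributions to the quadratic nonlinearity) and each multiplier $A\in\{A_1,A_2\}$, and within each case to perform a Littlewood--Paley decomposition $\xi_1\sim N_1$, $\xi_2\sim N_2$ supplemented by smooth cutoffs $\chi$ partitioning frequency space according to which of the following non-resonance conditions holds on the support: (i) $|\Phi|$ bounded below (\emph{time non-resonance}), (ii) $|\nabla_\eta\Phi|$ bounded below (\emph{space non-resonance}), or (iii) the tangential component $|(\xi-\xi\cdot\tfrac{\eta}{|\eta|}\tfrac{\eta}{|\eta|})\cdot\nabla_\eta\Phi|$ bounded below (\emph{angular non-resonance}). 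Because $H(\xi)+H(\xi_1)+H(\xi_2)\gtrsim |\xi|+|\xi_1|+|\xi_2|$ is uniformly away from zero, the $\bar v^2$ case is purely time non-resonant. The $v^2$ case is handled by combining time and space non-resonance. The $|v|^2$ case is the delicate one: along the collinear configurations with $\xi_1$ and $\xi_2$ parallel, both $\Phi$ and $\nabla_\eta\Phi$ can vanish simultaneously, forcing the introduction of the third mechanism.

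On a time non-resonant dyadic piece we write $e^{is\Phi}=\tfrac{1}{i\Phi}\partial_s e^{is\Phi}$ and integrate by parts in $s$. The endpoint terms at $s\in\{t_0,t\}$ are bilinear expressions with symbol $\tfrac{\nabla_\xi\Phi}{\Phi}UA\chi$ applied to a pair of $\tilde v$'s, and are estimated using Proposition~\ref{prop:bilinear}(i); the key input is a dyadic bound on the $\op{\cdot}$-norm of this symbol, which we verify by direct computation from the explicit form of $H$, and the decay gain $\langle t_0\rangle^{-\eps}$ comes from Lemma~\ref{lem:decay} applied to the two $\tilde v$-factors. The interior term, where $\partial_s$ falls on $\widehat{\tilde f}$, becomes---by \eqref{E:cqv}---a cubic or higher expression in $v$, which is bounded using Proposition~\ref{prop:bilinear}(ii) together with the Strichartz and $L^\infty_t L^2_x$ pieces of the $Z$-norm.

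On a space non-resonant piece we instead use $e^{is\Phi}=\tfrac{\nabla_\eta\Phi}{is|\nabla_\eta\Phi|^2}\cdot\nabla_\eta e^{is\Phi}$ and integrate by parts in $\eta$, which cancels the factor of $s$ in \eqref{bs-quad}. When the $\eta$-derivative lands on the amplitudes or the cutoff $\chi$ we obtain a bilinear expression in $\tilde v$ handled just as above; when it lands on $\widehat{\tilde f}(\xi_2)$, Plancherel converts this to a factor of $Jv$ in $L^2_x$, which is controlled by the bootstrap on the $X$-norm. Each subcase again requires a dyadic verification that the new symbols $\tfrac{\nabla_\eta\Phi\cdot\nabla_\xi\Phi}{|\nabla_\eta\Phi|^2}UA\chi$ (and variants) obey the hypotheses of Proposition~\ref{prop:bilinear}.

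The principal obstacle is the angular non-resonant region in the $|v|^2$ case, where both time and space resonances fail simultaneously near the collinear set. Here we use the identity
$$e^{is\Phi}=\tfrac{1}{is\,w\cdot\nabla_\eta\Phi}\,w\cdot\nabla_\eta e^{is\Phi},\qquad w:=\xi-\xi\cdot\tfrac{\eta}{|\eta|}\tfrac{\eta}{|\eta|},$$
so that $w\cdot\nabla_\eta$ is tangential to the sphere of constant $|\eta|$. The virtue of this vector field is that on the physical side it corresponds---up to Mikhlin multipliers and a factor involving $\xi$---to the angular-momentum operator $x\times\nabla$. Consequently, when the $\eta$-derivative falls on $\widehat{\tilde f}(\xi_2)$ we recover precisely the third component of the $X$-norm, $\|(x\times\nabla)v\|_{L^2_x}$, while the cases in which it falls on the amplitude or cutoff mirror the space non-resonant analysis. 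Summing the three phase cases, the three non-resonance regions, and the dyadic decompositions, the accumulated $L^p$-decay from Lemma~\ref{lem:decay} produces the claimed $\langle t_0\rangle^{-\eps}$ factor and completes the proof.
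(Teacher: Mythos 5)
Your proposal is essentially the paper's own strategy: decompose by the sign pattern in $\Phi$ (hence by $\bar v^2$, $v^2$, $|v|^2$); observe that the $\bar v^2$ phase is always time non-resonant; for $v^2$ combine time and space non-resonance; for $|v|^2$ supplement these with angular non-resonance precisely because time and space resonance can degenerate together; integrate by parts via the three identities; convert the resulting bilinear expressions into $Jv$, $(x\times\nabla)v$, or cubic-and-higher nonlinear terms and control them with the $\op{\cdot}$-bilinear estimate plus the decay lemma. This matches Sections~\ref{S:W2}--\ref{S:est} in structure, so I have nothing of substance to add on the strategy; the remaining content of the paper's proof is the dyadic verification of the $\op{\cdot}$-bounds, which your sketch defers.

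One small elision worth flagging: when the tangential derivative $\xi^{\perp_2}\cdot\nabla_{\xi_2}$ falls on $\wh f(\xi_1)$ (recall $\xi_1=\xi-\xi_2$, so this happens as often as the derivative falling on $\wh f(\xi_2)$), the identity
\[
\xi^{\perp_2}\cdot\nabla_{\xi_2}\bigl(\wh{f}(\xi_1)\bigr) = -\xi^{\perp_1}\cdot(\nabla_{\xi_1}\wh{f})(\xi_1) + \bigl(\xi\cdot\tunit{\xi_1}\tunit{\xi_1}-\xi\cdot\tunit{\xi_2}\tunit{\xi_2}\bigr)\nabla \wh{f}(\xi_1)
\]
produces a $Jv$-type contribution in addition to the $(x\times\nabla)v$-type contribution, which needs its own multiplier bound ($\tilde b^\angle$ in \eqref{tbA}); your proposal only accounts for the $\xi_2$-factor.
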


We briefly recall the general strategy.  We consider separately the phases arising from the $\bar v^2$, $v^2$, and $|v|^2$ terms.  For each phase, we decompose $\R_{\xi_1}^3\times\R_{\xi_2}^3$ into regions on which we have `non-resonance', which refers to the non-vanishing of $\Phi$ or some particular derivative of $\Phi$.  We carry out these decompositions in Section~\ref{dec1}, Section~\ref{dec2}, and Section~\ref{dec3}.  On each such region, we perform an integration by parts in the term
\[
\int_{t_0}^t \int e^{is\Phi}[s\nabla_\xi\Phi] U(\xi) A(\xi_1,\xi_2)\wh{\tilde{f}}(\xi_1)\wh{\tilde{f}}(\xi_2)\,d\xi_2\,ds
\]
using an identity that capitalizes on the particular from of non-resonance; see \eqref{id-1}, \eqref{id-2}, and \eqref{id-3} below.  The terms arising from integration by parts using a given identity are all of a similar form; we estimate these in Sections~\ref{est-tnr}--\ref{est-anr}, relying heavily on Proposition~\ref{prop:bilinear} and Lemma~\ref{lem:decay}.

We will always perform our decompositions and integrations by parts at fixed frequencies $|\xi_j|\sim N_j$.  As $P_N$ are not true projections, we will also use the fattened Littlewood--Paley projections $\tilde P_N$, which ensure that $P_N=\tilde P_N P_N$.

As noted in the introduction, many of the bilinear multipliers used to partition frequency space will not not obey uniform bounds; indeed in some cases they constitute the worst term.

\subsection{Time non-resonance}\label{S:tnr}  Time non-resonance refers to the non-vanishing of $\Phi$.  Suppose $\chi^T$ is a cutoff to a region on which $\Phi\neq 0$.  We wish to estimate the contribution of
\[
\sum_{N_1,N_2}\Bigl\|\int_{t_0}^t \!\int\! e^{is\Phi}\chi^T(\xi_1,\xi_2)\tilde\psi(\tfrac{\xi_1}{N_1}) \tilde\psi(\tfrac{\xi_2}{N_2})[s\nabla_\xi\Phi]
    U(\xi) A(\xi_1,\xi_2)\wh{\tilde{f}_{N_1}}(\xi_1)\wh{\tilde{f}_{N_2}}(\xi_2) d\xi_2 ds\Bigr\|_{L^2_\xi}
\]
to \eqref{bs-quad}. As $\Phi\neq 0$ on the support of $\chi^T$, we may use the identity
\begin{equation}\label{id-1}
e^{is\Phi} = \partial_s \frac{e^{is\Phi}}{i\Phi}
\end{equation}
to integrate by parts in the term above.  Defining the multiplier
\begin{equation}\label{bT}
b^T_{N_1,N_2}(\xi_1,\xi_2) =\chi^T(\xi_1,\xi_2) \tilde\psi(\tfrac{\xi_1}{N_1}) \tilde\psi(\tfrac{\xi_2}{N_2})A(\xi_1,\xi_2)U(\xi)\frac{\nabla_\xi\Phi}{\Phi},
\end{equation}
we see that it suffices to estimate the following terms:
\begin{align}
&\left.\begin{aligned}
&\sum_{N_1,N_2}\bigl\| s e^{isH} b^T_{N_1,N_2}[\tilde v_{N_1}(s), \tilde v_{N_2}(s)]\bigr\|_{L_x^2}, \quad s\in\{t_0,t\}, \\
&\sum_{N_1,N_2} \int_{t_0}^t \bigl\| e^{isH} b^T_{N_1,N_2}[\tilde v_{N_1}(s),\tilde v_{N_2}(s)]\bigr\|_{L_x^2}\,ds,
\end{aligned}\qquad\right\}\label{tnr} \\
&\sum_{N_1\!,N_2}\biggl\| \int_{t_0}^t\! s e^{isH}\!\bigl\{b^T_{N_1,N_2}[\tilde v_{N_1}(s), P_{N_2} N_v(u(s))]+b^T_{N_1,N_2}[ P_{N_1}N_v(u(s)), \tilde v_{N_2}(s)]\bigr\} ds\biggr\|_{L_x^2}\label{tnr2}
\end{align}

Note that because we are working on dyadic time intervals, we can estimate the two terms in \eqref{tnr} in the same way.  Note also that to arrive at \eqref{tnr2}, we have used the fact that
\[
i\partial_t [e^{itH} v] = e^{itH}{(i\partial_t - H)v}.
\]

\subsection{Space non-resonance}\label{S:snr} Space non-resonance refers to the non-vanishing of $\nabla_{\xi_2}\Phi$.  Suppose $\chi^X$ is a cutoff to a region on which $\nabla_{\xi_2}\Phi\neq 0$. We wish to estimate the contribution of
\[
\sum_{N_1\!,N_2} \!\Bigr\|\int_{t_0}^t \int\! e^{is\Phi}\chi^X(\xi_1,\xi_2)\tilde\psi(\tfrac{\xi_1}{N_1}) \tilde\psi(\tfrac{\xi_2}{N_2})[s\nabla_\xi\Phi]
    U(\xi) A(\xi_1,\xi_2)\wh{\tilde f_{N_1}}(\xi_1)\wh{\tilde f_{N_2}}(\xi_2) d\xi_2 ds\Bigl\|_{L_\xi^2}
\]
to \eqref{bs-quad}. As $\nabla_{\xi_2}\Phi\neq 0$ on the support of $\chi^X$, we may integrate by parts using the identity
\begin{equation}\label{id-2}
e^{is\Phi} = \frac{\nabla_{\xi_2}\Phi}{is|\nabla_{\xi_2}\Phi|^2}\cdot\nabla_{\xi_2} e^{is\Phi}.
\end{equation}
Defining the multipliers
\begin{align}
& b^X_{N_1,N_2}(\xi_1,\xi_2) = \chi^X(\xi_1,\xi_2) \tilde\psi(\tfrac{\xi_1}{N_1}) \tilde\psi(\tfrac{\xi_2}{N_2})A(\xi_1,\xi_2)U(\xi)\frac{\nabla_\xi\Phi\nabla_{\xi_2}\Phi}{ |\nabla_{\xi_2}\Phi|^{2}},\label{bX} \\
& \tilde b^X_{N_1,N_2}(\xi_1,\xi_2) = \nabla_{\xi_2}\cdot b^X_{N_1,N_2}(\xi_1,\xi_2), \label{tbX}
\end{align}
we see that it suffices to estimate the following terms:
\begin{align}
& \sum_{N_1,N_2}\biggl\|\int_{t_0}^t e^{isH} \bigl\{ b^X_{N_1,N_2}[P_{N_1}\widetilde{Jv}(s),\tilde v_{N_2}(s)] + b^X_{N_1,N_2}[\tilde v_{N_1}(s),P_{N_2}\widetilde{Jv}(s)]\bigr\}\,ds\biggr\|_{L_x^2}, \label{snr1} \\
& \sum_{N_1,N_2}\biggl\|\int_{t_0}^t e^{isH} \tilde b^X_{N_1,N_2}[\tilde v_{N_1}(s), \tilde v_{N_2}(s)]\,ds\biggr\|_{L_x^2}. \label{snr2}
\end{align}

\subsection{Angular non-resonance}\label{S:anr}  For the phase corresponding to the $|v|^2$ nonlinearity, namely,
\[
\Phi = H(\xi)+H(\xi_2)-H(\xi_1),
\]
we will use a different identity to integrate by parts on one region of frequency space.  We adopt the following notation for the projections of $\xi$ into the directions orthogonal to $\xi_1$ and $\xi_2$:
\[
\xi^{\perp_j} = \xi-(\xi\cdot\tunit{\xi_j})\tunit{\xi_j}.
\]

We will refer to the non-vanishing of $\xi^{\perp_2}\cdot\nabla_{\xi_2}\Phi$ as angular non-resonance.  Supposing $\chi^\angle$ is a cutoff to a region on which $\xi^{\perp_2}\cdot\nabla_{\xi_2}\Phi\neq 0$, we wish to estimate the contribution of
\[
\sum_{N_1,N_2}\biggl\|\int_{t_0}^t\!\int\! e^{is\Phi} \chi^\angle(\xi_1,\xi_2)\tilde\psi(\tfrac{\xi_1}{N_1})\tilde\psi(\tfrac{\xi_2}{N_2})
    [s\nabla_\xi\Phi]U(\xi)A(\xi_1,\xi_2)\wh{f_{N_1}}(\xi_1)\wh{\bar f_{N_2}}(\xi_2) d\xi_2 ds\Big\|_{L^2_\xi}
\]
to \eqref{bs-quad}.  As $\nabla_{\xi_2}\Phi = \nabla H(\xi_1)+\nabla H(\xi_2)$, one has
\[
\xi^{\perp_2}\cdot \nabla_{\xi_2}\Phi = \xi^{\perp_2}\cdot \nabla H(\xi_1).
\]
Thus, as $\xi^{\perp_2}\cdot\nabla_{\xi_2}\Phi\neq 0$ on the support of $\chi^\angle$, we may integrate by parts in the term above using the identity
\begin{equation}\label{id-3}
e^{is\Phi} = \frac{1}{is(\xi^{\perp_2}\cdot \nabla H(\xi_1))}\xi^{\perp_2}\cdot\nabla_{\xi_2}e^{is\Phi}.
\end{equation}
Defining
\begin{align}
&m_{N_1,N_2}(\xi_1,\xi_2) =\chi^\angle(\xi_1,\xi_2)\tilde\psi(\tfrac{\xi_1}{N_1})\tilde\psi(\tfrac{\xi_2}{N_2}) A(\xi_1,\xi_2) U(\xi)\tfrac{\nabla_\xi\Phi}{\xi^{\perp_2}\cdot \nabla H(\xi_1)},\label{mA} \\
&b_{N_1,N_2}^{\angle}(\xi_1,\xi_2)=\nabla_{\xi_2}\cdot[m_{N_1,N_2}(\xi_1,\xi_2)\xi^{\perp_2}],\label{bA}
\end{align}
we are first led to estimate the following term, which arises when the divergence misses both copies of $f$:
\begin{equation}\label{anr1}
\sum_{N_1,N_2} \biggl\| \int_{t_0}^t e^{isH}  b_{N_1,N_2}^\angle[ P_{N_1}v(s),P_{N_2}\bar{v}(s)]\,ds \biggr\|_{L_x^2}.
\end{equation}

The terms in which the divergence hits a copy of $f$ take the form
\[
\int_{t_0}^t\int e^{is\Phi} m_{N_1,N_2}(\xi_1,\xi_2)\bigl\{[\xi^{\perp_2}\cdot \nabla_{\xi_2}\wh{f}(\xi_1)]\whb{f}(\xi_2)+ \wh{f}(\xi_1)[\xi^{\perp_2}\cdot\nabla_{\xi_2}\whb{f}(\xi_2)]\bigr\}\,d\xi_2\,ds.
\]
For these terms, we first rely on the vector identity
\[
(x\times y)\cdot (z\times w) = (x\cdot z)(y\cdot w) - (y\cdot z)(x\cdot w)
\]
to write
\begin{align*}
\xi^{\perp_2}\cdot\nabla_{\xi_2}\whb{f}(\xi_2) & = \tfrac{1}{|\xi_2|^2} \bigl(\xi_2\cdot\xi_2\bigr) \bigl(\xi^{\perp_2}\cdot\nabla_{\xi_2}\whb{f}(\xi_2)\bigr) \\
& =\tfrac{1}{|\xi_2|^2}\bigl[ (\xi_2\times\xi^{\perp_2})\cdot(\xi_2\times\nabla_{\xi_2}\whb{f}(\xi_2)) + (\xi_2\cdot \xi^{\perp_2})(\xi_2\cdot\nabla_{\xi_2}\whb{f}(\xi_2)\bigr] \\
& = \tfrac{\xi_2\times\xi^{\perp_2}}{|\xi_2|^2}\cdot \bigl(\xi_2\times\nabla_{\xi_2}\whb{f}(\xi_2)\bigr).
\end{align*}
Exploiting $\xi_j\times\xi^{\perp_j}=\xi_j\times\xi$ and the radiality of $H(\xi)$, this further reduces to
\[
\xi^{\perp_2}\cdot\nabla_{\xi_2}\whb{f}(\xi_2) = e^{-isH(\xi_2)}\tfrac{\xi_2\times\xi}{|\xi_2|^2}\cdot (\xi_2\times\nabla_{\xi_2})\whb{v}(\xi_2).
\]
We next write
\[
\xi^{\perp_2}\cdot\nabla_{\xi_2}\bigl(\wh{f}(\xi_1)\bigr) = -\xi^{\perp_1}\cdot(\nabla_{\xi_1}\wh{f})(\xi_1) + \bigl(\xi\cdot\tunit{\xi_1}\tunit{\xi_1}-\xi\cdot\tunit{\xi_2}\tunit{\xi_2}\bigr)\nabla \wh{f}(\xi_1).
\]

Thus, defining
\begin{align}
& b^\angle_{j, N_1,N_2}(\xi_1,\xi_2) = m_{N_1,N_2}(\xi_1,\xi_2) \tfrac{\xi_j\times\xi}{|\xi_j|^2},\label{bfj} \\
& \tilde b^\angle_{N_1,N_2}(\xi_1,\xi_2) = m_{N_1,N_2}(\xi_1,\xi_2)\bigl[\xi\cdot\tunit{\xi_1}\tunit{\xi_1}-\xi\cdot\tunit{\xi_2}\tunit{\xi_2}\bigr],\label{tbA}
\end{align}
we are led to estimate the following terms:
\begin{align}
&\sum_{N_1,N_2}\biggl\| \int_{t_0}^t e^{isH} b_{1,N_1,N_2}^\angle[P_{N_1}(x\times\nabla)v(s),P_{N_2}\bar{v}(s)]\,ds\biggr\|_{L_x^2},\label{anr2} \\
&\sum_{N_1,N_2}\biggl\| \int_{t_0}^t e^{isH} b_{2, N_1,N_2}^\angle[P_{N_1}v(s), P_{N_2}(x\times\nabla)\bar{v}(s)]\,ds\biggr\|_{L_x^2},\label{anr3}\\
&\sum_{N_1,N_2} \biggl\| \int_{t_0}^t e^{isH} \tilde b^\angle_{N_1,N_2}[P_{N_1}Jv(s), P_{N_2}\bar v(s)]\,ds\biggr\|_{L_x^2}.\label{anr4}
\end{align}

%%%%%%%%%%%%%%%%%%%%%%%%%%%
%%%%%%%%%%%%%%%%%%%%%%%%%%%
%%%%%%%%%%%%%%%%%%%%%%%%%%%

\section{Non-resonant decompositions I: Preliminaries and $\bar v^2$ terms}\label{sec:dn}

The next three sections are devoted to the decompositions of frequency space into non-resonant regions for the phases corresponding to each quadratic nonlinear term.

The following lemma illustrates how we estimate many of the multipliers appearing below.

\begin{lemma}[Typical estimate] Consider a multiplier of the form
\[
b(\xi_1,\xi_2) = \tilde{\psi}(\tfrac{\xi_1}{N_1})\tilde{\psi}(\tfrac{\xi_2}{N_2}) m(\xi_1,\xi_2).
\]
Suppose $N_2\leq N_1$, and that for fixed $\xi$ the multiplier $b(\xi-\xi_2,\xi_2)$ vanishes except for those $\xi_2$ on a set of volume $\lesssim N_2^3\theta^2$.  Suppose further that
\[
\bigl| \partial_{\xi_2}^\alpha m \bigr|\lesssim A\cdot \ell^{|\alpha|},\quad|\alpha|\leq 2.
\]
Then
\[
\op{b}\lesssim A\cdot N_2^{\frac32}\theta \cdot \max\bigl\{ \tfrac{1}{N_2},\ell\bigr\}^{\frac32},
\]
where we recall the notation $\op{\cdot}$ from \eqref{bnorm}.
\end{lemma}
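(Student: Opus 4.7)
The plan is to bound the $\xi_2$-factor in the minimum defining $\op{b}$, i.e.\ to control $\|b\|_{L^\infty_\xi \dot H^1_{\xi_2}}^{1/2}\|b\|_{L^\infty_\xi \dot H^2_{\xi_2}}^{1/2}$; this is the natural choice since $N_2\leq N_1$ makes $\xi_2$ the smaller frequency and therefore the variable in which the support condition is most effective.

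First I would record a pointwise derivative estimate. Fix $\xi$ and view $b$ as a function of $\xi_2$ via $\xi_1=\xi-\xi_2$. Differentiation in $\xi_2$ either falls on $\tilde\psi(\xi_1/N_1)$, producing a factor $N_1^{-1}$; on $\tilde\psi(\xi_2/N_2)$, producing $N_2^{-1}$; or on $m$, producing at worst $\ell$ by hypothesis. Since $N_2\leq N_1$, we have $N_1^{-1}\leq N_2^{-1}$, so by Leibniz and the $|\alpha|\leq 2$ hypothesis on $m$,
\[
\bigl|\partial^\alpha_{\xi_2} b(\xi-\xi_2,\xi_2)\bigr| \lesssim A\cdot\max\{N_2^{-1},\ell\}^{|\alpha|}\qquad\text{for } |\alpha|\leq 2,
\]
uniformly in $\xi$.

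Next I would combine this with the support hypothesis. Since the $\xi_2$-support of $b(\xi-\xi_2,\xi_2)$ has measure at most $N_2^3\theta^2$, integrating the square of the pointwise bound gives
\[
\|b\|_{L^\infty_\xi \dot H^k_{\xi_2}} \lesssim A\cdot\max\{N_2^{-1},\ell\}^k \cdot N_2^{3/2}\theta, \qquad k\in\{1,2\}.
\]

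Finally, I would take the geometric mean for $k=1$ and $k=2$:
\[
\|b\|_{L^\infty_\xi \dot H^1_{\xi_2}}^{1/2}\|b\|_{L^\infty_\xi \dot H^2_{\xi_2}}^{1/2} \lesssim A\cdot N_2^{3/2}\theta\cdot \max\{N_2^{-1},\ell\}^{3/2},
\]
which is the claimed bound since $\op{b}$ is the minimum of two such products. The main (minor) obstacle is just bookkeeping: confirming that the factor arising from differentiating the cutoffs really is controlled by $\max\{N_2^{-1},\ell\}$ (which uses $N_2\leq N_1$), and verifying that no better bound comes from differentiating $\tilde\psi(\xi_1/N_1)$ — none does, since we assumed only $N_2\leq N_1$, not $N_2\ll N_1$. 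No further input is needed.
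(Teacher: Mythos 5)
Your proof is correct and follows the same route as the paper: bound the $\xi_2$-derivatives of $b$ pointwise by $A\cdot\max\{N_2^{-1},\ell\}^{|\alpha|}$ via Leibniz (using $N_2\le N_1$ to absorb the $\tilde\psi(\xi_1/N_1)$ cutoff loss), integrate the square over the $\xi_2$-support of volume $\lesssim N_2^3\theta^2$ to get the $L_\xi^\infty\dot H_{\xi_2}^k$ bounds, and take the geometric mean for $k=1,2$. The paper compresses these steps into a single sentence citing the Leibniz rule, the cutoff bound, and H\"older (their name for the support-volume step), but the content is identical.
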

\begin{proof} Note that for $j\in\{1,2\}$, we have
\begin{equation}\label{cutoff-loss}
\bigl| \partial_{\xi_j}^\alpha\bigl[ \tilde{\psi}(\tfrac{\xi_1}{N_1})\tilde{\psi}(\tfrac{\xi_2}{N_2})\bigr]\bigr| \lesssim \bigl(\tfrac{1}{N_1\wedge N_2}\bigr)^{|\alpha|}.
\end{equation}
By the Leibniz rule, \eqref{cutoff-loss}, and H\"older's inequality, we  arrive at the estimate
\[
\opt{b} \lesssim A\cdot N_2^{\frac32}\theta \cdot \max\bigl\{ \tfrac{1}{N_2}, \ell\bigr\}^{\frac32},
\]
which gives the result. \end{proof}

In general, we will use cutoffs to decompose frequency space into regions of time, space, or angular non-resonance, as quantified by the bounds we can prove for $\Phi$ and its derivatives. We will also need to keep track of the bounds satisfied by the derivatives of the cutoff functions.

We will consistently use the following notation.

\begin{definition} For $x,y\in\R^3\backslash\{0\}$ we let $\angle(x,y)\in[0,\pi)$ be the angle defined via $\cos\angle (x,y) = \tfrac{x}{|x|}\cdot \tfrac{y}{|y|}$. We define
\[
\theta_{02}=\angle(\xi,\xi_2), \quad \theta_{01}=\angle(\xi,\xi_1), \quad \theta_{12}=\angle(\xi_1,\xi_2),
\]
where $\xi=\xi_1+\xi_2$.  We also denote $\theta'=\pi-\theta$.  Note that $\theta_{02}+\theta_{01}+\theta_{12}'=\pi$.
\end{definition}

For reference, we collect some bounds on the derivatives of $H$, all of which follow from explicit computation.

\begin{lemma}[Derivative bounds for $H$]\label{H-derivatives} Write $H(\xi)=h(|\xi|),$ where
\[
h(r) = r\langle r\rangle = r(2+r^2)^{1/2}.
\]
Then
\[
h'(r) \sim \langle r\rangle,\quad h''(r)\sim \tfrac{r}{\langle r\rangle}, \quad h'''(r) \sim \tfrac{1}{\langle r\rangle^5}, \quad h^{(4)}(r) \sim \tfrac{r}{\langle r\rangle^7}.
\]
Consequently,
\[
|\partial_\xi^\alpha H(\xi) | \lesssim \langle \xi\rangle (\tfrac{1}{|\xi|})^{|\alpha|-1},\quad  |\alpha|\leq 4.
\]
\end{lemma}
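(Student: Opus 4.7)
The plan is to reduce everything to explicit computation with the one-variable function $h(r) = r\langle r\rangle$, then translate the one-variable bounds into bounds on the full gradients of the radial function $H(\xi) = h(|\xi|)$.

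First I would directly differentiate $h(r) = r(2+r^2)^{1/2}$. A short calculation gives
\[
h'(r) = \tfrac{2(1+r^2)}{\langle r \rangle}, \qquad h''(r) = \tfrac{2r(3+r^2)}{\langle r\rangle^3}, \qquad h'''(r) = \tfrac{12}{\langle r\rangle^5}, \qquad h^{(4)}(r) = -\tfrac{60\, r}{\langle r\rangle^7},
\]
obtained by applying the quotient rule and repeatedly using $\langle r\rangle' = r/\langle r\rangle$ (the computation for $h'''$ involves a cancellation of the $r^4$ terms, which is the only place any care is needed). From these closed forms the asymptotic equivalences follow from the elementary bounds $1+r^2 \sim \langle r\rangle^2 \sim 3+r^2$, valid for all $r\ge 0$.

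Next I would translate these to bounds on $\partial_\xi^\alpha H$. Since $H$ is radial, a standard induction (or direct computation) shows that for each multi-index $\alpha$ with $|\alpha|\ge 1$ one has
\[
\partial_\xi^\alpha H(\xi) = \sum_{k=1}^{|\alpha|} \tfrac{h^{(k)}(|\xi|)}{|\xi|^{|\alpha|-k}}\, P_{\alpha,k}\!\bigl(\tfrac{\xi}{|\xi|}\bigr),
\]
where the $P_{\alpha,k}$ are bounded polynomials in the components of the unit vector $\xi/|\xi|$; this representation is what one obtains from the chain rule by noting that each derivative either differentiates $h^{(k)}(|\xi|)$ (producing $h^{(k+1)}(|\xi|)\xi_j/|\xi|$) or lands on a previously-produced unit vector factor (producing a term of size $1/|\xi|$).

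Finally, I would plug in the bounds from the first step to check that each summand is controlled by $\langle \xi\rangle |\xi|^{-(|\alpha|-1)}$. For $|\alpha|=1$ only $k=1$ occurs and the bound reads $|h'(|\xi|)|\lesssim \langle\xi\rangle$. For $|\alpha|\ge 2$, the $k=1$ term contributes $\langle\xi\rangle/|\xi|^{|\alpha|-1}$, which is the claimed size; every remaining term has an extra factor $h^{(k)}(|\xi|)/h'(|\xi|)$ times $|\xi|^{k-1}$, and the one-variable bounds give $|h^{(k)}(r)|\cdot r^{k-1} \lesssim \langle r\rangle$ for $k=2,3,4$ (e.g.\ $|h''|r \lesssim r^2/\langle r\rangle \lesssim \langle r\rangle$, $|h'''|r^2 \lesssim r^2/\langle r\rangle^5 \lesssim \langle r\rangle$, $|h^{(4)}|r^3 \lesssim r^4/\langle r\rangle^7 \lesssim \langle r\rangle$), so these terms are absorbed into the stated bound. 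There is no real obstacle in this lemma; the only mild subtlety is keeping track of the cancellation that makes $h'''$ decay like $\langle r\rangle^{-5}$ rather than $\langle r\rangle^{-3}$, but as the third derivative plays no special role in the final bound, this can be treated as a bonus rather than an essential step.
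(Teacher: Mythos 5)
Your proof is correct and is exactly the expected argument. The paper does not write out a proof of this lemma (it is presented as a direct computation), and your approach — computing $h', h'', h''', h^{(4)}$ explicitly, reading off the asymptotics from $1+r^2\sim 2+r^2\sim 3+r^2$, and then controlling $\partial_\xi^\alpha H$ via the standard chain-rule expansion for derivatives of a radial function with the verification $|h^{(k)}(r)|\,r^{k-1}\lesssim\langle r\rangle$ for $k=2,3,4$ — is precisely the computation the authors have in mind.
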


The next lemma will be of frequent use in the following sections.

\begin{lemma}\label{DH-diff} For any $x,y\in\R^3$,
\[
|\nabla H(x)\pm \nabla H(y)| \lesssim \bigl| |x|-|y| \bigr|\bigl(\tfrac{|x|}{\langle x\rangle} \vee \tfrac{|y|}{\langle y\rangle}\bigr) + \bigl(\langle x\rangle\wedge \langle y\rangle\bigr)\sin(\tfrac12\angle(x,\mp y)).
\]
By the triangle inequality, one can replace $\bigl| |x|-|y|\bigr|$ by $|x-y|$.
\begin{proof} Without loss of generality, assume $|y|\leq |x|$.  Write
\[
\nabla H(x)\pm \nabla H(y) = \tunit{x}\bigl[h'(|x|)-h'(|y|)\bigr] + h'(|y|)\bigl(\tunit{x}\pm \tunit{y}\bigr).
\]
By the fundamental theorem of calculus and Lemma~\ref{H-derivatives},
\[
\bigl| h'(|x|)-h'(|y|)\bigr| \lesssim \bigl||x|-|y|\bigr|\bigl(\tfrac{|x|}{\langle x\rangle}\vee \tfrac{|y|}{\langle y\rangle}\bigr).
\]
As $h'(|y|)\sim\langle y\rangle$ and
\[
\bigl| \tunit{x}\pm\tunit{y} \bigr| = 2\sin(\tfrac12\angle(x,\mp y)),
\]
the result follows. \end{proof}
\end{lemma}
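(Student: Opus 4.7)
The plan is to isolate the radial and angular contributions to $\nabla H(x)\pm\nabla H(y)$ separately. Since $H(\xi)=h(|\xi|)$ with $h(r)=r\langle r\rangle$, its gradient is $\nabla H(\xi)=h'(|\xi|)\tunit{\xi}$, so the ``add and subtract'' identity
\[
\nabla H(x)\pm\nabla H(y) = \bigl(h'(|x|)-h'(|y|)\bigr)\tunit{x} + h'(|y|)\bigl(\tunit{x}\pm\tunit{y}\bigr)
\]
cleanly splits the quantity into a purely radial piece (multiplied by the unit vector $\tunit{x}$) and a purely angular piece (weighted by the radial factor $h'(|y|)$). I would first note that the right-hand side of the claimed inequality is symmetric under exchanging $x$ and $y$, so one can assume WLOG that $|y|\leq|x|$; then $\tfrac{|x|}{\langle x\rangle}\vee\tfrac{|y|}{\langle y\rangle}=\tfrac{|x|}{\langle x\rangle}$ and $\langle x\rangle\wedge\langle y\rangle=\langle y\rangle$.

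For the radial piece, Lemma~\ref{H-derivatives} gives $h''(r)\sim r/\langle r\rangle$, which is monotonically increasing in $r\geq 0$. Hence the fundamental theorem of calculus yields
\[
\bigl|h'(|x|)-h'(|y|)\bigr| \leq \bigl||x|-|y|\bigr|\sup_{r\in[|y|,|x|]}|h''(r)| \lesssim \bigl||x|-|y|\bigr|\tfrac{|x|}{\langle x\rangle},
\]
which under the WLOG assumption is exactly the first term on the right-hand side of the stated bound. For the angular piece, the same lemma gives $h'(|y|)\sim\langle y\rangle=\langle x\rangle\wedge\langle y\rangle$, while the elementary chord-length computation $|\tunit{x}\pm\tunit{y}|^2=2(1\pm\cos\angle(x,y))$ combined with the half-angle formula and the identity $\angle(x,-y)=\pi-\angle(x,y)$ yields
\[
\bigl|\tunit{x}\pm\tunit{y}\bigr| = 2\sin\bigl(\tfrac12\angle(x,\mp y)\bigr).
\]
Combining these two estimates via the triangle inequality applied to the algebraic identity produces the claimed bound; the replacement of $||x|-|y||$ by $|x-y|$ in the closing remark is then the trivial one-line reverse triangle inequality.

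There is no serious obstacle here; the argument is essentially routine once one writes down the right algebraic decomposition. The only points requiring care are the sign bookkeeping in the chord-angle identity (to verify that the $\mp$ inside $\angle$ really corresponds to the $\pm$ in the gradient difference) and a moment's checking that each term on the right-hand side of the stated inequality is symmetric in $x$ and $y$, which is what legitimizes the WLOG reduction.
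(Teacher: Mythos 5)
Your proof is correct and takes essentially the same approach as the paper: the identical algebraic decomposition into radial and angular pieces, the WLOG reduction, the FTC bound on $|h'(|x|)-h'(|y|)|$, and the chord-length identity. The only cosmetic difference is that you spell out the monotonicity of $h''$ to justify taking the supremum at $r=|x|$, which the paper leaves implicit.
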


%%%%%%%%%%%%%%%%%%%%%%%%%%%
\subsection{Non-resonant decomposition for $\bar v^2$ terms}\label{dec1}

The `decomposition' for $\bar v^2$ terms is particularly simple, due to the fact that there is always time non-resonance.

\begin{proposition}[Non-resonant decomposition for $\bar v^2$ terms]\label{prop:dec1} Fix $N_1,N_2$.  Define
\begin{align*}
\Phi &= H(\xi)+H(\xi_1)+H(\xi_2), \\
b^T(\xi_1,\xi_2) &=\tilde\psi(\tfrac{\xi_1}{N_1}) \tilde\psi(\tfrac{\xi_2}{N_2})A(\xi_1,\xi_2)U(\xi)\frac{\nabla_\xi\Phi}{\Phi}.
\end{align*}
Then
\begin{equation}\label{bT-bound-vbar2}
\op{b^T} \lesssim \frac{1}{N_1\vee N_2}.
\end{equation}
\end{proposition}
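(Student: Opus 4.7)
The plan is to invoke the ``Typical estimate'' lemma at the start of Section~\ref{sec:dn}. The central observation, unique to the $\bar v^2$ phase, is that $\Phi = H(\xi)+H(\xi_1)+H(\xi_2)$ is a sum of three non-negative quantities, so no further frequency decomposition is needed in order to obtain lower bounds on $\Phi$: on the support of the cutoffs one automatically has
\[
\Phi \;\geq\; H(\xi_1)\vee H(\xi_2) \;\sim\; (N_1\vee N_2)\langle N_1\vee N_2\rangle.
\]
By the symmetry built into \eqref{bnorm}, I may assume $N_2 \leq N_1$ and control $b^T$ via the $\dot H^k_{\xi_2}$ norms; the target bound becomes $\op{b^T} \lesssim 1/N_1$. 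At fixed $\xi$, the support in $\xi_2$ has volume $\sim N_2^3$, so $\theta=1$ in the Typical estimate.

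The pointwise size is immediate: $U(\xi)$ and $A$ are bounded (for $A_2$ this uses $\langle\xi_1\rangle\langle\xi_2\rangle \leq 2+|\xi_1|^2+|\xi_2|^2$), while Lemma~\ref{H-derivatives} gives $|\nabla_\xi\Phi| = |\nabla H(\xi)+\nabla H(\xi_1)| \lesssim \langle N_1\rangle$, so dividing by the phase lower bound yields $|b^T| \lesssim 1/N_1$. For the $\xi_2$-derivatives, I would use Leibniz to treat the factor $m := A\,U(\xi)\,\nabla_\xi\Phi / \Phi$ piece by piece: the cutoffs $\tilde\psi$ lose $1/N_2$ per derivative; the $A$ factor loses at worst $1/(N_1\wedge N_2) = 1/N_2$ per derivative by \eqref{Ajbds}; derivatives of $\nabla_\xi\Phi$ produce higher derivatives of $H(\xi_1)$ controlled by Lemma~\ref{H-derivatives}; and derivatives of $1/\Phi$ combine $|\partial_{\xi_2}\Phi| \lesssim \langle N_1\rangle$ with the strong denominator $1/\Phi \lesssim 1/(N_1\langle N_1\rangle)$. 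After bookkeeping, each term is controlled by $(1/N_1)(1/N_2)^{|\alpha|}$ for $|\alpha|\leq 2$, so the Typical estimate with amplitude $1/N_1$ and scale $\ell = 1/N_2$ delivers
\[
\op{b^T} \;\lesssim\; \frac{1}{N_1}\cdot N_2^{3/2}\cdot \Bigl(\frac{1}{N_2}\Bigr)^{3/2} \;=\; \frac{1}{N_1}.
\]

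The only step requiring a moment's care is the second derivative of $1/\Phi$: here $\partial_{\xi_2}^2\Phi$ contains the Hessian of $H$ at the potentially small frequency $\xi_2$, whose magnitude $\langle N_2\rangle/N_2$ can be large when $N_2 \ll 1$. Nevertheless, multiplying by $|\nabla_\xi\Phi|/|\Phi|^2 \lesssim 1/(N_1^2\langle N_1\rangle)$ produces a contribution bounded by $\lesssim 1/(N_2 N_1^2)$, which, using $N_2 \leq N_1$, is still dominated by the required $(1/N_1)(1/N_2)^2$. This is precisely where the uniform time non-resonance coming from positivity of $\Phi$ is essential, and is the main (though modest) obstacle in the argument.
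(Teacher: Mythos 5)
Your proof is correct and follows essentially the same route as the paper: both arguments rest on the uniform time non-resonance $|\Phi|\gtrsim N_1\langle N_1\rangle\vee N_2\langle N_2\rangle$ coming from the positivity of all three summands, plus a Leibniz-and-$L^2$ computation of the symbol's $\dot H_{\xi_j}$-norms. The only cosmetic difference is bookkeeping: the paper packages the derivative bounds through the cancellation $\partial_{\xi_j}^\alpha[\nabla_\xi\Phi+\nabla_{\xi_j}\Phi]$ before dividing by $\Phi$, whereas you expand $\nabla_\xi\Phi/\Phi$ directly and check the borderline term $\nabla_\xi\Phi\,\partial_{\xi_2}^2\Phi/\Phi^2$ by hand (correctly identifying the role of the strong lower bound on $\Phi$ there).
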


\begin{remark}
The multiplier $b^T$ does depend on $N_1$ and $N_2$.  This will be the case with all the multipliers in the following sections.  We have elected to omit any explicit reference to this dependence in our notation to reduce clutter.
\end{remark}

\begin{proof}  Using Lemma~\ref{H-derivatives}, one can check that
\begin{align*}
|\Phi| &\gtrsim N_1\langle N_1\rangle \vee N_2 \langle N_2\rangle, \\
|\partial_{\xi_j}^\alpha [\nabla_\xi\Phi +\nabla_{\xi_j}\Phi]| &\lesssim \bigl(\langle N_1 \rangle \vee \langle N_2\rangle\bigr) \bigl(\tfrac{1}{N_1\wedge N_2}\bigr)^{|\alpha|}, \quad |\alpha|\leq 2.
\end{align*}
Thus,
\[
\bigl| \partial_{\xi_j}^\alpha \tfrac{\nabla_\xi\Phi}{\Phi}\bigr| \lesssim \tfrac{1}{N_1\vee N_2}\bigl(\tfrac{1}{N_1\wedge N_2}\bigr)^{|\alpha|},\quad j\in\{1,2\},\quad |\alpha|\leq 2.
\]
Using this together with \eqref{cutoff-loss} and \eqref{Ajbds}, we deduce \eqref{bT-bound-vbar2}.
\end{proof}

%%%%%%%%%%%%%%%%%%%%%%%%%%%
%%%%%%%%%%%%%%%%%%%%%%%%%%%
%%%%%%%%%%%%%%%%%%%%%%%%%%%

\section{Non-resonant decompositions II: $v^2$ terms}\label{dec2}
In this section we carry out the decomposition into non-resonant regions for the $v^2$ terms in \eqref{bs-quad}.  This corresponds to the phase $\Phi = H(\xi)-H(\xi_1)-H(\xi_2)$.  Recall that we always work at fixed frequencies $|\xi_j|\sim N_j$.  By symmetry, it suffices to consider the case
\begin{equation}\label{vvN2N1}
N_2\leq N_1.
\end{equation}

\begin{proposition}[Non-resonant decomposition for $v^2$ terms]\label{prop:dec2}\leavevmode\ Given $N_2\leq N_1$, there exists a decomposition $1 = \sum_{j=1}^4 \rho_j$ such that the following holds:
the multipliers
\begin{align*}
&b^T_j = \tilde{\psi}(\tfrac{\xi_1}{N_1})\tilde{\psi}(\tfrac{\xi_2}{N_2})\rho_j(\xi_1,\xi_2)A(\xi_1,\xi_2)U(\xi)\tfrac{\nabla_\xi \Phi}{\Phi},\quad j\in\{1,2,3\} \\
&b^X =  \tilde{\psi}(\tfrac{\xi_1}{N_1})\tilde{\psi}(\tfrac{\xi_2}{N_2})\rho_4(\xi_1,\xi_2)A(\xi_1,\xi_2)U(\xi)\tfrac{\nabla_\xi\Phi \nabla_{\xi_2}\Phi}{|\nabla_{\xi_2}\Phi|^2}, \\
&\tilde b^X = \nabla_{\xi_2}\cdot b^X
\end{align*}
satisfy
\[
\begin{aligned}
&\op{b_j^T} \lesssim \begin{cases} \tfrac{1}{N_1^{3/2}} & \text{if }\, N_1\leq 1 \\ \tfrac{1}{N_1} & \text{if }\, N_1>1 \end{cases}\quad j\in\{1,2,3\}, \\
&\op{b^X} \lesssim \begin{cases} \bigl(\tfrac{\langle N_1\rangle}{N_1}\bigr)^{\frac12+} \tfrac{N_2}{\langle N_2\rangle} &\text{always,} \\
\tfrac{N_2}{N_1} &\text{if }\, N_1\gtrsim 1 \text{ and }N_1\gg N_2,
\end{cases} \\
&\op{\tilde b^X} \lesssim  \begin{cases} \bigl(\tfrac{\langle N_1\rangle}{N_1}\bigr)^{\frac32} \tfrac{1}{\langle N_2\rangle} &\text{always,}\\
\tfrac{1}{N_1}  & \text{if }\,N_1\gtrsim 1 \text{ and }N_1\gg N_2.
\end{cases}
\end{aligned}
\]
\end{proposition}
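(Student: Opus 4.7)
My plan is to decompose the restricted frequency set $\{|\xi_1|\sim N_1,\ |\xi_2|\sim N_2\}$ via joint angular and magnitude cutoffs, allocating three of the $\rho_j$'s to time non-resonant regions (where $|\Phi|$ admits a favorable lower bound) and $\rho_4$ to a space non-resonant region (where $|\nabla_{\xi_2}\Phi|$ admits a lower bound). The preliminary step is to establish quantitative bounds on $\Phi=h(|\xi|)-h(|\xi_1|)-h(|\xi_2|)$ and $\nabla_{\xi_2}\Phi=\nabla H(\xi)-\nabla H(\xi_2)$. Using the trigonometric identity
\[
|\xi_1|+|\xi_2|-|\xi| = \frac{4|\xi_1||\xi_2|\sin^2(\theta_{12}/2)}{|\xi_1|+|\xi_2|+|\xi|}
\]
combined with Lemma~\ref{H-derivatives}, a Taylor expansion shows that $\Phi$ degenerates only near parallel configurations (small $\theta_{12}$) when $N_1\lesssim 1$, and can also degenerate near transverse configurations when $N_1\gtrsim 1$ (from the leading $r^2$ behavior of $h$). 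In parallel, Lemma~\ref{DH-diff} applied to $\nabla_{\xi_2}\Phi$ shows it degenerates only when $|\xi|\approx|\xi_2|$ and $\hat\xi\approx\hat\xi_2$, i.e.\ when $\xi_1$ is small and aligned with $\xi_2$—a configuration incompatible with $N_2\leq N_1$ except when $N_1\sim N_2$ and the two vectors are nearly antiparallel.

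Armed with these bounds, I will build the partition by cases. In Case~A, $N_1\gtrsim 1$ and $N_1\gg N_2$, I will take $\rho_4\equiv 1$: since $|\xi|\sim N_1$ differs substantially from $|\xi_2|\sim N_2$, the bound $|\nabla_{\xi_2}\Phi|\sim\langle N_1\rangle$ holds throughout. In Case~B, $N_1\lesssim 1$, the phase is wave-like and I will introduce an angular cutoff of size $\theta$ around the parallel locus, assigning its complement (where $|\Phi|\gtrsim N_2$) to $\rho_1$ and the near-parallel set to $\rho_4$. In Case~C, $N_1\sim N_2\gtrsim 1$, I will use $\rho_2$ to cover the antiparallel region (where $|\xi|\ll N_1$ and $|\Phi|\sim N_1^2$), $\rho_3$ to cover the parallel region (where $|\xi|\sim 2N_1$ and again $|\Phi|\sim N_1^2$), and assign the remaining transverse region to $\rho_4$, where the degeneracy of $\Phi$ is compensated by $|\nabla_{\xi_2}\Phi|\sim\langle N_1\rangle$.

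The final step is an invocation of the typical estimate lemma from the beginning of Section~\ref{sec:dn} for each multiplier: compute the pointwise size $A$ from bounds on $U$, on $A_j$ via Lemma~\ref{AOK}, and on the ratio $|\nabla_\xi\Phi|/|\Phi|$ or the space-non-resonant factor $|\nabla_\xi\Phi\,\nabla_{\xi_2}\Phi|/|\nabla_{\xi_2}\Phi|^2$; identify the derivative scale $\ell$ from the smallest feature of the symbol, noting that differentiating $\Phi^{-1}$ costs $|\nabla_{\xi_2}\Phi|/|\Phi|$ and differentiating $|\nabla_{\xi_2}\Phi|^{-2}\nabla_{\xi_2}\Phi$ costs $|\nabla^2_{\xi_2}\Phi|/|\nabla_{\xi_2}\Phi|^2$ (the latter estimated via Lemma~\ref{H-derivatives}); and read off the volume parameter $\theta$ from the angular cutoffs. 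The bound for $\tilde b^X$ follows from that for $b^X$ by absorbing one additional factor of $\ell$. The main obstacle will be the estimate on $b^X$ in the borderline situation within Case~C, where both $|\Phi|$ and $|\nabla_{\xi_2}\Phi|$ can simultaneously degenerate along the codimension-one sub-variety interpolating between the parallel, transverse, and antiparallel regions; recovering the claimed exponent $(\langle N_1\rangle/N_1)^{1/2+}$ will require optimizing the angular cutoff at a scale $\theta\sim N_1^{-\delta}$ and absorbing the resulting logarithmic loss into the $+$, while the sharper bound $N_2/N_1$ when $N_1\gg N_2$ follows trivially from the Case~A simplification.
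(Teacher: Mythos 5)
The core structural error in your proposal is the formula $\nabla_{\xi_2}\Phi=\nabla H(\xi)-\nabla H(\xi_2)$, which misidentifies where space non-resonance fails. With the paper's convention ($\nabla_{\xi_2}$ taken with $\xi$ fixed and $\xi_1=\xi-\xi_2$), the correct formula for $\Phi = H(\xi)-H(\xi_1)-H(\xi_2)$ is
\[
\nabla_{\xi_2}\Phi = \nabla H(\xi_1) - \nabla H(\xi_2),
\]
which vanishes exactly when $\xi_1=\xi_2$, i.e.\ when the input frequencies are \emph{parallel and of equal magnitude}. This is precisely the locus you propose to assign to the space non-resonant region $\rho_4$ in your Case~B. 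There $|\nabla_{\xi_2}\Phi|\gtrsim \langle N_2\rangle\sin(\tfrac12\theta_{12})\to 0$, while $|\nabla_\xi\Phi| \gtrsim N_1N_2/\langle N_1\rangle$ stays bounded below, so the symbol $\nabla_\xi\Phi\cdot\nabla_{\xi_2}\Phi/|\nabla_{\xi_2}\Phi|^2$ blows up; the claimed $\op{b^X}$ bound cannot hold. The allocation must go the other way: the paper's Region~3 covers the tight near-parallel sector $\sin(\tfrac12\theta_{12})\lesssim N_1/\langle N_1\rangle$ as a \emph{time} non-resonant region using the weak lower bound $|\Phi|\gtrsim N_1^2N_2/\langle N_1\rangle$ (this is the source of the worst exponent $N_1^{-3/2}$ when $N_1\le 1$), and the space non-resonant Region~4 is the complementary, mildly transverse sector $\sin(\tfrac12\theta_{12})\gtrsim N_1/\langle N_1\rangle$, where $|\nabla_{\xi_2}\Phi|\gtrsim \langle N_2\rangle\sin(\tfrac12\theta_{12})$ is safely bounded below.

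Two secondary issues. First, your claim that away from parallel one has $|\Phi|\gtrsim N_2$ for $N_1\lesssim 1$ is only correct at $O(1)$ angular separation; in the paper's expansion one gets $|\Phi|\sim N_2\sin^2(\tfrac12\theta_{12})$ near the wave-like regime, so a quantitative angular cutoff is forced and the resulting time non-resonance is weak, not of strength $N_2$. Second, the $(\langle N_1\rangle/N_1)^{1/2+}$ exponent in the $\op{b^X}$ bound does not come from optimizing a free cutoff scale $\theta\sim N_1^{-\delta}$; it arises from the spherical-coordinate integral $\int_{N_1/\langle N_1\rangle}^{\pi-N_1/\langle N_1\rangle}(\sin\varphi)^{-1}\,d\varphi$ evaluated over the annular sector left after excising Region~3 at the fixed scale $N_1/\langle N_1\rangle$ (see \eqref{vvR4-L2compute}); there is no choice of $\delta$ to make. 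Your Case~A plan ($\rho_4\equiv 1$ when $N_1\gtrsim 1\gg N_2$) and the broad strokes of Case~C are reasonable, but the proposal as a whole cannot be salvaged without first correcting the $\nabla_{\xi_2}\Phi$ formula and reversing the parallel-region allocation, at which point it essentially reproduces the paper's four-region scheme.
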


%%%%%%%%%
\subsection{Region 1. Time non-resonance} Define
\[
\rho_1(\xi_1,\xi_2) = \chi_1^T(\xi_1,\xi_2)=\varphi(\tfrac{8\xi}{N_1})\tilde\psi(\tfrac{\xi_2}{N_1})\psi(\tfrac{\xi_1}{N_1}).
\]
This is a Coifman--Meyer multiplier that restricts to the region
\begin{equation}\label{vvR1}
|\xi|\leq |\xi_1|\sim |\xi_2|.
\end{equation}
In particular,
\begin{equation}\label{vvchi1Tbd}
|\partial_{\xi_2}^\alpha \rho_1 | \lesssim \bigl(\tfrac{1}{N_1\vee N_2}\bigr)^{|\alpha|},\quad |\alpha|\leq 3.
\end{equation}

As in \eqref{bT}, we define the multiplier
\begin{equation}\label{vvb1T}
b_1^T(\xi_1,\xi_2) = \rho_1(\xi_1,\xi_2)\tilde\psi(\tfrac{\xi_1}{N_1})\tilde\psi(\tfrac{\xi_2}{N_2})U(\xi)A(\xi_1,\xi_2)\frac{\nabla_\xi \Phi}{\Phi}.
\end{equation}
\begin{lemma}[Region 1 bounds]\label{L:vvR1} The following bound holds:
\begin{equation}\label{vvR1-bound}
\op{b^T_1} \lesssim \tfrac{1}{N_1}.
\end{equation}
\end{lemma}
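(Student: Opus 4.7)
The plan is to verify the hypotheses of the Typical Estimate Lemma stated at the beginning of Section~\ref{sec:dn}, which will then immediately yield the bound. First I would observe that on the support of $b_1^T$ the factor $\psi(\xi_1/N_1)$ from $\rho_1$ forces $|\xi_1|\sim N_1$, which combined with $|\xi|\leq N_1/8$ forces $|\xi_2|\sim N_1$; since $\tilde\psi(\xi_2/N_2)$ requires $|\xi_2|\sim N_2$, the multiplier vanishes unless $N_2\sim N_1$, so I may assume this throughout.

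Next I would establish the time non-resonance lower bound $|\Phi|\gtrsim N_1\langle N_1\rangle$. On the support of $\rho_1$ one has $H(\xi_1)+H(\xi_2)\gtrsim N_1\langle N_1\rangle$ (both summands positive and of this size), while $|H(\xi)|\leq |\xi|\langle\xi\rangle\leq \tfrac18 N_1\langle N_1\rangle$ using $|\xi|\leq N_1/8$ and $\langle\xi\rangle\leq\langle N_1\rangle$; hence no cancellation can occur in $\Phi=H(\xi)-H(\xi_1)-H(\xi_2)$.

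Next I would bound the $\xi_2$-derivatives (with $\xi$ fixed, $\xi_1=\xi-\xi_2$) of the full symbol $m := \rho_1\,\tilde\psi(\xi_1/N_1)\tilde\psi(\xi_2/N_2)\,U(\xi)A(\xi_1,\xi_2)\,\nabla_\xi\Phi/\Phi$. By Lemma~\ref{H-derivatives} and the chain rule, $|\partial_{\xi_2}^\alpha\Phi|\lesssim \langle N_1\rangle\,N_1^{-(|\alpha|-1)}$ for $1\leq|\alpha|\leq 3$, and similarly $|\partial_{\xi_2}^\alpha\nabla_\xi\Phi|\lesssim \langle N_1\rangle\,N_1^{-|\alpha|}$ (since $\nabla_\xi\Phi=\nabla H(\xi)-\nabla H(\xi_1)$ and only the second term depends on $\xi_2$). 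The quotient rule combined with the lower bound on $|\Phi|$ then gives $|\partial_{\xi_2}^\alpha(\nabla_\xi\Phi/\Phi)|\lesssim N_1^{-1}\,N_1^{-|\alpha|}$. Using $|U(\xi)|\lesssim N_1/\langle N_1\rangle$ (constant in $\xi_2$), together with \eqref{Ajbds}, \eqref{vvchi1Tbd}, and \eqref{cutoff-loss} for the remaining factors (all at scale $N_1$), one obtains $|\partial_{\xi_2}^\alpha m|\lesssim \langle N_1\rangle^{-1}\,N_1^{-|\alpha|}$ for $|\alpha|\leq 2$.

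Finally, applying the Typical Estimate Lemma with $N_2\sim N_1$, $\theta\sim 1$ (the $\xi_2$-support at fixed $\xi$ has volume $\lesssim N_1^3$), $A=\langle N_1\rangle^{-1}$, and $\ell=N_1^{-1}$ yields
\[
\op{b_1^T}\lesssim \langle N_1\rangle^{-1}\cdot N_1^{3/2}\cdot N_1^{-3/2}=\langle N_1\rangle^{-1}\leq N_1^{-1},
\]
the last inequality following from $\langle N_1\rangle\geq N_1$. The only delicate step is the quotient-rule estimate for $\nabla_\xi\Phi/\Phi$: one must track carefully that the numerator has size $\langle N_1\rangle$ while the denominator has size $N_1\langle N_1\rangle$, so that the overall gain per $\xi_2$-derivative is $N_1^{-1}$ and the overall $L^\infty$ size is $\langle N_1\rangle^{-1}$. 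Beyond this, the argument is routine symbol-and-support bookkeeping.
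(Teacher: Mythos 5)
Your proof is correct and follows essentially the same approach as the paper: establish the time-non-resonance lower bound $|\Phi|\gtrsim N_1\langle N_1\rangle$ on the support of $\rho_1$ (where $|\xi|\lesssim N_1$ forces no cancellation between $H(\xi)$ and $H(\xi_1)+H(\xi_2)$), then bound $\xi_2$-derivatives of the symbol by combining Lemma~\ref{H-derivatives} and the quotient rule, and finally plug into the symbol-norm estimate. The only difference is cosmetic: you keep track of the factor $U(\xi)\lesssim N_1/\langle N_1\rangle$, which gives the marginally sharper bound $\op{b_1^T}\lesssim\langle N_1\rangle^{-1}$; the paper simply uses $|U(\xi)|\lesssim 1$ and stops at $N_1^{-1}$, which is all that is needed (and all that survives the minimum over regions in Proposition~\ref{prop:dec2}).
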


\begin{proof}
Using \eqref{vvR1} and Lemma~\ref{H-derivatives}, one can check
\begin{align*}
|\Phi|&\gtrsim N_1\langle N_1\rangle, \\
\bigl| \partial_{\xi_2}^\alpha[ \nabla_\xi\Phi+\nabla_{\xi_2}\Phi]\bigr| & \lesssim \langle N_1\rangle\bigl(\tfrac{1}{N_1}\bigr)^{|\alpha|},\quad |\alpha|\leq 2.
\end{align*}
Thus,
\[
\bigl| \partial_{\xi_2}^\alpha \tfrac{\nabla_\xi\Phi}{\Phi}\bigr| \lesssim \tfrac{1}{N_1}\bigl(\tfrac{1}{N_1}\bigr)^{|\alpha|},\quad |\alpha|\leq 2.
\]
Using this together with the cutoff bounds (\eqref{cutoff-loss}, \eqref{vvchi1Tbd}) and \eqref{Ajbds}, we deduce
\[
\| b_1^T \|_{L_\xi^\infty \dot H_{\xi_2}^1}^{\frac12}\|b_1^T\|_{L_\xi^\infty \dot H_{\xi_2}^2}^{\frac12} \lesssim \tfrac{1}{N_1},
\]
which implies \eqref{vvR1-bound}. \end{proof}

\begin{remark} On the complement of Region 1 (i.e. on the support of $1-\rho_1$), we have
\begin{equation}\label{vvR1c}
|\xi|\sim|\xi_1|.
\end{equation}
\end{remark}

%%%%%%%%%
\subsection{Region 2. Time non-resonance}  Let $\phi_k:\mathbb{S}^2\to\R$ be a partition of unity adapted to a maximal $10^{-6}$-separated set $\{\omega_k\}$ on $\mathbb{S}^2$. Define
\[
\mathcal{R}_2 := \{(k,\ell):\angle(\omega_k,\omega_\ell) \geq \tfrac{2\pi}{3} + 4\cdot 10^{-6}\},
\]
and let
\[
\chi_2^T(\xi_1,\xi_2) = \sum_{(k,\ell)\in\mathcal{R}_2} \phi_k(\tunit{\xi})\phi_\ell(\tunit{\xi_1}).
\]
Note that
\begin{equation}\label{vvchi2Tbd}
\bigl| \partial_{\xi_2}^\alpha \chi_2^T \bigr| \lesssim \bigl(\tfrac{1}{N_1}\bigr)^{|\alpha|},\quad |\alpha|\leq 3.
\end{equation}

We define
\[
\rho_2(\xi_1,\xi_2)=[1-\chi_1^T(\xi_1,\xi_2)]\chi_2^T(\xi_1,\xi_2)
\]
and
\begin{equation}\label{vvb2T}
b_2^T(\xi_1,\xi_2) = \rho_2(\xi_1,\xi_2)\tilde\psi(\tfrac{\xi_1}{N_1}) \tilde\psi(\tfrac{\xi_2}{N_2})A(\xi_1,\xi_2)U(\xi)\frac{\nabla_\xi\Phi}{\Phi}.
\end{equation}
\begin{lemma}[Region 2 bounds]\label{L:vvR2} The following bound holds:
\begin{equation}\label{vvR2-bound}
\op{b_2^T} \lesssim \tfrac{1}{N_1}.
\end{equation}
\end{lemma}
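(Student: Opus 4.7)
The plan is to follow the same template as the proof of Lemma~\ref{L:vvR1}, adapted to the new geometry. First, I pin down the constraints on the support of $\rho_2 = (1-\chi_1^T)\chi_2^T$. On the complement of Region~1 we have $|\xi|\sim|\xi_1|$ by \eqref{vvR1c}, while the angular cutoff $\chi_2^T$ forces $\angle(\xi,\xi_1)\geq \tfrac{2\pi}{3}+4\cdot 10^{-6}$. The Littlewood--Paley cutoff imposes $|\xi_1|\sim N_1$, so the law of cosines together with $|\xi|\sim |\xi_1|$ gives $|\xi_2|=|\xi-\xi_1|\sim N_1$ (in particular $N_2\sim N_1$). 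Moreover, $\theta_{12}=\theta_{01}+\theta_{02}\geq \tfrac{2\pi}{3}$, so $\cos\theta_{12}\leq -\tfrac{1}{2}$ and $\xi_1,\xi_2$ are almost antiparallel.

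Next, I establish the time non-resonance estimate $|\Phi|\gtrsim N_1\langle N_1\rangle$. The identity
\[
|\xi_1|+|\xi_2|-|\xi|=\frac{2|\xi_1||\xi_2|(1-\cos\theta_{12})}{|\xi_1|+|\xi_2|+|\xi|}
\]
yields $|\xi_1|+|\xi_2|-|\xi|\gtrsim N_1$. For $N_1\lesssim 1$ the expansion $h(r)=\sqrt{2}\,r+O(r^3)$ gives $\Phi\approx \sqrt{2}(|\xi|-|\xi_1|-|\xi_2|)$, hence $|\Phi|\gtrsim N_1$; for $N_1\gtrsim 1$ the leading behaviour $h(r)\approx r^2+1$ gives $\Phi\approx -2\xi_1\!\cdot\!\xi_2-1\lesssim -N_1^2$, so $|\Phi|\gtrsim N_1^2$. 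Both cases combine to $|\Phi|\gtrsim N_1\langle N_1\rangle$.

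Third, by Lemma~\ref{H-derivatives} and the chain rule applied to $\partial_{\xi_2} H(\xi-\xi_2)$ and $\partial_{\xi_2} H(\xi_2)$,
\[
|\partial_{\xi_2}^\alpha \Phi|\lesssim N_1\langle N_1\rangle\bigl(\tfrac{1}{N_1}\bigr)^{|\alpha|},\qquad |\partial_{\xi_2}^\alpha\nabla_\xi\Phi|\lesssim \langle N_1\rangle\bigl(\tfrac{1}{N_1}\bigr)^{|\alpha|},\quad |\alpha|\leq 3,
\]
where the $|\alpha|=0$ bound on $\nabla_\xi\Phi=\nabla H(\xi)-\nabla H(\xi_1)$ comes from Lemma~\ref{DH-diff}. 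The quotient rule combined with the phase lower bound yields
\[
\Bigl|\partial_{\xi_2}^\alpha\tfrac{\nabla_\xi\Phi}{\Phi}\Bigr|\lesssim \tfrac{1}{N_1}\bigl(\tfrac{1}{N_1}\bigr)^{|\alpha|},\quad |\alpha|\leq 2.
\]

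Finally, I apply the typical-estimate lemma. The remaining factors of $b_2^T$ are the symbol $A$ (which satisfies $|\partial^\alpha A|\lesssim (1/N_1)^{|\alpha|}$ on our support by \eqref{Ajbds} since $|\xi_1|\wedge|\xi_2|\sim N_1$), the factor $U(\xi)$ (constant in $\xi_2$ at fixed $\xi$ and bounded by $1$), and the cutoffs (with bounds \eqref{cutoff-loss}, \eqref{vvchi2Tbd}, and \eqref{vvchi1Tbd}). By Leibniz, $|\partial_{\xi_2}^\alpha b_2^T|\lesssim \tfrac{1}{N_1}(1/N_1)^{|\alpha|}$ for $|\alpha|\leq 2$, and the $\xi_2$-support at fixed $\xi$ has volume $\lesssim N_1^3$. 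Plugging $A=1/N_1$, $N_2=N_1$, $\theta=1$, $\ell=1/N_1$ into the typical estimate yields $\op{b_2^T}\lesssim \tfrac{1}{N_1}\cdot N_1^{3/2}\cdot (1/N_1)^{3/2}=\tfrac{1}{N_1}$, as desired. The main obstacle is the lower bound $|\Phi|\gtrsim N_1\langle N_1\rangle$ in the low-frequency regime $N_1\lesssim 1$, where $H$ is wave-like: one really must exploit the geometric gap $|\xi_1|+|\xi_2|-|\xi|\gtrsim N_1$ coming from the angular separation $\theta_{12}\geq 2\pi/3$, since a direct Taylor expansion of $h$ does not suffice.
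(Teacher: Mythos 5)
Your overall strategy (lower bound on $|\Phi|$, derivative bounds via Lemma~\ref{H-derivatives}, then the typical-estimate lemma with $\theta=1$, $\ell=1/N_1$, and $N_2\sim N_1$) matches the paper's template and yields the correct bound. The geometric observations $|\xi|\sim|\xi_1|\sim|\xi_2|\sim N_1$ and $\theta_{12}\geq 2\pi/3$ are also correct.

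However, for the key lower bound $|\Phi|\gtrsim N_1\langle N_1\rangle$ you take a genuinely different route, and it has a gap. You Taylor-expand $h$ in the regimes $N_1\lesssim 1$ and $N_1\gtrsim 1$ separately: for small $N_1$ you invoke $h(r)=\sqrt2\,r+O(r^3)$ with the gap $|\xi_1|+|\xi_2|-|\xi|\gtrsim N_1$, and for large $N_1$ you invoke $h(r)\approx r^2+1$ with $\xi_1\cdot\xi_2\lesssim -N_1^2$. Both expansions control the leading term relative to the error only asymptotically ($N_1\ll 1$ or $N_1\gg 1$); in the crossover regime $N_1\sim 1$ the error terms are comparable in size to the main term, so neither expansion closes the estimate without further work (e.g. a compactness argument on $\{|\xi_1|,|\xi_2|\in[c,C],\,\theta_{12}\geq 2\pi/3\}$). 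The paper bypasses this entirely with a monotonicity argument that is uniform in $N_1$: from $\theta_{01}\geq 2\pi/3$ and the law of cosines, $|\xi|^2+|\xi_1|^2-|\xi_2|^2=2|\xi||\xi_1|\cos\theta_{01}<0$, hence $|\xi_2|\geq|\xi|$, hence $H(\xi_2)\geq H(\xi)$, and therefore $\Phi=H(\xi)-H(\xi_1)-H(\xi_2)\leq -H(\xi_1)$, giving $|\Phi|\geq H(\xi_1)\sim N_1\langle N_1\rangle$ immediately. This one-line argument is the content you were missing; it also shows that your final remark (``a direct Taylor expansion of $h$ does not suffice, one must exploit the gap $|\xi_1|+|\xi_2|-|\xi|\gtrsim N_1$'') mischaracterizes the situation: no expansion and no triangle-excess estimate are needed if one uses the monotonicity of $h$ together with $|\xi_2|\geq |\xi|$.

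A couple of minor points: your appeal to Lemma~\ref{DH-diff} for the $|\alpha|=0$ bound on $\nabla_\xi\Phi$ is unnecessary, since the crude bound $|\nabla H(\xi)-\nabla H(\xi_1)|\lesssim\langle N_1\rangle$ from Lemma~\ref{H-derivatives} already suffices here. Also, the paper writes the $|\alpha|=2$ bound on $\partial_{\xi_2}^\alpha\Phi$ as $\langle N_2\rangle/N_2$; your form $\langle N_1\rangle/N_1$ is equivalent since $N_2\sim N_1$ on this support.
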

\begin{proof} First note that on the support of $\chi_2^T$ we have $\theta_{01}\geq \tfrac{2\pi}{3}$.  Thus,
\[
|\xi|^2+|\xi_1|^2 - |\xi_2|^2 = 2|\xi_1| |\xi| \cos(\theta_{01}) < 0.
\]
This implies $|\xi_2|\geq |\xi|$, which in turn implies
\[
|\Phi| \gtrsim N_1\langle N_1\rangle.
\]

Using this together with Lemma~\ref{H-derivatives} (and recalling $N_2\leq N_1$), one also finds
\[
|\partial_{\xi_2}^\alpha \nabla_\xi\Phi| \lesssim \langle N_1\rangle\bigl(\tfrac{1}{N_1})^{|\alpha|}\qtq{for} |\alpha|\leq 2,\quad
|\partial_{\xi_2}^\alpha \Phi| \lesssim
 \begin{cases} \langle N_1\rangle & |\alpha| = 1, \\
\tfrac{\langle N_2\rangle}{N_2} & |\alpha|=2.
\end{cases}
\]

It follows that
\[
\bigl| \partial_{\xi_2}^\alpha \tfrac{\nabla_\xi\Phi}{\Phi}\bigr| \lesssim \begin{cases}
\tfrac{1}{N_1}\cdot\bigl(\tfrac{1}{N_1}\bigr)^{|\alpha|} & |\alpha| \leq 1, \\
\tfrac{1}{N_1}\cdot \tfrac{1}{N_1 N_2} & |\alpha|=2.\end{cases}
\]

Using this together with the cutoff bounds (\eqref{cutoff-loss}, \eqref{vvchi1Tbd}, \eqref{vvchi2Tbd}) and \eqref{Ajbds}, we deduce
\[
\opt{b_2^T}\lesssim \tfrac{1}{N_1},
\]
which gives \eqref{vvR2-bound}.\end{proof}

\begin{remark} On the complement of Region 2 (i.e. on the support of $1-\chi_2^T$), we have
\begin{equation}\label{12vs1}
\theta_{01}\leq \tfrac{2\pi}{3} + 8\cdot 10^{-6},\qtq{whence}
\sin(\theta_{01})\sim \sin(\tfrac12\theta_{01}).
\end{equation}
\end{remark}

%%%%%%%%%
\subsection{Region 3. Time non-resonance}  Let $\phi_k:\mathbb{S}^2\to\R$ be a partition of unity adapted to a maximal $10^{-6}\frac{N_1}{\langle N_1\rangle}$-separated set $\{\omega_k\}$ on $\mathbb{S}^2$.  Define
\begin{align*}
\mathcal{R}_3 & := \{(k,\ell): \angle( \omega_k,  \omega_\ell) \leq 10^{-4}\tfrac{N_1}{\langle N_1\rangle} - 4\cdot 10^{-6}\tfrac{N_1}{\langle N_1\rangle}\} \\
& \quad\quad \cup \{(k,\ell):\angle(\omega_k,\omega_\ell)\geq \pi - 10^{-4}\tfrac{N_1}{\langle N_1\rangle} + 4\cdot 10^{-6}\tfrac{N_1}{\langle N_1\rangle}\}.
\end{align*}
We let
\[
\chi_3^T(\xi_1,\xi_2) = \sum_{(k,\ell)\in\mathcal{R}_3} \phi_k(\tunit{\xi})\phi_\ell(\tunit{\xi_2}).
\]

One can check directly that
\begin{equation}\label{vvchi3Tbd}
\bigl|\partial_{\xi_2}^\alpha \chi_3^T\bigr| \lesssim \bigl(\tfrac{\langle N_1\rangle}{N_1N_2}\bigr)^{|\alpha|},\quad |\alpha|\leq 3.
\end{equation}

We define
\[
\rho_3(\xi_1,\xi_2)=\prod_{j=1}^2 \bigl[1-\chi_j^T(\xi_1,\xi_2)\bigr]\chi_3^T(\xi_1,\xi_2)
\]
and let
\[
b_3^T(\xi_1,\xi_2) = \rho_3(\xi_1,\xi_2)A(\xi_1,\xi_2)U(\xi)\tilde\psi(\tfrac{\xi_1}{N_1})\tilde\psi(\tfrac{\xi_2}{N_2})\frac{\nabla_\xi\Phi}{\Phi}.
\]
\begin{lemma}[Region 3 bounds]\label{L:vvR3} The following bound holds:
\begin{equation}\label{vvR3-bound}
\op{b_3^T} \lesssim \begin{cases} \tfrac{1}{N_1^{3/2}} & N_1 \leq 1, \\ \tfrac{1}{N_1} & N_1>1. \end{cases}
\end{equation}
\end{lemma}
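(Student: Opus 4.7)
The plan is to follow the template of Lemmas~\ref{L:vvR1}--\ref{L:vvR2}: on the support of $\rho_3$, establish a lower bound on $|\Phi|$, derive pointwise bounds on the $\xi_2$-derivatives of $\nabla_\xi\Phi/\Phi$ via Lemma~\ref{H-derivatives}, and combine these with the cutoff estimates \eqref{cutoff-loss}, \eqref{vvchi1Tbd}, \eqref{vvchi2Tbd}, \eqref{vvchi3Tbd} and the multiplier bound \eqref{Ajbds} to invoke the ``Typical estimate'' lemma at the beginning of Section~\ref{sec:dn}.  The key geometric observation is that outside Regions~1 and~2 we have $|\xi|\sim|\xi_1|\sim N_1$ and $\theta_{01}$ bounded away from $\pi$, while inside Region~3 the direction $\widehat{\xi_2}$ makes an angle $\lesssim N_1/\langle N_1\rangle$ with either $\widehat\xi$ or $-\widehat\xi$.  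Together these constraints reduce matters to two sub-cases: a parallel configuration in which $\widehat{\xi_2}\approx\widehat\xi$ and necessarily $||\xi|-|\xi_2||\sim N_1$ (to keep $|\xi_1|\sim N_1$), or an anti-parallel configuration in which $\widehat{\xi_2}\approx-\widehat\xi$ and $|\xi_1|\approx|\xi|+|\xi_2|$.

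The central step is the lower bound on $|\Phi|$.  The law of cosines gives $|\xi_1|^2=(|\xi|\mp|\xi_2|)^2+2|\xi||\xi_2|(1\mp\cos\theta_{02})$, and on Region~3 the angular correction is $\lesssim N_1N_2(N_1/\langle N_1\rangle)^2$; the Lipschitz estimate $|h'|\lesssim\langle N_1\rangle$ from Lemma~\ref{H-derivatives} then shows that replacing $h(|\xi_1|)$ by $h(||\xi|\mp|\xi_2||)$ in $\Phi$ is subleading.  The resulting scalar quantity $h(|\xi|)-h(||\xi|\mp|\xi_2||)-h(|\xi_2|)$ is positive by super-additivity of $h$, and the identity
\[
h(a+b)-h(a)-h(b)=\int_0^b\!\!\int_0^a h''(s+t)\,ds\,dt
\]
together with $h''(r)\sim r/\langle r\rangle$ yields $|\Phi|\gtrsim N_1N_2$ when $N_1>1$ and $|\Phi|\gtrsim N_1^2N_2$ when $N_1\leq 1$.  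A parallel computation controls $\nabla_\xi\Phi=h'(|\xi|)\widehat\xi-h'(|\xi_1|)\widehat{\xi_1}$ and its higher $\xi_2$-derivatives, ultimately giving $|\partial_{\xi_2}^\alpha(\nabla_\xi\Phi/\Phi)|\lesssim (1/N_1)^{1+|\alpha|}$ for $|\alpha|\leq 2$.

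With these ingredients in hand, the final assembly is routine: the typical estimate is applied with amplitude $A\sim 1/\langle N_1\rangle$ (from $|U|\cdot|\nabla_\xi\Phi/\Phi|\sim(N_1/\langle N_1\rangle)\cdot(1/N_1)$), angular width $\theta\sim N_1/\langle N_1\rangle$, and derivative scale $\ell\sim\langle N_1\rangle/(N_1N_2)$ coming from \eqref{vvchi3Tbd}, which dominates the radial scale $1/N_2$ from $\tilde\psi(\xi_2/N_2)$.  The arithmetic simplifies to $\op{b_3^T}\lesssim 1/\sqrt{N_1\langle N_1\rangle}$, giving $\lesssim 1/N_1$ in the high-frequency regime and the stronger bound $\lesssim 1/\sqrt{N_1}\leq 1/N_1^{3/2}$ in the low-frequency regime, as required.

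The main obstacle will be the low-frequency lower bound on $|\Phi|$.  At small frequencies the leading behavior $h(r)\approx\sqrt{2}\,r$ produces an exact linear cancellation in $\Phi$, so the phase is only of cubic size in the frequencies and one must verify that the angular correction from the law-of-cosines substitution is strictly subleading.  The Region~2 exclusion is precisely what rules out the dangerous degenerations of the parallel sub-case in which $||\xi|-|\xi_2||$ could approach zero, as those would collapse the cubic main term and allow the angular correction to take over.
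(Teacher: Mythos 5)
Your plan follows the paper's blueprint (parallel/anti-parallel split, super-additivity of $h$, the ``typical estimate'' lemma), but the claimed bound $|\nabla_\xi\Phi/\Phi|\lesssim 1/N_1$ fails when $N_1\leq 1$, and this is precisely the regime responsible for the exponent $3/2$ in the lemma.  Combined with your own lower bound $|\Phi|\gtrsim N_1^2N_2$, the claim would need $|\nabla_\xi\Phi|\lesssim N_1N_2$; this is not true.  Writing $\nabla_\xi\Phi=[h'(|\xi|)-h'(|\xi_1|)]\tfrac{\xi}{|\xi|}+h'(|\xi_1|)\bigl[\tfrac{\xi}{|\xi|}-\tfrac{\xi_1}{|\xi_1|}\bigr]$, the radial piece is indeed $O(N_1N_2/\langle N_1\rangle)$, but the angular piece has size $\sim\langle N_1\rangle\sin(\tfrac12\theta_{01})$; at the boundary of the $\chi_3^T$ cutoff one has $\theta_{02}\sim N_1/\langle N_1\rangle$, so the law of sines gives $\sin(\tfrac12\theta_{01})\sim N_2/\langle N_1\rangle$ and hence $|\nabla_\xi\Phi|\sim N_2$ with no extra factor of $N_1$.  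Thus $|\nabla_\xi\Phi/\Phi|\lesssim \langle N_1\rangle/N_1^2$, a full factor $N_1^{-1}$ worse than you assert.

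With the corrected amplitude $A\sim U(\xi)\cdot|\nabla_\xi\Phi/\Phi|\sim\tfrac{N_1}{\langle N_1\rangle}\cdot\tfrac{\langle N_1\rangle}{N_1^2}=\tfrac1{N_1}$ (not $1/\langle N_1\rangle$), your final assembly gives $\op{b_3^T}\lesssim\langle N_1\rangle^{1/2}N_1^{-3/2}$, which matches the lemma; your $1/\sqrt{N_1\langle N_1\rangle}$ is spuriously better by a factor of $N_1$ at low frequencies, which should have been a warning sign.  Moreover, even with the corrected amplitude, the $|\alpha|\geq 1$ estimates are delicate: the quotient rule for $\nabla_\xi\Phi/\Phi$ brings in $\nabla_{\xi_2}\Phi$, and one needs the refined bound $|\nabla_{\xi_2}\Phi|\lesssim N_1$ (valid only in the low-frequency parallel sub-case; this is the paper's \eqref{vvR3-phixi2ub}) rather than the naive $O(1)$ bound from Lemma~\ref{H-derivatives}; without it the derivative scale of $\nabla_\xi\Phi/\Phi$ degrades to $1/(N_1^2N_2)$ and the typical estimate overshoots to $N_1^{-3}$.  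The paper therefore treats $N_1\leq1$ in two separate sub-cases (\eqref{vvR3-case2} and \eqref{vvR3-case3}), proving the extra $|\nabla_{\xi_2}\Phi|$ bound in the parallel one and using the much stronger lower bound $|\Phi|\gtrsim N_2$ in the anti-parallel one.
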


\begin{proof} We begin by collecting some facts related to the support of $b_3^T$.

First note that by the definition of $\chi_3^T$, we have
\begin{equation}\label{chi3T-angle}
\theta_{02}\leq 10^{-4}\tfrac{N_1}{\langle N_1\rangle}\qtq{or}\theta_{02}\geq \pi-10^{-4}\tfrac{N_1}{\langle N_1\rangle},
\end{equation}
so that
\[
\sin(\theta_{02})\leq 10^{-4} \tfrac{N_1}{\langle N_1\rangle}.
\]
Thus, using the law of sines, \eqref{12vs1}, and \eqref{vvR1c}, we deduce
\begin{equation}\label{vvR3angles}
\begin{aligned}
& \sin(\theta_{12})= \tfrac{|\xi|}{|\xi_1|}\sin(\theta_{02}) \lesssim 10^{-4}\tfrac{N_1}{\langle N_1\rangle},  \\
& \sin(\tfrac12\theta_{01})\sim \tfrac{N_2}{N_1}\sin(\theta_{02}) \lesssim 10^{-4}\tfrac{N_2}{\langle N_1\rangle}\ll 1.
\end{aligned}
\end{equation}
In particular, $\theta_{01}\lesssim 10^{-4}\tfrac{N_2}{\langle N_1\rangle}\ll 1$, so that
\begin{equation}\label{02vs12}
\theta_{02}\sim\theta_{12}.
\end{equation}

Next, note that for fixed $\xi$, the multiplier $\chi_3^T(\xi,\xi_2) \tilde\psi(\frac{\xi_1}{N_1})\tilde\psi(\frac{\xi_2}{N_2})$ restricts $\xi_2$ to a set of volume
\begin{equation}\label{vvR3-volume}
N_2^3\tfrac{N_1^2}{\langle N_1\rangle^2}.
\end{equation}

We now claim the lower bound
\begin{equation}\label{vvR3-philb}
|\Phi| \gtrsim \tfrac{N_1^2 N_2}{\langle N_1\rangle}.
\end{equation}
To justify this, we treat the two alternatives in \eqref{chi3T-angle} separately, beginning with the first.  Writing
\[
\Phi = \bigl[h(|\xi_1|+|\xi_2|) - h(|\xi_1|) - h(|\xi_2|)\bigr] + \bigl[h(|\xi|) - h(|\xi_1|+|\xi_2|)\bigr],
\]
a direct computation gives
\[
h(|\xi_1|+|\xi_2|) - h(|\xi_1|) - h(|\xi_2|) = \tfrac{|\xi_1|\,|\xi_2|(|\xi_2|+2|\xi_1|)}{\langle \xi_1\rangle+ \langle |\xi_1|+|\xi_2|\rangle}
	+ \tfrac{|\xi_1|\,|\xi_2|(|\xi_1|+2|\xi_2|)}{\langle \xi_2\rangle + \langle |\xi_1| + |\xi_2|\rangle}\gtrsim\tfrac{N_1^2 N_2}{\langle N_1\rangle}.
\]
On the other hand, recalling \eqref{vvR1c}, we have
\[
| h(|\xi|) - h(|\xi_1|+|\xi_2|) | = \biggl| \int_{|\xi_1|+|\xi_2|}^{|\xi|} h'(r)\,dr \biggr| \lesssim \langle N_1\rangle \bigl| |\xi|-|\xi_1|-|\xi_2|\bigr|.
\]
As direct computation also gives
\[
\bigl| |\xi|-|\xi_1|-|\xi_2| \bigr| = 4\tfrac{|\xi_1|\,|\xi_2|}{|\xi|+|\xi_1|+|\xi_2|}\sin^2(\tfrac12\theta_{12}),
\]
we can use \eqref{vvR3angles} and continue from above to estimate
\[
|h(|\xi|) - h(|\xi_1|+|\xi_2|)| \lesssim  \langle N_1\rangle N_2 \sin^2(\tfrac12\theta_{12}) \ll \tfrac{N_1^2 N_2}{\langle N_1\rangle}.
\]
Thus \eqref{vvR3-philb} holds when $\theta_{02}\leq 10^{-4} \frac{N_1}{\langle N_1\rangle}$.

We now turn to the justification of \eqref{vvR3-philb} under the second alternative in \eqref{chi3T-angle}.  In fact, we will show something stronger, namely,
\begin{equation}\label{vvR3-philb2}
|\Phi| \gtrsim N_2 \qtq{when} \theta_{02}\geq \pi - 10^{-4} N_1/\langle N_1\rangle.
\end{equation}
Indeed, under this condition, $\cos(\theta_{02})<0$.  Thus
\[
|\xi|^2+|\xi_2|^2 - |\xi_1|^2 = 2|\xi|\,|\xi_2|\cos(\theta_{02})<0.
\]
This implies $|\xi_1|\geq |\xi|$ and therefore $\Phi\leq - H(\xi_2)$ and so \eqref{vvR3-philb2} follows.

We next claim the upper bound
\begin{equation}\label{vvR3-phixiub}
|\nabla_\xi\Phi| \lesssim N_2.
\end{equation}
Indeed, this follows from Lemma~\ref{DH-diff} and \eqref{vvR3angles}:
\[
|\nabla_\xi\Phi|  = |\nabla H(\xi)-\nabla H(\xi_1)| \lesssim N_2\tfrac{N_1}{\langle N_1\rangle} + \langle N_1\rangle \sin(\tfrac12\theta_{01}) \lesssim N_2.
\]

To proceed, we break into three cases.

\textbf{Case 1.} We first suppose
\begin{equation}
N_1 > 1. \label{vvR3-case1}
\end{equation}
For which the bounds established in \eqref{vvR3-philb} and \eqref{vvR3-phixiub} will be sufficient.  Indeed from these and Lemma~\ref{H-derivatives}, we have
\begin{align*}
|\Phi|& \gtrsim N_1 N_2, \\
|\partial_{\xi_2}^\alpha \nabla_\xi\Phi| &\lesssim
\begin{cases} N_2 & |\alpha| = 0, \\ \langle N_1\rangle \bigl(\tfrac{1}{N_1}\bigr)^{|\alpha|}& |\alpha|\in\{1,2\},\end{cases} \\
|\partial_{\xi_2}^\alpha \Phi| & \lesssim \begin{cases} N_1 & |\alpha|=1, \\ \tfrac{\langle N_2\rangle}{N_2} & |\alpha|=2. \end{cases}
\end{align*}

Thus,
\[
\bigl| \partial_{\xi_2}^\alpha \tfrac{\nabla_\xi\Phi}{\Phi}\bigr| \lesssim \tfrac{1}{N_1}\bigl(\tfrac{1}{N_2}\bigr)^{|\alpha|},\quad |\alpha|\leq 2.
\]
Combining this with the cutoff bounds (\eqref{cutoff-loss}, \eqref{vvchi1Tbd}, \eqref{vvchi2Tbd}, \eqref{vvchi3Tbd}), \eqref{Ajbds}, and the volume bound \eqref{vvR3-volume}, we deduce
\[
\opt{b_3^T} \lesssim \tfrac{1}{N_1},
\]
which is acceptable.

\textbf{Case 2.}  Next, we suppose
\begin{equation}
N_1\leq 1 \qtq{and} \theta_{02} \leq 10^{-4} \tfrac{N_1}{\langle N_1\rangle}. \label{vvR3-case2}
\end{equation} In this case, we will again use the bounds in $\eqref{vvR3-philb}$ and \eqref{vvR3-phixiub}; however, we will need the following additional estimate:
\begin{equation}\label{vvR3-phixi2ub}
|\nabla_{\xi_2}\Phi| \lesssim N_1.
\end{equation}
To prove this, we first note that by \eqref{02vs12} and \eqref{vvR3-case2}, we have
\[
\theta_{12} \lesssim 10^{-4} N_1\ll 1,\qtq{so that}\sin(\tfrac12\theta_{12}) \lesssim N_1.
\]
Thus, we may use Lemma~\ref{DH-diff}, \eqref{vvN2N1}, \eqref{vvR1c}, and \eqref{vvR3-case2} to bound
\[
|\nabla_{\xi_2}\Phi| = |\nabla H(\xi_1)-\nabla H(\xi_2)| \lesssim |\xi|\tfrac{N_1}{\langle N_1\rangle}+ \langle N_2\rangle N_1 \lesssim N_1^2 + N_1 \lesssim N_1.
\]

Noting that Lemma~\ref{H-derivatives} also gives
\[
|\partial_{\xi_2}^\alpha \nabla_\xi\Phi| \lesssim \bigl(\tfrac{1}{N_1}\bigr)^{|\alpha|}\qtq{for} |\alpha|\leq 2\quad \qtq{and} \quad|\partial_{\xi_2}^{\alpha} \Phi| \lesssim \tfrac{1}{N_2}\qtq{for}|\alpha|=2,
\]
we use \eqref{vvR3-philb}, \eqref{vvR3-phixiub}, and \eqref{vvR3-phixi2ub} to deduce
\[
\bigl| \partial_{\xi_2}^\alpha \tfrac{\nabla_\xi\Phi}{\Phi}\bigr| \lesssim \tfrac{1}{N_1^2}\bigl(\tfrac{1}{N_1N_2}\bigr)^{|\alpha|},\quad |\alpha|\leq 2.
\]
Combining this with the cutoff bounds (\eqref{cutoff-loss}, \eqref{vvchi1Tbd}, \eqref{vvchi2Tbd}, \eqref{vvchi3Tbd}), \eqref{Ajbds} and the volume bound \eqref{vvR3-volume}, and recalling that $|U(\xi)| \lesssim N_1$ in this regime, we deduce
\[
\opt{b_3^T} \lesssim \tfrac{1}{N_1^{3/2}},
\]
which is acceptable.

\textbf{Case 3.}  Finally, we suppose
\begin{equation}
N_1\leq 1 \qtq{and} \theta_{02} \geq \pi - 10^{-4}\tfrac{N_1}{\langle N_1\rangle}.\label{vvR3-case3}
\end{equation}

Noting that Lemma~\ref{H-derivatives} implies
\[
|\partial_{\xi_2}^\alpha\nabla_\xi\Phi| \lesssim (\tfrac{1}{N_1})^{|\alpha|},\quad |\partial_{\xi_2}^\alpha\Phi| \lesssim (\tfrac{1}{N_2})^{|\alpha|-1},\quad |\alpha|\in\{1,2\},
\]
we use \eqref{vvR3-philb2} (which holds under the assumption \eqref{vvR3-case3}) and \eqref{vvR3-phixiub} to deduce:
\[
\bigl| \partial_{\xi_2}^\alpha \tfrac{\nabla_\xi\Phi}{\Phi}\bigr| \lesssim \bigl(\tfrac{1}{N_1N_2}\bigr)^{|\alpha|},\quad |\alpha|\leq 2.
\]
Combining this with the cutoff bounds \eqref{cutoff-loss} (\eqref{vvchi1Tbd}, \eqref{vvchi2Tbd}, \eqref{vvchi3Tbd}), \eqref{Ajbds}, and the volume bound \eqref{vvR3-volume}, and recalling that $|U(\xi)| \lesssim N_1$ in this regime, we deduce
\[
\opt{b_3^T} \lesssim N_1^{1/2},
\]
which is acceptable.

This completes the proof of Lemma~\ref{L:vvR3}. \end{proof}

\begin{remark} On the complement of Region 3 (i.e. on the support of $1-\chi_3^T$), we have
\begin{equation}\label{vvR3c}
10^{-4} \tfrac{N_1}{\langle N_1\rangle} - 8\cdot 10^{-6}\tfrac{N_1}{\langle N_1\rangle} \leq \theta_{02} \leq \pi - 10^{-4}\tfrac{N_1}{\langle N_1\rangle} + 8\cdot 10^{-6} \tfrac{N_1}{\langle N_1\rangle}.
\end{equation}
Thus
\begin{equation}\label{vvR3c02}
\sin(\theta_{02})\gtrsim \tfrac{N_1}{\langle N_1\rangle}.
\end{equation}
By the law of sines and \eqref{vvR1c}, the same is true of $\sin(\theta_{12})$.  In particular,
\begin{equation}\label{vvR3c2}
\sin(\tfrac12\theta_{12})\gtrsim \tfrac{N_1}{\langle N_1\rangle}.
\end{equation}
\end{remark}

%%%%
\subsection{Region 4. Space non-resonance} We finally define
\[
\rho_4(\xi_1,\xi_2)=\prod_{j=1}^3 [1-\chi_j^T(\xi_1,\xi_2)]
\]
and let
\begin{align*}
&b^X(\xi_1,\xi_2)=\rho_4(\xi_1,\xi_2)\tilde\psi(\tfrac{\xi_1}{N_1})\tilde\psi(\tfrac{\xi_2}{N_2})A(\xi_1,\xi_2) U(\xi) \tfrac{\nabla_\xi\Phi \nabla_{\xi_2}\Phi}{|\nabla_{\xi_2}\Phi|^2},\\
&\tilde b^X(\xi_1,\xi_2) = \nabla_{\xi_2} \cdot b^X(\xi_1,\xi_2).
\end{align*}

We first claim that
\begin{equation}\label{vvR4xivsxi2}
\tfrac{|\nabla_\xi\Phi|}{|\nabla_{\xi_2}\Phi|} \lesssim \begin{cases} \tfrac{\langle N_1 \rangle N_2}{N_1\langle N_2\rangle} &\text{always,} \\
\tfrac{N_2}{N_1}& \text{if }\, N_1\gtrsim 1 \text{ and } N_1\gg N_2.
\end{cases}
\end{equation}

We begin by using Lemma~\ref{DH-diff}, \eqref{12vs1}, and the law of sines to estimate
\begin{align}
|\nabla_\xi\Phi| & \lesssim |\xi_2|\tfrac{|\xi_1|}{\langle \xi_1\rangle} + \langle \xi_1\rangle \sin(\tfrac12\theta_{01})\nonumber \\
& \lesssim \tfrac{N_1 N_2}{\langle N_1\rangle} + \tfrac{\langle N_1\rangle N_2}{N_1}\sin(\theta_{12}) \nonumber \\
& \lesssim \begin{cases} \tfrac{N_1 N_2}{\langle N_1\rangle} + \tfrac{\langle N_1\rangle N_2}{N_1}\sin(\tfrac12\theta_{12}) &\text{always,} \\ N_2 &  \text{if }\, N_1\gtrsim 1 \text{ and } N_1\gg N_2.\end{cases}\label{vvR4phixiub}
\end{align}

Next, using \eqref{vvR3c2},
\begin{align}
|\nabla_{\xi_2}\Phi| & \gtrsim \bigl|\tunit{\xi_1} - \tunit{\xi_2}\bigr|\min\{h'(|\xi_1|),h'(|\xi_2|)\}  \nonumber\\
&\gtrsim \langle N_2\rangle \sin(\tfrac12\theta_{12}) \gtrsim \tfrac{N_1\langle N_2\rangle}{\langle N_1\rangle}.\label{vvR4phixi2lb}
\end{align}
Using \eqref{vvR4phixi2lb} together with the first estimate in \eqref{vvR4phixiub}, we deduce the first bound in \eqref{vvR4xivsxi2}.  On the other hand,
\begin{align}
|\nabla_{\xi_2}\Phi|& \geq |\nabla H(\xi_1)| - |\nabla H(\xi_2)| \gtrsim N_1 \quad \text{if }\, N_1\gtrsim 1 \text{ and } N_1\gg N_2.  \label{vvR4-case2-lb}
\end{align}
This bound, together with the second estimate in \eqref{vvR4phixiub}, gives the second bound in \eqref{vvR4xivsxi2}.

Note that using \eqref{vvR4phixi2lb} together with \eqref{vvR1c} and the law of sines, we also get
\begin{equation}\label{vvR4phixi2lb2}
|\nabla_{\xi_2}\Phi| \gtrsim \langle N_2\rangle \sin(\theta_{02}).
\end{equation}

We are now in a position to prove bounds for the multipliers $b^X$ and $\tilde b^X$.
%%%
\begin{lemma}[Region 4 bounds]\label{L:vvR4} The following bounds hold:
\begin{align}\label{vvR4-bd1}
&\op{b^X} \lesssim \begin{cases} \bigl(\tfrac{\langle N_1\rangle}{N_1}\bigr)^{\frac12+} \tfrac{N_2}{\langle N_2\rangle} &\text{always,} \\
\tfrac{N_2}{N_1} & \text{if }\, N_1\gtrsim 1 \text{ and } N_1\gg N_2,
\end{cases} \\
\label{vvR4-bd2}
&\op{\tilde b^X}  \lesssim  \begin{cases} \bigl(\tfrac{\langle N_1\rangle}{N_1}\bigr)^{\frac32} \tfrac{1}{\langle N_2\rangle} &\text{always,} \\
\tfrac{1}{N_1}  &  \text{if }\, N_1\gtrsim 1 \text{ and } N_1\gg N_2.
\end{cases}
\end{align}
\end{lemma}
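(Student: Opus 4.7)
\textbf{Proof plan for Lemma~\ref{L:vvR4}.}
The plan is to insert an additional dyadic decomposition in the angular variable $\sin\theta_{02}$ over the range $\theta\in[N_1/\langle N_1\rangle,\,1]$ permitted by the complement of Region~3 (cf.~\eqref{vvR3c02}), apply the ``typical estimate'' $\op{b}\lesssim A\,\ell^{3/2}V^{1/2}$ on each dyadic piece, and then sum the resulting geometric series in $\theta$.

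For the second case ($N_1\gtrsim 1$ and $N_1\gg N_2$), no dyadic decomposition is needed. Here $|U(\xi)|\lesssim 1$ and $|A|\lesssim 1$; \eqref{vvR4-case2-lb} gives $|\nabla_{\xi_2}\Phi|\gtrsim N_1$ while \eqref{vvR4phixiub} gives $|\nabla_\xi\Phi|\lesssim N_2$, so $|b^X|\lesssim N_2/N_1$. Every derivative scale in the problem (the cutoffs $\tilde\psi(\xi_j/N_j)$, $\chi_j^T$, the symbol $A$, and the factor $1/|\nabla_{\xi_2}\Phi|^2$ via Lemma~\ref{H-derivatives}) is bounded by $\ell=1/N_2$ in this regime; with $V=N_2^3$ the typical estimate yields $\op{b^X}\lesssim(N_2/N_1)(1/N_2)^{3/2}(N_2^3)^{1/2}=N_2/N_1$. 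The divergence $\tilde b^X=\nabla_{\xi_2}\cdot b^X$ gains one extra power of $\ell$, producing $\op{\tilde b^X}\lesssim(N_2/N_1)(1/N_2)^{5/2}(N_2^3)^{1/2}=1/N_1$.

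For the general case, I would fix a dyadic $\theta$ and insert a smooth cutoff to $\sin\theta_{02}\sim\theta$. On that piece, \eqref{vvR4phixi2lb2} gives $|\nabla_{\xi_2}\Phi|\gtrsim \langle N_2\rangle\theta$; using \eqref{vvR1c} and the law of sines to get $\sin\tfrac12\theta_{12}\sim\theta$, the bound \eqref{vvR4phixiub} reads $|\nabla_\xi\Phi|\lesssim N_1N_2/\langle N_1\rangle+\langle N_1\rangle N_2\theta/N_1$; and the $\xi_2$-support at fixed $\xi$ has volume $V(\theta)\sim N_2^3\theta^2$. Together with $|U(\xi)|\lesssim N_1/\langle N_1\rangle$ and $|A|\lesssim 1$ this gives the pointwise bound
\[
|b^X|\lesssim A(\theta):=\frac{N_1^2N_2}{\langle N_1\rangle^2\langle N_2\rangle\,\theta}+\frac{N_2}{\langle N_2\rangle}.
\]
The relevant derivative scale is $\ell(\theta)\sim 1/(N_2\theta)$, dominated by the new dyadic cutoff in $\sin\theta_{02}$; a routine check using Lemma~\ref{H-derivatives} shows that derivatives of $1/|\nabla_{\xi_2}\Phi|^2$ and of $\nabla_\xi\Phi$ obey the same scale. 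The typical estimate then gives
\[
\op{b^X\text{ at }\theta}\lesssim A(\theta)\,(1/(N_2\theta))^{3/2}\,(N_2^3\theta^2)^{1/2}=A(\theta)/\theta^{1/2},
\]
and with two additional powers of $\ell$ one similarly obtains $\op{\tilde b^X\text{ at }\theta}\lesssim A(\theta)/(N_2\theta^{3/2})$. Summing the resulting geometric series over dyadic $\theta\gtrsim N_1/\langle N_1\rangle$ yields, respectively, the bounds $(\langle N_1\rangle/N_1)^{1/2+}N_2/\langle N_2\rangle$ and $(\langle N_1\rangle/N_1)^{3/2}/\langle N_2\rangle$; the ``$+$'' absorbs any slack from the smooth dyadic partition.

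The main technical obstacle is verifying that the composite derivative scale of $b^X$ really is $\ell(\theta)\sim 1/(N_2\theta)$. This requires carefully tracking derivatives of the rational expression $\nabla_\xi\Phi\,\nabla_{\xi_2}\Phi/|\nabla_{\xi_2}\Phi|^2$ using Lemmas~\ref{H-derivatives} and~\ref{DH-diff}, together with the uniform angular lower bounds \eqref{vvR3c02}--\eqref{vvR3c2} that hold throughout Region~4. Once this derivative bookkeeping is in place, the remainder is a direct application of the typical-estimate template, together with the geometric-series summation in $\theta$.
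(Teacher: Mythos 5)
The approach is a close cousin of the paper's but not identical in execution: where the paper bounds $\|b^X\|_{L_\xi^\infty\dot H_{\xi_2}^s}$ for $s\in\{1,2\}$ directly by computing a single spherical-coordinate integral $\int_{N_1/\langle N_1\rangle}^{\pi-N_1/\langle N_1\rangle}(\sin\varphi)^{-(2s-1)}\,d\varphi$ (cf.\ the analogue of \eqref{vvR4-L2compute}), you propose to discretize that integral dyadically in $\sin\theta_{02}$, invoke the ``typical estimate'' on each angular shell, and sum. Both strategies reduce to exactly the same angular integral in the end, and your pointwise bounds $A(\theta)$, $\ell(\theta)\sim 1/(N_2\theta)$, and volume $N_2^3\theta^2$ are correct (they match \eqref{vvR4-multub1} and \eqref{vvR3-volume}). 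The treatment of the second case ($N_1\gtrsim 1$, $N_1\gg N_2$) is fine and agrees with the paper.

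However, there is a genuine gap at the final step of the general case. The quantity $\op{\cdot}$ is \emph{not} a norm: it is a geometric mean $\|b\|_{L_\xi^\infty\dot H_{\xi_2}^1}^{1/2}\|b\|_{L_\xi^\infty\dot H_{\xi_2}^2}^{1/2}$. Writing $b^X=\sum_\theta b_\theta$ and knowing only that $\op{b_\theta}\lesssim f(\theta)$ for each shell, you cannot conclude $\op{b^X}\lesssim\sum_\theta f(\theta)$. Indeed, by Cauchy--Schwarz,
\[
\sum_\theta \|b_\theta\|_{\dot H^1}^{1/2}\|b_\theta\|_{\dot H^2}^{1/2} \le \Bigl(\sum_\theta\|b_\theta\|_{\dot H^1}\Bigr)^{1/2}\Bigl(\sum_\theta\|b_\theta\|_{\dot H^2}\Bigr)^{1/2},
\]
and it is the right-hand side, not the left, that dominates $\op{b^X}$; the inequality runs the wrong way for your summation. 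Nothing prevents $\|b_\theta\|_{\dot H^1}$ from being large on some shells and $\|b_\theta\|_{\dot H^2}$ large on others, with each product under control, yet the product of the two sums diverging. To repair this, you should extract from the typical estimate the separate bounds $\|b_\theta\|_{L_\xi^\infty\dot H_{\xi_2}^s}\lesssim A(\theta)\,N_2^{3/2-s}\,\theta^{1-s}$ for $s\in\{1,2\}$, sum each dyadic series individually (giving precisely the paper's bounds in \eqref{vvR4-L2compute}, with the logarithmic loss at $s=1$ absorbed into the $0+$), and only then take the geometric mean. Once this bookkeeping is done correctly, your calculation recovers the stated estimates; the same remark applies to $\tilde b^X$.
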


\begin{proof}To begin, recall that $\tilde b^X = \nabla_{\xi_2}\cdot b^X.$  For the terms in which the divergence misses the factor $\tfrac{\nabla_\xi\Phi\nabla_{\xi_2}\Phi}{|\nabla_{\xi_2}\Phi|^2}$, we claim that we get an upper bound of $\tfrac{1}{N_2}\op{b^X}$, which (using \eqref{vvR4-bd1}) one can check is acceptable.  Indeed, using the cutoff bounds \eqref{cutoff-loss}, \eqref{vvchi1Tbd}, \eqref{vvchi2Tbd}, together with \eqref{Ajbds}, we see that if the divergence hits the product
\[
\tilde\psi(\tfrac{\xi_1}{N_1})\tilde\psi(\tfrac{\xi_2}{N_2})(1-\chi_1^T)(1-\chi_2^T)A(\xi_1,\xi_2),
\]
then we will have an additional factor of $\tfrac{1}{N_2}$, but otherwise we can argue exactly the same as for $b^X$.  If the divergence hits $(1-\chi_3^T)$, then we get an additional factor of $\tfrac{\langle N_1\rangle}{N_1N_2}$ (cf. \eqref{vvchi3Tbd}).  However, in this case, we can also use the better volume bound \eqref{vvR3-volume}.  Thus, once again we face an additional factor of $\tfrac{1}{N_2}$, which is acceptable.

Thus, to estimate $\tilde b^X$, it suffices to treat $b^X$ and the multiplier
\[
\tilde b^X_* = \prod_{j=1}^3(1-\chi_j^T)\cdot \tilde{\psi}(\tfrac{\xi_1}{N_1})\tilde{\psi}(\tfrac{\xi_2}{N_2}) A(\xi_1, \xi_2)U(\xi)\nabla_{\xi_2}\cdot\tfrac{\nabla_\xi\Phi\nabla_{\xi_2}\Phi}{|\nabla_{\xi_2}\Phi|^2},
\]

We will first prove the general bounds for $b^X$ and $\tilde b^X_*$, and then give the improvements when $N_1\gtrsim 1$ and $N_1\gg N_2$.

Using Lemma~\ref{H-derivatives}, we find
\[
|\partial_{\xi_2}^\alpha \nabla_\xi\Phi| \lesssim \langle N_1\rangle \bigl(\tfrac{1}{N_1}\bigr)^{|\alpha|}, \quad |\partial_{\xi_2}^\alpha\nabla_{\xi_2}\Phi | \lesssim \langle N_2\rangle \bigl(\tfrac{1}{N_2}\bigr)^{|\alpha|},\quad 1\leq |\alpha|\leq 3.
\]
Using \eqref{vvR4xivsxi2} and \eqref{vvR4phixi2lb2} as well (along with \eqref{vvR3c02}), we find:
\begin{equation}\label{vvR4-multub1}
\bigl| \partial_{\xi_2}^\alpha \tfrac{\nabla_\xi\Phi\nabla_{\xi_2}\Phi}{|\nabla_{\xi_2}\Phi|^2}\bigr| \lesssim \tfrac{\langle N_1\rangle N_2}{N_1\langle N_2\rangle}\bigl(\tfrac{1}{N_2\sin(\theta_{02})}\bigr)^{|\alpha|}\lesssim\tfrac{\langle N_1\rangle N_2}{N_1\langle N_2\rangle}\bigl(\tfrac{\langle N_1\rangle}{N_1 N_2}\bigr)^{|\alpha|} ,\quad |\alpha|\leq 3.
\end{equation}

We now claim that
\begin{equation}\label{vvR4-opt1}
\opt{b^X} \lesssim \bigl(\tfrac{\langle N_1\rangle}{N_1}\bigr)^{\frac12+} \tfrac{N_2}{\langle N_2\rangle},
\end{equation}
which implies the first bound in \eqref{vvR4-bd1}.

\begin{proof}[Proof of \eqref{vvR4-opt1}] If no derivatives hit the product
\begin{equation}\label{vvR4-bad-product}
(1-\chi_3^T)\tfrac{\nabla_\xi\Phi\nabla_{\xi_2}\Phi}{|\nabla_{\xi_2}\Phi|^2},
\end{equation}
then we use the cutoff bounds (\eqref{cutoff-loss}, \eqref{vvchi1Tbd}, \eqref{vvchi2Tbd}), together with \eqref{vvR4-multub1} (with $|\alpha|=0$),  \eqref{Ajbds}, and the upper bound $|U(\xi)|\lesssim\tfrac{N_1}{\langle N_1\rangle}$ to estimate the contribution to the $L_\xi^\infty \dot H_{\xi_2}^s$-norms by
\[
\tfrac{N_2}{\langle N_2\rangle}N_2^{\frac32-s},
\]
for $s\in\{1,2\}$, which is acceptable.

If any derivative hits $1-\chi_3^T$, then we are in a position to use the volume bound \eqref{vvR3-volume}.  In particular, in the case that all derivatives land on $1-\chi_3^T$, we may use \eqref{vvchi3Tbd}, \eqref{vvR3-volume}, and \eqref{vvR4-multub1} (with $|\alpha|=0$) to estimate the contribution to the $L_\xi^\infty \dot H_{\xi_2}^s$-norms by
\[
\bigl(\tfrac{\langle N_1\rangle}{N_1}\bigr)^{s-1} \tfrac{N_2}{\langle N_2\rangle} N_2^{\frac32-s}
\]
for $s\in\{1,2\}$, which is acceptable.  By using the second bound in \eqref{vvR4-multub1} (with $|\alpha|=1$) and the volume bound \eqref{vvR3-volume}, we get the same estimate for $s=2$ when one derivative lands on each factor in \eqref{vvR4-bad-product}, which is again an acceptable contribution.

It remains to consider the case when all derivatives land on $\frac{\nabla_\xi\Phi\nabla_{\xi_2}\Phi}{|\nabla_{\xi_2}\Phi|^2}$ for $s\in\{1,2\}$.  For this, we use the first bound in \eqref{vvR4-multub1}, and for fixed $\xi$ we use spherical coordinates in the $\xi_2$ variable (with $\xi$ as the north pole) to compute the $L_{\xi_2}^2$-norm.  In particular, recalling \eqref{vvR3c}, we may estimate the contribution to the $L_\xi^\infty \dot H_{\xi_2}^s$-norms by
\begin{equation}\label{vvR4-L2compute}
\tfrac{N_2}{\langle N_2\rangle}N_2^{\frac32-s}\biggl(\int_{\frac{N_1}{\langle N_1\rangle}}^{\pi - \frac{N_1}{\langle N_1\rangle}}\frac{d\varphi}{(\sin{\varphi})^{2s-1}}\biggr)^{1/2} \lesssim
\begin{cases}
\tfrac{N_2^{\frac52-s}}{\langle N_2\rangle} \bigl(\tfrac{\langle N_1\rangle}{N_1}\bigr)^{0+} & s=1, \\
\tfrac{N_2^{\frac52-s}}{\langle N_2\rangle} \bigl(\tfrac{\langle N_1\rangle}{N_1}\bigr)^{s-1} & s\in\{2,3\}
\end{cases}
\end{equation}
which is acceptable.  This completes the proof of \eqref{vvR4-opt1}. (Note that we do not need the case $s=3$ here; however, we will use it below.)   \end{proof}

We next claim
\begin{equation}\label{vvR4-opt2}
\opt{b^X_*} \lesssim \bigl(\tfrac{\langle N_1\rangle}{N_1}\bigr)^{\frac32} \tfrac{1}{\langle N_2\rangle},
\end{equation}
which implies the first bound in \eqref{vvR4-bd2}.
\begin{proof}[Proof of \eqref{vvR4-opt2}] We argue similarly to the case of \eqref{vvR4-opt1}.  If no derivatives hit
\begin{equation}\label{vvR4-bad-product2}
(1-\chi_3^T)\nabla_{\xi_2}\cdot\tfrac{\nabla_\xi\Phi\nabla_{\xi_2}\Phi}{|\nabla_{\xi_2}\Phi|^2},
\end{equation}
then we use \eqref{cutoff-loss}, \eqref{vvchi1Tbd}, \eqref{vvchi2Tbd},  \eqref{Ajbds}, the second bound in \eqref{vvR4-multub1} (with $|\alpha|=1$), and the upper bound $|U(\xi)|\lesssim\tfrac{N_1}{\langle N_1\rangle}$ to estimate the contribution to the $L_\xi^\infty \dot H_{\xi_2}^s$-norms by
\[
\tfrac{\langle N_1\rangle}{N_1} N_2^{\frac32-s} \tfrac{1}{\langle N_2\rangle}.
\]
for $s\in\{1,2\}$, which is acceptable.

If any derivative hits $1-\chi_3^T$, then we are in a position to use the volume bound \eqref{vvR3-volume}.  In particular, in the case that all derivatives land on $1-\chi_3^T$, we may use \eqref{vvchi3Tbd}, \eqref{vvR3-volume}, and the second bound in \eqref{vvR4-multub1} (with $|\alpha|=1$) to estimate the contribution to the $L_\xi^\infty \dot H_{\xi_2}^s$-norms by
\[
\bigl(\tfrac{\langle N_1\rangle}{N_1}\bigr)^{s}  N_2^{\frac32-s}\tfrac{1}{\langle N_2\rangle}
\]
for $s\in\{1,2\}$, which is also acceptable.  By using the second bound in \eqref{vvR4-multub1} (with $|\alpha|=2$) and the volume bound \eqref{vvR3-volume}, we get the same estimate for $s=2$ when one derivative lands on each factor in \eqref{vvR4-bad-product2}, which is again an acceptable contribution.

It remains to consider the case when all derivatives land on $\nabla_{\xi_2}\cdot\frac{\nabla_\xi\Phi\nabla_{\xi_2}\Phi}{|\nabla_{\xi_2}\Phi|^2}$ for $s\in\{1,2\}$.  For this, we estimate as before; that is we use the first bound in \eqref{vvR4-multub1}, and for fixed $\xi$ we use spherical coordinates in the $\xi_2$ variable (with $\xi$ as the north pole) to compute the $L_{\xi_2}^2$-norm. By  \eqref{vvR4-L2compute}, we estimate the contribution to the $L_\xi^\infty \dot H_{\xi_2}^s$-norms by
\[
\tfrac{N_2^{\frac52-(s+1)}}{\langle N_2\rangle}\bigl(\tfrac{\langle N_1\rangle}{N_1}\bigr)^s\sim \bigl(\tfrac{\langle N_1\rangle}{N_1}\bigr)^{s}  N_2^{\frac32-s}\tfrac{1}{\langle N_2\rangle}
\]
for $s\in\{1,2\}$, which is acceptable.\end{proof}

So far, we have established the first estimates in \eqref{vvR4-bd1} and \eqref{vvR4-bd2}.  To complete the proof of Lemma~\ref{L:vvR4}, we need to consider the case
\begin{equation}\label{vvR4-bd1-case2}
N_1\gtrsim 1 \qtq{and} N_1\gg N_2.
\end{equation}

Using Lemma~\ref{H-derivatives} and \eqref{vvR4-bd1-case2}, we find
\[
|\partial_{\xi_2}^\alpha \nabla_\xi\Phi| \lesssim (\tfrac{1}{N_1})^{|\alpha|-1}, \quad |\partial_{\xi_2}^\alpha \nabla_{\xi_2}\Phi| \lesssim \langle N_2\rangle (\tfrac{1}{N_2})^{|\alpha|},\quad 1\leq |\alpha|\leq 3.
\]
Thus, recalling \eqref{vvR4xivsxi2} and \eqref{vvR4-case2-lb}, we find:
\[
\bigl|\partial_{\xi_2}^\alpha \tfrac{\nabla_\xi\Phi\nabla_{\xi_2}\Phi}{|\nabla_{\xi_2}\Phi|^2}\bigr|\lesssim \tfrac{N_2}{N_1}\bigl(\tfrac{1}{N_2}\bigr)^{|\alpha|},\quad |\alpha|\leq 3.
\]
Using this together with \eqref{cutoff-loss}, \eqref{vvchi1Tbd}, \eqref{vvchi2Tbd}, \eqref{vvchi3Tbd}, and  \eqref{Ajbds}, it is not hard to verify that
\[
\opt{b^X} \lesssim \tfrac{N_2}{N_1},
\]
which gives the second bound in \eqref{vvR4-bd1}.  Similarly,
\[
\opt{\tilde b^X_*} \lesssim \tfrac{1}{N_1},
\]
giving the second bound in \eqref{vvR4-bd2}.  This completes the proof of Lemma~\ref{L:vvR4}. \end{proof}

\begin{proof}[Proof of Proposition~\ref{prop:dec2}]  The proof of Proposition~\ref{prop:dec2} follows immediately from Lemmas~\ref{L:vvR1}, \ref{L:vvR2}, \ref{L:vvR3}, and \ref{L:vvR4}.  One may also check that the cutoffs $\rho_j$ sum to $1$.  Note that we always choose the worst multiplier bound in the statement of Proposition~\ref{prop:dec2}.  For example, the bounds for $\op{b_1^T}$ and $\op{b_2^T}$ when $N_1\leq 1$ are better than those stated; however, the stated bound is attained by $b_3^T$.
\end{proof}

%%%%%%%%%%%%%%%%%%%%%%%%%%%
%%%%%%%%%%%%%%%%%%%%%%%%%%%
%%%%%%%%%%%%%%%%%%%%%%%%%%%

\section{Non-resonant decompositions III: $|v|^2$ terms}\label{dec3}
In this section we carry out the decomposition into non-resonant regions for the $|v|^2$ terms in \eqref{bs-quad}.  By symmetry, it suffices to consider the phase
\[
\Phi = H(\xi)+H(\xi_2) - H(\xi_1).
\]
As before, we will work at fixed frequencies $|\xi_j|\sim N_j$.  To simplify notation, we will suppress the $N_1, N_2$ subscripts on our multipliers.

The result of this section is the following.
\begin{proposition}[Non-resonant decomposition for $|v|^2$ terms]\label{prop:dec3}  Given $N_1$ and $N_2$, there exists a decomposition $1 = \sum_{j=1}^8 \rho_j$ on the support of $\tilde\psi(\tfrac{\xi_1}{N_1})\tilde\psi(\tfrac{\xi_2}{N_2})$ such that the following holds:
\begin{itemize}
\item For $j\in\{1,2,3,6\}$, define
\[
b_j^T(\xi_1,\xi_2) = \rho_j(\xi_1,\xi_2)\tilde\psi(\tfrac{\xi_1}{N_1})\tilde\psi(\tfrac{\xi_2}{N_2})A(\xi_1,\xi_2)U(\xi)\tfrac{\nabla_\xi\Phi}{\Phi}.
\]
Then
\[
\begin{array}{llll}
\op{b_1^T} \lesssim \tfrac{1}{N_2}, & \text{with} & N_1\lesssim N_2 & \text{in }\supp b_1^T, \\ \\
\op{b_j^T} \lesssim 1, & \text{with} & N_1\sim N_2 & \text{in }\supp b_j^T,\quad j\in\{2,3\}, \\ \\
\op{b_6^T} \lesssim \tfrac{1}{N_1 N_2^{1/2}}, & \text{with} & N_2\lesssim N_1 \lesssim 1 & \text{in }\supp b_6^T.
\end{array}
\]
\item For $j\in\{5,7,8\}$, define
\begin{align*}
& b_j^X(\xi_1,\xi_2)=\rho_j(\xi_1,\xi_2)\tilde\psi(\tfrac{\xi_1}{N_1})\tilde\psi(\tfrac{\xi_2}{N_2})A(\xi_1,\xi_2)U(\xi)\tfrac{\nabla_\xi\Phi\nabla_{\xi_2}\Phi}{|\nabla_{\xi_2}\Phi|^2}, \\
& \tilde b_j^X(\xi_1,\xi_2)=\nabla_{\xi_2}\cdot b_j^X.
\end{align*}
Then
\[
\begin{array}{llll}
\op{b_5^X} \lesssim \tfrac{N_2}{N_1}, & \text{with} & N_1\gtrsim 1\text{ and } N_1\gg N_2 & \text{in }\supp b_5^X, \\ \\
\op{b_j^X} \lesssim \tfrac{1}{N_1^{1/2}}, & \text{with} & N_2\lesssim N_1 \lesssim 1 & \text{in }\supp b_j^X,\quad j\in\{7,8\}
\end{array}
\]
and
\[
\op{\tilde b_j^X} \lesssim \tfrac{1}{N_2}\op{b_j^X},\quad j\in\{5,7,8\},
\]
with $\tilde b_j^X$ having the same support properties as $ b_j^X$.
\item Finally, define
\begin{align*}
& m(\xi_1,\xi_2)=\rho_4(\xi_1,\xi_2)\tilde\psi(\tfrac{\xi_1}{N_1})\tilde\psi(\tfrac{\xi_2}{N_2})A(\xi_1,\xi_2)U(\xi)\tfrac{\nabla_\xi\Phi}{\xi^{\perp_2}\cdot \nabla H(\xi_1)}, \\
& b^\angle(\xi_1,\xi_2)= \nabla_{\xi_2}\cdot[m(\xi_1,\xi_2)\xi^{\perp_2}], \\
&  b_k^\angle(\xi_1,\xi_2) = m(\xi_1,\xi_2) \tfrac{\xi_k\times\xi^{\perp_k}}{|\xi_k|^2}, \quad k\in\{1,2\}, \\
& \tilde b^\angle(\xi_1,\xi_2) = m(\xi_1,\xi_2)[\xi\cdot\tunit{\xi_1}\tunit{\xi_1}-\xi\cdot\tunit{\xi_2}\tunit{\xi_2}].
\end{align*}
Then
\[
\op{b^\angle}+\op{b_k^\angle}+\op{\tilde b^\angle} \lesssim 1, \quad k\in\{1,2\},
\]
with $1\lesssim N_1 \sim N_2$ in the support of these multipliers.
\end{itemize}
\end{proposition}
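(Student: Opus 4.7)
The proof follows the blueprint of Proposition~\ref{prop:dec2}, adapted to the phase $\Phi = H(\xi) + H(\xi_2) - H(\xi_1)$, whose resonant structure differs substantially from the $v^2$ case. The critical observation is that $\nabla_\xi\Phi = \nabla H(\xi)$ and $\nabla_{\xi_2}\Phi = \nabla H(\xi_1) + \nabla H(\xi_2)$; since $\nabla H(\xi_j) = h'(|\xi_j|)\tunit{\xi_j}$ with $h'$ strictly monotone, the simultaneous vanishing of $\Phi$ and $\nabla_{\xi_2}\Phi$ forces $\xi_1 = -\xi_2$ and hence $\xi = 0$. Thus only high-high interactions with small output are genuinely space-time resonant, and precisely there one must deploy the angular non-resonance identity~\eqref{id-3}.

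The plan is to decompose frequency space first according to the relative sizes of $N_1$ and $N_2$. In the regime $N_1 \ll N_2$ we have $|\xi| \sim N_2$, so that $|\Phi| \geq H(\xi) + H(\xi_2) - H(\xi_1) \gtrsim N_2\langle N_2\rangle$, while $|\nabla_\xi\Phi| \lesssim \langle N_2\rangle$; a single smooth cutoff $\rho_1$ to this sector then yields the Region 1 bound by the typical-estimate argument, using the cutoff derivative bound~\eqref{cutoff-loss}, the symbol bound~\eqref{Ajbds}, and Lemma~\ref{H-derivatives}. In the regime $N_2 \ll N_1$ with $N_1 \gtrsim 1$, we have $|\nabla_{\xi_2}\Phi| \geq h'(|\xi_1|) - h'(|\xi_2|) \gtrsim N_1$, yielding the Region 5 SNR bound; the bound on $\tilde b_5^X = \nabla_{\xi_2}\cdot b_5^X$ carries an additional factor $1/N_2$ because $\xi_2$-derivatives land on $\xi_2$-scale cutoffs. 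The remaining case $N_1 \sim N_2 \gtrsim 1$ requires a finer angular decomposition: introduce a TNR cutoff $\rho_2$ restricting $\angle(\xi,\xi_1)$ away from $\pi$ (mirroring $\chi_2^T$ of Section~\ref{dec2}) and a further cutoff $\rho_3$ restricting $\theta_{02}$ to stay away from the resonant angles; in these sectors Lemma~\ref{DH-diff} gives $|\Phi| \gtrsim N_1\langle N_1\rangle$, so TNR yields $\op{b_j^T} \lesssim 1$ for $j\in\{2,3\}$. The residual sector, $N_1 \sim N_2 \gtrsim 1$ with $|\xi|$ small and $\xi_1$ nearly antiparallel to $\xi_2$, is exactly where $\xi^{\perp_2}\cdot\nabla H(\xi_1) \sim \langle N_1\rangle \sin(\theta_{02})$ is controllable from below, so $\rho_4$ is placed there and the ANR bounds for $b^\angle, b_1^\angle, b_2^\angle, \tilde b^\angle$ are checked by direct differentiation.

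The low-frequency case $N_2 \lesssim N_1 \lesssim 1$ (Regions 6, 7, 8) is handled by an analogous angular decomposition, except that $h'(r)\sim\sqrt{2}$ is essentially constant here and the extra factor $U(\xi) \sim |\xi|$ compensates for the loss incurred near resonance; this permits a TNR cutoff where $|\xi|\sim N_1$ (Region 6) complemented by two SNR sectors (Regions 7 and 8) corresponding to the two angular extremes — no ANR sub-region is needed because the space-time resonance at $\xi = 0$ is no longer a genuine singularity once one pays the smallness of $U(\xi)$. The main obstacle throughout is Region 4: the multipliers produced by the identity~\eqref{id-3} divide by $\xi^{\perp_2}\cdot\nabla H(\xi_1)$, and each $\xi_2$-derivative falling on this denominator, on the angular cutoff, or on $\nabla H(\xi_1)$ must be offset by the angular-volume saving in the same spirit as (though more delicate than) the endgame in the Region 4 analysis of Proposition~\ref{prop:dec2}. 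Special care is required for $b^\angle = \nabla_{\xi_2}\cdot[m\, \xi^{\perp_2}]$, where the divergence can generate factors of $1/|\xi_2|$ that must be absorbed via the $A_2$-type symbol bound in~\eqref{Ajbds}; once this is done, summing the eight cutoffs exhausts the support of $\tilde\psi(\tfrac{\xi_1}{N_1})\tilde\psi(\tfrac{\xi_2}{N_2})$, completing the construction.
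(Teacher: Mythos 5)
Your high-level plan matches the structure of the paper's argument: split by the relative sizes of $N_1, N_2$; use pure TNR in the $N_1\ll N_2$ and $1\lesssim N_1\sim N_2$ cases, SNR in the $N_2\ll N_1\gtrsim 1$ case, and ANR in the high-high residual region; then re-run the low-frequency case with the $U(\xi)$ factor. Your opening observation that the space-time resonance for this phase is concentrated at $\xi=0$ with $N_1\sim N_2$ is the right diagnosis. However, there is a genuine gap in the TNR argument for Regions $2$ and $3$.

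You claim that in the sectors cut off by $\rho_2$ and $\rho_3$, ``Lemma~\ref{DH-diff} gives $|\Phi|\gtrsim N_1\langle N_1\rangle$.'' This is false. On the support of $\rho_2$ (where $\theta_{01}\geq\tfrac{2\pi}{3}$ — note, by the way, that this means $\angle(\xi,\xi_1)$ is restricted to be \emph{near} $\pi$, not away from it) and of $\rho_3$ (where $|\cos\theta_{02}|\gtrsim 1$), the output frequency $|\xi|$ is not bounded below, so $\Phi$ can be arbitrarily small. What one actually gets is
\[
|\Phi|\gtrsim \langle N_1\rangle\,|\xi| \quad\text{(Region 2)}\qquad\text{and}\qquad |\Phi|\gtrsim N_2\,|\xi|\quad\text{(Region 3)},
\]
the first from $|\Phi|\geq H(\xi_2)-H(\xi_1)\gtrsim\langle N_1\rangle\,\bigl||\xi_2|-|\xi_1|\bigr|\gtrsim\langle N_1\rangle\,|\xi|$, the second from a separate algebraic identity using $G(\xi)=H(\xi)-(|\xi|^2+1)$ and the bound $|G(\eta)-G(\mu)|\lesssim|\eta-\mu|$. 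Either way you are left holding a factor of $1/|\xi|$, and the \emph{only} thing that cancels it is the symbol $U(\xi)\lesssim |\xi|/\langle\xi\rangle$. You correctly identify this $U$-compensation as the mechanism in Regions 6--8, but you present Regions 2 and 3 as if $U$ were not needed there; in fact it is essential in all four TNR sub-regions of the $|v|^2$ decomposition except Region 1. If you try to prove $|\Phi|\gtrsim N_1\langle N_1\rangle$ on Region 2 or 3 you will fail (take $\xi_1\to-\xi_2$ with $\theta_{01}=\pi$: then $\xi\to 0$ and $\Phi\to 0$).

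Two smaller inaccuracies, which you should also fix if you flesh this out. First, the identity for the ANR denominator is
\[
\xi^{\perp_2}\cdot\nabla H(\xi_1)\;=\;\frac{|\xi|^2\sin^2\theta_{02}\,h'(|\xi_1|)}{|\xi_1|}\;\gtrsim\;|\xi|^2,
\]
not $\langle N_1\rangle\sin\theta_{02}$; the lower bound is quadratic in $|\xi|$, and one verifies that the $U(\xi)\lesssim|\xi|$ factor, the $N_2^{3/2}$ volume factor, and the symbol bound $|\nabla_\xi\Phi|\lesssim N_1$ together bring the final estimate down to $\lesssim 1$ only because the denominator grows like $|\xi|^2$. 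Second, Region 6 is not defined by the condition $|\xi|\sim N_1$; it is an angular cutoff $\theta_{02}\geq\tfrac{\pi}{2}\vee(\pi-2\cdot 10^{-4}N_1+\dots)$, and one then proves $|\Phi|\gtrsim|\xi|N_1N_2$ by splitting $\Phi=[h(|\xi|)+h(|\xi_2|)-h(|\xi|+|\xi_2|)]+[h(|\xi|+|\xi_2|)-h(|\xi_1|)]$. The remaining regions and your reasoning about Region 5 and about why no ANR sub-region is needed at low frequencies are consistent with the paper.
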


To begin, we record a lemma that allows us to exploit some cancellation in derivatives of $\Phi$.
\begin{lemma}\label{D2H-diff} Let $\Phi = H(\xi)+H(\xi_2)-H(\xi_1)$. For $1\leq |\alpha|\leq 4$,
\[
\bigl|\partial_{\xi_2}^\alpha \Phi \bigr| \lesssim \bigl( \tfrac{\langle \xi_1\rangle}{|\xi_1|^{|\alpha|}} + \tfrac{\langle \xi_2\rangle}{|\xi_2|^{|\alpha|}}\bigr)|\xi|
\]
\end{lemma}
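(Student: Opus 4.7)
My plan is to combine a parity/cancellation argument with a case split on the relative sizes of $|\xi|$ and $|\xi_2|$. Since $\xi$ is held fixed and $\xi_1 = \xi - \xi_2$, each $\xi_2$-derivative of $H(\xi_1)$ produces a sign, so a direct chain-rule computation gives
\[
\partial_{\xi_2}^\alpha \Phi \;=\; (\partial^\alpha H)(\xi_2) \;-\; (-1)^{|\alpha|}(\partial^\alpha H)(\xi_1).
\]
Because $H$ is radial (hence even), each derivative $\partial^\alpha H$ satisfies $\partial^\alpha H(-y) = (-1)^{|\alpha|}\partial^\alpha H(y)$. Substituting $\xi = 0$ (so $\xi_1 = -\xi_2$) into the right-hand side therefore yields zero. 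This is the cancellation that produces the factor of $|\xi|$ on the right-hand side of the claim.

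Next, viewing $g(\xi) := \partial_{\xi_2}^\alpha\Phi$ as a function of $\xi$ with $\xi_2$ frozen, we have $g(0)=0$, and the fundamental theorem of calculus gives
\[
g(\xi) \;=\; -(-1)^{|\alpha|}\int_0^1 \xi\cdot(\nabla\partial^\alpha H)(s\xi - \xi_2)\,ds,
\]
since only the $H(\xi_1)$ piece of $\Phi$ depends on $\xi$.  I then split into two cases.  If $|\xi|\leq \tfrac12|\xi_2|$, then along the integration path $|s\xi-\xi_2|\sim|\xi_2|$, and moreover $|\xi_1|=|\xi-\xi_2|\sim|\xi_2|$; applying Lemma~\ref{H-derivatives} (suitably extended to order $|\alpha|+1\leq 5$) to bound the integrand by $\langle\xi_2\rangle/|\xi_2|^{|\alpha|}$ delivers the claim.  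If instead $|\xi|>\tfrac12|\xi_2|$, then $|\xi_1|\leq|\xi|+|\xi_2|\lesssim|\xi|$, and I discard the cancellation, applying the triangle inequality and Lemma~\ref{H-derivatives} directly:
\[
|g(\xi)| \;\lesssim\; \tfrac{\langle\xi_2\rangle}{|\xi_2|^{|\alpha|-1}} + \tfrac{\langle\xi_1\rangle}{|\xi_1|^{|\alpha|-1}} \;=\; |\xi_2|\tfrac{\langle\xi_2\rangle}{|\xi_2|^{|\alpha|}} + |\xi_1|\tfrac{\langle\xi_1\rangle}{|\xi_1|^{|\alpha|}} \;\lesssim\; |\xi|\Bigl(\tfrac{\langle\xi_1\rangle}{|\xi_1|^{|\alpha|}} + \tfrac{\langle\xi_2\rangle}{|\xi_2|^{|\alpha|}}\Bigr),
\]
using the case hypothesis to replace each $|\xi_j|$ by $|\xi|$ up to constants.

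The only technical point is the extension of Lemma~\ref{H-derivatives} to $|\alpha|+1 = 5$, needed for the fundamental theorem step at top order.  This follows from the same explicit calculation the authors use for that lemma: the small-$r$ behaviour of $h^{(k)}$ remains controlled by $r^{-(k-1)}$ for every $k\ge 1$ (since $h'(0)=\sqrt{2}$ is the leading nonzero Taylor coefficient at the origin) while the large-$r$ behaviour only improves past $k=2$, so the bound $|\partial^\beta H(y)|\lesssim \langle y\rangle/|y|^{|\beta|-1}$ continues to hold for $|\beta|=5$.  I do not expect any genuine obstacle here; the hardest conceptual piece is simply recognising the parity cancellation, after which the case analysis is elementary.
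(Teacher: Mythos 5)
Your proof is correct and follows essentially the same route as the paper: the paper likewise uses the evenness of $H$ to recast $\partial_{\xi_2}^\alpha\Phi$ as $[\partial^\alpha H](\xi_2)-[\partial^\alpha H](-\xi_1)$, a difference of one function at two points separated by $\xi$, and then invokes the fundamental theorem of calculus together with Lemma~\ref{H-derivatives}. Your parametrisation by $s\mapsto s\xi-\xi_2$ is just the reflection of the paper's segment from $-\xi_1$ to $\xi_2$, and your explicit case split (FTC when $|\xi|\leq\tfrac12|\xi_2|$, triangle inequality otherwise) and your check that $|\partial^\beta H(y)|\lesssim\langle y\rangle/|y|^{|\beta|-1}$ persists at order $|\beta|=5$ merely spell out details the paper leaves to the reader. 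One small slip of phrasing: you say ``only the $H(\xi_1)$ piece of $\Phi$ depends on $\xi$,'' which is false as written (the $H(\xi)$ term obviously depends on $\xi$); what you mean, and what your formula correctly encodes, is that after applying $\partial_{\xi_2}^\alpha$ with $|\alpha|\geq1$ the $H(\xi)$ term is annihilated, so only the $H(\xi_1)$ contribution survives to be differentiated in $\xi$.
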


\begin{proof} Using $H(\xi)=H(-\xi)$ and $\xi_1=\xi-\xi_2$, we deduce
\[
\partial_{\xi_2}^\alpha \Phi = [\partial_{\xi}^\alpha H](\xi_2) - [\partial_\xi^\alpha H](-\xi_1).
\]
Thus, the result is a consequence of the fundamental theorem of calculus and Lemma~\ref{H-derivatives}.\end{proof}

%%%%%%%%%%%%%%%%%%%%%%
\subsection{Region 1. Time non-resonance}  Define
\[
\rho_1(\xi_1,\xi_2)=\chi_1^T(\xi_1,\xi_2) \equiv 1 \qtq{if} N_1\leq \tfrac1{64} N_2
\]
and vanishing otherwise.  We then define
\[
b_1^T(\xi_1,\xi_2)= \rho_1(\xi_1,\xi_2)\tilde\psi(\tfrac{\xi_1}{N_1})\tilde\psi(\tfrac{\xi_2}{N_2})A(\xi_1,\xi_2)U(\xi)\tfrac{\nabla_\xi\Phi}{\Phi}.
\]
Where this is non-zero, we have
\begin{equation}\label{R1}
|\xi_1|\leq |\xi|\sim N_2.
\end{equation}

\begin{lemma}[Region 1 bounds]\label{L:R1} The following bound holds:
\[
\op{b_1^T} \lesssim \tfrac{1}{ N_2}.
\]
Moreover, $N_1\lesssim N_2$ in the support of $b_1^T$.
\end{lemma}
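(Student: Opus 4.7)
The key observation is that Region~1 forces $N_1 \leq \tfrac{1}{64}N_2$, whence $|\xi|\sim|\xi_2|\sim N_2$ as in \eqref{R1} and $\xi_1$ carries the small scale. I will bound $\op{b_1^T}$ using the $\dot H^s_{\xi_1}$ branch of the $\op{\cdot}$-norm, via the Typical estimate lemma applied with the roles of $\xi_1$ and $\xi_2$ exchanged (that is, with $\xi_1$ playing the role of the small variable).

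First, I will establish the pointwise symbol bound $|b_1^T|\lesssim \tfrac{1}{\langle N_2\rangle}$. Since $|\xi_1|\ll|\xi_2|$, split $\Phi = [H(\xi)+H(\xi_2)] - H(\xi_1)$; the bracketed piece is $\gtrsim N_2\langle N_2\rangle$ while $H(\xi_1)\lesssim N_1\langle N_1\rangle \ll N_2\langle N_2\rangle$, so $|\Phi|\gtrsim N_2\langle N_2\rangle$. Combined with $|\nabla_\xi\Phi| = |\nabla H(\xi)-\nabla H(\xi_1)|\lesssim \langle N_2\rangle$ from Lemma~\ref{H-derivatives}, together with $|U(\xi)|\lesssim \tfrac{N_2}{\langle N_2\rangle}$ and $|A|\lesssim 1$ (trivial for $A_1$; for $A_2$, use $|A_2|\lesssim \tfrac{\langle N_1\rangle\langle N_2\rangle}{\langle N_2\rangle^2}\lesssim 1$ since $N_1\leq N_2$), the pointwise bound follows.

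Next, I will verify the derivative hypothesis of the Typical estimate with $A_{\mathrm{lem}}=\tfrac{1}{\langle N_2\rangle}$ and $\ell=\tfrac{1}{N_1}$ for the symbol $m := U(\xi)A(\xi_1,\xi_2)\tfrac{\nabla_\xi\Phi}{\Phi}$. Since $\xi$ is held fixed, $\nabla_{\xi_1}U(\xi)=0$ and $\nabla_{\xi_1}$ annihilates $\nabla H(\xi)$ and $H(\xi_2)$, leaving $\nabla_{\xi_1}^\alpha\nabla_\xi\Phi = -[\partial^\alpha\nabla H](\xi_1)$ bounded by $\langle N_1\rangle(\tfrac{1}{N_1})^{|\alpha|}\lesssim \langle N_2\rangle(\tfrac{1}{N_1})^{|\alpha|}$, and $|\nabla_{\xi_1}^\alpha\Phi|\lesssim \langle N_2\rangle(\tfrac{1}{N_1})^{|\alpha|-1}$ for $|\alpha|\geq 1$, both via Lemma~\ref{H-derivatives}. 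Expanding $\nabla_{\xi_1}^\alpha\tfrac{\nabla_\xi\Phi}{\Phi}$ by the quotient rule and using $N_1\leq N_2$ to dominate every term that appears yields $|\nabla_{\xi_1}^\alpha\tfrac{\nabla_\xi\Phi}{\Phi}|\lesssim \tfrac{1}{N_2}(\tfrac{1}{N_1})^{|\alpha|}$ for $|\alpha|\leq 2$. Combined with $|\nabla_{\xi_1}^\alpha A|\lesssim (\tfrac{1}{N_1})^{|\alpha|}$ from \eqref{Ajbds}, the Leibniz rule delivers $|\nabla_{\xi_1}^\alpha m|\lesssim \tfrac{1}{\langle N_2\rangle}(\tfrac{1}{N_1})^{|\alpha|}$ as required.

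Finally, since $\tilde\psi(\xi_1/N_1)$ restricts $\xi_1$ to a set of volume $\sim N_1^3$ (so $\theta=1$ in the lemma), the Typical estimate yields
\[
\op{b_1^T}\;\lesssim\; \tfrac{1}{\langle N_2\rangle}\cdot N_1^{3/2}\cdot N_1^{-3/2}\;=\;\tfrac{1}{\langle N_2\rangle}\;\lesssim\;\tfrac{1}{N_2},
\]
which is the desired bound; the support assertion $N_1\lesssim N_2$ is immediate from the definition of $\rho_1$. The only mildly delicate step is the quotient-rule bookkeeping in the third paragraph, where one must check that each of the four terms arising from $\nabla_{\xi_1}^2\tfrac{\nabla_\xi\Phi}{\Phi}$ is dominated by $\tfrac{1}{N_1^2 N_2}$; the hypothesis $N_1\leq N_2/64$ together with $\langle N_1\rangle\lesssim\langle N_2\rangle$ makes this routine.
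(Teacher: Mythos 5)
Your proof is correct and follows essentially the same route as the paper: bound $|\Phi|\gtrsim N_2\langle N_2\rangle$ and $|\nabla_\xi\Phi|\lesssim\langle N_2\rangle$, verify $|\partial_{\xi_1}^\alpha\tfrac{\nabla_\xi\Phi}{\Phi}|\lesssim \tfrac{1}{N_2}(\tfrac{1}{N_1})^{|\alpha|}$, then conclude via the $\dot H_{\xi_1}^s$-branch of $\op{\cdot}$ (which the paper does directly using \eqref{cutoff-loss}, while you route the same computation through the ``Typical estimate'' lemma with the roles of $\xi_1,\xi_2$ exchanged). Your sharper tracking of $U(\xi)\sim N_2/\langle N_2\rangle$ yields $1/\langle N_2\rangle$ rather than $1/N_2$, which is a marginal (and harmless) improvement over what the lemma asserts.
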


\begin{proof} Using \eqref{R1} and Lemma~\ref{H-derivatives}, we deduce for $j\in\{1,2\}$:
\begin{align*}
&|\Phi|\gtrsim N_2\langle N_2\rangle,\quad |\nabla_\xi\Phi|\lesssim\langle N_2\rangle,\quad |\nabla_{\xi_j}\Phi|\lesssim \langle N_2\rangle, \\
& \bigl|\partial_{\xi_j}^\alpha[\nabla_\xi\Phi + \nabla_{\xi_j}\Phi]\bigr| \lesssim \langle N_1\rangle (\tfrac{1}{N_1}\bigr)^{|\alpha|},\quad |\alpha|\in\{1,2\}.
\end{align*}
Thus,
\[
\bigl|\partial_{\xi_j}^\alpha\tfrac{\nabla_\xi\Phi}{\Phi} \bigr| \lesssim \tfrac{1}{N_2}\bigl(\tfrac{1}{N_1}\bigr)^{|\alpha|},\quad |\alpha|\leq 2.
\]
Recalling also \eqref{cutoff-loss}, we deduce
\[
\opo{b_1^T} \lesssim \tfrac{1}{ N_2},
\]
which gives the lemma.
\end{proof}

\begin{remark} In the remaining cutoffs, a factor $1-\chi^T_1$ will enforce the constraint
\begin{equation}\label{R1c}
N_2 \leq 32 N_1 \qtq{and so also} |\xi_2|\lesssim |\xi_1|.
\end{equation}
\end{remark}

%%%%%%%%%%%%%%%%%%%%%%
\subsection{Region 2. Time non-resonance} Let $\phi_k:\mathbb{S}^2\to\R$ be a partition of unity adapted to a maximal $10^{-6}$-separated set $\{\omega_k\}$ on $\mathbb{S}^2$.  Define
\[
\mathcal{R}_2:= \{(k,\ell):\angle(\omega_k,\omega_\ell)\geq \tfrac{2\pi}{3} + 4\cdot 10^{-6}\},
\]
and let
\[
\chi_2^T(\xi_1,\xi_2) = \sum_{(k,\ell)\in\mathcal{R}_2} \phi_k(\tunit{\xi})\phi_{\ell}(\tunit{\xi_1}).
\]
One can check that
\begin{equation}\label{chi2Tbd}
|\partial_{\xi_j}^\alpha \chi_2^T | \lesssim \bigl(\tfrac{1}{N_1}\bigr)^{|\alpha|},\quad|\alpha|\leq 3.
\end{equation}
We define
\[
\rho_2(\xi_1,\xi_2) =  [1-\chi_1^T(\xi_1,\xi_2)]\chi_2^T(\xi_1,\xi_2)
\]
and
\[
b_2^T(\xi_1,\xi_2) =\rho_2(\xi_1,\xi_2)\tilde\psi(\tfrac{\xi_1}{N_1})\tilde\psi(\tfrac{\xi_2}{N_2})A(\xi_1,\xi_2)U(\xi)\frac{\nabla_\xi\Phi}{\Phi}.
\]

\begin{lemma}[Region 2 bounds]\label{L:R2} The following bound holds:
\[
\op{b_2^T} \lesssim 1.
\]
Moreover, $N_1\sim N_2$ in the support of $b_2^T$.
\end{lemma}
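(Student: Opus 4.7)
The proof proceeds in two steps: verifying the support condition $N_1\sim N_2$ and estimating $\op{b_2^T}$.

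For the support, the factor $1-\chi_1^T$ is nonzero only when $N_1 > \tfrac{1}{64}N_2$, forcing $N_2\lesssim N_1$ on $\supp b_2^T$ (see \eqref{R1c}). Separately, on the support of $\chi_2^T$ we have $\theta_{01}\geq \tfrac{2\pi}{3}$, so $\cos\theta_{01}\leq -\tfrac12$. By the law of cosines,
\[
|\xi_2|^2 = |\xi|^2 + |\xi_1|^2 - 2|\xi||\xi_1|\cos\theta_{01} \geq |\xi_1|^2 + |\xi|^2 + |\xi||\xi_1|,
\]
so $|\xi_2|\geq |\xi_1|$ and hence $N_1\lesssim N_2$. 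Combining these yields $N_1\sim N_2$.

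The main obstacle for the multiplier bound is a quantitative lower estimate on $|\Phi|$. Since $|\xi_2|\geq|\xi_1|$, one has $\Phi\geq H(\xi)\geq 0$, but $\Phi$ genuinely vanishes at $\xi=0$. To quantify this, I will use
\[
|\xi_2|-|\xi_1| = \tfrac{|\xi_2|^2-|\xi_1|^2}{|\xi_1|+|\xi_2|} \gtrsim \tfrac{|\xi|^2+|\xi||\xi_1|}{N_1}\gtrsim|\xi|
\]
together with the monotonicity of $h'$ to conclude $|\Phi| \geq H(\xi_2)-H(\xi_1) \geq h'(|\xi_1|)(|\xi_2|-|\xi_1|) \gtrsim \langle N_1\rangle|\xi|$. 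The factor $|\xi|$ here will be cancelled precisely by $|U(\xi)|\sim |\xi|/\langle\xi\rangle$ present in $b_2^T$.

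For the derivative bounds, Lemma~\ref{H-derivatives} gives $|\partial_{\xi_2}^\alpha \nabla_\xi\Phi|\lesssim \langle N_1\rangle/N_1^{|\alpha|}$ for $|\alpha|\leq 2$ (since $\nabla_\xi\Phi = \nabla H(\xi) - \nabla H(\xi_1)$ and only the second term depends on $\xi_2$), while Lemma~\ref{D2H-diff} gives $|\partial_{\xi_2}^\alpha \Phi|\lesssim \langle N_1\rangle|\xi|/N_1^{|\alpha|}$ for $1\leq|\alpha|\leq 2$. The quotient rule then yields
\[
\bigl|\partial_{\xi_2}^\alpha\bigl(\tfrac{\nabla_\xi\Phi}{\Phi}\bigr)\bigr| \lesssim \tfrac{1}{|\xi|\,N_1^{|\alpha|}}, \qquad |\alpha|\leq 2,
\]
and multiplication by $|U(\xi)|\lesssim |\xi|/\langle\xi\rangle$ removes the $1/|\xi|$ singularity. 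Combining via the product rule with \eqref{cutoff-loss}, \eqref{chi2Tbd}, and \eqref{Ajbds} (each contributing at most $N_1^{-|\alpha|}$) then gives $|\partial_{\xi_2}^\alpha b_2^T|\lesssim N_1^{-|\alpha|}$. To conclude, the $\xi_2$-support of $b_2^T$ has volume $\sim N_1^3$ for each fixed $\xi$, so $\|b_2^T\|_{L^\infty_\xi \dot H_{\xi_2}^s}\lesssim N_1^{3/2-s}$ for $s\in\{1,2\}$, and hence $\op{b_2^T}\leq \opt{b_2^T} \lesssim N_1^{1/4}\cdot N_1^{-1/4} = 1$, as claimed.
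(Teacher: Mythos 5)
Your proof is correct and follows essentially the same route as the paper's: law of cosines with $\cos\theta_{01}\le -\tfrac12$ to get $|\xi_2|\ge|\xi_1|$ and the lower bound $|\Phi|\gtrsim\langle N_1\rangle|\xi|$; derivative bounds from Lemmas~\ref{H-derivatives} and~\ref{D2H-diff}; and the factor $U(\xi)$ cancelling the $1/|\xi|$ singularity before a volume count gives $\|b_2^T\|_{L^\infty_\xi\dot H^s_{\xi_2}}\lesssim N_1^{3/2-s}$. You are slightly more explicit than the paper about the quotient-rule and volume computations, but the argument is the same.
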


\begin{proof}
On the support of $\chi_2^T$ we have $\theta_{01}\geq\tfrac{2\pi}{3}$.  Thus $\cos\theta_{01}\leq-\frac12$ and so
\[
|\xi_2|^2 - |\xi_1|^2 = |\xi|^2 -2|\xi_1|\,|\xi|\cos(\theta_{01}) \geq |\xi_1|\,|\xi|.
\]
In particular, using \eqref{R1c}, we deduce that
\begin{equation}\label{R2sim}
|\xi_1|\sim|\xi_2|,
\end{equation}
which justifies the last assertion in the lemma, as well as
\begin{equation}\label{R2-1}
|\xi_2| - |\xi_1| \geq \tfrac{|\xi|\,|\xi_1|}{|\xi_2|+|\xi_1|} \gtrsim |\xi|.
\end{equation}
These relations in turn yield
\begin{equation}\label{R2-philb}
|\Phi|\geq H(\xi_2)-H(\xi_1) = \int_{|\xi_1|}^{|\xi_2|}h'(r)\,dr \gtrsim \langle N_1\rangle\bigl| |\xi_2|-|\xi_1|\bigr| \gtrsim \langle N_1\rangle|\xi|.
\end{equation}

We note that by Lemma~\ref{D2H-diff}, we have
\begin{equation}\label{R2-ub}
|\partial_{\xi_2}^\alpha \Phi\bigr| \lesssim |\xi|\langle N_1\rangle\bigl(\tfrac{1}{N_1}\bigr)^{|\alpha|},\quad |\alpha|\in\{1,2\},
\end{equation}
while by Lemma~\ref{H-derivatives},
\[
|\partial_{\xi_2}^\alpha \nabla_\xi\Phi| \lesssim \langle N_1\rangle\bigl(\tfrac{1}{N_1}\bigr)^{|\alpha|},\quad |\alpha|\leq 2.
\]
Using this together with \eqref{R2-philb}, we find:
\[
|\partial_{\xi_j}^{\alpha}\tfrac{\nabla_\xi\Phi}{\Phi}\bigr| \lesssim \tfrac{1}{|\xi|}\bigl(\tfrac{1}{N_1}\bigr)^{|\alpha|},\quad |\alpha|\leq 2.
\]
Recalling the cutoff bounds \eqref{cutoff-loss} and \eqref{chi2Tbd} (and relying on the factor $U(\xi)$), we deduce
\[
\opt{b_2^T} \lesssim 1,
\]
which completes the proof of Lemma~\ref{L:R2}. \end{proof}

\begin{remark} On the complement of Region 2, we have
\begin{equation}\label{R2c}
\theta_{01}\leq \tfrac{2\pi}{3}+8\cdot 10^{-6},\qtq{whence} \sin(\theta_{01})\sim\sin(\tfrac12\theta_{01}).
\end{equation}
\end{remark}

%%%%%%%%%%%%%%%%%%%%%%
\subsection{Region 3. Time non-resonance}\label{SS:9.3} Let $\phi_k$ and $\omega_k$ be as in Region 2.  Define
\[
\mathcal{R}_3=\{(k,\ell):\angle(\omega_k,\omega_\ell)\leq \tfrac{\pi}{3}+4\cdot 10^{-6}\} \cup \{(k,\ell):\angle(\omega_k,\omega_\ell)\geq \tfrac{2\pi}{3}-4\cdot 10^{-6}\}
\]
and let
\begin{equation}\label{R3-cutoff}
\chi_3^T(\xi_1,\xi_2) = \sum_{(k,\ell)\in\mathcal{R}_3} \phi_k(\tunit{\xi})\phi_\ell(\tunit{\xi_2}).
\end{equation}
One verifies that
\begin{equation}\label{chi3Tbd}
|\partial_{\xi_2}^\alpha \chi_3^T| \lesssim \bigl(\tfrac{1}{|\xi_2|}\bigr)^{|\alpha|}.
\end{equation}

We define
\begin{equation}\label{R3-more-cutoffs}
 \rho_3(\xi_1,\xi_2) =\prod_{j=1}^2 \bigl[1-\chi_j^T(\xi_1,\xi_2)\bigr]\cdot \chi_3^T(\xi_1,\xi_2)
    \qtq{if} C \leq N_1 \leq 64 N_2
\end{equation}
and vanishing otherwise (see also \eqref{R1c}).  Note that $C>0$ will be chosen below.

We define
\[
b_3^T(\xi_1,\xi_2) = \rho_3(\xi_1,\xi_2)\tilde\psi(\tfrac{\xi_1}{N_1})\tilde\psi(\tfrac{\xi_2}{N_2})A(\xi_1,\xi_2)U(\xi)\frac{\nabla_\xi\Phi}{\Phi}.
\]
\begin{lemma}[Region 3 bounds]\label{L:R3}
The following bound holds,
\[
\op{b_3^T} \lesssim 1.
\]
Moreover, $1\lesssim N_1\sim N_2$ on the support of $b_3^T$.
\end{lemma}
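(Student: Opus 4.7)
The plan is to follow the pattern of Lemma~\ref{L:R2}: establish a lower bound on $|\Phi|$ together with matching upper bounds on $|\partial_{\xi_2}^\alpha \Phi|$ and $|\partial_{\xi_2}^\alpha \nabla_\xi\Phi|$ valid on $\supp b_3^T$, use the quotient rule to bound derivatives of $\nabla_\xi\Phi/\Phi$, and combine these with the cutoff bounds \eqref{cutoff-loss}, \eqref{chi2Tbd}, \eqref{chi3Tbd}, the symbol bounds \eqref{Ajbds}, and the $\xi_2$-volume of the support to control the $L^\infty_\xi \dot H^s_{\xi_2}$ norms appearing in \eqref{bnorm}. First I would extract the geometric content of the cutoffs: the factor $\rho_3$ in \eqref{R3-more-cutoffs} enforces $C\leq N_1 \leq 64 N_2$, which combined with \eqref{R1c} yields $1 \lesssim N_1 \sim N_2$; for $C$ chosen sufficiently large, this also places us in the high-frequency regime where $\langle N_j\rangle \sim N_j$. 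The cutoff $\chi_3^T$ confines $\theta_{02}$ to $[0,\tfrac{\pi}{3}+O(10^{-6})] \cup [\tfrac{2\pi}{3}-O(10^{-6}),\pi]$, so $|\cos\theta_{02}|\geq \tfrac12 - O(10^{-6})$ stays bounded away from the critical value $0$, which is exactly where the leading order of $\Phi$ vanishes. For each fixed $\xi$, the $\xi_2$-support has volume $\lesssim N_1^3$.

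The main obstacle is the lower bound $|\Phi|\gtrsim |\xi|N_1$ on $\supp b_3^T$. Using the algebraic identity
\[
H(\xi_2)-H(\xi_1) = |\xi|\cdot \frac{(2|\xi_2|\cos\theta_{02}-|\xi|)(2+|\xi_1|^2+|\xi_2|^2)}{h(|\xi_2|)+h(|\xi_1|)},
\]
one sees that in the high-frequency regime the ratio is essentially $2|\xi_2|\cos\theta_{02}-|\xi|$, so that $\Phi \approx H(\xi) + |\xi|(2|\xi_2|\cos\theta_{02}-|\xi|)$. A short case analysis on the sign of $\cos\theta_{02}$ (and, within each, on whether $|\xi|\lesssim N_1/2$ or $|\xi|\sim N_1$) then shows that $\Phi$ has definite sign up to errors that are absorbed by choosing $C$ large, and that $|\Phi|\gtrsim |\xi|N_1$ throughout. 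The key observation is that at $|\xi|=0$ one has $\xi_2=-\xi_1$ and $\Phi=0$; the linear vanishing of $\Phi$ in $|\xi|$ will be compensated by the factor $U(\xi)=|\xi|/\langle\xi\rangle$ in $b_3^T$.

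Granting this, Lemma~\ref{H-derivatives} provides $|\partial_{\xi_2}^\alpha \nabla_\xi\Phi|\lesssim N_1(1/N_1)^{|\alpha|}$ for $|\alpha|\leq 2$ and Lemma~\ref{D2H-diff} provides $|\partial_{\xi_2}^\alpha \Phi|\lesssim |\xi|N_1(1/N_1)^{|\alpha|}$ for $1\leq|\alpha|\leq 2$. The quotient rule then yields $|\partial_{\xi_2}^\alpha[\nabla_\xi\Phi/\Phi]|\lesssim (1/|\xi|)(1/N_1)^{|\alpha|}$, and multiplying by $U(\xi)=|\xi|/\langle\xi\rangle$ together with $\langle\xi\rangle\geq\sqrt{2}$ produces $|\partial_{\xi_2}^\alpha[U(\xi)\nabla_\xi\Phi/\Phi]|\lesssim (1/N_1)^{|\alpha|}$. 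Since the cutoffs and the symbol $A$ also satisfy $|\partial_{\xi_2}^\alpha(\cdot)| \lesssim (1/N_1)^{|\alpha|}$ in this regime, Leibniz gives $|\partial_{\xi_2}^\alpha b_3^T|\lesssim (1/N_1)^{|\alpha|}$. Combined with the volume bound $\lesssim N_1^3$ this yields $\|b_3^T\|_{L^\infty_\xi \dot H^s_{\xi_2}}\lesssim N_1^{3/2-s}$ for $s\in\{1,2\}$, and interpolating via \eqref{bnorm} produces $\op{b_3^T} \lesssim N_1^{1/4}\cdot N_1^{-1/4} = 1$, as required.
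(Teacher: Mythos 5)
Your overall architecture is the same as the paper's: establish $|\Phi|\gtrsim|\xi|N_1$ on $\supp b_3^T$, obtain matching upper bounds $|\partial_{\xi_2}^\alpha\nabla_\xi\Phi|\lesssim N_1^{1-|\alpha|}$ (from Lemma~\ref{H-derivatives}) and $|\partial_{\xi_2}^\alpha\Phi|\lesssim|\xi|N_1^{1-|\alpha|}$ (from Lemma~\ref{D2H-diff}), apply the quotient rule, absorb the $1/|\xi|$ using $U(\xi)=|\xi|/\langle\xi\rangle$, and combine with the cutoff bounds and the volume $\lesssim N_1^3$. Your identification of $U(\xi)$ as compensating the linear vanishing of $\Phi$ at $|\xi|=0$ is exactly right, and your final interpolation $\op{b_3^T}\lesssim N_1^{1/4}\cdot N_1^{-1/4}$ is correct.

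The one substantive gap is the crucial lower bound $|\Phi|\gtrsim|\xi|N_1$. Your algebraic identity $H(\xi_2)-H(\xi_1) = |\xi|(2|\xi_2|\cos\theta_{02}-|\xi|)\,R$ with $R=\tfrac{2+|\xi_1|^2+|\xi_2|^2}{h(|\xi_2|)+h(|\xi_1|)}$ is correct, and so $\Phi = |\xi|\bigl[\langle\xi\rangle + (2|\xi_2|\cos\theta_{02}-|\xi|)R\bigr]$. But the advertised ``short case analysis'' is not short, and its most delicate branch is not handled by what you write. When $\cos\theta_{02}\geq\tfrac12-\delta$ and $|\xi|>2|\xi_2|\cos\theta_{02}$ (which forces $|\xi|\gtrsim N_1$), the bracketed quantity is a difference of two terms each of size $\sim N_1$, and the crude estimate $R\in[\tfrac12,2]$ is insufficient: one gets only $\langle\xi\rangle + (2|\xi_2|\cos\theta_{02}-|\xi|)R\geq -|\xi|+2|\xi_2|\cos\theta_{02}$, which can be as negative as $-cN_1$. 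To close this case one needs the sharper estimate $R-1\lesssim N_1^{-2}$ (obtained from $r^2+1-h(r)=\tfrac{1}{r^2+1+h(r)}\leq 1$), after which $\langle\xi\rangle - R|\xi| = (\langle\xi\rangle-|\xi|) - (R-1)|\xi|\gtrsim -N_1^{-1}$ and the $2R|\xi_2|\cos\theta_{02}\gtrsim N_1$ term dominates. This is doable but is precisely the part you would need to spell out.

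The paper sidesteps all of this by writing $H(\xi)=|\xi|^2+1+G(\xi)$ with $G(\xi)=-\tfrac{1}{H(\xi)+|\xi|^2+1}\in[-1,0)$, so that $\Phi = 2\xi\cdot\xi_2 + [G(\xi)-G(0)]+[G(\xi_2)-G(\xi_1)]$. Since $G$ is radial and Lipschitz with a small constant, both bracketed errors are $\lesssim|\xi|$, and $|\Phi|\geq 2|\xi||\xi_2||\cos\theta_{02}| - C|\xi|\gtrsim|\xi||\xi_2|$ follows in one step for all $|\cos\theta_{02}|\geq\tfrac12-10^{-3}$, once $N_1\geq C$. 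The information this encodes is the same $R-1\lesssim N_1^{-2}$ fact you need, but packaged so that no case split on the sign of $\cos\theta_{02}$ or on the size of $|\xi|$ is required. I would recommend adopting the $G$-decomposition; failing that, supply the sharper bound on $R-1$ and carry out the cases explicitly.
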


\begin{proof} From \eqref{R1c} and definition of $\rho_3$, we have that
\begin{equation}\label{R3-condition}
|\xi_1|\sim |\xi_2|\gtrsim 1.
\end{equation}
in the support of $b_3^T$.  This justifies the last assertion in the lemma.

Turning to bounding $b_3^T$, we first claim that
\begin{equation}\label{R3-philb}
|\Phi|\gtrsim |\xi| N_2.
\end{equation}

To see this, we introduce the function
\[
G(\xi)=H(\xi)-(|\xi|^2+1) = \frac{-1}{H(\xi)+|\xi|^2+1} \in [-1,0).
\]
A direct computation gives
\[
\Phi = H(\xi)+H(\xi_2)-H(\xi_1) =2\xi_2\cdot\xi + G(\xi_2)-G(\xi_1)+ G(\xi)-G(0).
\]
Note that
\[
G(\eta)-G(\mu) = \frac{-[H(\mu)-H(\eta)+|\mu|^2-|\eta|^2]}{[H(\eta)+|\eta|^2+1][H(\mu)+|\mu|^2+1]},
\]
so that
\[
|G(\eta)-G(\mu)| \lesssim \tfrac{1+|\eta|+|\mu|}{\langle\eta\rangle^2\langle\mu\rangle^2}\cdot|\eta-\mu|\lesssim |\eta-\mu|.
\]
Noting also that $|\cos(\theta_{02})|\geq \tfrac12 - 10^{-3}$ in the support of $\chi_3^T$, we have
\[
|\Phi| \geq 2|\xi|\,|\xi_2|\,|\cos(\theta_{02})| - C|\xi|  \gtrsim |\xi|(|\xi_2|-C) \gtrsim|\xi|\cdot|\xi_2|,
\]
provided we choose the constant appropriately in \eqref{R3-more-cutoffs}.

We next claim
\begin{equation}\label{R3-phiub}
|\partial_{\xi_2}^\alpha \nabla_\xi\Phi| \lesssim \bigl(\tfrac{1}{N_2}\bigr)^{|\alpha|-1},\quad |\alpha|\leq 2.
\end{equation}

By Lemma~\ref{H-derivatives} and \eqref{R2sim},
\[
|\nabla_\xi\Phi| = |\nabla H(\xi)-\nabla H(\xi_1)| \lesssim \langle\xi\rangle+\langle\xi_1\rangle \lesssim  N_2.
\]
which gives the case $|\alpha| = 0$ of \eqref{R3-phiub}. The cases $|\alpha|\in\{1,2\}$ follow from Lemma~\ref{H-derivatives}, recalling that \eqref{R3-condition} holds.

Finally, using Lemma~\ref{D2H-diff} and \eqref{R3-condition}, we have
\begin{equation}\label{R3-phiub2}
|\partial_{\xi_2}^\alpha \Phi| \lesssim |\xi|(\tfrac{1}{N_1})^{|\alpha|-1},\quad |\alpha|\in\{1,2\}.
\end{equation}

Using \eqref{R3-philb}, \eqref{R3-phiub}, and \eqref{R3-phiub2}, we deduce:
\[
\bigl| \partial_{\xi_2}^\alpha \tfrac{\nabla_\xi\Phi}{\Phi}\bigr| \lesssim \tfrac{1}{|\xi|}\bigl(\tfrac{1}{N_2}\bigr)^{|\alpha|},\quad|\alpha|\leq 2.
\]
Using this together with the cutoff bounds (\eqref{cutoff-loss}, \eqref{chi2Tbd}, \eqref{chi3Tbd}), \eqref{Ajbds}, and relying on the factor $U(\xi)$, we deduce
\[
\opt{b_3^T} \lesssim 1,
\]
which completes the proof of the Lemma~\ref{L:R3}.
\end{proof}

\begin{remark} On the support of $1-\chi_3^T$, we have
\begin{equation}\label{R3c}
\theta_{02}\in[\tfrac{\pi}{3},\tfrac{2\pi}{3}],\qtq{so that} |\cos(\theta_{02})| \leq \tfrac12.
\end{equation}
\end{remark}

%%%%%%%%%%%%%%%%%%%%%%%%%%%
\subsection{Region 4. Angular non-resonance}\label{SS:9.4}  Define
\[
\rho_4(\xi_1,\xi_2) = \prod_{j=1}^3 \bigl[1-\chi_j^T(\xi_1,\xi_2)\bigr] \qtq{if} C\leq N_1 \leq 64 N_2
\]
and vanishing otherwise (see also \eqref{R1c}).  Note that
\begin{equation}\label{chi4bd}
\bigl|\partial_{\xi_j}^\alpha \rho_4\bigr| \lesssim \bigl(\tfrac{1}{N_2}\bigr)^{|\alpha|}.
\end{equation}
On the support of $\rho_4$, we will use the integration by parts described in Section~\ref{S:anr}.

Define
\[
m(\xi_1,\xi_2)=\rho_4(\xi_1,\xi_2)\tilde\psi(\tfrac{\xi_1}{N_1})\tilde\psi(\tfrac{\xi_2}{N_2})A(\xi_1,\xi_2)U(\xi)\tfrac{\nabla_\xi\Phi}{\xi^{\perp_2}\cdot \nabla H(\xi_1)},
\]
where we recall the notation $\xi^{\perp_j}=\xi-\xi\cdot\tunit{\xi_j}\tunit{\xi_j}$.

Recalling the discussion in Section~\ref{S:anr}, we need to prove multiplier bounds for
\begin{align*}
& b^\angle(\xi_1,\xi_2)= \nabla_{\xi_2}\cdot[m(\xi_1,\xi_2)\xi^{\perp_2}], \\
&  b_j^\angle(\xi_1,\xi_2) = m(\xi_1,\xi_2) \tfrac{\xi_j\times\xi}{|\xi_j|^2}, \\
& \tilde b^\angle(\xi_1,\xi_2) = m(\xi_1,\xi_2)[\xi\cdot\tunit{\xi_1}\tunit{\xi_1}-\xi\cdot\tunit{\xi_2}\tunit{\xi_2}].
\end{align*}

\begin{lemma}[Region 4 bounds]\label{L:R4} The following bounds hold:
\begin{equation}
\op{b^{\angle}}+\op{b_j^{\angle}}+\op{\tilde b^{\angle}} \lesssim 1. \label{L:R41}
\end{equation}
Moreover, $1\lesssim N_1\sim N_2$ in the support of the multipliers above.
\end{lemma}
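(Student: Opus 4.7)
The plan is to establish all three bounds via a common strategy: (i) record the support geometry; (ii) derive a key formula for the denominator $\xi^{\perp_2}\cdot\nabla H(\xi_1)$ appearing in $m$; (iii) prove pointwise bounds on each target multiplier by exploiting cancellations coming from the additional vector factors; and (iv) upgrade to $\op{\cdot}$ bounds via the volume of the $\xi_2$-support.

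On $\supp\rho_4$, the conditions defining $\rho_4$ together with \eqref{R1c} and \eqref{R3c} force $1\lesssim N_1\sim N_2$, so $|\xi_1|\sim|\xi_2|\sim N_1$ and $\sin\theta_{02}\sim 1$. The law of sines then yields $\sin\theta_{12}=|\xi|\sin\theta_{02}/|\xi_1|\sim|\xi|/N_1$, which may be small when $\xi_1\approx-\xi_2$. Using $\xi=\xi_1+\xi_2$ a direct computation gives $\xi^{\perp_2}\cdot\tunit{\xi_1}=|\xi_1|\sin^2\theta_{12}$ and hence
\[
\xi^{\perp_2}\cdot\nabla H(\xi_1)=\frac{h'(|\xi_1|)\,|\xi|^2\sin^2\theta_{02}}{|\xi_1|}\sim|\xi|^2.
\]
Combining this with $|\nabla_\xi\Phi|\lesssim N_1$ (Lemma~\ref{DH-diff}), $|A|\lesssim 1$ (Lemma~\ref{AOK}, since $N_1,N_2\gtrsim 1$), and $|U(\xi)|\lesssim 1$ yields the pointwise bound $|m|\lesssim U(\xi)N_1/|\xi|^2$, singular at $|\xi|=0$. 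A routine differentiation using Lemma~\ref{H-derivatives} and the cutoff bounds \eqref{chi2Tbd}, \eqref{chi3Tbd}, \eqref{chi4bd} confirms that each $\xi_2$-derivative of $m$ costs at most a factor $1/N_2$, so
\[
|\partial_{\xi_2}^\alpha m|\lesssim \frac{U(\xi)\,N_1}{|\xi|^2}\Bigl(\frac{1}{N_2}\Bigr)^{|\alpha|},\qquad|\alpha|\leq 3.
\]

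The singular factor $N_1/|\xi|^2$ is neutralized by the extra vector factor in each target multiplier. For $b^\angle_k$, the identity $\xi_k\times\xi=\pm\xi_1\times\xi_2$ together with the law of sines gives $|(\xi_k\times\xi)/|\xi_k|^2|\sim|\xi|/N_1$, so $|b^\angle_k|\lesssim U(\xi)/|\xi|=\langle\xi\rangle^{-1}\lesssim 1$. For $\tilde b^\angle$, write $\tunit{\xi_1}=-\tunit{\xi_2}+w$ with $|w|=2\sin(\tfrac12\theta_{12}')\sim|\xi|/N_1$; expanding gives
\[
(\xi\cdot\tunit{\xi_1})\tunit{\xi_1}-(\xi\cdot\tunit{\xi_2})\tunit{\xi_2}=-(\xi\cdot\tunit{\xi_2})w-(\xi\cdot w)\tunit{\xi_2}+(\xi\cdot w)w,
\]
of magnitude $\lesssim|\xi|^2/N_1$, so $|\tilde b^\angle|\lesssim U(\xi)\lesssim 1$. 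For $b^\angle=\nabla_{\xi_2}\cdot(m\xi^{\perp_2})$, the direct computation $\nabla_{\xi_2}\cdot\xi^{\perp_2}=-2(\xi\cdot\tunit{\xi_2})/|\xi_2|$ of size $\sim|\xi|/N_2$, combined with $|\xi^{\perp_2}|\sim|\xi|$ and the derivative bound on $m$, yields $|b^\angle|\lesssim 1$. Differentiating each identity product-rule fashion gives $|\partial_{\xi_2}^\alpha b|\lesssim(1/N_2)^{|\alpha|}$ for $b\in\{b^\angle,b^\angle_k,\tilde b^\angle\}$ and $|\alpha|\leq 2$. Since the $\xi_2$-support has volume $\lesssim N_2^3$ for each fixed $\xi$, we deduce $\|b\|_{L^\infty_\xi\dot H^s_{\xi_2}}\lesssim N_2^{3/2-s}$ for $s\in\{1,2\}$, and the geometric mean in \eqref{bnorm} produces $\op{b}\lesssim 1$.

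The main obstacle is the derivative estimate on $1/(\xi^{\perp_2}\cdot\nabla H(\xi_1))$: since this denominator scales as $|\xi|^2$ with $|\xi|$ possibly much smaller than $N_1$, one must verify that each $\xi_2$-derivative costs $1/N_2$ rather than $1/|\xi|$. This follows from the explicit formula above once one notes that $h'(|\xi_1|)/|\xi_1|$ and $|\xi|^2\sin^2\theta_{02}=|\xi|^2-(\xi\cdot\tunit{\xi_2})^2$ are smooth in $\xi_2$ on $\supp\rho_4$ with $\xi_2$-derivatives of the required size, but the bookkeeping, while elementary, is delicate and must simultaneously track the cutoff derivatives.
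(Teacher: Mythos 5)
Your proof is correct and follows essentially the same strategy as the paper's: establish the lower bound $|\xi^{\perp_2}\cdot\nabla H(\xi_1)|\gtrsim|\xi|^2$ and derivative bounds costing $1/N_1\sim 1/N_2$, then neutralize the $N_1/|\xi|^2$ singularity of $m$ through the extra vector factors in each of $b^\angle$, $b^\angle_k$, $\tilde b^\angle$, and finish with the $N_2^3$ volume bound. Your alternate presentations of the cancellations (the identity $\xi_k\times\xi=\pm\xi_1\times\xi_2$ and the expansion $\tunit{\xi_1}=-\tunit{\xi_2}+w$) are equivalent to the explicit formulas the paper uses, so there is no substantive difference.
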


\begin{proof} First note that in the support of $\rho_4$, we have
\begin{equation}\label{R4-condition}
|\xi_1|\sim |\xi_2|\gtrsim 1.
\end{equation}

Using \eqref{R3c} and \eqref{R4-condition} one readily checks
\begin{equation}\label{R4lb1}
|\xi^{\perp_2}\cdot \nabla H(\xi_1)| =  \bigl| \tfrac{|\xi|^2|\xi_2|^2-(\xi\cdot\xi_2)^2}{|\xi_2|^2}\cdot\tfrac{h'(|\xi_1|)}{|\xi_1|}\bigr| \gtrsim \tfrac{|\xi|^2|\xi_2|^2\langle \xi_1\rangle}{|\xi_1| |\xi_2|^2}\gtrsim |\xi|^2.
\end{equation}
For higher derivatives, one can check that
\[
|\partial_{\xi_2}^\alpha\bigl[ \xi^{\perp_2}\cdot \nabla H(\xi_1)\bigr] |\lesssim |\xi|^2\bigl(\tfrac{1}{N_1}\bigr)^{|\alpha|},\quad |\alpha|\leq 3.
\]

From Lemma~\ref{H-derivatives}, we also have
\[
|\partial_{\xi_2}^\alpha \nabla_\xi\Phi| \lesssim N_1\bigl(\tfrac{1}{N_1}\bigr)^{|\alpha|},\quad |\alpha|\leq 3.
\]

Using the last three bounds, one deduces
\begin{equation}\label{R4-symbol1}
\bigl| \partial_{\xi_2}^\alpha \bigl(\tfrac{\nabla_\xi\Phi}{\xi^{\perp_2}\cdot\nabla H(\xi_1)}\bigr)\bigr| \lesssim \tfrac{N_1}{|\xi|^2}\bigl(\tfrac{1}{N_1}\bigr)^{|\alpha|},\quad |\alpha|\leq 3.
\end{equation}

Next, note that
\[
|\partial_{\xi_2}^\alpha \xi^{\perp_2}\,| \lesssim |\xi|\bigl(\tfrac{1}{N_1}\bigr)^{|\alpha|},\quad |\alpha|\leq 3.
\]
Using this together with the cutoff bounds (\eqref{cutoff-loss}, \eqref{chi2Tbd}, \eqref{chi3Tbd}, \eqref{chi4bd}),  \eqref{Ajbds}, and \eqref{R4-symbol1} (and recalling the presence of $U(\xi)$), we deduce
\[
\opt{b^{\angle}} \lesssim 1,
\]
giving the first estimate in \eqref{L:R41}.

Next, we recall \eqref{R4-condition} and note that
\[
\bigl|\partial_{\xi_2}^{\alpha} \tfrac{\xi_j\times \xi}{|\xi_j|^2}\bigr| \lesssim \tfrac{|\xi|}{N_1}\bigl(\tfrac{1}{N_1}\bigr)^{|\alpha|},\quad|\alpha|\leq 2.
\]
Combining this with the bounds above (i.e. all of the cutoff bounds and the estimate \eqref{R4-symbol1}), one can deduce
\[
\opt{b_j^{\angle}} \lesssim 1,
\]
giving the second estimate in \eqref{L:R41}.

Finally, we compute directly
\[
\xi\cdot\tunit{\xi_1}\tunit{\xi_1}-\xi\cdot\tunit{\xi_2}\tunit{\xi_2} = \tfrac{(\xi\cdot\xi_1)[\xi\cdot(\xi_2-\xi_1)]\xi_1+|\xi_1|^2[|\xi|^2\xi_1-(\xi\cdot\xi_2)\xi] }{|\xi_1|^2|\xi_2|^2}.
\]
Thus one can check
\[
\bigl| \partial_{\xi_j}^\alpha\bigl(\xi\cdot\tunit{\xi_1}\tunit{\xi_1} - \xi\cdot\tunit{\xi_2}\tunit{\xi_2}\bigr)\bigr| \lesssim \tfrac{|\xi|^2}{N_1}\bigl(\tfrac{1}{N_1}\bigr)^{|\alpha|},\quad|\alpha|\leq 2,
\]
and we can again estimate as above to deduce
\[
\opt{\tilde b^{\angle}}\lesssim 1,
\]
giving the final estimate in \eqref{L:R41}.  Note that in contrast to the first two estimates in \eqref{L:R41}, we use the fact that $U(\xi)\leq 1$.
\end{proof}
%%%%%%%%%%%%%%%%%%%%%%%%%%%
\subsection{Region 5. Space non-resonance} Let
\[
\rho_5(\xi_1,\xi_2) = \prod_{j=1}^2 \bigl[1-\chi_j^T(\xi_1,\xi_2)\bigr] \qtq{if} N_1 \geq C \qtq{and} N_2 < \tfrac1{64} N_1
\]
and vanishing otherwise.  One can readily check that
\begin{equation}\label{chi5bd}
|\partial_{\xi_j}^\alpha \rho_5|\lesssim \bigl(\tfrac{1}{N_2}\bigr)^{|\alpha|},\quad |\alpha|\leq 3
\end{equation}
and also that on the support of $\rho_5$,
\begin{equation}\label{R5-condition}
16 |\xi_2| < |\xi_1| \sim |\xi|\qtq{and} |\xi_1| \gtrsim 1
\end{equation}

We define the multipliers
\begin{align*}
& b_5^X(\xi_1,\xi_2) = \rho_5(\xi_1,\xi_2)\tilde\psi(\tfrac{\xi_1}{N_1})\tilde\psi(\tfrac{\xi_2}{N_2})A(\xi_1,\xi_2)U(\xi)\tfrac{\nabla_{\xi}\Phi\nabla_{\xi_2}\Phi}{|\nabla_{\xi_2}\Phi|^2}, \\
& \tilde b_5^X(\xi_1,\xi_2)=\nabla_{\xi_2}\cdot b_5^X(\xi_1,\xi_2).
\end{align*}

\begin{lemma}[Region 5 bounds]\label{L:R5}  The following bounds hold:
\[
\op{b_5^X} \lesssim \tfrac{N_2}{N_1}, \quad \op{\tilde b_5^X} \lesssim \tfrac{1}{N_1},
\]
with $N_1\gtrsim 1$ and $ N_1\gg N_2$ on the support of these multipliers.
\end{lemma}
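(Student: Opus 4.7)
The strategy closely parallels Case~2 of the proof of Lemma~\ref{L:vvR4}: Region~5 for the $|v|^2$ phase has precisely the support constraints $|\xi_1|\sim|\xi|\sim N_1\gtrsim 1$ and $|\xi_2|\sim N_2$ with $N_2\ll N_1$ (cf.~\eqref{R5-condition}), and the target bounds coincide. The essential new feature is the sign of $H(\xi_2)$ in the phase $\Phi=H(\xi)+H(\xi_2)-H(\xi_1)$, which gives
\[
\nabla_{\xi_2}\Phi = \nabla H(\xi_1) + \nabla H(\xi_2).
\]
In contrast to the $v^2$ case, the required lower bound on this sum follows from a clean triangle inequality: $|\nabla H(\xi_1)|\sim\langle N_1\rangle\sim N_1$, while $|\nabla H(\xi_2)|\lesssim\langle N_2\rangle\lesssim 1+N_1/64$, so choosing the constant $C$ in~\eqref{R5-condition} sufficiently large yields $|\nabla_{\xi_2}\Phi|\gtrsim N_1$.

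For the numerator, writing $\nabla_\xi\Phi=\nabla H(\xi)-\nabla H(\xi_1)$ and applying Lemma~\ref{DH-diff} with $|\xi-\xi_1|=|\xi_2|\sim N_2$ and $\sin(\tfrac12\angle(\xi,\xi_1))\lesssim N_2/N_1$ (since $\xi=\xi_1+\xi_2$ with $|\xi_2|\ll|\xi_1|$), we obtain $|\nabla_\xi\Phi|\lesssim N_2$. Higher derivatives are controlled by Lemma~\ref{H-derivatives}: since $\nabla_\xi\Phi$ depends on $\xi_2$ only through its $\xi_1$-term, one has $|\partial_{\xi_2}^\alpha\nabla_\xi\Phi|\lesssim(1/N_1)^{|\alpha|-1}$ for $1\le|\alpha|\le 3$, while $|\partial_{\xi_2}^\alpha\nabla_{\xi_2}\Phi|\lesssim\langle N_2\rangle(1/N_2)^{|\alpha|}$ for $|\alpha|\ge 1$, with the $\xi_2$-contribution dominating (especially for $N_2\lesssim 1$). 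The quotient and product rules then yield the key estimate
\[
\Bigl|\partial_{\xi_2}^\alpha\tfrac{\nabla_\xi\Phi\cdot\nabla_{\xi_2}\Phi}{|\nabla_{\xi_2}\Phi|^2}\Bigr|\lesssim\tfrac{N_2}{N_1}\bigl(\tfrac{1}{N_2}\bigr)^{|\alpha|},\quad|\alpha|\le 3.
\]

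The remainder of the argument is essentially bookkeeping. The cutoff bounds~\eqref{cutoff-loss}, \eqref{chi2Tbd}, \eqref{chi5bd}, the symbol bound $|\partial_{\xi_2}^\alpha A|\lesssim(1/N_2)^{|\alpha|}$ from~\eqref{Ajbds}, and $|U(\xi)|\lesssim 1$ (since $|\xi|\sim N_1\gtrsim 1$) each cost at most a factor of $1/N_2$ per derivative. Combined with the $\xi_2$-volume $\sim N_2^3$ fixed by $\tilde\psi(\xi_1/N_1)\tilde\psi(\xi_2/N_2)$ at fixed $\xi$, this gives $\|b_5^X\|_{L_\xi^\infty\dot H_{\xi_2}^s}\lesssim (N_2/N_1)\,N_2^{3/2-s}$ for $s\in\{1,2\}$, whence $\opt{b_5^X}\lesssim N_2/N_1$. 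The bound on $\tilde b_5^X=\nabla_{\xi_2}\cdot b_5^X$ follows from the same analysis with one extra factor of $1/N_2$ introduced by the divergence, yielding $\op{\tilde b_5^X}\lesssim 1/N_1$.

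The principal subtlety is calibrating the constant $C$ in~\eqref{R5-condition} large enough that $|\nabla H(\xi_2)|$ is strictly dominated by $|\nabla H(\xi_1)|$, so that the lower bound on $|\nabla_{\xi_2}\Phi|$ holds uniformly on the region; once this is secured, tracking scales through the quotient and cutoffs is mechanical and noticeably simpler than in the corresponding case of Proposition~\ref{prop:dec2}.
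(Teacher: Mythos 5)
Your proof is correct and follows essentially the same approach as the paper: you establish $|\nabla_\xi\Phi|\lesssim N_2$ via Lemma~\ref{DH-diff}, the lower bound $|\nabla_{\xi_2}\Phi|\gtrsim N_1$ via a reverse triangle inequality (as in \eqref{R5-lb}), the same derivative bounds from Lemma~\ref{H-derivatives}, and arrive at the identical symbol estimate \eqref{R5-symbol}, from which the multiplier bounds follow via the cutoff bounds and \eqref{Ajbds}. The only cosmetic discrepancy is that the reverse-triangle-inequality lower bound is not really a new feature compared with \eqref{vvR4-case2-lb} in the $v^2$ case; otherwise the argument matches the paper's.
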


\begin{proof} By Lemma~\ref{H-derivatives}, \eqref{R5-condition}, and the fundamental theorem of calculus,
\begin{equation}\label{R5-ub}
|\nabla_{\xi}\Phi| = |\nabla H(\xi)-\nabla H(\xi_1)| \lesssim N_2.
\end{equation}
Similarly,
\begin{equation}\label{R5-lb}
|\nabla_{\xi_2}\Phi| = \bigl| h'(|\xi_1|)\tunit{\xi_1} + h'(|\xi_2|)\tunit{\xi_2} \bigr| \gtrsim \langle N_1\rangle - \langle N_2\rangle \gtrsim N_1.
\end{equation}

For higher derivatives, we use Lemma~\ref{H-derivatives} and \eqref{R5-condition} to estimate
\[
|\partial_{\xi_2}^\alpha \nabla_{\xi}\Phi| \lesssim N_1\bigl(\tfrac{1}{N_1}\bigr)^{|\alpha|},\quad
|\partial_{\xi_2}^\alpha \nabla_{\xi_2}\Phi| \lesssim \langle N_2\rangle\bigl(\tfrac{1}{N_2}\bigr)^{|\alpha|},\quad|\alpha|\leq 3.
\]
Using this together with \eqref{R5-ub} and \eqref{R5-lb}, we deduce
\begin{equation}\label{R5-symbol}
\bigl| \partial_{\xi_2}^\alpha \tfrac{\nabla_{\xi}\Phi\nabla_{\xi_2}\Phi}{|\nabla_{\xi_2}\Phi|^2} \bigr| \lesssim \tfrac{N_2}{N_1}\bigl(\tfrac{1}{N_2}\bigr)^{|\alpha|},\quad |\alpha|\leq 3.
\end{equation}
This, together with the cutoff bounds (\eqref{cutoff-loss}, \eqref{chi2Tbd}, \eqref{chi3Tbd}, \eqref{chi4bd}, and \eqref{chi5bd}) and \eqref{Ajbds}, implies
\[
\opt{b_5^X}\lesssim \tfrac{N_2}{N_1},
\]
giving the first bound.  Distributing the derivative in $\tilde b_5^X$ and using the same bounds above yields
\[
\opt{\tilde b_5^X} \lesssim \tfrac{1}{N_1},
\]
giving the second bound.  This completes the proof of Lemma~\ref{L:R5}. \end{proof}

\begin{remark}  Combining the cutoff functions defined thus far, we have
$$
\sum_{j=1}^5 \rho_j(\xi_1,\xi_2) = \begin{cases} \chi_2(\xi_1,\xi_2) &\text{if $\frac1{32}N_2\leq N_1 <C$} \\ 1 & \text{otherwise.} \end{cases}
$$
\end{remark}

%%%%%%%%%%%%%%%%%%%%%%%%%%%
\subsection{Region 6. Time non-resonance}
Let $\phi_k$ be a partition of unity adapted to a maximal $10^{-6}N_1$-separated set $\{\omega_k\}$ on $\mathbb{S}^2$. Define
\[
\mathcal{R}_6 = \{(k,\ell):\angle(\omega_k,\omega_\ell)\geq \tfrac\pi2 \vee (\pi - 2\cdot 10^{-4}N_1 + 4\cdot 10^{-6}N_1)\},
\]
and let
\[
\chi_6^T(\xi_1,\xi_2) = \sum_{(k,\ell)\in\mathcal{R}_6} \phi_k(\tunit{\xi})\phi_\ell(\tunit{\xi_2}).
\]
Note that
\[
\bigl| \partial_{\xi_2}^\alpha \chi_6^T \bigr| \lesssim \bigl(\tfrac{1}{N_1N_2}\bigr)^{|\alpha|},\quad |\alpha|\leq3.
\]

We define
\[
\rho_6(\xi_1,\xi_2) = \bigl[1-\chi_2^T(\xi_1,\xi_2)\bigr]\chi_6^T(\xi_1,\xi_2) \qtq{if} \tfrac1{32}N_2\leq N_1 <C
\]
and vanishing otherwise.  Note that
\begin{equation}\label{chi6Tbd}
\bigl| \partial_{\xi_2}^\alpha \rho_6\bigr| \lesssim \bigl(\tfrac{1}{N_1N_2}\bigr)^{|\alpha|},\quad |\alpha|\leq 3.
\end{equation}
We define the multiplier
\[
b_6^T(\xi_1,\xi_2) = \rho_6(\xi_1,\xi_2)\tilde\psi(\tfrac{\xi_1}{N_1})\tilde\psi(\tfrac{\xi_2}{N_2})A(\xi_1,\xi_2)U(\xi)\frac{\nabla_\xi\Phi}{\Phi}.
\]
\begin{lemma}[Region 6 bounds]\label{L:R6}
The following bound holds:
\[
\op{b_6^T} \lesssim \tfrac{1}{N_1 N_2^{1/2}}.
\]
Moreover, $N_2\lesssim N_1\lesssim 1$ on the support of $b_6^T$.
\end{lemma}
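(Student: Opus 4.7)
The support analysis is immediate from the cutoff structure: $\rho_6 = (1-\chi_2^T)\chi_6^T$ is nonzero only when $\tfrac{1}{32}N_2 \leq N_1 < C$, giving $N_2 \lesssim N_1 \lesssim 1$. For $C$ chosen small enough that $2\cdot 10^{-4}N_1 \leq \pi/2$ throughout, the cutoff $\chi_6^T$ restricts the complementary polar angle $\phi := \pi - \theta_{02}$ to $\phi \lesssim N_1$, and at each fixed $\xi$ the $\xi_2$-support has volume $\lesssim N_2^3 N_1^2$. Geometrically, $\xi$ and $\xi_2$ are nearly antiparallel, so $|\xi_1| \approx |\xi|+|\xi_2|$ and $\xi_1, \xi_2$ are also nearly antiparallel.

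\textbf{Main obstacle: lower bound on $|\Phi|$.} The crux of the proof is establishing $|\Phi| \gtrsim |\xi|\,N_1\,N_2$. I would Taylor-expand $H(r) = \sqrt{2}\,r + r^3/(2\sqrt{2}) + O(r^5)$ at low frequencies to write
\[
\Phi = \sqrt{2}\bigl(|\xi|+|\xi_2|-|\xi_1|\bigr) + \tfrac{1}{2\sqrt{2}}\bigl(|\xi|^3+|\xi_2|^3-|\xi_1|^3\bigr) + O(N_1^5).
\]
The identity $|\xi_1|^2 = (|\xi|+|\xi_2|)^2 - 4|\xi||\xi_2|\sin^2(\phi/2)$ shows the linear piece is positive and bounded by $C|\xi||\xi_2|\phi^2/(|\xi|+|\xi_2|)$. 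Expanding $|\xi_1|^3$ around $(|\xi|+|\xi_2|)^3$, the cubic piece equals $-\tfrac{3}{2\sqrt{2}}|\xi||\xi_2|(|\xi|+|\xi_2|)$ plus a remainder of the same form as the linear piece. Since $|\xi|+|\xi_2| \geq |\xi_1| \sim N_1$ while $\phi \leq 2\cdot 10^{-4}N_1$, the linear piece is at most a small fraction of the (negative) cubic piece, so the cubic wins and $|\Phi| \gtrsim |\xi||\xi_2|(|\xi|+|\xi_2|) \gtrsim |\xi|\,N_1\,N_2$. This is precisely where the small constant $10^{-4}$ in the definition of $\chi_6^T$ is essential; a looser angular cutoff would permit cancellation between the two pieces.

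\textbf{Derivative bounds and conclusion.} From Lemma~\ref{DH-diff} (together with $||\xi_1|-|\xi_2|| \lesssim |\xi|$ and the fact that $\sin((\pi-\theta_{12})/2) \lesssim |\xi|\phi/N_1$, which follow from the near-antiparallel geometry and the law of sines), I get $|\nabla_\xi\Phi| \lesssim N_2$ and the refined bound $|\partial_{\xi_2}\Phi| \lesssim |\xi|$; Lemma~\ref{H-derivatives} gives $|\partial_{\xi_2}^\alpha \nabla_\xi\Phi| \lesssim N_1^{-|\alpha|}$; and Lemma~\ref{D2H-diff} gives $|\partial_{\xi_2}^2\Phi| \lesssim |\xi|/N_2^2$. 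Since $|U(\xi)| \sim |\xi|$ at low frequencies cancels the $|\xi|$ in the lower bound for $\Phi$, a Leibniz computation (in which cutoff derivatives cost at most $1/N_2$ each and derivatives of $A_j$ are absorbed into the dominant $\Phi$-quotient contributions) yields
\[
|b_6^T| \lesssim \tfrac{1}{N_1}, \qquad |\partial_{\xi_2}b_6^T| \lesssim \tfrac{1}{N_1^2 N_2}, \qquad |\partial_{\xi_2}^2 b_6^T| \lesssim \tfrac{1}{N_1^2 N_2^3},
\]
with the dominant contribution to the second-derivative bound coming from $|U|\,|\nabla_\xi\Phi|\,|\partial_{\xi_2}^2\Phi|/|\Phi|^2$. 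Finally, since the $\xi_2$-support has volume $\lesssim N_2^3 N_1^2$ for fixed $\xi$, Cauchy--Schwarz gives $\|\partial^s b_6^T\|_{L^2_{\xi_2}} \lesssim N_2^{3/2}N_1 \|\partial^s b_6^T\|_{L^\infty}$ for $s\in\{1,2\}$, so
\[
\op{b_6^T} \lesssim N_2^{3/2}\,N_1 \cdot \Bigl(\tfrac{1}{N_1^2 N_2}\cdot \tfrac{1}{N_1^2 N_2^3}\Bigr)^{1/2} = \tfrac{1}{N_1\,N_2^{1/2}},
\]
which is the desired bound. Note this route is slightly sharper than invoking the ``Typical estimate'' lemma with a single $\ell$: here the first- and second-derivative bounds require different natural $\ell$-values, and taking the geometric mean (rather than the maximum raised to $3/2$) is what matches the claimed exponent.
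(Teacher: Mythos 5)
Your strategy mirrors the paper's in structure: the lower bound $|\Phi|\gtrsim|\xi|N_1N_2$, the upper bounds $|\nabla_\xi\Phi|\lesssim N_2$, $|\partial_{\xi_2}\Phi|\lesssim|\xi|$, $|\partial_{\xi_2}^2\Phi|\lesssim|\xi|/N_2^2$, the cancellation of $|\xi|$ by $U(\xi)$, the $N_2^3N_1^2$ volume bound, and the assembly via Leibniz and Cauchy--Schwarz are exactly the paper's ingredients, and your pointwise symbol bounds $|b_6^T|\lesssim N_1^{-1}$, $|\partial_{\xi_2}b_6^T|\lesssim N_1^{-2}N_2^{-1}$, $|\partial_{\xi_2}^2b_6^T|\lesssim N_1^{-2}N_2^{-3}$ agree with the paper's.

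The one place you diverge --- using a Taylor expansion rather than exact algebra to prove $|\Phi|\gtrsim|\xi|N_1N_2$ --- has a genuine gap. You reduce to a negative cubic main term $\sim|\xi|\,|\xi_2|(|\xi|+|\xi_2|)$, a positive linear piece, and a Taylor remainder that you record as $O(N_1^5)$; you show the cubic dominates the linear piece, but you never compare the cubic to the $O(N_1^5)$ error, and that comparison can fail. With $N_1\sim N_2$ and $\xi,\xi_2$ nearly antiparallel, the output frequency $|\xi|\approx|\xi_1|-|\xi_2|$ can be arbitrarily small, and then $|\xi|\,N_1N_2\ll N_1^5$. To repair this you must observe that the remainder $E(|\xi|)+E(|\xi_2|)-E(|\xi_1|)$, where $E(r)=H(r)-\sqrt{2}\,r-r^3/(2\sqrt{2})=O(r^5)$, itself carries the factor $|\xi|\,|\xi_2|$: split it as $[E(|\xi|)+E(|\xi_2|)-E(|\xi|+|\xi_2|)]+[E(|\xi|+|\xi_2|)-E(|\xi_1|)]$, bound the first piece by $\int_0^{|\xi|}\!\int_0^{|\xi_2|}|E''(s+t)|\,dt\,ds\lesssim|\xi|\,|\xi_2|\,N_1^3$, and the second by the mean value theorem together with your bound on $|\xi|+|\xi_2|-|\xi_1|$. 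The paper sidesteps the issue entirely by computing $h(|\xi|)+h(|\xi_2|)-h(|\xi|+|\xi_2|)$ in closed form, which exhibits the factor $|\xi|\,|\xi_2|$ on the nose. One further minor slip in the accounting: you say ``cutoff derivatives cost at most $1/N_2$ each,'' but $\chi_6^T$ derivatives cost $1/(N_1N_2)$ (cf.\ \eqref{chi6Tbd}); since this coincides with the $\Phi$-quotient derivative cost your final pointwise bounds come out the same, but the stated reasoning is off.
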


\begin{proof} First note that for fixed $\xi$, the multiplier $\chi_6^T(\xi,\xi_2) \tilde\psi(\frac{\xi_1}{N_1})\tilde\psi(\frac{\xi_2}{N_2})$ restricts $\xi_2$ to a set of volume
\begin{equation}\label{R6-volume}
N_2^3 N_1^2.
\end{equation}
By construction, $\theta_{02}\geq \pi - 2\cdot 10^{-4}N_1$ on the support of $ \chi_6^T$; thus
\begin{equation}\label{R6-angle}
\sin(\tfrac12\theta_{02}') \leq 10^{-4} N_1.
\end{equation}

We first claim the lower bound
\begin{equation}\label{R6-lb}
|\Phi| \gtrsim |\xi| N_1 N_2.
\end{equation}
We begin by writing
\[
\Phi = h(|\xi|)+h(|\xi_2|)-h(|\xi|+|\xi_2|)+ h(|\xi|+|\xi_2|)-h(|\xi_1|).
\]
By direct computation, one verifies
\[
\bigl| h(|\xi|)+h(|\xi_2|)-h(|\xi|+|\xi_2|) \bigr| = \bigl| \tfrac{|\xi|\,|\xi_2|(|\xi_2|+2|\xi|)}{\langle \xi\rangle + \langle|\xi|+|\xi_2|\rangle} + \tfrac{|\xi|\,|\xi_2|(|\xi|+2|\xi_2|)}{\langle \xi_2\rangle + \langle |\xi|+|\xi_2|\rangle}\bigr| \gtrsim |\xi| N_1 N_2.
\]

For the remaining piece, we use the fundamental theorem of calculus, Lemma~\ref{H-derivatives}, and \eqref{R6-angle} to get
\[
\bigl| h(|\xi|+|\xi_2|) - h(|\xi_1|)\bigr| \lesssim \bigl| |\xi|+|\xi_2| - |\xi_1| \bigr| \lesssim \tfrac{|\xi|\,|\xi_2|}{|\xi|+|\xi_2|+|\xi_1|}\sin^2(\tfrac12\theta_{02}') \ll |\xi| N_1 N_2.
\]
Thus \eqref{R6-lb} holds.

Next, we claim
\begin{equation}\label{R6-ub}
|\nabla_\xi\Phi| \lesssim N_2.
\end{equation}
Indeed, using Lemma~\ref{DH-diff}, we first get
\[
|\nabla_\xi\Phi| = |\nabla H(\xi)-\nabla H(\xi_1)| \lesssim N_1 N_2 + \sin(\tfrac12\theta_{01}).
\]
By \eqref{R2c}, the law of sines, and \eqref{R6-angle},
\[
\sin(\tfrac12\theta_{01}) \sim \tfrac{N_2}{N_1} \sin(\theta_{02}) \sim \tfrac{N_2}{N_1}\sin(\theta_{02}') \lesssim \tfrac{N_2}{N_1}\sin(\tfrac12\theta_{02}')\lesssim N_2.
\]
Thus (recalling $N_1\lesssim 1$), \eqref{R6-ub} holds.

For higher derivatives, Lemma~\ref{H-derivatives} gives
\begin{equation}\label{R6-ubb}
|\partial_{\xi_2}^\alpha \nabla_\xi\Phi|\lesssim (\tfrac{1}{N_1})^{|\alpha|},\quad |\alpha|\in\{1,2\}.
\end{equation}

Next, we will show
\begin{equation}\label{R6-ub2}
|\partial_{\xi_2}^\alpha \Phi | \lesssim
\begin{cases}
|\xi| & |\alpha| =1, \\
|\xi|\tfrac{1}{N_2^2} & |\alpha| = 2.
\end{cases}
\end{equation}
In fact, for $|\alpha|=2$ this follows directly from Lemma~\ref{D2H-diff}. For $|\alpha|=1$, we first have by Lemma~\ref{DH-diff},
\[
|\nabla_{\xi_2}\Phi| = |\nabla H(\xi_2)+\nabla H(\xi_1)| \lesssim |\xi| + \sin(\tfrac12\theta_{12}').
\]
As $\theta_{01}+\theta_{12}'=\theta_{02}'\leq \frac\pi2$, the law of sines and \eqref{R6-angle} yield
\[
\sin(\tfrac12\theta_{12}') \sim \sin(\theta_{12}') \sim \tfrac{|\xi|}{N_1}\sin(\theta_{02}') \lesssim |\xi|,
\]
as needed.

Using \eqref{R6-lb}, \eqref{R6-ub}, \eqref{R6-ubb}, and \eqref{R6-ub2}, we deduce
\[
\bigl|\partial_{\xi_2}^\alpha\tfrac{\nabla_\xi\Phi}{\Phi}\bigr| \lesssim \tfrac{1}{|\xi| N_1}\cdot\begin{cases} \bigl(\tfrac{1}{N_1 N_2}\bigr)^{|\alpha|}& |\alpha|\in\{0,1\}, \\
\tfrac{1}{N_1 N_2^3} & |\alpha|=2.\end{cases}
\]
Combining this with the cutoff bounds (\eqref{cutoff-loss}, \eqref{chi2Tbd}, \eqref{chi3Tbd}, \eqref{chi4bd}, \eqref{chi5bd}, and \eqref{chi6Tbd}), \eqref{Ajbds}, and the volume bound \eqref{R6-volume}, we deduce
\[
\opt{b_6^T} \lesssim \tfrac{1}{N_1 N_2^{1/2}},
\]
giving the desired bound. This completes the proof of Lemma~\ref{L:R6}. \end{proof}

\begin{remark} On the support of $1-\chi_6^T$, one has
\begin{equation}\label{R6c-angle}
\theta_{02}\leq \pi - 2\cdot 10^{-4}N_1+ 8\cdot 10^{-6}N_1,
\end{equation}
so that
\begin{equation}\label{R6c}
\sin(\tfrac12\theta_{02}') \gtrsim N_1.
\end{equation}
\end{remark}

Before proceeding to Regions 7 and 8, let us pause to derive a lower bound that we will be using in both: Assume first that  $|\xi_2|\leq |\xi_1|$.  Then
\[
\nabla_{\xi_2}\Phi = \bigl(\tunit{\xi_1}+\tunit{\xi_2})h'(|\xi_2|) + \tunit{\xi_1}\int_{|\xi_2|}^{|\xi_1|}h''(r)\,dr.
\]
As $\bigl(\tunit{\xi_1}+\tunit{\xi_2}\bigr)\cdot\tunit{\xi_1} = 1 + \cos(\theta_{12})\geq 0$, it follows that
\begin{equation}\label{R7-lb1}
|\nabla_{\xi_2}\Phi| \gtrsim \max\{ \sin(\tfrac12\theta_{12}'), N_2\bigl| |\xi_1|-|\xi_2| \bigr|\}.
\end{equation}
Repeating the preceding argument with roles reversed, shows that \eqref{R7-lb1} actually holds whenever $N_2\lesssim N_1\lesssim 1$.

%%%%%%%%%%%%%%%%%%%%%%%%%%%
\subsection{Region 7. Space non-resonance} Let $\phi_k$ be a partition of unity adapted to a maximal $10^{-6}N_1$-separated set $\{\omega_k\}\subset\mathbb{S}^2$. Define
\[
\mathcal{R}_7 = \{(k,\ell):\angle(\omega_k,\omega_\ell) \leq \tfrac\pi6\wedge(2\cdot 10^{-4}N_1 - 4\cdot 10^{-6}N_1)\}
\]
and set
\[
\chi_7^X(\xi_1,\xi_2)=\sum_{(\omega_k,\omega_\ell)\in\mathcal{R}_7} \phi_k(\tunit{\xi})\phi_\ell(\tunit{\xi_2}).
\]

We now define
\[
\rho_7(\xi_1,\xi_2) =\prod_{j\in\{2,6\}}\bigl[1-\chi_j^T(\xi_1,\xi_2)\bigr]\chi_7^X(\xi_1,\xi_2) \qtq{if} \tfrac1{32}N_2\leq N_1 <C
\]
and vanishing otherwise. Note that
\begin{equation}\label{chi7Xbd}
\bigl| \partial_{\xi_2}^\alpha \chi_7^X\bigr| \lesssim \bigl(\tfrac{1}{N_1N_2}\bigr)^{|\alpha|}.
\end{equation}
We also define
\begin{align*}
&b_7^X(\xi_1,\xi_2) = \rho_7(\xi_1,\xi_2)\tilde\psi(\tfrac{\xi_1}{N_1})\tilde\psi(\tfrac{\xi_2}{N_2})A(\xi_1,\xi_2)U(\xi) \tfrac{\nabla_\xi\Phi \nabla_{\xi_2}\Phi}{|\nabla_{\xi_2}\Phi|^2}, \\
&\tilde b_7^X(\xi_1,\xi_2)=\nabla_{\xi_2}\cdot b_7^X(\xi_1,\xi_2).
\end{align*}

\begin{lemma}[Region 7 bounds]\label{L:R7} The following bounds hold:
\[
\op{b_7^X} \lesssim\tfrac{N_2}{N_1^{1/2}}, \quad \op{\tilde b_7^X} \lesssim\tfrac{1}{N_1^{3/2}}.
\]
Moreover, $N_2\lesssim N_1\lesssim 1$ on the support of these multipliers.
\end{lemma}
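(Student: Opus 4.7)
The plan is to follow the pattern of the Region~5 argument (Lemma~\ref{L:R5}), adapted to the fact that we are now in the low-frequency regime $N_2\lesssim N_1\lesssim 1$ and working with an angular cutoff on $\theta_{02}$ rather than a magnitude cutoff on $|\xi_2|/|\xi_1|$.

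\textbf{Geometric setup.}  First I would collect the elementary facts forced by the support of $\rho_7$. By construction $\tfrac1{32}N_2\leq N_1<C$, so $N_2\lesssim N_1\lesssim 1$, justifying the last claim of the lemma. The factor $\chi_7^X$ imposes $\theta_{02}\lesssim N_1$ (so $\sin\theta_{02}\lesssim N_1$), and the factor $1-\chi_2^T$ gives $\theta_{01}\leq \tfrac{2\pi}{3}+O(10^{-6})$ as in~\eqref{R2c}. Since $\theta_{01}+\theta_{02}+\theta_{12}'=\pi$, this forces
\[
\theta_{12}'\geq \tfrac{\pi}{6}-O(N_1)-O(10^{-6}),\qtq{whence}\sin(\tfrac12\theta_{12}')\gtrsim 1.
\]
Applying the law of sines $|\xi_2|\sin(\theta_{12}')=|\xi|\sin(\theta_{01})$ then yields $|\xi|\gtrsim |\xi_2|\sim N_2$, while $|\xi|\lesssim N_1$ is automatic. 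Also, for fixed $\xi$, the support in $\xi_2$ is an angular tube of volume $\sim N_2^3N_1^2$.

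\textbf{Phase bounds.}  Plugging $\sin(\tfrac12\theta_{12}')\gtrsim 1$ into \eqref{R7-lb1} gives the crucial space non-resonance bound
\[
|\nabla_{\xi_2}\Phi|\gtrsim 1.
\]
For the numerator, Lemma~\ref{DH-diff} and the law of sines $\sin(\tfrac12\theta_{01})\sim \sin\theta_{01}=\tfrac{|\xi_2|}{|\xi|}\sin\theta_{02}\lesssim \tfrac{N_2 N_1}{|\xi|}$ yield
\[
|\nabla_\xi\Phi|=|\nabla H(\xi)-\nabla H(\xi_1)|\lesssim N_1 N_2+\langle N_1\rangle \sin(\tfrac12\theta_{01})\lesssim \tfrac{N_1 N_2}{|\xi|}\lesssim N_1,
\]
using $|\xi|\gtrsim N_2$ in the last step. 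Higher derivatives are controlled as in Lemma~\ref{L:R5} by Lemma~\ref{H-derivatives}: $|\partial_{\xi_2}^\alpha\nabla_\xi\Phi|\lesssim (1/N_1)^{|\alpha|-1}$ and $|\partial_{\xi_2}^\alpha\nabla_{\xi_2}\Phi|\lesssim (1/N_2)^{|\alpha|}$ for $1\leq|\alpha|\leq 3$.  Combining these gives
\[
\Bigl|\partial_{\xi_2}^\alpha\tfrac{\nabla_\xi\Phi\,\nabla_{\xi_2}\Phi}{|\nabla_{\xi_2}\Phi|^2}\Bigr|
\lesssim \tfrac{N_1 N_2}{|\xi|}\bigl(\tfrac{1}{N_2}\bigr)^{|\alpha|},\qquad |\alpha|\leq 3.
\]
Multiplying by $U(\xi)\lesssim|\xi|$ absorbs the $|\xi|^{-1}$, producing an effective symbol of size $N_1N_2$ whose $\xi_2$-derivatives pick up factors of $1/N_2$ from $A_2$ and $1/(N_1N_2)$ from the angular cutoffs \eqref{chi6Tbd}, \eqref{chi7Xbd}.

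\textbf{Assembling the bounds.}  With these inputs in hand, I would apply the Typical estimate lemma with $\theta\sim N_1$, $A\sim N_1 N_2$, and the worst derivative scale $\ell\sim 1/(N_1 N_2)$ coming from the cutoffs. Schematically, after using the volume bound $N_2^3N_1^2$ from the angular constraint, one obtains $\op{b_7^X}\lesssim N_2/N_1^{1/2}$, which is the claimed estimate; cases split along which factor the $\xi_2$-derivatives land on (the cutoffs, the $A_j$ multiplier, or the phase ratio), exactly as in the proof of Lemma~\ref{L:R4} in Region~4.  For $\tilde b_7^X=\nabla_{\xi_2}\cdot b_7^X$, distributing the divergence costs at most one derivative on the worst factor, yielding an extra $1/(N_1N_2)$, and hence $\op{\tilde b_7^X}\lesssim \tfrac{1}{N_1 N_2}\op{b_7^X}\lesssim 1/N_1^{3/2}$.

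\textbf{Main obstacle.} The delicate point is the derivation of $|\nabla_{\xi_2}\Phi|\gtrsim 1$: one must simultaneously use the angular cutoff $\chi_7^X$ (restricting $\theta_{02}$) and the \emph{complementary} angular cutoff $1-\chi_2^T$ (restricting $\theta_{01}$ away from $\pi$) to rule out the dangerous geometry $\xi_1\approx -\xi_2$ that would make $\nabla H(\xi_1)+\nabla H(\xi_2)$ vanish. This combinatorial interplay, together with tracking the $1/|\xi|$ singularity in $\nabla_\xi\Phi$ and ensuring it is canceled by $U(\xi)$, is where the argument is most subtle; the remaining bookkeeping parallels the preceding regions.
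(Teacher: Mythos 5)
Your plan correctly reproduces the paper's geometric setup (the angular constraint from $\chi_7^X$ combined with $1-\chi_2^T$ to force $\theta_{12}'$ near $\pi$, hence $|\nabla_{\xi_2}\Phi|\gtrsim 1$ via \eqref{R7-lb1}; the volume bound $N_2^3N_1^2$), and the overall assembly is in the right spirit. However, two of your intermediate estimates are wrong. First, the law-of-sines step $\sin\theta_{01}=\tfrac{|\xi_2|}{|\xi|}\sin\theta_{02}$ is not a valid pairing: the side opposite $\theta_{02}$ is $|\xi_1|$, not $|\xi|$ (which is opposite $\theta_{12}'$), so the correct relation is $\sin\theta_{01}=\tfrac{|\xi_2|}{|\xi_1|}\sin\theta_{02}\lesssim\tfrac{N_2}{N_1}\cdot N_1=N_2$. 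This gives the clean $\xi$-independent bound $|\nabla_\xi\Phi|\lesssim N_2$, exactly as in the paper's \eqref{R7-ub1}, with no $|\xi|^{-1}$ artifact to cancel against $U(\xi)$. Second, the derivative bound $|\partial_{\xi_2}^\alpha\nabla_\xi\Phi|\lesssim(1/N_1)^{|\alpha|-1}$ is the Region~5 formula, which has $\langle\xi_1\rangle\sim N_1$ built in because there $N_1\gtrsim 1$; in the present regime $N_1\lesssim 1$ one has $\langle\xi_1\rangle\sim 1$, and Lemma~\ref{H-derivatives} only yields $|\partial_{\xi_2}^\alpha\nabla_\xi\Phi|\lesssim(1/N_1)^{|\alpha|}$ (the paper's \eqref{R7-ub1}). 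You are off by a factor of $N_1^{-1}$.

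Neither error turns out to be fatal, but that is somewhat accidental: your product bound $|\nabla_\xi\Phi|\cdot U(\xi)\lesssim N_1N_2$ is valid because it agrees with $N_2\cdot U(\xi)\lesssim N_2\cdot N_1$, just not via the derivation you gave, and your final estimate survives because the worst derivative scale $\tfrac{1}{N_1N_2}$ really does come from the cutoffs. One further bookkeeping issue: with your stated inputs $A\sim N_1N_2$, $\theta\sim N_1$, $\ell\sim\tfrac{1}{N_1N_2}$, the Typical Estimate Lemma gives $\op{b_7^X}\lesssim A\,N_2^{3/2}\theta\,\ell^{3/2}=N_1^{1/2}N_2$, which is a factor of $N_1$ sharper than the $N_2/N_1^{1/2}$ you wrote (the gain comes from your use of $U(\xi)\lesssim N_1$ versus the paper's simpler $U(\xi)\lesssim 1$); the claimed lemma still follows, but the displayed number is not what these inputs produce.
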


\begin{proof}  Note that the cutoff $ \chi_7^X$ guarantees
\begin{equation}\label{R7-angle}
\theta_{02}\leq 2\cdot 10^{-4} N_1,\qtq{so that}\sin(\tfrac12\theta_{02})\lesssim N_1.
\end{equation}
Furthermore, for fixed $\xi$, this restricts $\xi_2$ to a set of volume
\begin{equation}\label{R7-volume}
N_2^3 N_1^2.
\end{equation}

We first establish
\begin{equation}\label{R7-ub1}
\bigl|\partial_{\xi_2}^\alpha \nabla_\xi\Phi\bigr| \lesssim
\begin{cases} N_2 & |\alpha| = 0, \\ \bigl(\tfrac{1}{N_1}\bigr)^{|\alpha|} & 1\leq |\alpha|\leq 3.\end{cases}
\end{equation}
Indeed, using Lemma~\ref{DH-diff} with \eqref{R2c}, the law of sines, and \eqref{R7-angle}, we find
\[
|\nabla_\xi\Phi| \lesssim N_1 N_2 + \sin(\tfrac12\theta_{01}) \lesssim N_1 N_2 + \tfrac{N_2}{N_1}\sin(\theta_{02}) \lesssim N_2,
\]
which gives the case $|\alpha|=0$.  The other cases follow from Lemma~\ref{H-derivatives}, recalling $N_1\lesssim 1$.

We next claim
\begin{equation}\label{R7-lb}
|\nabla_{\xi_2}\Phi|\gtrsim 1.
\end{equation}
Recall from \eqref{R2c} that $\theta_{01}\leq \tfrac{2\pi}{3}+8\cdot 10^{-6}$; furthermore, by the law of sines, and \eqref{R7-angle}, we have
\[
\sin(\theta_{01}) \sim \tfrac{N_2}{N_1}\sin(\theta_{02}) \ll N_2.
\]
We conclude $\theta_{01}\ll N_2$. Recalling \eqref{R7-angle} and the fact that $\theta_{01}+\theta_{02}+\theta_{12}' = \pi$, we deduce that
\[
\theta_{12}'\geq \pi-cN_1
\]
for some $0<c\ll 1$. Thus $\sin(\tfrac12\theta_{12}')\gtrsim 1$, and so \eqref{R7-lb1} gives \eqref{R7-lb}.

We next note that Lemma~\ref{H-derivatives} immediately gives
\begin{equation}\label{R7-ub2}
\bigl| \partial_{\xi_2}^{\alpha}\nabla_{\xi_2}\Phi\bigr| \lesssim \bigl(\tfrac{1}{N_2}\bigr)^{|\alpha|},\quad |\alpha|\leq 3.
\end{equation}

Using \eqref{R7-ub1}, \eqref{R7-lb}, and \eqref{R7-ub2}, one verifies
\begin{equation}\label{R7-symbol}
\bigl|\partial_{\xi_2}^{\alpha}\tfrac{\nabla_\xi\Phi \nabla_{\xi_2}\Phi}{|\nabla_{\xi_2}\Phi|^2} \bigr|\lesssim
\begin{cases} N_2 & |\alpha|=0, \\
N_2\cdot \tfrac{1}{N_1}\bigl(\tfrac{1}{N_2}\bigr)^{|\alpha|},& 1\leq |\alpha|\leq 3.
\end{cases}
\end{equation}

We are now in a position to estimate the multipliers.  Comparing \eqref{R7-symbol} and \eqref{Ajbds} with all of the cutoff bounds (\eqref{cutoff-loss}, \eqref{chi2Tbd}, \eqref{chi6Tbd}, and \eqref{chi7Xbd}), and exploiting the volume bound \eqref{R7-volume}, one finds that
\[
\opt{b_7^X} \lesssim \tfrac{N_2}{N_1^{1/2}}.
\]
This gives the first multiplier bound.

We turn to estimating $\tilde b_7^X$.  If the divergence lands on any term other than
\[
m(\xi_1,\xi_2):=[1-\chi_6^T]\chi_7^X\tfrac{\nabla_\xi\Phi\nabla_{\xi_2}\Phi}{|\nabla_{\xi_2}\Phi|^2},
\]
we will face an additional $\tfrac{1}{N_2}$, but otherwise we can estimate exactly as above.  Thus, we get an acceptable contribution for these terms (cf. $N_1\lesssim 1$), and we are left to estimate
\[
b^* := \tilde\psi(\tfrac{\xi_1}{N_1})\tilde\psi(\tfrac{\xi_2}{N_2})A(\xi_1,\xi_2)U(\xi)[1-\chi_1^T][1-\chi_2^T]\nabla_{\xi_2}\cdot m(\xi_1,\xi_2).
\]
Recalling the cutoff bounds \eqref{chi6Tbd} and \eqref{chi7Xbd}, we see that we will lose an additional $\tfrac{1}{N_1 N_2}$ when estimating $b^*$. In particular, using \eqref{R7-symbol}, the cutoff bounds, and the volume bound \eqref{R7-volume}, we deduce
\[
\opt{b^*} \lesssim N_1^{-3/2},
\]
giving the second multiplier bound. This completes the proof of Lemma~\ref{L:R7}.\end{proof}

\begin{remark} On the support of $(1-\chi_6^T)(1-\chi_7^X)$, we have
\begin{equation}\label{R7c}
\sin(\theta_{02}) \gtrsim N_1.
\end{equation}
\end{remark}

%%%%%%%%%%%%%%%%%%%%%%%%%%%
\subsection{Region 8. Space non-resonance} We define
\[
\rho_8(\xi_1,\xi_2) =\prod_{j\in\{2,6\}} \bigl[1-\chi_j^T\bigr]\bigl[1-\chi_7^X(\xi_1,\xi_2)\bigr] \qtq{if} \tfrac1{32}N_2\leq N_1 <C
\]
and vanishing otherwise.  Corresponding to this we define multipliers
\begin{align*}
&b_8^X(\xi_1,\xi_2) = \rho_8(\xi_1,\xi_2)\tilde\psi(\tfrac{\xi_1}{N_1})\tilde\psi(\tfrac{\xi_2}{N_2})A(\xi_1,\xi_2)U(\xi) \tfrac{\nabla_\xi\Phi \nabla_{\xi_2}\Phi}{|\nabla_{\xi_2}\Phi|^2}, \\
&\tilde b_8^X(\xi_1,\xi_2)=\nabla_{\xi_2}\cdot b_8^X(\xi_1,\xi_2).
\end{align*}

\begin{lemma}[Region 8 bounds]\label{L:R8} The following bounds hold:
\begin{align*}
\op{b_8^X} \lesssim \tfrac{1}{N_1^{1/2}}, \quad \op{\tilde b_8^X} \lesssim \tfrac{1}{N_1^{1/2} N_2},
\end{align*}
with $N_2\lesssim N_1\lesssim 1$ on the support of these multipliers.
\end{lemma}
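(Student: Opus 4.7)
I would prove Lemma~\ref{L:R8} following the same recipe as in the proofs of Lemmas~\ref{L:R5}--\ref{L:R7}: extract support properties from the cutoff structure, derive pointwise bounds on $\Phi$ and its $\xi_2$-derivatives, deduce a bound on the symbol of $b_8^X$, and combine these with the cutoff regularity and the support volume via the $\xi_2$-variant of formula~\eqref{bnorm}.

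The support information is immediate from $\rho_8=(1-\chi_2^T)(1-\chi_6^T)(1-\chi_7^X)$ together with the constraint $\tfrac1{32}N_2\leq N_1< C$ and \eqref{R1c}: we obtain $N_2\lesssim N_1\lesssim 1$, and hence $|\xi|\lesssim N_1$ and $|U(\xi)|\lesssim N_1$, which gives the support claim in the lemma. The factor $1-\chi_2^T$ preserves the comparability $\sin\theta_{01}\sim\sin(\tfrac12\theta_{01})$ from \eqref{R2c}, while $1-\chi_6^T$ and $1-\chi_7^X$ together confine $\theta_{02}$ to $[c_1N_1,\pi-c_2N_1]$ by \eqref{R6c} and \eqref{R7c}, so that $\sin\theta_{02}\gtrsim N_1$ and $\sin(\tfrac12\theta_{02}')\gtrsim N_1$.

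The central analytic step is a lower bound on $|\nabla_{\xi_2}\Phi|$. Starting from \eqref{R7-lb1}, I would combine the half-angle identity
\[
\sin^2(\tfrac12\theta_{12}')=\tfrac{|\xi|^2-(|\xi_1|-|\xi_2|)^2}{4|\xi_1||\xi_2|}
\]
with the law of sines $\sin\theta_{12}'=\tfrac{|\xi|}{|\xi_1|}\sin\theta_{02}$ and the cutoff-imposed lower bound $\sin\theta_{02}\gtrsim N_1$ via a two-case analysis according to whether $|\xi|$ dominates $\bigl||\xi_1|-|\xi_2|\bigr|$ or vice versa. Upper bounds on $|\nabla_\xi\Phi|$ come from Lemma~\ref{DH-diff}, which gives $|\nabla_\xi\Phi|\lesssim N_2+\sin(\tfrac12\theta_{01})\lesssim 1$, while higher $\xi_2$-derivatives of $\Phi$ are controlled by Lemmas~\ref{H-derivatives} and \ref{D2H-diff} at scale $N_2$. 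Feeding these into the symbol of $b_8^X$ along with the Coifman--Meyer--Mikhlin bounds on $A(\xi_1,\xi_2)$ from \eqref{Ajbds}, the cutoff bounds \eqref{cutoff-loss}, \eqref{chi2Tbd}, \eqref{chi6Tbd}, \eqref{chi7Xbd}, the prefactor $U(\xi)\lesssim N_1$, and the support-volume bound $\lesssim N_2^3$ (no angular restriction reduces volume in Region~8), one obtains $\opt{b_8^X}\lesssim N_1^{-1/2}$. For $\tilde b_8^X=\nabla_{\xi_2}\cdot b_8^X$ the analysis of Lemma~\ref{L:R7} applies \emph{mutatis mutandis}: a derivative landing on a Coifman--Meyer--Mikhlin factor or a cutoff incurs at worst a $1/N_2$ loss, while a derivative on the phase-dependent factor is already accounted for in the symbol bounds above, yielding $\opt{\tilde b_8^X}\lesssim N_1^{-1/2}N_2^{-1}$.

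The main obstacle is the lower bound on $|\nabla_{\xi_2}\Phi|$. Unlike Region~7, where the conical cutoff $\chi_7^X$ immediately forces $\sin(\tfrac12\theta_{12}')\sim 1$ and hence $|\nabla_{\xi_2}\Phi|\gtrsim 1$, Region~8 is the residual zone in which neither $\theta_{01}$ nor $\theta_{12}'$ is automatically bounded below at unit scale. The worst configurations are those in which $\xi_1,\xi_2$ are nearly antiparallel with comparable magnitudes, sending $|\xi|\to 0$; the argument must rely on the $U(\xi)\sim|\xi|$ prefactor to absorb the corresponding degeneracy and on judiciously selecting between the two alternatives in \eqref{R7-lb1} in each regime to extract the claimed $N_1^{-1/2}$ rate.
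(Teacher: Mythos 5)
Your plan for $b_8^X$ itself is essentially the paper's: extract the support constraints from the residual cutoff structure, pin down a lower bound on $|\nabla_{\xi_2}\Phi|$ via \eqref{R7-lb1} and \eqref{R7c}, match it against the upper bound from Lemma~\ref{DH-diff} plus the law of sines, and rely on $U(\xi)$ to absorb the degeneracy as $|\xi|\to 0$.  (The paper organizes the two cases by the dyadic scales, $N_2\ll N_1$ versus $N_1\sim N_2$, rather than by comparing $|\xi|$ with $\bigl||\xi_1|-|\xi_2|\bigr|$; the dyadic split has the practical advantage of fixing the size of $U(\xi)$ uniformly in each case, but your split could probably be made to work for $b_8^X$.)

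For $\tilde b_8^X$, however, the ``\emph{mutatis mutandis} from Lemma~\ref{L:R7}'' step has a genuine gap.  In Region~7 the angular cutoff $\chi_7^X$ serves two purposes at once: it shrinks the $\xi_2$-support volume to $N_2^3N_1^2$, \emph{and} it confines $\theta_{02}\lesssim N_1$ so that the lower bound $|\nabla_{\xi_2}\Phi|\gtrsim 1$ is uniform.  In Region~8 you have neither: the volume is a full $N_2^3$ (as you note), and the lower bound degrades to $|\nabla_{\xi_2}\Phi|\gtrsim N_2\sin\theta_{02}$ with $\sin\theta_{02}$ ranging all the way down to $\sim N_1$.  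Each $\xi_2$-derivative that hits $\tfrac{\nabla_\xi\Phi\nabla_{\xi_2}\Phi}{|\nabla_{\xi_2}\Phi|^2}$ therefore costs a factor of $\tfrac{1}{N_2\sin\theta_{02}}$, not $\tfrac1{N_2}$, and replacing this by the crude uniform bound $\tfrac{1}{N_1N_2}$ and then multiplying by the square root of the volume overestimates the $\dot H^2_{\xi_2}$-norm by a full power of $N_1^{-1}$.  Concretely, in the case $N_1\sim N_2$ this crude route gives $\op{\tilde b_8^X}\lesssim N_1^{-5/2}$ rather than the required $N_1^{-3/2}$; the extra $N_1^{-1}$ is fatal when the bounds are fed into Section~\ref{est-snr}, since $\sum_{N_1\lesssim 1}N_1^{-1/2}$ diverges.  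The paper repairs this by keeping the sharper pointwise bound $\bigl(\tfrac{1}{N_2\sin\theta_{02}}\bigr)^{|\alpha|}$ and computing the $L^2_{\xi_2}$-norm in spherical coordinates about $\xi$, so that the dangerous factors of $\tfrac{1}{\sin\theta_{02}}$ are integrated rather than maximized; the resulting integral $\int_{N_1}^{\pi-N_1}\tfrac{d\varphi}{(\sin\varphi)^{2s+1}}$ is of size $N_1^{-2s}$ and its square root recovers exactly the lost power.  Without this spherical-coordinate refinement—which is the step your proposal omits—the claimed bound for $\tilde b_8^X$ cannot be reached.
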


\begin{proof} To begin, recall that $\tilde b_8^X = \nabla_{\xi_2}\cdot b_8^X$. For the terms in which the divergence misses the factor $\tfrac{\nabla_{\xi}\Phi\nabla_{\xi_2}\Phi}{|\nabla_{\xi_2}\Phi|^2}$, we claim that we get an upper bound of $\tfrac{1}{N_2}\op{b_8^X}$, which is an acceptable bound.  Indeed, using the cutoff bounds \eqref{cutoff-loss}, \eqref{chi2Tbd}, as well as \eqref{Ajbds}, we first see that if the divergence hits the product
\[
\tilde\psi(\tfrac{\xi_1}{N_1})\tilde\psi(\tfrac{\xi_2}{N_2})(1-\chi_1^T)(1-\chi_2^T) A(\xi_1,\xi_2),
\]
then we get an additional factor of $\tfrac{1}{N_2}$, but we will otherwise be able to estimate exactly as we do for $b_8^X$ below.  If the divergence hits the product
\[
(1-\tilde \chi_6^T)(1-\tilde \chi_7^X),
\]
then we will get an additional factor of $\tfrac{1}{N_1 N_2}$ (cf. \eqref{chi6Tbd} and \eqref{chi7Xbd}); however, in this case, we will always be able to use the improved volume bounds in \eqref{R6-volume} or \eqref{R7-volume}, which gains an additional $N_1$ in the arguments presented below.  In particular, we again face only an additional factor of $\tfrac{1}{N_2}$.

Thus, to treat $\tilde b_8^X$, it suffices to get suitable estimates for $b_8^X$ and the multiplier
\begin{equation}\label{b8star}
b_8^* :=\rho_8(\xi_1,\xi_2) \tilde\psi(\tfrac{\xi_1}{N_1})\tilde\psi(\tfrac{\xi_2}{N_2}) A(\xi_1,\xi_2) U(\xi)\nabla_{\xi_2}\cdot\bigl(\tfrac{\nabla_\xi\Phi\nabla_{\xi_2}\Phi}{|\nabla_{\xi_2}\Phi|^2}\bigr).
\end{equation}

We begin by applying Lemma~\ref{DH-diff} and using \eqref{R2c} and the law of sines to deduce
\begin{equation}\label{R8-ub1}
|\nabla_\xi\Phi| \lesssim N_1 N_2 + \sin(\tfrac12\theta_{01}) \lesssim N_1 N_2 + \sin(\theta_{01})\lesssim N_1 N_2 + \tfrac{N_2}{|\xi|}\sin(\tfrac12\theta_{12}').
\end{equation}

\textbf{Case 1.} Recalling that $N_2\lesssim N_1\lesssim 1$, we first consider the case
\begin{equation}\label{R8-case1}
N_2 \ll N_1 \sim |\xi|.
\end{equation}

Note that in this case, $\bigl| |\xi_1|-|\xi_2|\bigr| \gtrsim N_1$, so that \eqref{R8-ub1} and \eqref{R7-lb1} give
\begin{equation}\label{R8-case1-ub}
|\nabla_{\xi}\Phi| \lesssim (1+\tfrac{N_2}{|\xi|})|\nabla_{\xi_2}\Phi| \lesssim |\nabla_{\xi_2}\Phi|.
\end{equation}

Thus by \eqref{R7-lb1}, the law of sines, \eqref{R8-case1}, and \eqref{R7c}, we have
\[
|\nabla_{\xi_2}\Phi| \gtrsim \sin(\theta_{12}') \sim \tfrac{|\xi|}{N_1}\sin(\theta_{02}) \gtrsim \sin(\theta_{02})\gtrsim N_1.
\]
For higher derivatives, we have by Lemma~\ref{H-derivatives},
\[
\bigl| \partial_{\xi_2}^\alpha \nabla_{\xi}\Phi\bigr| \lesssim \bigl(\tfrac{1}{N_1}\bigr)^{|\alpha|},\quad
\bigl| \partial_{\xi_2}^\alpha \nabla_{\xi_2}\Phi\bigr| \lesssim \bigl(\tfrac{1}{N_2}\bigr)^{|\alpha|},\quad 1\leq |\alpha|\leq 3.
\]
Using these bounds, one verifies
\begin{equation}\label{R8-case1-symbol}
\bigl| \partial_{\xi_2}^\alpha \tfrac{\nabla_\xi\Phi\nabla_{\xi_2}\Phi}{|\nabla_{\xi_2}\Phi|^2} \bigr| \lesssim
\bigl(\tfrac{1}{N_2\sin(\theta_{02})}\bigr)^{|\alpha|} \lesssim \bigl(\tfrac{1}{N_1 N_2}\bigr)^{|\alpha|},\quad |\alpha|\leq 3.
\end{equation}
Combining the second estimate in \eqref{R8-case1-symbol} with the cutoff bounds (\eqref{cutoff-loss}, \eqref{chi2Tbd}, \eqref{chi6Tbd}, \eqref{chi7Xbd}), \eqref{Ajbds}, and noting that $U(\xi)\sim N_1$ in this regime, we deduce
\[
\opt{b_8^X} \lesssim \tfrac{1}{N_1^{1/2}},
\]
which is acceptable.  (The worst terms arise when all of the derivatives hit $\tfrac{\nabla_\xi\Phi\nabla_{\xi_2}\Phi}{|\nabla_{\xi_2}\Phi|^2}$.)

We turn to estimating $b_8^*$ in Case 1 (cf. \eqref{b8star}).  We wish to show
\[
\opt{b_8^*} \lesssim \tfrac{1}{N_1^{1/2} N_2},
\]
For this, it will suffice to prove
\begin{equation}\label{R8-ets}
\|b_8^*\|_{L_\xi^\infty \dot H_{\xi_2}^s} \lesssim N_1^{1-s} N_2^{\frac12-s},\quad s\in\{1,2\}.
\end{equation}

First, if no derivatives hit
\[
(1-\chi_6^T)(1-\chi_7^X)\nabla_{\xi_2}\cdot\bigl(\tfrac{\nabla_\xi\Phi\nabla_{\xi_2}\Phi}{|\nabla_{\xi_2}\Phi|^2}\bigr),
\]
then we use the cutoff bounds (\eqref{cutoff-loss}, and \eqref{chi2Tbd}), \eqref{Ajbds}, and the second estimate in \eqref{R8-case1-symbol} (with $|\alpha|=1$) to estimate the contribution to the $L_\xi^\infty \dot H_{\xi_2}^s$-norms by
\[
N_2^{\frac12-s}
\]
for $s\in\{1,2\}$, which is acceptable.

Next, if any derivative hits $(1-\chi_6^T)(1-\chi_7^X)$, then we may use the volume bound in \eqref{R6-volume} or \eqref{R7-volume}.  In particular, when all derivatives land on these cutoffs, we may use \eqref{chi6Tbd}, \eqref{chi7Xbd}, and the second estimate in \eqref{R8-case1-symbol} (with $|\alpha|=1$) to estimate the contribution to the $L_\xi^\infty \dot H_{\xi_2}^s$-norms by
\[
N_1^{1-s} N_2^{\frac12-s}
\]
for $s\in\{1,2\}$, which is acceptable.

It remains to consider the following two situations:
\begin{itemize}
\item[(i)] $s=2$, with one derivative hitting $\nabla_{\xi_2}\cdot\tfrac{\nabla_{\xi}\Phi\nabla_{\xi_2}\Phi}{|\nabla_{\xi_2}\Phi|^2}$ and the other hitting the cutoffs $(1-\chi_6^T)(1-\chi_7^X)$,
\item[(ii)] $s\in\{1,2\}$ and all derivatives land on $\nabla_{\xi_2}\cdot\tfrac{\nabla_\xi\Phi\nabla_{\xi_2}\Phi}{|\nabla_{\xi_2}\Phi|^2}$.
\end{itemize}

For these terms, for fixed $\xi$ we use spherical coordinates in $\xi_2$ (with $\xi$ as the north pole) to compute the $L_{\xi_2}^2$-norms.  We also recall \eqref{R7c}.

Using \eqref{chi6Tbd}, \eqref{chi7Xbd}, and the first bound in \eqref{R8-case1-symbol} (with $|\alpha|=2$), we estimate the contribution of terms in (i) to the $L_\xi^\infty \dot H_{\xi_2}^2$-norm by
\[
\tfrac{N_1\cdot N_2^{\frac32}}{N_1 N_2^3}\biggl(\int_{N_1}^{\pi-N_1}\frac{d\varphi}{(\sin\varphi)^3}\biggr)^{\frac12} \lesssim N_1^{-1} N_2^{-\frac32},
\]
which is acceptable in light of \eqref{R8-ets}.

Similarly, we estimate the contribution of the terms in (ii) to the $L_\xi^\infty \dot H_{\xi_2}^s$-norms by
\[
\tfrac{N_1 N_2^{3/2}}{N_2^{s+1}} \biggl(\int_{N_1}^{\pi-N_1} \frac{d\varphi}{(\sin\varphi)^{2s+1}}\biggr)^{1/2} \lesssim N_1^{1-s} N_2^{\frac12-s}\qtq{for} s\in\{1,2\},
\]
which is acceptable.  This completes the proof of \eqref{R8-ets} and hence of Lemma~\ref{L:R8} in Case 1.

\textbf{Case 2.} Suppose now that
\begin{equation}\label{R8-case2}
N_1 \sim N_2\lesssim 1.
\end{equation}

By \eqref{R7-lb1}, the law of sines, and \eqref{R7c}, we have
\begin{equation}\label{R8-lb3}
|\nabla_{\xi_2}\Phi|\gtrsim \sin(\tfrac12\theta_{12}')\gtrsim\sin(\theta_{12}')\sim\tfrac{|\xi|}{N_1}\sin(\theta_{02}) \gtrsim |\xi|.
\end{equation}
Combining this with the upper bound \eqref{R8-ub1}, we find
\begin{equation}\label{R8-case2-ub1}
|\nabla_\xi \Phi| \lesssim\tfrac{N_1}{|\xi|}|\nabla_{\xi_2}\Phi|.
\end{equation}

For higher derivatives of $\nabla_\xi\Phi$, we appeal to Lemma~\ref{H-derivatives} to find
\begin{equation}\label{R8-ub3}
\bigl|\partial_{\xi_2}^\alpha \nabla_\xi\Phi\bigr| \lesssim \bigl(\tfrac{1}{N_1}\bigr)^{|\alpha|},\quad 1\leq|\alpha|\leq 3.
\end{equation}
For higher derivatives of $\nabla_{\xi_2}\Phi$, Lemma~\ref{D2H-diff} and \eqref{R8-case2} give
\begin{equation}\label{R8-ub4}
\bigl| \partial_{\xi_2}^\alpha\Phi\bigr| \lesssim |\xi|\bigl(\tfrac{1}{N_1}\big)^{|\alpha|}, \quad 1\leq |\alpha|\leq 4.
\end{equation}

Combining \eqref{R8-case2}, \eqref{R8-case2-ub1}, \eqref{R8-lb3}, \eqref{R8-ub3}, \eqref{R8-ub4} and recalling the bounds \eqref{R7c}, we deduce
\begin{equation}\label{R8-symbol}
\bigl|\partial_{\xi_2}^{\alpha}\tfrac{\nabla_{\xi}\Phi\nabla_{\xi_2}\Phi}{|\nabla_{\xi_2}\Phi|^2}\bigr| \lesssim \tfrac{N_1}{|\xi|}\bigl(\tfrac{1}{N_1\sin\theta_{02}}\bigr)^{|\alpha|} \lesssim \tfrac{N_1}{|\xi|}\bigl(\tfrac{1}{N_1^2}\bigr)^{|\alpha|},\quad |\alpha|\leq 3.
\end{equation}

We are now in a position to prove the multiplier bounds, beginning with $b_8^X$.  We use the second estimate in \eqref{R8-symbol}, the cutoff bounds (\eqref{cutoff-loss}, \eqref{chi2Tbd}, \eqref{chi6Tbd}, \eqref{chi7Xbd}), \eqref{Ajbds}, the fact that $U(\xi)\sim|\xi|$ in this regime, and \eqref{R8-case2} to arrive at the estimate
\[
\opt{b_8^X} \lesssim \tfrac{1}{N_1^{1/2}},
\]
which is acceptable.

We turn to estimating $b_8^*$ in Case 2 (cf. \eqref{b8star}).  In this case, we need to show
\[
\opt{b_8^*} \lesssim \tfrac{1}{N_1^{3/2}},
\]
for which it will suffice to prove
\[
\|b_8^*\|_{L_\xi^\infty \dot H_{\xi_2}^s} \lesssim N_1^{\frac32-2s},\quad s\in\{1,2\}.
\]

 First, if no derivatives hit
\[
(1-\chi_6^T)(1-\chi_7^X)\nabla_{\xi_2}\cdot\bigl(\tfrac{\nabla_\xi\Phi\nabla_{\xi_2}\Phi}{|\nabla_{\xi_2}\Phi|^2}\bigr),
\]
then we use the cutoff bounds (\eqref{cutoff-loss}, and \eqref{chi2Tbd}), \eqref{Ajbds}, and the second estimate in \eqref{R8-symbol} (with $|\alpha|=1$) to estimate the contribution to the $L_\xi^\infty \dot H_{\xi_2}^s$-norms by
\[
N_1^{\frac12-s}
\]
for $s\in\{1,2\}$, which is acceptable.

If any derivative hits $(1-\chi_6^T)(1-\chi_7^X)$, then we may use the volume bound in \eqref{R6-volume} or \eqref{R7-volume}.  In particular, when all derivatives land on these cutoffs, we may use \eqref{chi6Tbd}, \eqref{chi7Xbd}, and the second estimate in \eqref{R8-symbol} (with $|\alpha|=1$) to estimate the contribution to the $L_\xi^\infty \dot H_{\xi_2}^s$-norms by
\[
N_1^{\frac32-2s}
\]
for $s\in\{1,2\}$, which is acceptable.

It remains to consider the following two situations:
\begin{itemize}
\item[(i)] $s=2$, with one derivative hitting $\nabla_{\xi_2}\cdot\tfrac{\nabla_{\xi}\Phi\nabla_{\xi_2}\Phi}{|\nabla_{\xi_2}\Phi|^2}$ and the other hitting the cutoffs $(1-\chi_6^T)(1-\chi_7^X)$,
\item[(ii)] $s\in\{1,2\}$ and all derivatives land on $\nabla_{\xi_2}\cdot\tfrac{\nabla_\xi\Phi\nabla_{\xi_2}\Phi}{|\nabla_{\xi_2}\Phi|^2}$.
\end{itemize}

For these terms, for fixed $\xi$ we again use spherical coordinates in $\xi_2$ (with $\xi$ as the north pole) to compute the $L_{\xi_2}^2$-norms.  Recall also \eqref{R7c}.

Using \eqref{chi6Tbd}, \eqref{chi7Xbd}, and the first bound in \eqref{R8-symbol} (with $|\alpha|=2$), we estimate the contribution of terms in (i) to the $L_\xi^\infty \dot H_{\xi_2}^2$-norm by
\[
\tfrac{N_1^{\frac52}}{N_1^4}\biggl(\int_{N_1}^{\pi-N_1}\frac{d\varphi}{(\sin\varphi)^3}\biggr)^{\frac12} \lesssim N_1^{-\frac52},
\]
which is acceptable.

Similarly, we estimate the contribution of the terms in (ii) to the $L_\xi^\infty \dot H_{\xi_2}^s$-norms by
\[
\tfrac{N_1^{\frac52}}{N_1^{s+1}} \biggl(\int_{N_1}^{\pi-N_1} \frac{d\varphi}{(\sin\varphi)^{2s+1}}\biggr)^{1/2} \lesssim N_1^{\frac32-2s}\qtq{for} s\in\{1,2\},
\]
which is acceptable. This completes the proof of Lemma~\ref{L:R8} in Case 2.
\end{proof}

\begin{proof}[Proof of Proposition~\ref{prop:dec3}]  Proposition~\ref{prop:dec3} follows from Lemmas~\ref{L:R1}, \ref{L:R2}, \ref{L:R3}, \ref{L:R4}, \ref{L:R5}, \ref{L:R6}, \ref{L:R7}, and \ref{L:R8}.  One may also check that the multipliers $\rho_j$ sum to $1$ on the support of $\tilde\psi(\tfrac{\xi_1}{N_1})\tilde\psi(\tfrac{\xi_2}{N_2})$.  Note that in the statement of Proposition~\ref{prop:dec3}, we always choose the worst bounds; for example, the bounds obtained for $b_8^X$ in Lemma~\ref{L:R8} are strictly worse than those for $b_7^X$ in Lemma~\ref{L:R7}.
\end{proof}

%%%%%%%%%%%%%%%%%%%%%%%%%%
%%%%%%%%%%%%%%%%%%%%%%%%%%
%%%%%%%%%%%%%%%%%%%%%%%%%%
\section{Estimation of non-resonant terms}\label{S:est}

In this section, we complete the proof of Proposition~\ref{prop:bs-quad} by estimating the contribution of the time, space, and angular non-resonant regions of frequency space to \eqref{bs-quad}.

Recall that in Sections~\ref{S:tnr}--\ref{S:anr}, we collected the terms that we need to estimate in order to deal with each type of non-resonant region, namely \eqref{tnr} and \eqref{tnr2} for time non-resonant regions, \eqref{snr1} and \eqref{snr2} for space non-resonant regions, and \eqref{anr1}, \eqref{anr2}--\eqref{anr4} for angular non-resonant regions.  These involve certain bilinear operators, namely, $b^T$ for time non-resonant terms (cf. \eqref{bT}), $b^X$ and $\tilde b^X$ for space non-resonant terms (cf. \eqref{bX}, \eqref{tbX}), and $b^\angle$, $b^\angle_j$, $\tilde b^\angle$ for angular non-resonant terms (cf. \eqref{bA}, \eqref{bfj}, \eqref{tbA}).  In Sections~\ref{sec:dn}--\ref{dec3}, we decomposed frequency space into non-resonant regions for each type of quadratic nonlinearity and proved bounds for the resulting bilinear operators (cf. Propositions~\ref{prop:dec1}, \ref{prop:dec2}, and \ref{prop:dec3}).

In this section, we will show that the bounds established in Propositions~\ref{prop:dec1}, \ref{prop:dec2}, and \ref{prop:dec3} suffice to prove the desired estimate in Proposition~\ref{prop:bs-quad}.

Our main tools will be the bilinear estimate Proposition~\ref{prop:bilinear} and the decay estimates in Lemma~\ref{lem:decay}.  For convenience, we record some particular consequences of Lemma~\ref{lem:decay} here.  We let $t_0$ and $I$ be as in Proposition~\ref{prop:bs-quad}, that is, $I=[t_0,1]$ if $t_0<1$ and $I=[t_0,2t_0]$ otherwise.  Then for $2\leq r\leq 6$, we have
\begin{equation}\label{F2}
\|v_N\|_{L_t^\infty L_x^r(I\times\R^3)} \lesssim N^{\frac12-\frac3r}\langle t_0\rangle^{\frac3r-\frac32}\|v\|_{Z(I)}.
\end{equation}
Indeed, \eqref{F2} follows from interpolation between $L_x^2$ and $L_x^6$, Lemma~\ref{lem:decay}, and  the $L_t^\infty \dot H_x^1$-control of $v$.  We apply \eqref{F2} most often with $r=6-$, i.e.
\[
\|v_N\|_{L_t^\infty L_x^{6-}(I\times\R^3)} \lesssim N^{0-}\langle t_0\rangle^{-1+}\|v\|_{Z(I)}.
\]
We also use the following consequence of Bernstein and Lemma~\ref{lem:decay}:
\[
\|v_N\|_{L_t^\infty L_x^6(I\times\R^3)} \lesssim N^{0+}\langle t_0\rangle^{-1+}\|v\|_{Z(I)}.
\]

\begin{remark} As in Proposition~\ref{prop:bs-quad} we work on dyadic intervals, when we apply Proposition~\ref{prop:bilinear} with a dual Strichartz norm $L_t^\alpha L_x^\beta$, we always have the option to bound the contribution of all time integrals by $\langle t_0\rangle^{1/\alpha}$.
\end{remark}

%%%%%%%%%%%%%%%%%%%%%%%%%%%
%%%%%%%%%%%%%%%%%%%%%%%%%%%
\subsection{Estimation of time non-resonant terms}\label{est-tnr}  To estimate the contribution of time non-resonant regions to \eqref{bs-quad}, we need to bound terms of the form \eqref{tnr} and \eqref{tnr2}, which involve the multiplier $b^T$ (cf. \eqref{bT}).  To estimate these terms, we will use the bilinear estimate Proposition~\ref{prop:bilinear}.

By virtue of symmetries inherent in the terms that we need to estimate and in light of the multiplier bounds in Propositions~\ref{prop:dec1}, \ref{prop:dec2}, and \ref{prop:dec3}, we see that we may reduce to the case $N_2\leq N_1$; moreover, we see that it is natural to cover this with three regimes: (i) $N_2\lesssim N_1\lesssim 1$, (ii) $1\lesssim N_1 \sim N_2$, and (iii) $1\vee N_2 \lesssim N_1$.  Of course, the second regime is contained in the third, but we have to deal with a different multiplier bound in this particular case (cf. Regions 2 and 3 in Proposition~\ref{prop:dec3}).  We record here the bounds we must contend with in each regime:
\begin{equation}\label{tnr-worst}
\op{b^T} \lesssim \begin{cases} N_1^{-1} N_2^{-\frac12} & N_2\lesssim N_1 \lesssim 1, \\
1 & 1\lesssim N_1\sim N_2, \\
N_1^{-1} & 1\vee N_2\lesssim N_1.
\end{cases}
\end{equation}

\textbf{1.} We begin by estimating terms of the form \eqref{tnr}.  We can treat the cases $1\lesssim N_1 \sim N_2$ and $1\vee N_2 \lesssim 1$ together, using the worse multiplier bound of $1$ in \eqref{tnr-worst}.  Using the dual Strichartz norm $L_t^1 L_x^2$ with Proposition~\ref{prop:bilinear}, we estimate these terms via Bernstein, Lemma~\ref{lem:decay}, and \eqref{F2}:
\begin{align*}
\sum_{1\vee N_2 \leq N_1} &\langle t_0\rangle\|v_{N_1}\|_{L_t^\infty L_x^{3}} \|v_{N_2}\|_{L_t^\infty L_x^{6}} \\
& \lesssim \sum_{1\vee N_2\lesssim N_1} \langle t_0\rangle^{-\frac12+}N_2^{0+}N_1^{-\frac12} \|v\|_{Z(I)}^2 \lesssim \langle t_0\rangle^{-\frac12+} \|v\|_{Z(I)}^2,
\end{align*}
which is acceptable.

\textbf{2.} We estimate terms of the form \eqref{tnr} in the regime $N_2\lesssim N_1 \lesssim 1$ using Proposition~\ref{prop:bilinear} with the dual Strichartz norm $L_t^1 L_x^2$ and Lemma~\ref{lem:decay}:
\begin{align*}
\sum_{N_2\lesssim N_1 \lesssim 1}& \langle t_0\rangle N_1^{-1} N_2^{-\frac12} \|v_{N_1}\|_{L_t^\infty L_x^3} \|v_{N_2}\|_{L_t^\infty L_x^6} \\
& \lesssim \sum_{N_2\lesssim N_1 \lesssim 1} \langle t_0\rangle N_2^{\frac12} \|U^{-1} v\|_{L_t^\infty L_x^3} \|U^{-1} v\|_{L_t^\infty L_x^6} \lesssim \langle t_0\rangle^{-\frac16}\|v\|_{Z(I)}^2,
\end{align*}
which is acceptable.

\textbf{3.} We next consider terms of the form \eqref{tnr2} in the regime $1\lesssim N_1 \sim N_2$.  Note that since $N_1\sim N_2$, it suffices to treat terms in which $P_{N_2}$ falls on the nonlinearity.

Using Proposition~\ref{prop:bilinear} with the dual Strichartz norm $L_t^2 L_x^{6/5}$, \eqref{tnr-worst}, \eqref{F2}, and Lemma~\ref{lem:decay}, we may bound the contribution of the quadratic part of $N_v(u)$ by
\begin{align*}
\sum_{1 \lesssim N_1 \sim N_2}& \langle t_0\rangle^{\frac32} \|v_{N_1}\|_{L_t^\infty L_x^{6-}} \|u^2\|_{L_t^{\infty} L_x^{\frac32+}} \\
& \lesssim \sum_{1\lesssim N_1 \sim N_2} \langle t_0\rangle^{\frac12+} N_1^{0-} \|u\|_{L_t^\infty L_x^{3+}}^2 \|v\|_{Z(I)} \lesssim \langle t_0\rangle^{-\frac{5}{18}+} \|v\|_{Z(I)}^3.
\end{align*}
We estimate the contribution of cubic and higher terms via \eqref{tnr-worst}, Bernstein, Lemma~\ref{lem:decay}, and \eqref{dual-est}: for $k\in\{3,4,5\}$,
\begin{align*}
\sum_{1\lesssim N_1 \sim N_2} & \langle t_0\rangle \|v_{N_1}\|_{L_t^\infty L_x^{\infty}} \|P_{N_2}(u^k)\|_{L_t^2 L_x^{\frac65}} \\
& \lesssim \sum_{1\lesssim N_1 \sim N_2} \langle t_0\rangle N_1^{\frac12}N_2^{-1} \|v\|_{L_t^\infty L_x^6} \|\nabla(u^k)\|_{L_t^2 L_x^{\frac65}} \lesssim \langle t_0\rangle^{-\frac{19}{18}} \|v\|_{Z(I)}^{k+1},
\end{align*}
which is acceptable.

\textbf{4.} We next estimate terms of the form \eqref{tnr2} in the regime $1\vee N_2 \lesssim N_1$.

We first consider the case when $P_{N_2}$ falls on the nonlinearity.  To estimate the contribution of the quadratic terms, we use Proposition~\ref{prop:bilinear} with the dual Strichartz norm $L_t^1 L_x^2$, \eqref{tnr-worst}, Bernstein, and Lemma~\ref{lem:decay}:
\begin{align*}
\sum_{1\vee N_2 \lesssim N_1} & \langle t_0\rangle^2 N_1^{-1}\|v_{N_1}\|_{L_t^\infty L_x^{6-}} \|P_{N_2}(u^2)\|_{L_t^\infty L_x^{3+}} \\
& \lesssim\sum_{1\vee N_2 \lesssim N_1} \langle t_0\rangle^{1+} N_1^{-1} N_2^{0+} \|u\|_{L_t^\infty L_x^6}^2 \|v\|_{Z(I)}\lesssim \langle t_0\rangle^{-\frac{5}{9}+} \|v\|_{Z(I)}^3,
\end{align*}
which is acceptable.  We next consider the contribution of the cubic and higher terms.  We use Proposition~\ref{prop:bilinear} with the dual Strichartz norm $L_t^2 L_x^{6/5}$, \eqref{tnr-worst}, Bernstein, Lemma~\ref{lem:decay}, and \eqref{dual-est}: for $k\in\{3,4,5\}$, we bound these terms by
\begin{align*}
\sum_{1\vee N_2 \lesssim N_1} &\langle t_0\rangle N_1^{-1} \|v_{N_1}\|_{L_t^\infty L_x^{\infty-}} \|P_{N_2}(u^k)\|_{L_t^2 L_x^{\frac65+}} \\
& \lesssim \sum_{1\vee N_2 \lesssim N_1} \langle t_0\rangle N_1^{-\frac12-}N_2^{0+} \|v\|_{L_t^\infty L_x^6} \|u^k\|_{L_t^2 L_x^{\frac65}}
\lesssim  \langle t_0\rangle^{-\frac{19}{18}} \|v\|_{Z(I)}^{k+1},
\end{align*}
which is acceptable.

We turn to the case when $P_{N_1}$ lands on the nonlinearity.  For the quadratic terms, we use Proposition~\ref{prop:bilinear} with the dual Strichartz norm $L_t^1 L_x^2$.  We get the contribution
\begin{align*}
 \sum_{1\vee N_2 \lesssim N_1}&\langle t_0\rangle^2 N_1^{-1} \|v_{N_2}\|_{L_t^\infty L_x^{6+}}\|u^2\|_{L_t^\infty L_x^{3-}} \\
 & \lesssim  \sum_{1\vee N_2 \lesssim N_1}\langle t_0\rangle^2 N_1^{-1} N_2^{0+}\|v\|_{L_t^\infty L_x^6} \|u\|_{L_t^\infty L_x^{6-}}^2\lesssim \langle t_0\rangle^{-\frac59+}\|v\|_{Z(I)}^3,
\end{align*}
which is acceptable.  For the cubic and higher terms, we instead use $L_t^2 L_x^{6/5}$ and rely on \eqref{dual-est} and Bernstein: for $k\in\{3,4,5\}$ we get the contribution
\begin{align*}
\sum_{1\vee N_2 \lesssim N_1}&\langle t_0\rangle N_1^{-1}\|P_{N_1}(u^k)\|_{L_t^2 L_x^{\frac65}} \|v_{N_2}\|_{L_t^\infty L_x^{\infty}} \\
& \lesssim \sum_{1\vee N_2 \lesssim N_1}\langle t_0\rangle N_1^{-1}N_2^{\frac12} \|u^k\|_{L_t^2 L_x^{\frac65}} \|v\|_{L_t^\infty L_x^6} \lesssim
\langle t_0\rangle^{-\frac{19}{18}} \|v\|_{Z(I)}^{k+1},
\end{align*}
which is acceptable.

\textbf{5.} Next, we estimate terms of the form \eqref{tnr2} in the regime $N_2 \lesssim N_1\lesssim 1$.  To begin, we consider only the contribution of the cubic and higher terms.

We first consider the case when $P_{N_2}$ falls on the nonlinearity.  We use Proposition~\ref{prop:bilinear} with the dual Strichartz norm $L_t^2 L_x^{6/5}$, \eqref{dual-est}, Bernstein, and Lemma~\ref{lem:decay} to obtain for $k\in\{3,4,5\}$ the contribution
\begin{align*}
\sum_{N_2\lesssim N_1 \lesssim 1}& \langle t_0\rangle N_1^{-1} N_2^{-\frac12} \|P_{N_1}v\|_{L_t^\infty L_x^{6-}} \|P_{N_2}(u^k)\|_{L_t^2 L_x^{\frac32+}} \\
& \lesssim \sum_{N_2\lesssim N_1 \lesssim 1}  \langle t_0\rangle N_2^{0+} \|U^{-1}v\|_{L_t^\infty L_x^{6-}} \|u^k\|_{L_t^2 L_x^{\frac65}}  \lesssim \langle t_0\rangle^{-\frac{5}{6}+} \|v\|_{Z(I)}^{k+1},
\end{align*}
which is acceptable.

Next, we consider the case when $P_{N_1}$ falls on the nonlinearity. We again use Proposition~\ref{prop:bilinear} and dual Strichartz norm $L_t^2 L_x^{6/5}$, \eqref{dual-est}, Bernstein, and Lemma~\ref{lem:decay} to obtain for $k\in\{3,4,5\}$ the contribution
\begin{align*}
\sum_{N_2\lesssim N_1 \lesssim 1}& \langle t_0\rangle N_1^{-1} N_2^{-\frac12}\|v_{N_2}\|_{L_t^\infty L_x^{6-}} \|P_{N_1}(u^k)\|_{L_t^2 L_x^{\frac32+}} \\
& \lesssim \sum_{N_2\lesssim N_1 \lesssim 1} \langle t_0\rangle N_2^{\frac12} N_1^{-\frac12+} \|U^{-1}v\|_{L_t^\infty L_x^{6-}} \|u^k\|_{L_t^2 L_x^{\frac65}} \lesssim \langle t_0\rangle^{-\frac56+}\|v\|_{Z(I)}^{k+1},
\end{align*}
which is acceptable.

\textbf{6.}  Finally, we treat the contribution of the quadratic terms in the nonlinearity to \eqref{tnr2} when $N_2\lesssim N_1\lesssim 1$.  We will use Proposition~\ref{prop:bilinear} with the dual Strichartz norm $L_t^1 L_x^2$.

Recalling that $u_1=v_1$ and $u_2=U^{-1}v_2$, we note that the quadratic terms in \eqref{E:cqv} are of the form
\[
\text{\O}(U(u^2) + uv).
\]

If $P_{N_2}$ lands on the $U(u^2)$ term, we estimate via Lemma~\ref{lem:decay}:
\begin{align*}
\sum_{N_2\lesssim N_1\lesssim 1} & \langle t_0\rangle^2 N_2^{-\frac12}N_1^{-1} \|v_{N_1}\|_{L_t^\infty L_x^6} \|UP_{N_2}(u^2)\|_{L_t^\infty L_x^3} \\
& \lesssim \sum_{N_2\lesssim N_1\lesssim 1}\langle t_0\rangle^2 N_2^{\frac12}\|U^{-1}v\|_{L_t^\infty L_x^6} \|u\|_{L_t^\infty L_x^6}^2 \lesssim \langle t_0\rangle^{-\frac13}\|v\|_{Z(I)}^3,
\end{align*}
which is acceptable.  If $P_{N_1}$ lands on the $U(u^2)$ term, we get the contribution
\begin{align*}
\sum_{N_2\lesssim N_1\lesssim 1}& \langle t_0\rangle^2 N_2^{-\frac12} N_1^{-1} \|v_{N_2}\|_{L_t^\infty L_x^6} \|UP_{N_1}(u^2)\|_{L_t^\infty L_x^3} \\
& \lesssim \sum_{N_2\lesssim N_1 \lesssim 1} \langle t_0\rangle^2 N_2^{\frac12} \|U^{-1}v\|_{L_t^\infty L_x^6} \|u\|_{L_t^\infty L_x^6}^2 \lesssim \langle t_0\rangle^{-\frac13} \|v\|_{Z(I)}^3,
\end{align*}
which is acceptable.

If $P_{N_2}$ lands on the $uv$ term, then we estimate by Bernstein and Lemma~\ref{lem:decay}:
\begin{align*}
\sum_{N_2\lesssim N_1\lesssim 1}&\langle t_0\rangle^2 N_1^{-1} N_2^{-\frac12} \|v_{N_1}\|_{L_t^\infty L_x^6} \|P_{N_2}(uv)\|_{L_t^\infty L_x^3} \\
& \lesssim \sum_{N_2\lesssim N_1\lesssim 1}\langle t_0\rangle^2 N_2^{0+}\|U^{-1} v\|_{L_t^\infty L_x^6} \|uv\|_{L_t^\infty L_x^{2-}} \\
& \lesssim \sum_{N_2\lesssim N_1\lesssim 1}\langle t_0\rangle^2 N_2^{0+}\|U^{-1} v\|_{L_t^\infty L_x^6} \|u\|_{L_t^\infty L_x^{3-}}\|v\|_{L_t^\infty L_x^6} \lesssim \langle t_0\rangle^{-\frac16+}\|v\|_{Z(I)}^3,
\end{align*}
which is acceptable.

Finally, if $P_{N_1}$ lands on the $uv$ term, we get the contribution
\begin{align*}
\sum_{N_2\lesssim N_1\lesssim 1}&\langle t_0\rangle^2 N_1^{-1} N_2^{-\frac12} \|v_{N_2}\|_{L_t^\infty L_x^6}\|P_{N_1}(uv)\|_{L_t^\infty L_x^3} \\
& \lesssim \sum_{N_2\lesssim N_1\lesssim 1}\langle t_0\rangle^2 N_2^{\frac12} N_1^{-\frac12+}\|U^{-1}v\|_{L_t^\infty L_x^6} \|uv\|_{L_t^\infty L_x^{2-}}  \lesssim \langle t_0\rangle^{-\frac16+}\|v\|_{Z(I)}^3,
\end{align*}
which is acceptable.  This completes the estimation of the contribution of the time non-resonant regions.
%%%%%%%%%%%%%%%%%%%%%%%%%%%
%%%%%%%%%%%%%%%%%%%%%%%%%%%
\subsection{Estimation of space non-resonant terms}\label{est-snr}
To estimate the contribution of the space non-resonant regions to \eqref{bs-quad}, we need to bound terms of the form \eqref{snr1}, involving the $b^X$ multiplier, and \eqref{snr2}, involving the $\tilde b^X$ multiplier. Again, we will rely on Proposition~\ref{prop:bilinear}.

Note that we only had space non-resonant regions for the $v^2$ and $|v|^2$ nonlinearities; the multiplier bounds we established appear in Propositions~\ref{prop:dec2} and \ref{prop:dec3}.  In all cases of space non-resonance, we had $N_2\lesssim N_1$. Examining the multiplier bounds, it is natural to split into two cases: $N_2\lesssim N_1 \lesssim 1$ and $1\vee N_2 \lesssim N_1$. We record here the worst bounds for each multiplier in these two regimes:
\begin{equation}\label{snr-worst}
\begin{aligned}
&\op{b^X} \lesssim \begin{cases} N_1^{-\frac12} & N_2 \lesssim N_1 \lesssim 1, \\
N_2 N_1^{-1} & 1\vee N_2 \lesssim N_1.\end{cases}, \\
&\op{\tilde b^X} \lesssim \begin{cases} N_2^{-1} N_1^{-\frac12} & N_2\lesssim N_1 \lesssim 1, \\
N_1^{-1} & 1\vee N_2\lesssim N_1.\end{cases}
\end{aligned}
\end{equation}

We first consider terms of the form \eqref{snr1} in the case $N_2 \lesssim N_1\lesssim 1$.  We will use Proposition~\ref{prop:bilinear} with the dual Strichartz norm $L_t^{4/3} L_x^{3/2}$.  We estimate such terms via \eqref{snr-worst}, Bernstein, and Lemma~\ref{lem:decay}:
\begin{align*}
\sum_{N_2\lesssim N_1\lesssim 1}&\langle t_0\rangle^{\frac34} N_1^{-\frac12}\bigl\{ \|P_{N_1}Jv\|_{L_t^\infty L_x^2} \|v_{N_2}\|_{L_t^\infty L_x^6} + \|P_{N_2}Jv\|_{L_t^\infty L_x^{2+}}\|v_{N_1}\|_{L_t^\infty L_x^{6-}}\bigr\} \\
& \lesssim \sum_{N_2\lesssim N_1\lesssim 1}\langle t_0\rangle^{\frac34} N_1^{-\frac12}\bigl\{ N_2\langle t_0\rangle^{-\frac79} + N_2^{0+}N_1^{1-}\langle t_0\rangle^{-\frac79+}\bigr\} \|v\|_{Z(I)}^2 \\
& \lesssim \langle t_0\rangle^{-\frac{1}{36}+}\|v\|_{Z(I)}^2,
\end{align*}
which is acceptable.

We next consider terms of the form \eqref{snr1} in the case $1\vee N_2\lesssim N_1$.  We will use Proposition~\ref{prop:bilinear} with the dual Strichartz norm $L_t^{\frac43+}L_x^{\frac32-}$.  We estimate such terms via \eqref{snr-worst} and \eqref{F2}:
\begin{align*}
\sum_{N_2\vee 1 \lesssim N_1}&\langle t_0\rangle^{\frac34-}\tfrac{N_2}{N_1}\bigl\{\|P_{N_1}Jv\|_{L_t^\infty L_x^2} \|v_{N_2}\|_{L_t^\infty L_x^{6-}} + \|P_{N_2} Jv\|_{L_t^\infty L_x^2}\|v_{N_1}\|_{L_t^\infty L_x^{6-}}\bigr\} \\
& \lesssim \sum_{N_2\vee 1\lesssim N_1} \langle t_0\rangle^{-\frac14+}\bigl\{N_2^{1-}N_1^{-1} + N_2 N_1^{-1-}\bigr\}\|v\|_{Z(I)}^2 \lesssim \langle t_0\rangle^{-\frac14+}\|v\|_{Z(I)}^2,
\end{align*}
which is acceptable.

We turn to terms of the form \eqref{snr2} in the case $N_2\lesssim N_1\lesssim 1$. We will use Proposition~\ref{prop:bilinear} with the dual Strichartz norm $L_t^1 L_x^2$. We estimate such terms via \eqref{snr-worst},
using \eqref{FL6} for $v_{N_2}$ and Lemma~\ref{lem:decay}(iii) for $v_{N_1}$:
\begin{align*}
\sum_{N_2\lesssim N_1\lesssim 1}& \langle t_0\rangle N_2^{-1} N_1^{-\frac12}\|v_{N_1}\|_{L_t^\infty L_x^3} \|v_{N_2}\|_{L_t^\infty L_x^6} \\
&\lesssim \sum_{N_2\lesssim N_1\lesssim 1} \langle t_0\rangle^{-\frac{1}{18}} N_2^{\frac13} N_1^{\frac12} \|v\|_{Z(I)}^2\lesssim \langle t_0\rangle^{-\frac{1}{18}}\|v\|_{Z(I)}^2,
\end{align*}
which is acceptable.

Finally, we consider terms of the form \eqref{snr2} in the case $1\vee N_2 \lesssim N_1$.  We will use Proposition~\ref{prop:bilinear} with the dual Strichartz norm $L_t^1L_x^2$.  We estimate such terms via \eqref{snr-worst}, Lemma~\ref{lem:decay}, and Bernstein:
\begin{align*}
\sum_{1\vee N_2 \lesssim N_1}&  \langle t_0\rangle N_1^{-1} \|v_{N_1}\|_{L_t^\infty L_x^6} \|v_{N_2}\|_{L_t^\infty L_x^3} \\
& \lesssim \sum_{1\vee N_2 \lesssim N_1} N_1^{-1} N_2^{0+} \|v_{N_2}\|_{L_t^\infty L_x^{3-}}\|v\|_{Z(I)} \lesssim \langle t_0\rangle^{-\frac12+}\|v\|_{Z(I)}^2,
\end{align*}
which is acceptable.  This completes the estimation of the contribution of space non-resonant regions to \eqref{bs-quad}.

%%%%%%%%%%%%%%%%%%%%%%%%%%%
%%%%%%%%%%%%%%%%%%%%%%%%%%%
\subsection{Estimation of angular non-resonant terms}\label{est-anr}

To estimate the contribution of angular non-resonant regions to \eqref{bs-quad}, we need to bound the terms \eqref{anr1}, \eqref{anr2}, \eqref{anr3}, and \eqref{anr4}.

There is only one angular non-resonant region in our decompositions.  The relevant multiplier bounds appear in Proposition~\ref{prop:dec3}; in particular, the bounds are all $\lesssim 1$.  Note also that in the angular non-resonant region, we have the condition $1\lesssim N_1 \sim N_2$.

To estimate the term \eqref{anr1}, we use Proposition~\ref{prop:bilinear} with the dual Strichartz norm $L_t^1 L_x^2$ and \eqref{F2}:
\begin{align*}
\sum_{1\lesssim N_1\sim N_2}&\langle t_0\rangle \|v_{N_1}\|_{L_t^\infty L_x^4} \|v_{N_2}\|_{L_t^\infty L_x^4} \\
& \lesssim \sum_{1\lesssim N_1 \sim N_2} \langle t_0\rangle^{-\frac12} N_1^{-\frac14}N_2^{-\frac14}\|v\|_{Z(I)}^2 \lesssim \langle t_0\rangle^{-\frac12}\|v\|_{Z(I)}^2.
\end{align*}
which is acceptable.

We next consider \eqref{anr2} and \eqref{anr3}.  As $N_1\sim N_2$, it suffices to treat \eqref{anr2}.  Using Proposition~\ref{prop:bilinear} with the dual Strichartz norm $L_t^{\frac43+}L_x^{\frac32-}$ and \eqref{F2}, we get the contribution
\begin{align*}
\sum_{1\lesssim N_1\sim N_2} & \langle t_0\rangle^{\frac34-}\|P_{N_1}(x\times\nabla)v\|_{L_t^\infty L_x^2} \|v_{N_2}\|_{L_t^\infty L_x^{6-}} \\
&\lesssim \sum_{1\lesssim N_1 \sim N_2} \langle t_0\rangle^{-\frac14+}N_2^{0-}\|v\|_{Z(I)}^2 \lesssim \langle t_0\rangle^{-\frac14+}\|v\|_{Z(I)}^2,
\end{align*}
which is acceptable.

Finally, we estimate \eqref{anr4}.  Arguing as above, we estimate this term by
\[
\sum_{1\leq N_1\sim N_2}\langle t_0\rangle^{\frac34-}\|P_{N_1}Jv\|_{L_t^\infty L_x^2} \|P_{N_2}v\|_{L_t^\infty L_x^{6-}} \lesssim \langle t_0\rangle^{-\frac14+}\|v\|_{Z(I)}^2,
\]
which is acceptable.  This completes the estimation of the contribution of the angular non-resonant regions to \eqref{bs-quad}, and hence the proof of Proposition~\ref{prop:bs-quad}.

%%%%%%%%%%%%%%%%%%%%%%%%%%%
%%%%%%%%%%%%%%%%%%%%%%%%%%%
%%%%%%%%%%%%%%%%%%%%%%%%%%%
\section{Proof of the main result}\label{S:proof}

In this section, we finally prove the main result, Theorem~\ref{T:main}.  As discussed in the Introduction, \cite{KMV} already guarantees that we have a unique global solution $v$ to \eqref{E:cqv} for initial data $v_0$ as in Theorem~\ref{T:main}.  Thus, it remains to show that the solution scatters.  We will prove scattering forward in time.

Collecting the results of Proposition~\ref{prop:str}, Proposition~\ref{prop:weighted1}, Proposition~\ref{prop:weighted2}, and Proposition~\ref{prop:bs-quad}, we arrive at the following \emph{a priori} estimate:
\[
\|v\|_{Z(I)} \lesssim \|v(t_0)\|_{X} + \langle t_0\rangle^{-\eps} \sum_{k=2}^6 \|v\|_{Z(I)}^k
\]
for some $\eps>0$, where $t_0=\inf I$.  Using this, one can show that if $\|v_0\|_{X}$ is sufficiently small, then the solution obeys
\begin{equation}\label{Zbd}
\|v\|_{Z([0,\infty)]} \lesssim 1.
\end{equation}

To prove scattering, it suffices to show that $\{e^{itH}v(t)\}$ is Cauchy in $H_x^1$, $\{(x\times\nabla)e^{itH}v(t)\}$ is Cauchy in $L_x^2$, and $\{xe^{itH}v(t)\}$ is Cauchy in $L_x^2$ as $t\to\infty$.

We first use the Duhamel formula \eqref{duhamel1}, Proposition~\ref{prop:str}, and \eqref{Zbd} to estimate
\begin{align*}
\| e^{itH}v(t) - e^{isH}v(s) \|_{H_x^1} &=\biggl\| \int_s^t e^{i\tau H}N_v(u(\tau))\,d\tau\biggr\|_{H_x^1}\\
& \lesssim s^{-\eps}\sum_{k=2}^5\|v\|_{Z([0,\infty))}^k \lesssim s^{-\eps}
\end{align*}
for $t>s>1$.  Thus $\{e^{itH}v(t)\}$ is Cauchy in $H_x^1$ as $t\to\infty$.  Arguing similarly (using Proposition~\ref{prop:str} and recalling \eqref{def:S}) also shows that $\{(x\times\nabla)e^{itH}v(t)\}$ is Cauchy in $L_x^2$ as $t\to\infty$. 

To show that $\{xe^{itH}v(t)\}$ is Cauchy in $L_x^2$, we use the normal form of the equation; that is, we use the Duhamel formula with \eqref{nf-duh}.  Then, for $t>s>1$, we use Proposition~\ref{prop:weighted1}, Proposition~\ref{prop:weighted2}, Proposition~\ref{prop:bs-quad}, and \eqref{Zbd} to estimate
\begin{align*}
\|&x[e^{itH}v(t)-e^{isH}v(s)]\|_{L_x^2} \\
& \lesssim \sum_{k=2}^6 \biggl\| x\int_s^t e^{i\tau H}\N_k(u(\tau))\,d\tau\biggr\|_{L_x^2} \\
& \quad + \|xe^{isH}B[v_1(s),v_1(s)]\|_{L_x^2}+\|xe^{isH}B[U^{-1}v_2(s),U^{-1}v_2(s)]\|_{L_x^2} \\
& \quad + \|xe^{itH}B[v_1(t),v_1(t)]\|_{L_x^2}+\|xe^{itH}B[U^{-1}v_2(t),U^{-1}v_2(t)]\|_{L_x^2} \\
& \lesssim s^{-\eps}\sum_{k=2}^6 \|v\|_{Z([0,\infty)}^k \lesssim s^{-\eps}.
\end{align*}
Thus $\{xe^{itH}v(t)\}$ is Cauchy in $L_x^2$ as $t\to\infty$.

Finally, note that the argument above gives a rate of convergence of $t^{-\eps}$ to the scattering state.  This completes the proof of Theorem~\ref{T:main}.


\begin{thebibliography}{100}
%\bibitem{BG} H. Bahouri and P. Gerard, \emph{High frequency approximation of solutions to critical nonlinear wave equations.} Amer. J. Math. \textbf{121} (1999), no. 1, 131--175. %MR1705001

\bibitem{Barashenkov}I.~V.~Barashenkov, A.~D.~Gocheva, V.~G.~Makhankov, and I.~V.~Puzynin, {\it Stability of the soliton-like ``bubbles".} Phys. D \textbf{34} (1989), no. 1--2, 240--254. MR0982390

%\bibitem{Bourg} J. Bourgain, \emph{Global wellposedness of defocusing critical nonlinear Schr\"odinger equation in the radial case}. J. Amer. Math. Soc. \textbf{12} (1999), no. 1, 145--171. %MR1626257

%\bibitem{Cazenave} T. Cazenave, \emph{Semilinear Schr\"odinger equations.} Courant Lecture Notes in Mathematics, \textbf{10}. New York University, Courant Institute of Mathematical Sciences, New York; American Mathematical Society, Providence, RI, 2003.  %MR2002047

\bibitem{ChrWei} M. Christ and M. Weinstein, \emph{Dispersion of small amplitude solutions of the generalized Korteweg--de Vries equation}. J. Funct. Anal. \textbf{100} (1991), no. 1, 87--109. MR1124294

\bibitem{CoiMey} R. Coifman and Y. Meyer, \emph{Commutateurs d'int\'egrales singuli\`eres et op\'erateurs multilin\'eaires}. Ann. Inst. Fourier (Grenoble) \textbf{28} (1978), no. 3, xi, 177--202. MR0511821

\bibitem{CoiMey2} R. Coifman and Y. Meyer, \emph{Ondelettes et op\'erateurs. III. Op\'erateurs multilin\'eaires.} Actualit\'es Math\'ematiques, Hermann, Paris, 1991. pp. i--xii and 383--538. MR1160989

%\bibitem{CKSTT:gwp} J.~Colliander, M.~Keel, G.~Staffilani, H.~Takaoka, and T.~Tao, {\it Global well-posedness and scattering for the energy-critical nonlinear Schr\"odinger equation in $\R^3$.} Ann. of Math. (2) \textbf{167} (2008),  no. 3, 767--865. %MR2415387

\bibitem{Gerard} P. G\'erard, \emph{The Cauchy problem for the Gross--Pitaevskii equation.} Ann. Inst. H. Poincar\'e Anal. Non Lin\'eaire \textbf{23} (2006), no. 5, 765--779. MR2259616

\bibitem{GMS} P. Germain, N. Masmoudi, and J. Shatah, \emph{Global solutions for 3D quadratic Schr\"odinger equations.} Int. Math. Res. Not. IMRN 2009, no. 3, 414--432.  MR2482120 

%\bibitem{GinVel} J. Ginibre and G. Velo, \emph{Smoothing properties and retarded estimates for some dispersive evolution equations}. Comm. Math. Phys. \textbf{144} (1992), no. 1, 163--188. %MR1151250

\bibitem{Ginsburg1958} V.~L.~Ginzburg and L.~P.~Pitaevskii, {\it On the theory of superfluidity.} Sov. Phys. JETP \textbf{34} (1958), no. 7, 858--861; translation from Zh. Eksp. Teor. Fiz., \textbf{32} (1958), 1240--1245. MR0105929

\bibitem{Ginsburg1976} V.~L.~ Ginzburg and A.~A.~ Sobyanin, {\it Superfluidity of helium II near the $\lambda$-point.} Sov. Phys. Usp. \textbf{19} (1976), no. 10: 773.
%Usp. Fiz. Nauk \textbf{120} (1976), 153.

\bibitem{GHN} Z. Guo, Z. Hani, and K. Nakanishi, {\it Scattering for the 3D Gross-Pitaevskii equation.} Preprint \texttt{arXiv:1611.08320}.

\bibitem{GNT:dd} S. Gustafson, K. Nakanishi, and T. Tsai, \emph{Scattering for the Gross--Pitaevskii equation.} Math. Res. Lett. \textbf{13} (2006), no. 2--3, 273--285. MR2231117

\bibitem{GNT:2d} S. Gustafson, K. Nakanishi, and T. Tsai, \emph{Global dispersive solutions for the Gross--Pitaevskii equation in two and three dimensions.} Ann. Henri Poincar\'e \textbf{8} (2007), no. 7, 1303--1331. MR2360438

\bibitem{GNT:3d} S. Gustafson, K. Nakanishi, and T. Tsai, \emph{Scattering theory for the Gross--Pitaevskii equation in three dimensions.} Commun. Contemp. Math. \textbf{11} (2009), no. 4, 657--707. MR2559713

\bibitem{Hunt} R. A. Hunt, \emph{On L(p,q) spaces.} Enseignement Math. (2) \textbf{12} (1966) 249--276. MR0223874

\bibitem{Kartavenko} V.~G.~Kartavenko, \emph{Soliton-like solutions in nuclear hydrodynamics.}
Sov. J. Nucl. Phys. \textbf{40} (1984), 240--246; translation from Yad. Fiz. \textbf{40} (1984), 377--388.

%\bibitem{Kato} T. Kato, \emph{On nonlinear Schr\"odinger equations.} Ann. Inst. H. Poincar\'e Phys. Th\'eor. \textbf{46} (1987), no. 1, 113--129. MR0877998

\bibitem{KeelTao} M. Keel and T. Tao, \emph{Endpoint Strichartz estimates}. Amer. J. Math. \textbf{120} (1998), no. 5, 955--980. MR1646048

%\bibitem{Keraani-H1} S. Keraani, \emph{On the defect of compactness for the Strichartz estimates of the Schr\"odinger equations}. J. Differential Equations \textbf{175} (2001), no. 2, 353--392. %MR1855973

\bibitem{KMV} R. Killip, J. Murphy, and M. Visan, \emph{The final-state problem for the cubic-quintic NLS with non-vanishing boundary conditions}. Anal. PDE \textbf{9} (2016), 1523--1574.  MR3570231

\bibitem{KOPV} R. Killip, T. Oh, O. Pocovnicu, and M. Visan, \emph{Global well-posedness of the Gross--Pitaevskii and cubic-quintic nonlinear Schr\"odinger equations with non-vanishing boundary conditions.} Math. Res. Lett. \textbf{19} (2012), no. 5, 969--986. MR3039823

\bibitem{KOPV:35} R. Killip, T. Oh, O. Pocovnicu, and M. Visan, \emph{Solitons and scattering for the cubic-quintic nonlinear Schr\"odinger equation on $\R^3$.}  Preprint \texttt{arXiv:1409.6734}.

%\bibitem{Nak} K. Nakanishi, \emph{Asymptotically-free solutions for the short-range nonlinear Schr\"odinger equation}. SIAM J. Math. Anal. \textbf{32} (2001), no. 6, 1265--1271. MR1856248

\bibitem{One} R. O'Neil, \emph{Convolution operators and $L(p,q)$ spaces.} Duke Math. J. \textbf{30} (1963), 129--142.  MR0146673

\bibitem{Pushakarov1978} D. I. Pushkarov and  Zl. Kojnov, \emph{Contribution to the theory of defectons in quantum crystals.}
Sov. Phys. JETP \textbf{47} (1978), 962--966; translation from Zh. Eksp. Teor. Fiz. \textbf{74} (1978), 1845--1852.

\bibitem{Pushakarov1984} Kh. I. Pushkarov and M. T. Primatarova, {\it Solitary clusters of spin deviations and lattice deformation
in an anharmonic ferromagnetic chain.} Phys. Stat. Sol. (b) \textbf{123} (1984), 573--584.

\bibitem{Pushakarov1986} Kh. I. Pushkarov and M. T. Primatarova, {\it Solitary excitations in molecular chains in the presence of vibronic states.} Phys. Stat. Sol. (b) \textbf{133} (1986), 253--265.

\bibitem{Sha} J. Shatah, \emph{Normal forms and quadratic nonlinear Klein--Gordon equations.} Comm. Pure Appl. Math. \textbf{38} (1985), no. 5, 685--696. MR0803256

\bibitem{SteinWeiss} E. M. Stein and G. Weiss, \emph{Introduction to Fourier analysis on Euclidean spaces}. Princeton Mathematical Series, No. 32. Princeton University Press, Princeton, N.J., 1971. MR0304972

\bibitem{Str} R. S. Strichartz, \emph{Restrictions of Fourier transforms to quadratic surfaces and decay of solutions of wave equations.} Duke. Math. J. \textbf{44} (1977), no. 3, 705--714. MR0512086

%\bibitem{Matador} T.~Tao, M.~Visan, and X.~Zhang, {\it The nonlinear Schr\"odinger equation with combined power-type nonlinearities.} Comm. Partial Differential Equations \textbf{32} (2007), no. 7--9, 1281--1343. MR2354495

%\bibitem{TsutsumiYajima} Y. Tsutsumi and K. Yajima, %MR0741737

%\bibitem{Oberwolfach} M. Visan, \emph{Dispersive Equations}. In ``Dispersive Equations and Nonlinear Waves'', Oberwolfach Seminars \textbf{45}, Birkh\"auser/Springer Basel 2014.

%\bibitem{Zhang} X. Zhang, \emph{On the Cauchy problem of 3-D energy-critical Schr\"odinger equations with subcritical perturbations}. J. Differential Equations \textbf{230} (2006), no. 2, 422--445. %MR2271498

\end{thebibliography}
\end{document}